\begin{document}

\theoremstyle{plain}
\newtheorem{thm}{Theorem}[section]
\newtheorem{lem}[thm]{Lemma}
\newtheorem{cor}[thm]{Corollary}
\newtheorem{prop}[thm]{Proposition}
\newtheorem{conj}[thm]{Conjecture}

\theoremstyle{definition}
\newtheorem{defn}[thm]{Definition}
\newtheorem{question}[thm]{Question}
\newtheorem{nota}[thm]{Notations}
\newtheorem{claim}[thm]{Claim}
\newtheorem{ex}[thm]{Example}
\newtheorem{rmk}[thm]{Remark}
\newtheorem{rmks}[thm]{Remarks}

\def\bar{\overline}
\newcommand{\GG}{\mathbf G}
\newcommand{\aro}{\longrightarrow}
\newcommand{\cn}{\mathcal{N}}
\newcommand{\ot}{\otimes}
\newcommand{\g}[1]{\mathfrak{#1}}
\newcommand{\ep}{\varepsilon} 
\newcommand{\ti}{\times}
\newcommand{\dmod}[1]{\mathcal{D}(#1)\text{-}{\bf mod}}
\newcommand{\si}{\sigma}
\newcommand{\arou}[1]{\stackrel{aro}{#1}}
\newcommand{\wt}{\widetilde}
\newcommand{\de}{\delta}
\newcommand{\cd}{\mathcal D}
\newcommand{\ce}{\mathcal E}
\newcommand{\cm}{\mathcal M}
\newcommand{\cl}{\mathcal L}
\newcommand{\co}{\mathcal O}
\newcommand{\cs}{\mathcal S}
\newcommand{\ct}{\mathcal T}
\newcommand{\cv}{\mathcal V}
\newcommand{\bm}[1]{\boldsymbol{#1}}

\date{May 31, 2017}
\title[Group schemes over DVRs]{On the structure of affine flat group schemes over discrete valuation rings}

\author[N. D. Duong]{Nguyen Dai Duong}
\thanks{The research of the first and the second named authors is supported in part by  Vietnam National Foundation for Science and Technology Development (NAFOSTED), grant number 101.04-2016.19.}

\address{Institute of Mathematics,
Vietnam Academy of Science and Technology.
18 Hoang Quoc Viet, Cau Giay, Hanoi, Vietnam}

\email{nguyendaiduongqn@yahoo.com.vn}

\author[P. H. Hai]{Phung  Ho Hai}
\address{Institute of Mathematics,
Vietnam Academy of Science and Technology.
18 Hoang Quoc Viet, Cau Giay, Hanoi, Vietnam}

\email{phung@math.ac.vn}

\author[J. P. dos Santos]{Jo\~ao Pedro P. dos Santos}

\address{Institut de Math\'ematiques de Jussieu -- Paris Rive Gauche, 4 place Jussieu, 
Case 247, 75252 Paris Cedex 5, France}

\email{joao-pedro.dos-santos@imj-prg.fr}

\subjclass[2010]{14F10,	14L15 }

\keywords{Affine group schemes over discrete valuation rings, Neron blowups, Tannakian categories, differential Galois theory.}

\begin{abstract}We study affine group schemes over a discrete valuation ring $R$ using two techniques: Neron blowups and Tannakian categories. We employ the theory developed to define and study differential Galois groups of $\mathcal D$-modules on a scheme over a $R$. This throws light on how differential Galois groups of families degenerate.
\end{abstract}

\maketitle

\section{Introduction}\label{05.10.2016--2}The thoughts the reader is about to see in this text come from our will to understand  ``models'' of group schemes defined over a DVR and their Tannakian interpretation. In fact, we set out to identify if a more general theory would be able to accommodate the following two examples.  (For unexplained notations, see the end of this introduction.)

\begin{ex}[$\mathrm{SGA3}$, Exp. $\mathrm{VI}_B$, 13.3]\label{10.07.2014--1}Let $R$ be a discrete valuation ring with uniformiser $\pi$ and quotient field $K$. 
Consider the pro-system of affine group schemes
\begin{equation}\label{05.05.2014--5}
\ldots\longrightarrow \GG_{a,R}\xrightarrow{\times \pi}\GG_{a,R}\xrightarrow{\times \pi}\GG_{a,R},
\end{equation}
which corresponds, on the level of rings, to the inductive system 
\begin{equation}\label{25.05.2015--1}
R[x_0]\longrightarrow R[x_1]\longrightarrow\ldots,\qquad x_i\longmapsto \pi x_{i+1}.
\end{equation}
The limit $G$ of diagram \eqref{05.05.2014--5} is a flat affine group scheme over $R$ whose associated ring $R[G]$ is the colimit of  \eqref{25.05.2015--1}:  $\left\{ P\in K[T]\, | \,P(0)\in R\right\}$. Another meaningful way to express $G$ is by writing it as a join of $R$-schemes: 
\[
G=\GG_{a,K}\underset{e,\mathrm{Spec}\, K}{\amalg}\mathrm{Spec}\,R.
\]
Note that the $R$-module $R[G]$, being isomorphic to $R\oplus K\oplus K\oplus\cdots$ is \emph{not} projective and that $G\otimes R_n$ is always the trivial group scheme over $R_n=R/(\pi^{n+1})$. 
\end{ex}

\begin{ex}[cf. 3.2.1.5 of \cite{Andre}]\label{07.10.2014--4}Let $R$ be a discrete valuation ring with uniformiser $\pi$ and quotient field $K$.   
Let $\GG_{m,R}=\mathrm{Spec}\, R[z,1/z]$. If we set $x_0=z-1$ and $y_0=1/z-1$, so that $R[z,1/z]=R[x_0,y_0]/(x_0+y_0+x_0y_0)$, co-multiplication is  given by 
\begin{equation}\label{28.05.2015--1}
\begin{split}
x_0&\longmapsto  x_0\otimes 1+1\otimes x_0+x_0\otimes x_0\\ y_0&\longmapsto y_0\otimes 1+1\otimes y_0+y_0\otimes y_0.
\end{split}
\end{equation}
Now we write 
\[
G_n=\mathrm{Spec}\,R[x_n,y_n]/(x_n+y_n+\pi^nx_ny_n),
\]
and define co-multiplication by 
\begin{equation}\label{28.05.2015--2}
\begin{split}
x_n&\longmapsto x_n\otimes 1+1\otimes x_n+\pi^nx_n\otimes x_n,\\   y_n&\longmapsto  y_n\otimes 1+1\otimes y_n+\pi^ny_n\otimes y_n.
\end{split}
\end{equation}
It is immediately verified that 
\[\begin{split}x_{n-1}&\longmapsto \pi x_{n}\\ y_{n-1}&\longmapsto \pi y_n\end{split}\] defines a morphism of $R$-algebras $R[G_{n-1}]\to R[G_n]$ which induces an isomorphism $K[G_{n-1}]\to K[G_n]$.
This means that   $R[G_n]$ is none other than the subring of $K[z,1/z]$ generated by $\pi^{-n}x_0$ and $\pi^{-n}y_0$. It is also easily verified that the co-multiplication \eqref{28.05.2015--2} on $R[G_n]$ is obtained from that in \eqref{28.05.2015--1}. We then arrive at the projective system of affine group schemes 
\[\ldots\longrightarrow G_{n+1}\longrightarrow G_{n}\longrightarrow G_{n-1}\longrightarrow\ldots\]
whose limit $G$ is considered in \cite[3.2.1.5]{Andre}. 
Note that, as in Example \ref{10.07.2014--1}, 
\[
R[G]=\{P\in K[z,1/z]\,:\, \varepsilon P\in R\},
\] 
where $\varepsilon$ is the co-identity $z\mapsto1$. Again $G\otimes R_n=\mathrm{Spec}\,R_n$ for all $n$.  
\end{ex}
Furthermore, in \cite{Andre}, the author shows that the group scheme $G$ is a ``differential Galois group''; a fact which kindred our interest in building a heftier theory. 

Our method is to put together the theory of Tannakian categories over a DVR \cite{DH} and the theory of Neron blowups \cite{WW}. Once this is done, two basic principles appear: 
\begin{itemize}\item[\textbf{P1.}]   Tannakian theory over a DVR meets much more frequently group schemes of infinite type. More technically,  ``Galois groups'' or ``images''  can be of infinite type. 
\item[\textbf{P2.}]  The theory of Neron blowups serves to render these inconvenients more tractable. 
\end{itemize}
We then focus on two main tasks: 
\begin{itemize}\item[\textbf{T1.}] Understand to what extent \textbf{P2} is  responsible for the difficulty in \textbf{P1}.
\item[\textbf{T2.}] Detect in  concrete cases when the difficulty in \textbf{P1} is avoided.  
\end{itemize}

Our findings concerning  \textbf{T1}, respectively \textbf{T2}, is in Section \ref{20.08.2015--1}, respectively Section \ref{20.08.2015--2}. We now summarise the contents of the text in more detail. 

   In Section \ref{03.02.2015--3} we introduce and study some basic properties of the Neron blowup of a group scheme over a DVR along a closed subscheme of the special fibre. This is a central technique in studying group schemes over discrete valuation rings. The text relies heavily on the work of Waterhouse and Weisfeiler \cite{WW} and for most of the time (Sections \ref{24.06.2015--1}--\ref{24.06.2015--3}) we simply present their findings in our perspective so that further explanations become more effective. In doing so, we explain how one result from \cite{WW} fits comfortably in general mathematical culture (Theorem \ref{11.02.2015--1}), we elaborate on a hint appearing in \cite{WW} (Proposition \ref{12.08.2015--9}), 
we give a more important role to the notion of standard sequence (Definition \ref{19.11.2014--1}) by including it as part of the structure theorem on generic isomorphisms (Theorem \ref{26.09.2014--6})
and we derive an expression for an arbitrary affine flat group scheme as a limit of flat  \emph{algebraic} ones (Theorem \ref{19.11.2014--1b}). On the other hand, Section \ref{24.06.2015--4} already starts the program mentioned in this introduction: we define automatic blowups of group schemes (Definition \ref{22.08.2014--2}). This is an abstraction of Example \ref{10.07.2014--1} and Example \ref{07.10.2014--4}, and fits into the theory of standard sequences. We then go on to nuance some basic properties of the automatic blowup.

In Section \ref{07.10.2014--3}, we begin to put emphasis on the Tannakian approach and study how to produce faithful representations of Neron blowups. This is motivated by our will to study differential Galois groups. Indeed, differential Galois groups are the reflex  in the Tannakian mirror of certain tensor categories \emph{generated by one object}. Performing blowups modifies the group  on one side and our 
task is to understand how to modify the generating object on the other. We begin by finding faithful representations of Neron blowups of the identity (Proposition \ref{05.05.2014--2}) and then proceed to treat the general case (Proposition \ref{09.10.2014--3}). To repeat, the constructions involved in these last two propositions are tailored to serve a situation where the objects under the lenses is the category of representations, so that manipulations  should take place in there. Once this principle is abandoned, we can offer the reader some simpler results (Proposition \ref{24.06.2015--6} and Proposition \ref{24.06.2015--5}). 

Section \ref{06.03.2015--2} studies possible candidates for images of group schemes. For a morphism $\rho:G\to H$ between affine group schemes over a field $k$, it is well-known that the schematic image $\mathrm{Im}(\rho)$ is a closed subgroup scheme of $H$ and that the natural morphism $G\to\mathrm{Im}(\rho)$ is faithfully flat \cite[Ch. 14]{waterhouse}. (This is a very pleasant feature of this theory.) Said differently, $\rho$ can be written as the composition $\tau\circ \sigma$, with $\sigma$ a closed embedding and $\tau$ faithfully flat.    But, in the setting of group schemes over a DVR, the situation gets a bit more complicated since the aforementioned factorization ceases to exist in general. One is then led to consider factorizations into three morphisms (Definition \ref{21.08.2014--1}), which gives two ``images.'' This is not as simple as the case of affine group schemes over fields, but is not as complicated as that of affine schemes. In possession of these definitions, we then move on to study the behavior of these images under  reduction modulo $\pi$. The structure then becomes more sophisticated and a third ``image''  appears, see diagram \eqref{09.05.2014--1}. While these ``images'' are of course related to each other, they need not be identical (Example \ref{15.10.2014--2}).  We put in evidence some of these relations in Corollary \ref{09.05.2014--2} and Proposition  \ref{24.06.2015--7}. The section ends with a direct translation of some of the results found in terms of the accompanying tensor categories, which are properly defined in Definition \ref{07.04.2015--2} and recognized in Proposition \ref{09.03.2015--2}. 

In Section \ref{05.02.2015--2} we introduce the Neron blowup along a formal closed flat subgroup scheme (Definition \ref{25.09.2014--1}). This is a generalisation of the notion of  automatic blowup introduced in Section \ref{24.06.2015--4} and is the beginning of the march towards understanding how Neron blowups help in controlling ``Galois groups'' of infinite type.  
This section works out some fundamental properties of the new concept and special attention should be paid to Theorem \ref{26.09.2014--7}, which says that the standard sequence of the blowup along a formal closed flat subgroup is ``constant.''
This property is then isolated and studied from an abstract point of view in Section \ref{20.08.2015--1}. There, the goal is  simple: when is a   standard sequence with ``constant'' centres  the standard sequence of a blowup along a formal flat closed subgroup scheme? (The motivation for this question should  be understood in the light of   task \textbf{T1} mentioned earlier since standard sequences give rise to affine group schemes.) We give a partial answer, cf. Corollary \ref{09.04.2015--3}.  

In Section \ref{09.03.2015--9} we apply some of our previous results to differential Galois theory. (As we understand, the object of this theory is the  study of $\mathcal O$-coherent  $\cd$-modules, where $\cd$ is the ring of all differential operators.)  Here we make fundamental use of Tannakian categories over $R$.  The upshot is that for a $\mathcal D$-module  $\cm$ on a smooth scheme $X$  over $R$, one has \emph{two differential Galois groups} in sight: the full $\mathrm{Gal}'(\cm)$ and the restricted $\mathrm{Gal}(\cm)$ (see Definition \ref{21.08.2015--1} and Definition \ref{21.08.2015--2}). This, of course, is a manifestation of the theory developed in Section 
\ref{06.03.2015--2} so that, accordingly,  $\mathrm{Gal}(\cm)$ is always of finite type over $R$, and $\mathrm{Gal}'(\cm)$ can be quite big (see Example \ref{10.11.2016--1}, where  we reinterpret Example \ref{07.10.2014--4}).    
Over the residue field $k$, we arrive at \emph{three group schemes}: the reductions   $\mathrm{Gal}(\cm)\ot k$ and $\mathrm{Gal}'(\cm)\ot k$, and the differential Galois group of the $\cd$-module $\cm|_{X_k}$, $\mathrm{Gal}(\cm\ot k)$.  A simple example where all these groups differ is given (Example \ref{15.10.2014--1}). 

In Section \ref{20.08.2015--2}, we  concentrate on the ``geometric case'' and  suppose that $R$ contains a copy of its residue field. We then present a result, Theorem \ref{absolute-conn}, which moves in the direction of Bolibrukh's theorem on complete integrability and isomonodromic deformations \cite[Theorem 1.2]{sabbah}. 
It  says that for inflated (also called completely integrable) $\mathcal D$-modules over proper and smooth ambient spaces, the aforementioned   differential Galois groups coincide. This is of course an accomplishment of task \textbf{T2} introduced  above. In this same section, and then again on Section \ref{03.03.2015--2}, we also explain a number of technical results concerning the approximation of $R$  by smooth subalgebras (stemming mainly from \cite[$\mathrm{IV}_3$, \S8]{EGA} and \cite{artin_approximation}) which come to be necessary in order to circumvent the fact that  $R$ is not of finite type over the ground field.  

The proof of Theorem \ref{absolute-conn} makes use of two other elaborate webs of ideas which were developed in \cite{EH1}, \cite{dS12} and \cite{Zha14}, and the objective of Section \ref{03.03.2015--2} is to review these works. In fact, the examination we offer of \cite{dS12} and \cite{Zha14} (see Theorem \ref{lem.santos}) only touches upon the main output of these, the exactness of the homotopy sequence.  On the other hand the reading we make of \cite{EH1} is more incisive and as a result, we extract deeper consequences from the methods: compare Theorem \ref{lem.eh} with its inspiration, Theorem 5.10 of \cite{EH1}. 

\subsection*{Acknowledgements} We wish to thank the referee of this journal for an intelligent reading of the work. The third named author thanks Mark Spivakovsky for help with the theory of smoothing of morphisms.

\subsection*{Notations, conventions and standard terminology}\label{notations}

\begin{enumerate}\item Throughout the text, $R$ stands for a discrete valuation ring with quotient field $K$, residue field $k$, and uniformizer $\pi$. All the Hom-sets and tensor products, when not explicitly indicated, are understood to be taken over $R$. The quotient ring $R/(\pi^{n+1})$ is denoted by $R_n$. 
\item Given an object $X$ over $R$ (a scheme, a module, etc), we sometimes find useful to write $X_k$ instead of $X\ot_Rk$, $X_K$ instead of $X\ot_RK$, etc.  
\item To avoid repetitions, by a \textbf{group scheme} over some ring $A$, we understand an \textbf{affine group scheme} over $A$. 
\item The category of group schemes over a ring $A$ will be denoted by $(\mathbf{GSch}/A)$; the full subcategory whose objects are $A$-flat will be denoted by $(\mathbf{FGSch}/A)$. 
\item If $V$ is a free $R$-module of finite rank, we write $\mathbf{GL}(V)$ for the general linear group scheme representing $A\mapsto\mathrm{Aut}_A(V\ot A)$. If $V=R^n$, then $\mathbf{GL}(V)=\mathbf{GL}_{n}$.  
\item 
If $G$ is  a group scheme over $R$, we let $\mathrm{Rep}_R(G)$ stand for the category of representations of $G$ which are, as modules,  \emph{of finite type} over $R$. (We adopt Jantzen's  and Waterhouse's definition of representation. See \cite[Part I,  2.7 and 2.8, 29ff]{jantzen} and \cite[3.1-2, 21ff]{waterhouse}.) 

\item The full subcategory $\mathrm{Rep}_R(G)^o$ of $\mathrm{Rep}_R(G)$ has for objects those $V$ whose underlying $R$-module is free. If $V\in\mathrm{Rep}_R(G)^o$, we  abuse notation and make no graphical distinction between the homomorphism $G\to \mathbf{GL}(V)$ and the coaction $V\to V\ot R[G]$.  
\item An object  $V$ of $\mathrm{Rep}_R(G)^o$ is said to be a faithful representation if the resulting morphism $G\to {\bf GL}(V)$ is a closed immersion. Similar conventions are in force for group schemes over $k$, $R_n$, etc. We admonish the reader that \emph{this is not} the terminology of the authoritative \cite{SGA3}, where a faithful action is decreed to be one having no kernel (see Definition 2.3.6.2 of expos\'e I). Since the literature on the subject is scant and the definition of \cite{SGA3} permits some bizarreries (see Example \ref{07.10.2014--4}), we allow ourselves to make such a modification.   
\item For an affine scheme $X$ over $R$, we let $R[X]$ stand for the ring $\mathcal O(X)$. More generally, if $A$ is any $R$-algebra, we write $A[X]$ to denote $\mathcal O(X\ot_R A)$.
\item If $G$ is a group scheme over $R$, the coidentity or  counit is the morphism $\varepsilon:R[G]\to R$ associated to the identity $\mathrm{Spec}\,R\to G$. The augmentation ideal of $R[G]$ is the kernel of $\varepsilon$.
\item A vector bundle on an locally noetherian scheme $X$ is a locally free  $\mathcal O_X$-module of finite rank.  
\item If $M\subset N$ is an inclusion of $R$-modules, we write $M^\mathrm{sat}$ to stand for the saturation of $M$ in $N$, i.e. $\cup_m(M:\pi^m)$. 
\end{enumerate}

\section{Introducing Neron blowups}\label{03.02.2015--3}

\subsection{Definition and basic properties}\label{24.06.2015--1}

Let $G\in (\textbf{FGSch}/R)$.  In what follows we regard $R[G]$ as a subring of $K[G]$. 

Let $H_0$ be a closed subgroup of  $G_k$ defined by an ideal $J$. Note that, in this case, $\pi\in J$. 
The \emph{Neron blowup} $G'$ of $G$ at $H_0$  (cf. \cite[p.551]{WW} and \cite[2.1.2]{Anan}) is the spectrum of the subring $R[G']$ of $K[G]$ generated by $R[G]$ and the elements of $\pi^{-1}J$. (It is obviously sufficient to adjoin elements $\pi^{-1}f$, where $f$ runs over a system of generators of $J$.)  
The natural morphism $R[G]\to R[G']$ is a morphism of Hopf algebras over $R$ as is readily verified. 
(See also \cite[Proposition 1.1]{WW}.) By construction, $G_K'\to G_K$ is an isomorphism  and $G'_k\to G_k$ factors through $H_0$. In fact we have:

\begin{lem}[Universal property, see \cite{WW}, p.551]\label{09.04.2015--1} Let $\mathcal G\to G$ be a morphism in $\mathbf{FGSch}/R$ such that $\mathcal G_k\to G_k$ factors through $H_0\to G_k$. Then there exists a unique $\mathcal G\to G'$ rendering the diagram 
\[
\xymatrix{\mathcal G\ar[dr]\ar[r]& G'\ar[d]\\ &G}
\] 
commutative. \qed
\end{lem}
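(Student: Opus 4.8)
The plan is to argue entirely on the level of Hopf algebras, using $R$-flatness to view every ring inside its generic fibre. Write the given arrow $\mathcal G\to G$ as a homomorphism of Hopf algebras $\phi\colon R[G]\to R[\mathcal G]$. Since $G$ and $\mathcal G$ are $R$-flat, the maps $R[G]\hookrightarrow K[G]$ and $R[\mathcal G]\hookrightarrow K[\mathcal G]$ are injective and $\phi$ is the restriction of $\phi_K:=\phi\ot K\colon K[G]\to K[\mathcal G]$. Because $R[G']$ is by definition the subring of $K[G]=K[G']$ generated by $R[G]$ and the elements $\pi^{-1}f$ with $f\in J$, constructing the desired morphism $\mathcal G\to G'$ over $G$ reduces to showing that $\phi_K$ maps $R[G']$ into $R[\mathcal G]$; and for that it is enough to verify that $\phi_K(\pi^{-1}f)=\pi^{-1}\phi(f)\in R[\mathcal G]$ for $f$ ranging over generators of $J$.

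The heart of the matter is thus the inclusion $\phi(J)\subset\pi R[\mathcal G]$, which I would deduce from the factorisation hypothesis after reducing modulo $\pi$. The arrow $\mathcal G_k\to G_k$ corresponds to $\bar\phi\colon R[G]/\pi R[G]\to R[\mathcal G]/\pi R[\mathcal G]$ induced by $\phi$. Since $\pi\in J$, one has $H_0=\mathrm{Spec}(R[G]/J)$, and the assumption that $\mathcal G_k\to G_k$ factors through $H_0$ says exactly that $\bar\phi$ factors through the quotient $R[G]/\pi R[G]\to R[G]/J$. Hence $\bar\phi$ kills the image of $J$, i.e. $\phi(f)\in\pi R[\mathcal G]$ for every $f\in J$; this is precisely the divisibility needed, so $\phi_K$ restricts to a ring homomorphism $\phi'\colon R[G']\to R[\mathcal G]$ extending $\phi$.

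Finally I would check that $\phi'$ is the morphism sought and that it is unique. It is a homomorphism of Hopf algebras automatically: the comultiplication, counit $\varepsilon$ and antipode on $R[G']$ are the restrictions of those on $K[G']=K[G]$, and $\phi'\ot K=\phi_K$ respects them, so the same identities hold in $R[\mathcal G]$ after embedding into $K[\mathcal G]$, whence in $R[\mathcal G]$ by injectivity; commutativity of the triangle is built into the fact that $\phi'$ extends $\phi$. For uniqueness, any $\psi\colon R[G']\to R[\mathcal G]$ restricting to $\phi$ on $R[G]$ has $\psi\ot K=\phi_K$ on $K[G']=K[G]$, and the injection $R[\mathcal G]\hookrightarrow K[\mathcal G]$ then forces $\psi=\phi'$.

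The single genuine obstacle is the bookkeeping in the second paragraph: translating ``$\mathcal G_k\to G_k$ factors through $H_0$'' into the membership $\phi(J)\subset\pi R[\mathcal G]$. Once this reduction-mod-$\pi$ step is in place, everything else is a formal computation inside $K[G]$ and $K[\mathcal G]$.
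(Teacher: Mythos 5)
Your proof is correct. The paper states this lemma without proof, citing Waterhouse--Weisfeiler (p.~551); your argument --- translating the hypothesis that $\mathcal G_k\to G_k$ factors through $H_0$ into the divisibility $\phi(J)\subset\pi R[\mathcal G]$, then extending $\phi$ over $R[G']=R[G][\pi^{-1}J]$ inside $K[G]$, with Hopf compatibility and uniqueness following from $\pi$-torsion-freeness of $R[\mathcal G]$ and of $R[\mathcal G]\otimes_R R[\mathcal G]$ --- is precisely the standard argument that the citation points to.
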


\begin{rmk}\label{26.09.2014--4}
It should be noted that this very elementary definition of blowup in fact relates to the usual notion as follows. Let $(\pi,a_1,\ldots,a_r)\subset R[G]$ be an ideal  cutting out on $G_k$ a closed subgroup scheme $H_0$. Write $G'$ for its Neron blow-up. Then 
\[
\frac{R[G][\xi_1,\ldots,\xi_r]}{(\pi\xi_1-a_1,\ldots\pi\xi_r-a_r)^\mathrm{sat}}\longrightarrow K[G],\quad \xi_i\longmapsto \frac{a_i}{\pi},
\]
defines an isomorphism onto $R[G']$. As is well-known, in case $R[G]$ is noetherian, the ring on the left above is the ring of an affine piece of the blowup \cite[Lemma 1.2(e), p.318]{liu}.
\end{rmk}

\subsection{A result stemming directly from the theory of blowups}\label{24.06.2015--2}
We use this section to state a generalization of a result appearing in \cite{WW} and which is more transparently treated by using (nowadays) commonplace algebraic geometry. We begin by recalling the following important theorem,  whose proof the reader can find   in \cite[Ch. X,\S9,no.7]{BA}.  
\begin{thm}\label{05.10.2016--1}Let $\mathcal I\subset\mathcal O_X$ be an ideal of the locally noetherian scheme $X$. Let $\psi:\widetilde X\to X$ stand for the blowup of $\mathcal I$. Let $U=\mathrm{Spec}\,A$ be an affine open of $X$ where $\mathcal I$ is generated by a regular $A$-sequence $x_1,\ldots,x_c$. 
Then the $U$-scheme $\psi^{-1}(U)$ is isomorphic to \[\mathrm{Proj}\,\frac{A[\xi_1,\ldots,\xi_c]}{(\xi_ix_j-\xi_jx_i)}.\]  
 \qed
\end{thm}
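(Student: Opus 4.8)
The plan is to reduce the statement to a purely commutative-algebraic identity for the Rees algebra and then to exploit the regular sequence hypothesis. First I would recall that the blowup $\psi:\wt X\to X$ of $\mathcal I$ is by definition $\mathrm{Proj}_X$ of the Rees sheaf of algebras $\bigoplus_{n\geq0}\mathcal I^n$, and that formation of the blowup commutes with restriction to an open subscheme. Writing $I=(x_1,\ldots,x_c)\subset A$ for the ideal cut out by $\mathcal I$ on $U$ (noetherian, so all the finiteness one needs is automatic), this identifies $\psi^{-1}(U)$ with $\mathrm{Proj}$ of the Rees algebra $\cd_A:=\bigoplus_{n\geq0}I^n$, graded so that $I^n$ sits in degree $n$. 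It therefore suffices to produce an isomorphism of graded $A$-algebras
\[
A[\xi_1,\ldots,\xi_c]/(\xi_ix_j-\xi_jx_i)\xrightarrow{\ \sim\ }\cd_A,
\]
with each $\xi_i$ homogeneous of degree $1$; passing to $\mathrm{Proj}$ then gives the theorem.

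Next I would construct this map. Sending $\xi_i\mapsto x_i\in I=(\cd_A)_1$ defines a homomorphism $\Phi:A[\xi_1,\ldots,\xi_c]\to\cd_A$ of graded $A$-algebras, surjective because $I^n$ is generated by the degree-$n$ monomials in the $x_i$. Each relation $\xi_ix_j-\xi_jx_i$ is homogeneous of degree $1$ and manifestly lies in $\ker\Phi$, since it maps to $x_ix_j-x_jx_i=0$. Hence $\Phi$ factors through the quotient $B:=A[\xi]/(\xi_ix_j-\xi_jx_i)$, giving a surjection $\bar\Phi:B\to\cd_A$. The entire theorem thus comes down to the injectivity of $\bar\Phi$, that is, to showing that $\ker\Phi$ is generated by the displayed ``Koszul'' relations. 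This is the step where the regular sequence hypothesis is indispensable, and it is the main obstacle.

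To settle it I would split the computation of $\ker\Phi$ into two classical inputs. In degree $1$, an element of $\ker\Phi$ is a linear form $\sum_ia_i\xi_i$ with $\sum_ia_ix_i=0$, i.e. a syzygy of $(x_1,\ldots,x_c)$; because the $x_i$ form a regular sequence the Koszul complex resolves $A/I$, so the module of first syzygies is generated by the trivial relations $x_ie_j-x_je_i$, which correspond exactly to our generators $\xi_ix_j-\xi_jx_i$. In higher degrees one must check that no new relations appear, i.e. that $\ker\Phi$ is generated in degree $1$; equivalently, that $I$ is of \emph{linear type}, so that the canonical surjection $\mathrm{Sym}_A(I)\to\cd_A$ from the symmetric algebra onto the Rees algebra is an isomorphism. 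For an ideal generated by a regular sequence this is standard and can be obtained by induction on $c$, modding out by $x_c$ (which again leaves a regular sequence) and comparing the associated graded ring $\mathrm{gr}_I(A)$ with the polynomial ring $(A/I)[\bar\xi_1,\ldots,\bar\xi_c]$.

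Combining the two inputs yields $\ker\Phi=(\xi_ix_j-\xi_jx_i)$, so $\bar\Phi$ is an isomorphism of graded $A$-algebras; applying $\mathrm{Proj}$ gives $\psi^{-1}(U)\cong\mathrm{Proj}\,A[\xi_1,\ldots,\xi_c]/(\xi_ix_j-\xi_jx_i)$, as claimed. I expect the only genuinely delicate point to be the linear-type assertion; everything else is the formal dictionary between blowups and $\mathrm{Proj}$ of Rees algebras together with exactness of the Koszul complex.
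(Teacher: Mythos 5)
Your proposal is correct and in substance coincides with the paper's treatment: the paper gives no argument at all for this theorem, referring to \cite[Ch.~X, \S 9, no.~7]{BA}, and what that reference supplies is precisely the algebraic fact your proof turns on --- for $I=(x_1,\dots,x_c)$ generated by a regular sequence, the kernel of the graded surjection $\Phi\colon A[\xi_1,\dots,\xi_c]\to\bigoplus_{n\ge 0}I^n$, $\xi_i\mapsto x_i$, is generated by the relations $\xi_i x_j-\xi_j x_i$, i.e.\ $I$ is of linear type. The geometric dictionary you spell out (blowup $=$ $\mathrm{Proj}$ of the Rees algebra, compatibility with restriction to opens) is the implicit part of the paper's statement, and your two-step analysis of $\ker\Phi$ (Koszul exactness in degree one, linear type in higher degrees) is exactly the content of the citation.

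One caution concerning your parenthetical sketch of the linear-type assertion. The step ``modding out by $x_c$ (which again leaves a regular sequence)'' presupposes that regular sequences can be permuted, and this fails for general noetherian rings: $xy,\ 1-x,\ xz$ is a regular sequence in $k[x,y,z]$, yet $xy$ is a zerodivisor modulo $(xz)$, since $xy\cdot z=y\cdot xz$ while $z\notin(xz)$. Moreover, the comparison of $\mathrm{gr}_I(A)$ with the polynomial ring $(A/I)[\bar\xi_1,\dots,\bar\xi_c]$ shows that a homogeneous $F$ of degree $n$ with $F(x_1,\dots,x_c)=0$ has all its coefficients in $I$, but this alone does not give injectivity of $\bar\Phi$ in degree $n$; a further descent modulo the Koszul relations is still required. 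Neither point sinks your proof, because the fact you invoke is genuinely classical (Micali's theorem on ideals of linear type); if you want it self-contained, note that injectivity of $\bar\Phi$ can be checked after localizing at the primes of $A$ --- both graded algebras and the regularity hypothesis localize --- and in a noetherian local ring either some $x_i$ is a unit (a trivial case, as the Koszul relations then eliminate all other variables) or all $x_i$ lie in the maximal ideal, where permutability does hold; alternatively, simply cite \cite[Ch.~X, \S 9, no.~7]{BA} as the paper does.
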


From this and Theorem 1.5 of \cite{WW}, we arrive at the following generalization of \cite[Theorem 1.7]{WW}. (Recall from [EGA $\mathrm{IV}_4$, 17.12.1, p.85] that a closed immersion of smooth schemes is always regular.)

\begin{thm}\label{11.02.2015--1}Let $G\in(\mathbf{FGSch}/R)$ be of finite type. Let $H_0\subset G_k$ be a regularly immersed  closed sub-group scheme. Write $c$ for the  codimension of $H_0$ in $G_k$. If $G'\to G$ stands for the Neron blowup of $G$ at $H_0$, then $G'\ot k\to  H_0$ is a smooth surjective morphism whose kernel  is isomorphic to $\GG_{a,k}^{c }$.  \qed
\end{thm}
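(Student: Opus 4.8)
The plan is to reduce the assertion to a computation on $G$ in a neighbourhood of $H_0$, read off the special fibre of the blowup from the presentation in Remark \ref{26.09.2014--4}, and then extract the group structure of the kernel from the Hopf algebra. I would begin by working Zariski-locally on $G$, on an affine open $U=\mathrm{Spec}\,A$ meeting $H_0$. Since $H_0\hookrightarrow G_k$ is regular of codimension $c$ and $\pi$ is a nonzerodivisor on $A=R[G]$, I can choose $a_1,\dots,a_c\in A$ whose reductions cut out $H_0$ in $U_k$ and for which $\pi,a_1,\dots,a_c$ is a regular $A$-sequence. Moreover, replacing $a_i$ by $a_i-\varepsilon(a_i)$ — a modification by the element $\varepsilon(a_i)\in\pi A$ (it reduces to $0$ at the identity of $H_0$), which alters neither $R[G']$ nor the reductions $\bar a_i$ — I may assume $\varepsilon(a_i)=0$. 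Over $U$ the Neron blowup is then $R[G']=A[a_1/\pi,\dots,a_c/\pi]\subset K[G]$, which by Remark \ref{26.09.2014--4} and Theorem \ref{05.10.2016--1} is the affine chart of the blowup of $H_0$ in $G$ where the exceptional coordinate attached to $\pi$ is inverted.

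Next I would determine the special fibre. Writing $\xi_i$ for $a_i/\pi$, there is a tautological surjection $A[\xi_1,\dots,\xi_c]/(\pi\xi_i-a_i)\to R[G']$, and the key point is that for a regular sequence this ideal is already $\pi$-saturated — equivalently the chart ring, being a subring of $K[G]$, is $\pi$-torsion free — so the map is an isomorphism and the saturation in Remark \ref{26.09.2014--4} is superfluous. Reducing modulo $\pi$ then annihilates $a_i=\pi\xi_i$ and yields
\[
R[G']\ot k\;\cong\;\bigl(A/(\pi,a_1,\dots,a_c)\bigr)[\xi_1,\dots,\xi_c]\;=\;k[H_0][\xi_1,\dots,\xi_c].
\]
Hence $G'\ot k\to H_0$ is, locally over $H_0$, the projection $\mathbb A^{c}_{H_0}\to H_0$; it is therefore smooth of relative dimension $c$ and surjective, and both properties, being local on the target, hold globally.

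It then remains to compute the kernel $N:=\ker(G'\ot k\to H_0)$, which is a homomorphism because the blowup is a Hopf morphism whose reduction factors through the subgroup $H_0$; the coordinate ring of $N$ is the fibre $k[\xi_1,\dots,\xi_c]$ over the identity. Because $J$ is a Hopf ideal in $k[G_k]$, reduction shows $\Delta(a_i)\in I\ot A+A\ot I$ for the ideal $I=(\pi,a_1,\dots,a_c)$ of $H_0$ in $G$; expanding on these generators near the identity and dividing by $\pi$ produces
\[
\Delta'(\xi_i)=\sum_j \xi_j\ot u_{ij}+\sum_j w_{ij}\ot\xi_j+1\ot v_i+v_i'\ot1,\qquad u_{ij},w_{ij},v_i,v_i'\in A,
\]
for the comultiplication of $R[G']$. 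Passing to $k[N]\ot k[N]$ — reducing modulo $\pi$ and evaluating the $A$-coefficients at the identity $e$ — no quadratic term in the $\xi$ survives, and the counit axioms of $N$ (note $\varepsilon(\xi_i)=0$ by our choice of the $a_i$) force $u_{ij}(e)=w_{ij}(e)=\delta_{ij}$ and $v_i(e)+v_i'(e)=0$. Thus the $\xi_i$ are primitive and $N\cong\GG_{a,k}^{c}$.

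The delicate step, and the one I expect to be the main obstacle, is the control of the special fibre: one must know that $A[\xi]/(\pi\xi_i-a_i)$ carries no $\pi$-torsion, so that $R[G']\ot k$ is genuinely the polynomial ring $k[H_0][\xi]$ rather than a nonreduced thickening. This is precisely the linear-type property ($\mathrm{Sym}=\mathrm{Rees}$) of ideals generated by a regular sequence, and it underwrites both the smoothness over $H_0$ and the identification of the kernel with a vector group; the remaining verifications are either local on the base or formal consequences of the counit axioms.
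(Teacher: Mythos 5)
Your proof is correct, but it takes a more self-contained route than the paper, whose argument for Theorem \ref{11.02.2015--1} is purely one of citation: the paper combines Theorem \ref{05.10.2016--1} (the Bourbaki description of the blowup of an ideal generated by a regular sequence) with Theorem 1.5 of \cite{WW}, presenting the statement as a generalization of \cite[Theorem 1.7]{WW}, and writes out no independent proof. The first half of your argument---the chart presentation $A_f[\xi_1,\ldots,\xi_c]/(\pi\xi_1-a_1,\ldots,\pi\xi_c-a_c)$ of $R[G']$ near a point of $H_0$, justified by the linear-type (Sym $=$ Rees) property of regular sequences, so that reduction mod $\pi$ yields the polynomial algebra $k[H_0\cap U][\xi_1,\ldots,\xi_c]$ and hence smoothness and surjectivity of $G'\ot k\to H_0$---is exactly the content the paper extracts from Theorem \ref{05.10.2016--1} and Remark \ref{26.09.2014--4}, so there the two routes coincide; note also that lifting local generators of the ideal of $H_0$ to a regular sequence $\pi,a_1,\ldots,a_c$ works precisely because $R[G]$ is $R$-flat. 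Where you genuinely differ is the kernel: instead of quoting \cite{WW}, you identify $k[N]=k[\xi_1,\ldots,\xi_c]$ and prove the $\xi_i$ are primitive by expanding $\Delta(a_i)$ inside the two-sided ideal generated by the Hopf ideal of $H_0$, dividing by $\pi$, and invoking the counit axioms. This is in substance the manipulation the paper performs later, with all localisations made explicit, in the proof of Proposition \ref{12.08.2015--9} (equations \eqref{12.08.2015--1}--\eqref{12.08.2015--7}); your computation is the special case needed to identify the kernel, whereas the paper's version yields the finer conormal-action statement. Your route buys independence from \cite{WW}; the paper's buys brevity. Three small imprecisions, none fatal: the $a_i$ generate the ideal of $H_0$ only after localising, so the expansion of $\Delta(a_i)$ (and the normalisation $\varepsilon(a_i)=0$, cf. Lemma \ref{30.11.2016--1}) must be carried out in a localisation of $R[G]\ot R[G]$ over $(e,e)$; the coefficients in your displayed expansion of $\Delta'(\xi_i)$ live in that localised tensor product rather than in $R[G]$, and the constant term need not split as $1\ot v_i+v_i'\ot 1$---but since every such coefficient maps to a scalar in $k[N]\ot k[N]$, the affine-linearity of $\Delta_N(\xi_i)$ and the counit argument are unaffected; finally, the division by $\pi$ uses that $R[G']\ot R[G']$ has no $\pi$-torsion, which holds because $R[G']$ is $R$-flat.
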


\begin{ex}Let $k$ be of positive characteristic $p$. Let $G=\GG_{a,R}$ and denote the Neron blowup of $\alpha_p\subset G_k$ by $G'\to G$. Then $G'_k\to\alpha_p$ is a smooth morphism with kernel $\mathbf G_{a,k}$. 
\end{ex}

\subsection{The action of the centre of the Neron blowup on the kernel}\label{12.08.2015--10}
Let $G\in(\mathbf{FGSch}/R)$ be of finite type. Let $H_0\subset G_k$ be a regularly immersed  closed subgroup scheme. From Theorem \ref{11.02.2015--1} we know that $G_k'\to H_0$ is a faithfully flat morphism so that $H_0$ acts by group scheme automorphisms on  the \emph{right of} the kernel $F$. This is explained in \cite[Ch. III, \S6, no.1, 431ff]{DG}, but is really a triviality: ``lift and conjugate.'' 
Since $F\simeq \mathbf G_{a,k}^c$,  then one might ask if this action is linear and if it can be obtained from something simpler. The answer is Proposition \ref{12.08.2015--9} below, which is the theme of this section. 

To grasp the meaning of Proposition \ref{12.08.2015--9}, we   need the notion of the \emph{conormal} representation. Let $\mathcal G$ be an affine group scheme of finite type over $k$ and let $\mathcal H\subset\mathcal  G$ be a closed subgroup cut out by the ideal $I\subset k[\mathcal G]$. We let $\mathcal H$ act on $\mathcal G$ \emph{on the right} by conjugation: $g\cdot h=h^{-1}gh$. In this way, we obtain an $\mathcal H$-module structure on $k[\mathcal G]$. If $\g a\subset k[\mathcal G]$ stands for the augmentation ideal, it is easily seen that $\g a,I$  and $\g aI$ are all sub-$\mathcal H$-modules of $k[\mathcal G]$. Note that $I/\g aI$ is a finite dimensional $k$-space. 
\begin{defn}The  $\mathcal H$-module $I/\g aI$ is called the co-normal representation. It will be denoted in this text by $\nu:\mathcal H\to\mathbf{GL}(I/\g aI )$. 
\end{defn}

Obviously, $I/\g aI$ is simply the fibre at $e$ of the conormal sheaf   of the immersion $\mathcal H\subset\mathcal G$ [EGA $\mathrm{IV}_4$, p.5]. We invite the reader to explicate the relation between $\nu$ and the pertaining adjoint representations.

\begin{prop}\label{12.08.2015--9}We maintain the setting of the first paragraph of this section. Then there exists an isomorphism of group schemes 
\[F\simeq \mathbf G_{a,k}^c\simeq \mathrm{Spec}\,k[\xi_1,\ldots,\xi_c]\] 
such that the action of $H_0$ on $F$ is determined by 
\[\xi_j\longmapsto \sum_{i=1}^c  \xi_i\ot \nu_{ij},  \] 
where $[\nu_{ij}]\in\mathrm{GL}_c(k[H_0])$ defines the conormal representation of $H_0$. 
\end{prop}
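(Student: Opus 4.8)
The plan is to reduce everything to an explicit Hopf-algebra computation on the special fibre, exploiting that conjugation is given by an \emph{algebra} homomorphism and that the blowup coordinates are literally $\xi_i=a_i/\pi$. First I would fix convenient generators. Since $H_0\subset G_k$ is regularly immersed of codimension $c$ and contains the identity, I can choose $a_1,\dots,a_c$ in the augmentation ideal $\g a\subset R[G]$ whose reductions $\bar a_1,\dots,\bar a_c$ form a regular sequence generating the ideal $I$ of $H_0$ in $k[G_k]$; then $J=(\pi,a_1,\dots,a_c)$ and, by Remark \ref{26.09.2014--4} (the saturation being superfluous in the regular case), $R[G']=R[G][\xi_1,\dots,\xi_c]/(\pi\xi_i-a_i)$ with $\xi_i=a_i/\pi$. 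Reducing modulo $\pi$ gives $k[G']=k[H_0][\bar\xi_1,\dots,\bar\xi_c]$, a polynomial ring over $k[H_0]$, and this is exactly the identification $F\simeq\GG_{a,k}^c=\mathrm{Spec}\,k[\xi_1,\dots,\xi_c]$ furnished by Theorem \ref{11.02.2015--1}; here $p_F\colon k[G']\to k[F]$ is ``evaluate the $k[H_0]$-coefficients at the counit.''

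Next I would set up the conjugation coaction. Writing $\rho_{G'}\colon R[G']\to R[G']\ot R[G']$ for the algebra homomorphism attached to $(g,h)\mapsto h^{-1}gh$, and likewise $\rho_G$ on $R[G]$, these agree under the generic isomorphism $R[G']\subset K[G]$, so that $\rho_{G'}(\xi_i)=\tfrac1\pi\rho_G(a_i)$ inside $R[G']\ot R[G']$. Because the kernel $F$ is abelian, the restriction of conjugation to $F\ti G_k'\to F$ factors through $F\ti H_0$, and the $H_0$-coaction on $F$ is computed by $\rho_F(\bar\xi_j)=(p_F\ot\mathrm{id})(\rho_{G'}(\xi_j)\bmod\pi)$, which then lands a priori in $k[F]\ot k[H_0]$. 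At the same time, the conormal representation is, by its very definition, the coaction induced on $I/\g aI$ by $(\mathrm{id}\ot\iota^{*})\rho_{G_k}$, where $\iota^{*}\colon k[G_k]\to k[H_0]$ is restriction; concretely $(\mathrm{id}\ot\iota^{*})\rho_{G_k}(\bar a_j)\equiv\sum_i\bar a_i\ot\nu_{ij}$ modulo $\g aI\ot k[H_0]$.

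The heart of the argument, and the step I expect to be most delicate, is the $\pi$-adic bookkeeping that matches these two. Writing $\rho_G(a_j)=a_j\ot1+r_j$ with $r_j\in\g a\ot\g a$ (a general property of the conjugation coaction on augmentation elements, forced by the fact that $(\ep\ot\mathrm{id})\rho_G$ and $(\mathrm{id}\ot\ep)\rho_G$ are governed by the fixing of the identity), I would expand $r_j$ in the generators $a_i,b_l$ of $\g a$, the $b_l$ lifting generators of the augmentation ideal of $H_0$, and substitute $a_i=\pi\xi_i$. A term survives division by $\pi$, reduction modulo $\pi$, and application of $p_F\ot\mathrm{id}$ only if its first tensor factor is a single $a_i$: a factor $b_l$ is killed by $p_F$, whereas a first factor lying in $\g a^2$ retains a factor of $\pi$ and dies modulo $\pi$. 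Thus $\rho_F(\bar\xi_j)=\sum_i\bar\xi_i\ot c_{ij}$ for certain $c_{ij}\in k[H_0]$, and the same dichotomy identifies $c_{ij}$ with the coefficient of $\bar a_i$ in $(\mathrm{id}\ot\iota^{*})\rho_{G_k}(\bar a_j)$ read modulo $\g aI$, that is, with $\nu_{ij}$.

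The points that will need genuine care are: that no quadratic-in-$\bar\xi$ term survives, so that the action is truly \emph{linear} (this is not automatic in positive characteristic, where $\mathrm{Aut}(\GG_{a,k}^c)$ is larger than $\mathrm{GL}_c$); that the surviving first-factor data is precisely the conormal fibre $I/\g aI$ and not some coarser quotient of $\g a$; and that the termwise division by $\pi$ after the substitution $a_i=\pi\xi_i$ is legitimate (which is guaranteed a priori, since $\tfrac1\pi\rho_G(a_j)=\rho_{G'}(\xi_j)$ already lies in $R[G']\ot R[G']$). Once the linear coaction $\bar\xi_j\mapsto\sum_i\bar\xi_i\ot\nu_{ij}$ is established on the generators, it extends to all of $k[F]=\mathrm{Sym}\,\langle\bar\xi_1,\dots,\bar\xi_c\rangle$ because $\rho_F$ is an algebra homomorphism, yielding the asserted formula.
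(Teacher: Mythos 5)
Your proof has two genuine gaps, one at the very start and one at the heart of the computation.

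First, the global presentation. You choose $a_1,\ldots,a_c\in\g a\subset R[G]$ whose reductions form a regular sequence generating the ideal $I$ of $H_0$ in $k[G_k]$, and from this you get $R[G']=R[G][\xi_1,\ldots,\xi_c]/(\pi\xi_i-a_i)$ and $k[G']=k[H_0][\bar\xi_1,\ldots,\bar\xi_c]$ \emph{globally}. But ``regularly immersed'' is a local condition: it provides regular sequences only in suitable localizations, not a system of $c$ global generators of $I$. Whether a regularly immersed closed subgroup scheme is cut out by a (global) regular sequence is precisely the open Question the paper records immediately after this proposition, so your starting assumption is not known to be available. The paper's proof is written entirely at localizations for this reason: at $\g m$, the identity of the special fibre, where Lemma \ref{30.11.2016--1} produces a regular sequence lying in the augmentation ideal (your requirement ``$a_i\in\g a$'' is the global shadow of that lemma), and at an arbitrary closed point $\g n$ of $H_0\cap V$; the local formulas are then glued. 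Your descent argument replacing the paper's ``lift and conjugate'' (the action factors through $F\ti H_0$ because $F$ is abelian and $G_k'\to H_0$ is faithfully flat) is legitimate and is a genuine, if minor, variant; the global presentation is not.

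Second, and more seriously, the termwise bookkeeping. You expand $r_j\in\g a\ot\g a$ as $\sum_i a_i\ot u_{ij}+\sum_l b_l\ot v_{lj}$ and analyze the effect of $\frac{1}{\pi}(-)$, reduction mod $\pi$, and $p_F\ot\mathrm{id}$ term by term. This expansion is highly non-unique, and the individual terms $\frac{1}{\pi}(b_l\ot v_{lj})$ do \emph{not} lie in $R[G']\ot R[G']$; only their sum does. Your a priori justification --- that $\frac{1}{\pi}\rho_G(a_j)=\rho_{G'}(\xi_j)$ lies in $R[G']\ot R[G']$ --- licenses operating on the total, not on the (non-canonical) terms, so the assertion ``a factor $b_l$ is killed by $p_F$'' is not meaningful: before $p_F$ can be applied, the $\pi$ must be absorbed somewhere, and where it goes depends on the expansion. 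What actually controls these cross terms is a structural fact you never use in the computation: $H_0$ is stable under conjugation by itself, i.e. $\gamma^\#(J)\subset\langle J\ot1\rangle+\langle1\ot J\rangle$. This is the paper's eq. \eqref{12.08.2015--1}, $\gamma^\#(x_{j,\g m})=\sum_i a_{ij}x_{i,\g m}\ot a_{ij}'+\sum_i b_{ij}'\ot b_{ij}y_{i,\g n}+\pi s_j$: after division by $\pi$ the residual terms carry a blowup coordinate $\eta_i$ in the \emph{second} (conjugator) factor and die because the section $\sigma^\#$ kills $\boldsymbol\eta$ --- not because of a first-factor $b_l$ killed by $p_F$. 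Moreover, the genuine error term $s_j$ is then controlled by combining $\varepsilon(x_{j,\g m})=0$ with the fact that the ideal $(\boldsymbol y_{\g n})$ has no $\pi$-torsion (eq. \eqref{12.08.2015--2}); this is where regularity of the sequence is used beyond merely dropping the saturation in Remark \ref{26.09.2014--4}, and your proposal has no counterpart of it. The same defect undermines your identification of the coefficients with $\nu_{ij}$: the terms $\bar b_l\ot\bar v_{lj}|_{H_0}$ are individually not in $I\ot k[H_0]$, let alone in $\g aI\ot k[H_0]$, so reading off the conormal representation ``by the same dichotomy'' again depends on the chosen expansion. To repair the argument you would have to replace your expansion of $r_j$ by one organized through the subgroup property (generators of $J$ in the second factor, explicit $\pi$-error term) and control the error by the torsion-freeness argument --- at which point you have reconstructed the paper's proof, localized so as to avoid the first gap.
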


\begin{proof}In the argument, we employ some useful notations: 
\begin{itemize}\item We write ``$\times$'' for the fibre product over $R$ and note that if $X$ and $Y$ are $k$-schemes, then $X\times Y$ is canonically isomorphic to $X\times_{\mathrm{Spec}\,k}Y$  [EGA $\mathrm{I}$, 3.2.4, p.105].
\item If $A$ and $B$ are $R$-algebras  and $\g A$ is an ideal of $A$, we write $\langle \g A\ot1\rangle$ for the ideal generated by the image of $\g A\ot_RB\to A\ot_RB$.  
\item If $\g p$ is a prime ideal of a ring $A$ and $I\subset \g p$ is a sub-ideal, we write $I_{\g p}$ for $IA_{\g p}$. 
\end{itemize}

Let $\gamma:G\ti  G\to G$ stand for the morphism defined by $(x,g)\mapsto g^{-1}xg$; it produces a right action of $G$ on itself. Letting $\gamma':G'\ti G'\to G'$ stand for the analogous action of $G'$ on itself,  we arrive at a commutative diagram
\[\xymatrix{G'\ti G'\ar[r]^-{\gamma'}\ar[d]&G'\ar[d]\\ G\ti G\ar[r]_-{\gamma}&G.}\]
Let $V\subset G$ be \emph{any} affine open subset whose intersection with $H_0$ is the source of a scheme-theoretic section to $G_k'\to H_0$:
\[\sigma:H_0\cap V\aro G_k'.\]
(The existence of $V$ is a consequence of the description of the blowup offered in Theorem \ref{05.10.2016--1}.) By definition of the right action 
\[\alpha:F\times H_0\longrightarrow F\] we have a commutative diagram 
\begin{equation}\label{11.08.2015--1}
\xymatrix{   G_k'\ti G_k'\ar[rr]^-{\gamma_k'}&& G_k'    
\\ F\ti G_k'\ar@{^{(}->}[u]\ar[rr]&& F\ar@{^{(}->}[u] \\
 F\ti(H_0\cap V)\ar[u]^{\mathrm{id}\times\sigma}\ar[rru]_-{\alpha}.&&    }
\end{equation}
Let $\g n$ be a closed point of  $H_0\cap V$ and complete diagram \eqref{11.08.2015--1} as follows: 
\begin{equation}\label{12.08.2015--8}\xymatrix{  
&G_k'\ti G_k'\ar[rr]^-{\gamma_k'}&& G_k'    
\\ 
 &F\ti G_k'\ar@{^{(}->}[u]\ar[rr]&& F\ar@{^{(}->}[u] 
 \\ 
F\ti\mathrm{Spec}\,k[H_0]_{\g n}\ar[r]& F\ti(H_0\cap V)\ar[u]^{\mathrm{id}\ti\sigma}\ar[rru]_-{\alpha}.&&    }
\end{equation}
We will  show that the obvious composition 
\[F\ti \mathrm{Spec}\, k[H_0]_{ \g n}\aro F\ti (H_0\cap V)\stackrel{\alpha}{\longrightarrow} F\]
coincides with the composition 
\[F\ti \mathrm{Spec}\, k[H_0]_{\g n}\aro F\ti (H_0\cap V)\stackrel{\nu}{\longrightarrow} F,\]
where $\nu$ is the conormal representation. 

As usual, let $\varepsilon:R[G]\to R$ be the co-identity and write $\g a$ for its kernel. Write $\g m$ for the ideal $(\pi,\g a)$ --- it corresponds to the identity on the closed fibre $G_k$ --- and note that $\varepsilon$ extends to $R[G]_{\g m}$. It goes without saying that the kernel of $\varepsilon:R[G]_{\g m}\to R$ is just $\g aR[G]_{\g m}$.

\begin{lem}\label{30.11.2016--1}There exist functions $x_{1,\g m},\ldots,x_{c,\g m}\in R[G]_{\g m}$ such that 
\begin{enumerate}\item for each $j$, $\varepsilon(x_{j,\g m})=0$, and 
\item the sequence $\pi,x_{1,\g m},\ldots,x_{c,\g m}$ is regular and generates the ideal of $H_0$ at $R[G]_{\g m}$. 
\end{enumerate}  
\end{lem}

\begin{proof} Let $\pi,x_{1,\g m}^*,\ldots,x_{c,\g m}^*$ be a regular sequence generating the ideal of $H_0$ at $\g m$. 
We can always write $x_{j,\g m}^*=c_j+x_{j,\g m}$, with $c_j\in R$ and $x_{j,\g m}\in \g aR[G]_{\g m}$. Moreover, we know that $\varepsilon(x^*_{j,\g m})\equiv0\mod\pi$, since $x_{j,\g m}^*+(\pi)\in k[G]$ belongs to the ideal of $H_0$. It then follows that $c_j\equiv 0\mod\pi$. Write $c_j^*:=\pi^{-1}c_j$. Then $x_{j,\g m}=x_{j,\g m}^*-\pi c_j^*$,
which means that we have an equality of ideals 
\[(\pi,x_{1,\g m},\ldots,x_{c,\g m})=(\pi,x_{1,\g m}^*,\ldots,x_{c,\g m}^*).
\]
The proof is finished by EGA $\mathrm{IV}_4$, 16.9.5, p.47. 
\end{proof}

The morphism $\gamma$  induces an arrow between local rings 
\[\gamma^\#: R[G]_{\g m}\aro R[G]_{\g m}\ot_R R[G]_{\g n}.\]
Since $H_0$ is a subgroup of $G_k$, if $J\subset R[G]$ stands for its ideal, it follows that  
\[
\gamma^\#(J_{\g m})\subset  \langle J_{\g m}\ot1\rangle+\langle1\ot J_{\g n}\rangle.  
\]
If $y_{1,\g n},\ldots,y_{c,\g n}$ denote elements of $R[G]_{\g n}$ such that $\pi, y_{1,\g n},\ldots,y_{c,\g n}$ forms a regular sequence generating $J_{\g n}$,  we conclude that 
\begin{equation}\label{12.08.2015--1}
\gamma^\#(x_{j,\g m})=\sum_i a_{ij}{x_{i,\g m}} \ot a_{ij}'+\sum_ib'_{ij}\ot b_{ij}{y_{i,\g n}} +\pi s_j
\end{equation}
for $a_{ij},b'_{ij}$ in $R[G]_{\g m}$,  $a_{ij}',b_{ij}$ in $R[G]_{\g n}$ and $s_j$ in $R[G]_{\g m}\ot_RR[G]_{\g n}$.

Since  $\varepsilon(x_{j,\g m})=0$ (see Lemma \ref{30.11.2016--1}), eq. \eqref{12.08.2015--1} gives 
\[0 = \sum_i\varepsilon(b_{ij}')\cdot b_{ij}y_{i,\g n}+ \pi\cdot\left( \varepsilon\ot_R\mathrm{id}_{R[G]_{\g n}}\right) (s_j) ,
\]
which proves that 
\[\pi  \cdot\left( \varepsilon\ot_R\mathrm{id}_{R[G]_{\g n}}\right) (s_j)\in (\boldsymbol y_{\g n}).\]
Since the ideal $(\boldsymbol y_{ \g n})$ has no $\pi$-torsion, because the sequence $\{y_{1,\g n},\ldots,y_{c,\g n},\pi\}$ is regular, it follows that 
\[\left( \varepsilon\ot_R\mathrm{id}_{R[G]_{\g n}}\right) (s_j)\in (\boldsymbol y_{\g n}).\]
Hence, abusing notation and confusing $\g a$ with $\g aR[G]_{\g m}$, 
\begin{equation}\label{12.08.2015--2}s_j\in \langle\g a\ot_R1\rangle+\langle1\ot_R(\boldsymbol y_{\g n})\rangle .
\end{equation}
Since the sequence $\{\pi,x_{1,\g m},\ldots,x_{c,\g m}\}$ is regular, it follows that we have an isomorphism of $R[G]_{\g m}$-algebras:
\begin{equation}\label{12.08.2015--4}R[G']_{\g m}=\frac{R[G]_{\g m}[\boldsymbol\xi]}{(\pi\boldsymbol\xi-\boldsymbol x_{\g m})}.\end{equation}
(Here $R[G']_{\g m}$ stands for the ring obtained by inverting the elements of $R[G']$ in the image of $R[G]\setminus\g m$.)
Now, using (EGA $\mathrm{IV}_1$, 15.2.4, p.21), there exists some affine neighbourhood $V$ of $\g n$ in $G$ and functions   $\boldsymbol y\in R[V]$ inducing $\boldsymbol y_{\g n}$  such that  $\{\pi,y_1,\ldots,y_c\}$ is $R[V]$-regular. Consequently,  if $G'|_V$ stands for the inverse image of $V$ in $G'$, we have 
\begin{equation}\label{12.08.2015--5}
R[G'|_V]\simeq\frac{R[V] [\boldsymbol\eta]}{(\pi\boldsymbol\eta-\boldsymbol y )}
\end{equation}
as $R[V]$-algebras. 
In particular, $V\cap H$ is the source of a section  
$\sigma: H_0\cap V\to G'_k$ defined on the level of rings by $\boldsymbol\eta\mapsto\boldsymbol0$. 

Now, $\gamma'^{\#}$ fits into a diagram 
\[
\xymatrix{
R[G']_{\g m} \ar[rr]^-{\gamma'^{\#}} && R[G']_{\g m}\ot_RR[G']_{\g n} \\ R[G]_{\g m}\ar[rr]_-{\gamma^\#}\ar[u]&&R[G]_{\g m}\ot_RR[G]_{\g n}.\ar[u] 
}\]
Then, from eqs. \eqref{12.08.2015--1} and \eqref{12.08.2015--4}, we have  
\[
\pi\gamma'^{\#}(\xi_j)=\pi\sum_i a_{ij}\xi_{i}\ot a_{ij}'+\pi\sum_ib_{ij}'\ot b_{ij}\eta_i+\pi s_j,
\]
so that 
\begin{equation}\label{12.08.2015--3}
\gamma'^{\#}(\xi_j)=\sum_i a_{ij}\xi_{i}\ot a_{ij}'+\sum_ib_{ij}'\ot b_{ij}\eta_i+  s_j.
\end{equation}
Consequently,   eqs. \eqref{12.08.2015--2} and \eqref{12.08.2015--5} show  that 
\begin{equation}\label{12.08.2015--7} \gamma'^{\#}(\xi_j)\equiv\sum_i \varepsilon(a_{ij})\xi_i\ot a_{ij}'  \mod\, (\pi)+\langle\g a\ot_R1\rangle+\langle 1\ot_R(\boldsymbol \eta) \rangle   .
\end{equation}
Now  we write diagram \eqref{12.08.2015--8} on the level of rings and complete :  
\[\xymatrix{&&&&k[G']_{\g m}\ar @/_1pc/@{-->}[lllld]\ar@/^1pc/@{-->>}[ddl]
\\
 k[G']_{\g m}\ot k[G']_{\g n}\ar@{-->>}[d] &\ar [l] \ar@{->>}[d] k[G']\ot k[G']&&\ar[ll]_-{\gamma_k'^{\#}} k[G'] \ar@{->>}[d]    \ar@{-->}[ru]
\\ 
k[F]\ot k[G']_{\g n}\ar[d]_{\text{kill $1\ot\boldsymbol\eta$}}    & \ar[l] \ar[d]_{\mathrm{id}\ot\sigma^\#}k[F]\ot k[G'] &&\ar[ll] k[F] \ar[lld]^{\alpha^\#}
\\
k[F]\ot k[H_0]_{\g n}   &\ar[l] k[F]\ot k[ H_0\cap V] .&&    }
\]
It then follows from eq. \eqref{12.08.2015--7} that the image of $\xi_j$ via  
\[k[F]\stackrel{\alpha^\#}{\aro}k[F]\ot k[H_0\cap V]\aro  k[F]\ot k[H_0]_{\g n}\]
 is $\sum_i \varepsilon(a_{ij})\xi_i\ot a_{ij}'$.
 By definition, this is the co-normal representation $\nu$. 
\end{proof}

\begin{question}Is a regularly immersed closed affine algebraic subgroup scheme cut out by a regular sequence?
\end{question}

\begin{rmk}Proposition \ref{12.08.2015--9} is  mentioned on p. 554 of \cite{WW}. 
\end{rmk}

\subsection{The standard blowup sequence}\label{24.06.2015--3}

As remarked in \cite{WW}, Neron blowups allow us to decompose \emph{any} morphism of group schemes which   is an isomorphism on generic fibres. 

\begin{thm}[cf. Theorem 1.4 of \cite{WW}]\label{26.09.2014--6}Let $\rho:\mathcal G\to G$ be an arrow of $\mathbf{FGSch}/R$ which is an isomorphism on generic fibres. Write $\rho_0=\rho$ and $G_0=G$. Let $\rho_n:\mathcal G\to G_n$ be constructed and define $G_{n+1}$, respectively $\rho_{n+1}$, as the  blowup of $\mathrm{Im}(\rho_n\ot k)$, respectively the natural morphism $\mathcal G\to G_{n+1}$. Then $\varprojlim_n\rho_n:\mathcal G\to \varprojlim G_n$ is an isomorphism.  

\end{thm}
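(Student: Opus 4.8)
The plan is to pass to coordinate rings and realise the entire construction as an increasing chain of subrings of the single ring $L:=K[\mathcal G]$. First I would record that each $\rho_n$ is again an isomorphism on generic fibres: this holds for $\rho_0$ by hypothesis, a Neron blowup $G_{n+1}\to G_n$ is generically an isomorphism by construction, and $\rho_{n+1}$ followed by $G_{n+1}\to G_n$ equals $\rho_n$, so the property propagates by induction. Since $\mathcal G$ and every $G_n$ are $R$-flat, the rings $R[\mathcal G]$ and $R[G_n]$ are $\pi$-torsion free and embed into their generic fibres; using the generic isomorphisms to identify all these generic fibres with $L$, I may regard $M:=R[\mathcal G]$ and $N_n:=R[G_n]$ as subrings of $L$. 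The factoring morphisms $\rho_n\colon\mathcal G\to G_n$ give inclusions $N_n\subseteq M$, while the maps $G_{n+1}\to G_n$ give $N_0\subseteq N_1\subseteq N_2\subseteq\cdots\subseteq M$, and moreover $N_0[\pi^{-1}]=K[G]=L$. Because $\varprojlim_n G_n$ has coordinate ring $\varinjlim_n N_n=\bigcup_n N_n$, proving the theorem amounts to showing $\bigcup_n N_n=M$ inside $L$; one inclusion being clear, the content is $M\subseteq\bigcup_n N_n$.

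Next I would compute the centre of the $(n+1)$-st blowup. The subgroup $\mathrm{Im}(\rho_n\ot k)\subseteq(G_n)_k$ is, by definition, cut out by $\ker\bigl(k[G_n]\to k[\mathcal G]\bigr)$, a map which in the present notation is induced by the inclusion $N_n\subseteq M$ after reduction modulo $\pi$. A direct check then gives that the corresponding ideal $J_n\subseteq N_n$ (the one containing $\pi$ and cutting out the image on the special fibre) equals $N_n\cap\pi M$, whence, using that $M$ is $\pi$-torsion free,
\[
\pi^{-1}J_n=\pi^{-1}(N_n\cap\pi M)=\pi^{-1}N_n\cap M.
\]
By the definition of the Neron blowup, $N_{n+1}=R[G_{n+1}]$ is the subring of $L$ generated by $N_n$ together with $\pi^{-1}J_n$; in particular, as $R$-modules,
\[
N_{n+1}\supseteq\pi^{-1}N_n\cap M.
\]

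The exhaustion is then a formal consequence. I claim $N_n\supseteq\pi^{-n}N_0\cap M$ for all $n$, by induction: the case $n=0$ is $N_0=N_0\cap M$, and if the inclusion holds at stage $n$ then, since $M\subseteq\pi^{-1}M$,
\[
N_{n+1}\supseteq\pi^{-1}N_n\cap M\supseteq\pi^{-1}(\pi^{-n}N_0\cap M)\cap M=\pi^{-(n+1)}N_0\cap M.
\]
Taking the union over $n$ and using that intersection with $M$ commutes with the increasing union $\bigcup_n\pi^{-n}N_0=N_0[\pi^{-1}]=L$, I would obtain
\[
\bigcup_n N_n\supseteq\bigcup_n\bigl(\pi^{-n}N_0\cap M\bigr)=L\cap M=M,
\]
which is the desired inclusion. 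Hence $\bigcup_n N_n=M$ and $\varprojlim_n\rho_n$ is an isomorphism.

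I expect the only genuinely delicate points to be the bookkeeping in the first paragraph—verifying that the tower stays inside $\mathbf{FGSch}/R$ (flatness of each blowup, and that each $\mathrm{Im}(\rho_n\ot k)$ is a \emph{closed subgroup scheme} so that the blowup is defined and the result is again a Hopf algebra) and that generic isomorphy is preserved—together with the identification $J_n=N_n\cap\pi M$ of the blow-up centre. Once these are in place, the exhaustion argument is purely module-theoretic and short, the crucial input being simply that $N_0$ already generates all of $L$ after inverting $\pi$.
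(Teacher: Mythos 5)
Your proposal is correct and is essentially the paper's own argument: both identify all rings as subrings of $K[\mathcal G]$, observe that the centre of the $(n+1)$-st blowup is cut out by $R[G_n]\cap\pi R[\mathcal G]$, prove by induction that $\pi^{-n}R[G_0]\cap R[\mathcal G]\subset R[G_n]$, and conclude by taking the union. The only cosmetic difference is that you package the induction step in the module-theoretic identity $N_{n+1}\supseteq\pi^{-1}N_n\cap M$ where the paper chases a single element $f_0$.
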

\begin{proof}For the convenience of the reader and to avoid the hypothesis of finite generation present in \cite{WW}, we summarize the proof.

We prove by induction that $\pi^{-n}R[G_0]\cap R[\mathcal G]\subset R[G_n] $. As the ideal of $R[G_0]$ cutting out $\mathrm{Im}(\rho_0\ot k)$ is $R[G_0]\cap\pi R[\mathcal G]$, the case $n=1$ is readily proved. We now assume that $\pi^{-n}R[G_0]\cap R[\mathcal G]\subset R[G_n]$. Let $f_0\in R[G_0]$ be such that $\pi^{-n-1}f_0$ lies in $R[\mathcal G]$. Then $\pi^{-n}f_0$ belongs to $\pi R[\mathcal G]$. On the other hand, the inclusion $\pi^{-n}R[G_0]\cap R[\mathcal G]\subset R[G_n]$ forces $\pi^{-n}f_0$ to be in $R[G_n]$, so that $\pi^{-n}f_0$ belongs to $R[G_n]\cap \pi R[\mathcal G]$. The latter ideal cuts out $\mathrm{Im}(\rho_n\ot k)$, and therefore $\pi^{-1}\pi^{-n}f_0$ lies in $R[G_{n+1}]$.

Using the above inclusion, we arrive at $R[\mathcal G]\subset \cup R[G_n]$. Since each $R[G_n]$ is contained in $R[\mathcal G]$, the proof of finished.
\end{proof}
\begin{defn}\label{19.11.2014--1}The sequence of morphism described in the statement of Theorem \ref{26.09.2014--6} is  called the standard blowup sequence associated to $\mathcal G\to G$. If context prevents any misunderstanding, we simply speak of the standard sequence of $\mathcal G\to G$.
\end{defn}

The concept of standard sequence also allows the following point of view \cite[p.552, Remarks]{WW}. 
\begin{defn}\label{03.02.2015--2}A diagram 
\[
\cdots\stackrel{}{\longrightarrow}G_{n+1}\stackrel{\rho_n}{\longrightarrow}\cdots\stackrel{\rho_0}{\longrightarrow}G_0
\]
in $\textbf{FGSch}/R$ is a standard sequence  if \begin{enumerate}\item each arrow $\rho_n$ is a blowup of some closed subgroup $B_n\subset G_n\ot k$ and \item  the induced morphism  $B_{n+1}\to B_n$ is faithfully flat.
\end{enumerate} 
\end{defn}
The overburdening of the term ``standard sequence'' caused by Definitions \ref{19.11.2014--1} and Definition \ref{03.02.2015--2} is ephemeral in view of:   
\begin{lem}\label{03.03.2015--1}Let 
\[
\cdots\stackrel{}{\longrightarrow}G_{n+1}\stackrel{\rho_n}{\longrightarrow}\cdots\stackrel{\rho_0}{\longrightarrow}G_0
\] be a standard sequence as in Definition \ref{03.02.2015--2}. Then $\varprojlim_nG_n\to G_0$ is an isomorphism on generic fibres and its standard blowup sequence is the above diagram.\qed
\end{lem}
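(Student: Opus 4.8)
The plan is to treat the generic fibre first and then to identify the standard blowup sequence by an induction that rests on a single schematic-image computation.

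Since each $\rho_n$ is a Neron blowup we may regard $R[G_n]$ as a subring of $R[G_{n+1}]$, and all of them as subrings of $K[G_0]$ via the generic isomorphisms. Writing $\mathcal G=\varprojlim_n G_n$, we then have $R[\mathcal G]=\varinjlim_n R[G_n]=\bigcup_n R[G_n]\subset K[G_0]$, which is $R$-torsion free and hence $R$-flat (the Hopf structure passing to the colimit), so that $\mathcal G\in\mathbf{FGSch}/R$. Tensoring the colimit with $K$, which commutes with colimits, and using that each $K[G_n]\to K[G_{n+1}]$ is an isomorphism, gives $K[\mathcal G]=\varinjlim_n K[G_n]=K[G_0]$; this is the asserted isomorphism on generic fibres.

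Let $\rho_n:\mathcal G\to G_n$ denote the canonical projection. To prove that the standard blowup sequence of $\mathcal G\to G_0$ is the given diagram, I argue by induction that the $n$-th term of the former is $G_n$ and that the $n$-th structural map is $\rho_n$; granting this, the $(n+1)$-st term is by construction the blowup of $\mathrm{Im}(\rho_n\ot k)$, so the whole matter reduces to the identity
\[
\mathrm{Im}(\rho_n\ot k)=B_n\qquad(n\ge0),
\]
for the blowup of $B_n$ is, by Definition \ref{03.02.2015--2}(1), exactly $\rho_n:G_{n+1}\to G_n$. On the level of rings, $\mathrm{Im}(\rho_n\ot k)$ is cut out in $k[G_n]$ by $\ker(k[G_n]\to k[\mathcal G])$. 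Because $k[\mathcal G]=\varinjlim_m k[G_m]$ is a filtered colimit, this kernel equals $\bigcup_{m>n}\ker(k[G_n]\to k[G_m])$, so it suffices to prove
\[
\mathrm{Im}(G_m\ot k\to G_n\ot k)=B_n\qquad(m>n).
\]

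This last equality is the heart of the argument and is where hypothesis (2) of Definition \ref{03.02.2015--2} enters. The inclusion $\mathrm{Im}(G_m\ot k\to G_n\ot k)\subseteq B_n$ is immediate, since $G_{n+1}\ot k\to G_n\ot k$ factors through $B_n$ by the very definition of the blowup. For the reverse inclusion I proceed by induction on $m-n$. When $m=n+1$, the map $\rho_n\ot k$ factors as $q_n:G_{n+1}\ot k\to B_n$; restricting $q_n$ to the closed subscheme $B_{n+1}\subseteq G_{n+1}\ot k$ recovers the induced morphism $B_{n+1}\to B_n$, which is faithfully flat and hence schematically dominant. Since the schematic image of a morphism contains that of its restriction to a closed subscheme, $\mathrm{Im}(q_n)\supseteq\mathrm{Im}(B_{n+1}\to B_n)=B_n$, whence $\mathrm{Im}(q_n)=B_n$. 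For $m-n\ge2$ one factors $G_m\ot k\to G_n\ot k$ through $G_{n+1}\ot k$; the induction hypothesis gives $\mathrm{Im}(G_m\ot k\to G_{n+1}\ot k)=B_{n+1}$, and composing with $q_n$ and invoking once more the faithful flatness of $B_{n+1}\to B_n$ yields $\mathrm{Im}(G_m\ot k\to G_n\ot k)=B_n$. Combining the two displays gives $\mathrm{Im}(\rho_n\ot k)=B_n$ for every $n$, closing the induction that identifies the standard blowup sequence with the given one. The single delicate point is the schematic dominance of $q_n:G_{n+1}\ot k\to B_n$: without regularity of the centre one cannot appeal to Theorem \ref{11.02.2015--1}, and it is precisely the faithful flatness of $B_{n+1}\to B_n$ — together with the observation that $B_{n+1}$ sits inside $G_{n+1}\ot k$ and maps onto $B_n$ — that supplies it.
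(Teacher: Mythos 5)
Your proof is correct. The paper in fact states this lemma without any proof (it is marked with a qed symbol immediately after the statement), so there is nothing to compare against; your argument supplies exactly the verification the authors treat as routine. Both halves are sound: the generic-fibre claim follows, as you say, from writing $R[\varprojlim_n G_n]=\bigcup_n R[G_n]\subset K[G_0]$, and the identification of the standard blowup sequence reduces to the equality $\mathrm{Im}\bigl(\varprojlim_m G_m\ot k\to G_n\ot k\bigr)=B_n$, which your induction on $m-n$ establishes using precisely the faithful flatness of $B_{n+1}\to B_n$ required in Definition \ref{03.02.2015--2}(2) --- the hypothesis that exists for exactly this purpose, as you correctly observe in contrasting it with the regularity needed for Theorem \ref{11.02.2015--1}. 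The only point you leave implicit is that the structural arrow $\mathcal G\to G_{n+1}$ produced in the construction of Theorem \ref{26.09.2014--6} coincides with the canonical projection from the limit; this is immediate from the uniqueness clause of the universal property (Lemma \ref{09.04.2015--1}), since both morphisms lie over the projection $\mathcal G\to G_n$, so your induction does close.
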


We shall apply Theorem \ref{26.09.2014--6} to 
give a description of   affine group schemes 
over $R$, see Theorem \ref{19.11.2014--1b} 
below. 
First we 
give a definition.
\begin{defn}\label{21.04.2015--1}
Let $H'$ be a flat Hopf algebra over $R$. A Hopf subalgebra $H$ of $H'$ is an $R$-submodule equipped with a Hopf algebra structure such that the inclusion $H\to H'$ is a homomorphism of Hopf algebras. We say that $H$ is \emph{saturated} in $H'$ if $H'/H$ is flat as an $R$-module.
\end{defn}
\begin{rmks} \label{21.04.2015--2}\begin{enumerate}
\item The image of a Hopf algebra homomorphism is a Hopf subalgebra (of the target).
\item One can always saturate a Hopf subalgebra. Adopting the notations explained at the end of Section \ref{05.10.2016--2} and  Definition \ref{21.04.2015--1}, we put  
\[
\Delta_{H^{\rm sat}}(h):=\pi^{-m}\Delta_{H}(\pi^mh),\quad \text{if $\pi^mh\in H$}.
\]
It  can be extracted from the proof of Lemma 3.1.2 in \cite{DH} that this is a coproduct for $H^{\rm sat}$.
\item Some authors define Hopf subalgebras as our saturated Hopf subalgebras. But Neron blowups justify our definition, see also Theorem \ref{19.11.2014--1b}. 
\end{enumerate}
\end{rmks}

\begin{thm}\label{19.11.2014--1b}
Let $G$ be a flat   group scheme over   $R$. Then we can present it as the the limit of a pro-system of flat   group schemes
\[G:=\varprojlim_i G_{i},\]
in which all morphisms are faithfully flat and the generic fiber of each $G_i$ is of finite type over $K$. Further, each $G_i$ can be obtained from a flat group scheme of finite type by a standard sequence.
\end{thm}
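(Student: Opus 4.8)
The plan is to push everything down to the generic fibre, where the classical theory of affine group schemes over a field applies, and then to transport the resulting finitely generated Hopf subalgebras back to \emph{saturated} Hopf subalgebras of $R[G]$; saturation is precisely the device that will force the structure maps to be faithfully flat. First I would write the Hopf algebra $K[G]$ as the filtered union $K[G]=\varinjlim_i A_i$ of its finitely generated Hopf subalgebras over $K$, a directed system because the Hopf subalgebra generated by two finitely generated ones is again finitely generated. Dually $G_K=\varprojlim_i\mathrm{Spec}\,A_i$ with all transition morphisms faithfully flat, by the field case \cite[Ch. 14]{waterhouse}. For each index set
\[
R[G_i]:=R[G]\cap A_i\subset K[G],\qquad G_i:=\mathrm{Spec}\,R[G_i].
\]
Since $A_i$ is a $K$-subspace, if $\pi^mh\in R[G_i]$ with $h\in R[G]$ then $h\in A_i$, whence $h\in R[G_i]$; thus $R[G_i]$ is saturated in $R[G]$ in the sense of Definition \ref{21.04.2015--1}, and in particular $R$-flat (torsion-free over a DVR). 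The same computation yields $R[G_i]\ot_RK=A_i$, so the generic fibre of $G_i$ is the finite type group $\mathrm{Spec}\,A_i$.

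The one point requiring an argument is that $R[G_i]$ is a Hopf subalgebra. Here I would use that, $R[G_i]$ being saturated, the inclusion $R[G_i]\ot_RR[G_i]\to R[G]\ot_RR[G]$ again has flat cokernel (a short computation filtering the quotient and using that the successive pieces are flat), so its image equals $(R[G]\ot_RR[G])\cap(A_i\ot_KA_i)$ inside $K[G]\ot_KK[G]$. For $h\in R[G_i]$ the element $\Delta(h)$ lies in $R[G]\ot_RR[G]$ (as $h\in R[G]$) and in $A_i\ot_KA_i$ (as $h\in A_i$), hence in $R[G_i]\ot_RR[G_i]$; the same applies to the antipode and counit. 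The exhaustion $\bigcup_iR[G_i]=R[G]$ is then immediate, since any $f\in R[G]\subset K[G]$ already lies in some $A_i$, so $R[G]=\varinjlim_iR[G_i]$ as Hopf algebras and $G=\varprojlim_iG_i$.

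For the faithful flatness of the structure maps $G\to G_i$ and the transition maps $G_j\to G_i$ ($i\le j$), I would argue on fibres. The quotients $R[G_j]/R[G_i]$ and $R[G]/R[G_i]$ are $R$-flat (the same saturation computation shows $R[G_i]$ is saturated in $R[G_j]$), so reduction modulo $\pi$ preserves injectivity; on each fibre one is left with an inclusion of Hopf algebras over a field, which is faithfully flat by the classical theorem. Combined with the $R$-flatness of $R[G]$, this gives faithful flatness over $R$; this is exactly the type of faithful flatness statement for flat Hopf algebras over saturated Hopf subalgebras established in \cite{DH}. To realize each $G_i$ as a standard sequence, I would choose, by a routine spreading-out argument, a finitely generated flat $R$-Hopf-subalgebra $B_i\subseteq R[G_i]$ with $B_i\ot_RK=A_i$, and set $\mathcal B_i:=\mathrm{Spec}\,B_i$, a flat group scheme of \emph{finite type}. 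The induced morphism $G_i\to\mathcal B_i$ lies in $\mathbf{FGSch}/R$ and is an isomorphism on generic fibres, so Theorem \ref{26.09.2014--6} identifies $G_i$ with the limit of the standard blowup sequence of $G_i\to\mathcal B_i$; this is precisely the assertion that $G_i$ is obtained from the finite type group $\mathcal B_i$ by a standard sequence.

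The step I expect to be the main obstacle is the faithful flatness of the structure and transition maps. Over a field this is automatic, but over the DVR $R$ a morphism onto a schematic image need not be faithfully flat at all (this is the very phenomenon that motivates Section \ref{06.03.2015--2}), and it is only the saturation $R[G_i]=R[G]\cap A_i$ that repairs this. Making the passage from the two fibrewise statements to flatness over $R$ rigorous is delicate because $R[G_i]$ and $R[G]$ are in general not Noetherian, so the usual fibrewise criterion does not apply verbatim and one must reduce to the finitely generated pieces $B_i$ and pass to the colimit, as in \cite{DH}. The auxiliary spreading-out producing the finite type Hopf model $\mathcal B_i$ is comparatively routine.
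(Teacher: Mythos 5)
Your construction up to the faithful-flatness discussion is sound, and it genuinely differs from the paper's route: you descend from the generic fibre, writing $K[G]=\varinjlim_i A_i$ and setting $R[G_i]=R[G]\cap A_i$, whereas the paper ascends from finite sub-comodules of $R[G]$. Your intersection argument is correct (the key point, that $R[G_i]\otimes_R R[G_i]$ is saturated in $R[G]\otimes_R R[G]$ because the quotient is an extension of the flat modules $Q\otimes_R R[G]$ and $R[G_i]\otimes_R Q$ with $Q=R[G]/R[G_i]$, does work), and deferring the faithful flatness of $G\to G_i$ and $G_j\to G_i$ to the saturated-Hopf-subalgebra theorem of \cite{DH} is legitimate --- indeed the paper itself never reproves that clause and relies on the same source.

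The genuine gap is the step you label routine: producing a finitely generated flat $R$-Hopf subalgebra $B_i\subseteq R[G_i]$ with $B_i\otimes_RK=A_i$. This is not a spreading-out statement in the EGA sense, and the obvious attempt fails: one can scale finitely many $K$-algebra generators of $A_i$ into $R[G_i]$ and take the $R$-subalgebra $C$ they generate (together with their antipodes), but nothing forces $\Delta(C)\subseteq C\otimes_R C$ --- one only knows $\Delta(a_j)\in A_i\otimes_K A_i$, and the tensor components need not lie in $C$. Over a field this is repaired by local finiteness of comodules; over $R$ the corresponding tool is Serre's theorem (\cite{serre}) that every element of $R[G_i]$, viewed as a comodule over itself, lies in a sub-comodule $V$ that is finite (hence free) over $R$, after which $B_i:=\mathrm{Im}\bigl(R[\mathbf{GL}(V)]\to R[G_i]\bigr)$ is a finitely generated Hopf subalgebra containing $V$. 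But this is precisely the engine of the paper's proof: the paper starts from such a $V\subseteq R[G]$, forms $R[G(V)]$, and only then saturates to obtain $G_i$, so that the finite-type seed comes first and the limit group second; your route goes the other way and then must recover the seed, which is exactly the hard part. Once you import Serre's theorem to fill this step, the two arguments essentially merge (your system $\{R[G]\cap A_i\}$ and the paper's $\{R[G(V)]^{\mathrm{sat}}\}$ are cofinal in one another). Relatedly, you single out faithful flatness as the main obstacle; in fact that part is covered by \cite{DH}, and the construction of $B_i$ is where the real content of Theorem \ref{19.11.2014--1b} lies.
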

\begin{proof} Consider $R[G]$ as a (right) 
regular representation of $G$ (i.e. as comodule 
on itself by means of the coproduct).
According to Serre \cite{serre}, $R[G]$ is the 
union of its sub-representations, which are 
finite as $R$-modules. Let $V$ be such a sub-representation (note that $V$ is free over $R$). Then $V$ 
induces a Hopf algebra homomorphism 
$R[GL(V)]\to R[G]$, the image of which is a 
Hopf subalgebra of $R[G]$, denoted by 
$R[G(V)]$. Notice that $V$ is a subset of 
$R[G(V)]$. Thus $R[G]$ is a union of its Hopf 
subalgebras $R[G(V)]$. One can take the $G_i$ 
in Theorem to be the saturation of the 
$R[G(V)]$ as $V$ runs in the set of finite sub-representations of $R[G]$.

Now by means of Theorem \ref{26.09.2014--6}, the saturation $R[G(V)]^\text{sat}$ is obtained from $R[G(V)]$ by iterated Neron blowups.
\end{proof}

\subsection{Study of a particular case}\label{24.06.2015--4}
Examples \ref{10.07.2014--1} and \ref{07.10.2014--4} are a particular case of the following process. Let $G$ be a flat group scheme over $R$.

\begin{defn}\label{22.08.2014--2}  The automatic blowup \[N\aro G\] of the identity is the limit of the diagram 
\[\cdots \longrightarrow G_n\longrightarrow\cdots\longrightarrow G_0\] 
where $G_0=G$ and $G_{n+1}\to G_n$ is the Neron blowup of $\{e\}\subset G_{n}\ot k$.
\end{defn}

\begin{rmk}The construction put forth in the above definition will be generalized in Section \ref{05.02.2015--2} below. 
\end{rmk}

\begin{prop}\label{05.02.2015--1}Write $\g a$ for the augmentation ideal of $R[G]$. Then, the algebra $R[N]$ of the automatic blowup of the identity is the $R[G]$-subalgebra of $K[G]$ generated by $\cup_n\pi^{-n}\g a$. The latter union also generates the augmentation ideal of $R[N]$. 
\end{prop}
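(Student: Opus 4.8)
The plan is to unwind the iterative definition of the automatic blowup and track the augmentation ideals through each stage, exploiting the fact that at every step we blow up the identity $\{e\}\subset G_n\ot k$, whose ideal is precisely $\g m_n=(\pi,\g a_n)$, where $\g a_n$ denotes the augmentation ideal of $R[G_n]$. First I would observe that, by the very definition of the Neron blowup, $R[G_{n+1}]$ is the $R[G_n]$-subalgebra of $K[G_n]=K[G]$ generated by $\pi^{-1}\g m_n=\pi^{-1}(\pi,\g a_n)=R[G_n]+\pi^{-1}\g a_n$. Hence $R[G_{n+1}]$ is generated over $R[G_n]$ by $\pi^{-1}\g a_n$. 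The key computation is then to identify the augmentation ideal $\g a_{n+1}$ of $R[G_{n+1}]$: since the counit $\ep$ extends to $K[G]$ and vanishes on $\g a_n$, it vanishes on $\pi^{-1}\g a_n$ as well, so $\pi^{-1}\g a_n\subset \g a_{n+1}$. I would check conversely that $\g a_{n+1}$ is generated, as an ideal of $R[G_{n+1}]$, by the new generators $\pi^{-1}\g a_n$ together with $\g a_n$ itself, i.e. $\g a_{n+1}=\langle \pi^{-1}\g a_n\rangle$ inside $R[G_{n+1}]$.

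Next I would set up the induction to show $\g a_n\supset\pi^{-n}\g a$ and, more usefully, that the $R[G_n]$-subalgebra structure telescopes: since $R[G_{n+1}]$ is generated over $R[G_n]$ by $\pi^{-1}\g a_n$ and $\g a_n$ is in turn generated over $R[G_{n-1}]$ by $\pi^{-1}\g a_{n-1}$, an easy induction gives that $R[G_n]$ is the $R[G]$-subalgebra of $K[G]$ generated by $\pi^{-1}\g a,\pi^{-2}\g a,\ldots,\pi^{-n}\g a$, equivalently by $\pi^{-n}\g a$ alone together with $R[G]$ (as $\pi^{-j}\g a\subset\pi^{-n}\g a$ for $j\le n$). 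Passing to the colimit $R[N]=\varinjlim_n R[G_n]=\cup_n R[G_n]$, this shows $R[N]$ is the $R[G]$-subalgebra of $K[G]$ generated by $\cup_n\pi^{-n}\g a$, which is the first assertion.

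For the second assertion, that the same union generates the augmentation ideal $\g a_N$ of $R[N]$, I would argue that $\g a_N=\cup_n\g a_n$ (the counit on $R[N]$ restricts to the counit on each $R[G_n]$), and then combine $\g a_n=\langle\pi^{-1}\g a_{n-1}\rangle$ with the telescoping above to see that each $\g a_n$ is generated over $R[G_n]$ by $\pi^{-n}\g a$; taking the union, $\g a_N$ is the ideal of $R[N]$ generated by $\cup_n\pi^{-n}\g a$. The main obstacle I anticipate is the careful bookkeeping at the inductive step: I must verify that adjoining $\pi^{-1}\g a_n$ really does produce \emph{exactly} $\g a_{n+1}$ and not merely a subideal, which amounts to checking that no element of $\g a_{n+1}$ escapes the ideal generated by the new degree-shifted generators — here the flatness of the $G_n$ and the explicit description of the blowup ring (so that $\ep$ behaves well and there is no hidden $\pi$-torsion) will be what makes the two inclusions match. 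Everything else is routine manipulation of ideals inside $K[G]$.
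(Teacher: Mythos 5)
Your proposal is correct and follows essentially the same route as the paper's proof: both proceed by induction, establishing $R[G_n]=R[G]\bigl[\pi^{-n}\g a\bigr]$ and $\g a_n=(\pi^{-n}\g a)$ via the observation that adjoining $\pi^{-1}$ times an ideal is the same as adjoining $\pi^{-1}$ times a generating set, and then pass to the union over $n$. The step you flag as the main obstacle, namely $\g a_{n+1}=\langle\pi^{-1}\g a_n\rangle$, is exactly the fact the paper records as ``easily proved,'' and your worry about hidden $\pi$-torsion is unnecessary since everything lives inside $K[G]$.
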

\begin{proof}
We let $G_n$ be as in Definition \ref{22.08.2014--2} and write $\g a_n$   for the augmentation ideal of $R[G_n]$. It is easily proved that $(\pi^{-1}\g a_n)=\g a_{n+1}$. Let us now show by induction that $R[G_0][\pi^{-n}\g a_0]=R[G_n]$ and $\g a_n=(\pi^{-n}\g a_0)$. As there is nothing to be checked for $n=0$, we assume that $R[G_0][\pi^{-n}\g a_0]=R[G_n]$ and $\g a_n=(\pi^{-n}\g a_0)$. Now, if $A\subset K[G]$ is any $R$-algebra and $S$ is any subset of $A$ generating an ideal $\g S$, it is clear that $A[\pi^{-1}\g S]=A[\pi^{-1}S]$. Hence, $R[G_{n+1}]=R[G_n][\pi^{-1}\g a_n]$ is just $R[G_{n}][\pi^{-n-1}\g a_0]$, which equals $R[G_0][\pi^{-n-1}\g a_0]$. This proves the first claim. As $\g a_{n+1}=(\pi^{-1}\g a_n)$ and $\g a_n=(\pi^{-n}\g a_0)$, we conclude that $\g a_{n+1}=(\pi^{-n-1}\g a_0)$, and the second claim follows as well.  
\end{proof}

Some noteworthy properties present in the examples remain valid. Others easily come forward. The next statement employs the notion of fibre product of rings, cf. \cite[Section 1]{ferrand}.

\begin{cor}\label{06.02.2015--2}If $\varepsilon_K:K[G]\to K$ stands for the  co-identity, then the first projection 
\[
K[G]\times_{\varepsilon_K, K}R\aro K[G]
\]
induces an isomorphism onto $R[N]$. Geometrically, we have an isomorphism of $R$-schemes 
\[
N\simeq G_K\underset{e,\,\mathrm{Spec}\,K}{\coprod}\mathrm{Spec}\,R. 
\]
\end{cor}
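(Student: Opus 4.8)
The plan is to combine the explicit description of $R[N]$ furnished by Proposition \ref{05.02.2015--1} with the elementary fact that, over a DVR, inverting $\pi$ on the augmentation ideal recovers its full generic fibre. I would first unwind the fibre product on the ring side, then prove a single equality of subrings of $K[G]$, and finally translate this into the geometric assertion.

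First I would set up notation. Writing $\g a=\ker(\varepsilon:R[G]\to R)$ and $\g a_K=\ker(\varepsilon_K:K[G]\to K)$, the counits split to give $R$-module decompositions $R[G]=R\oplus\g a$ and $K[G]=K\oplus\g a_K$. With these, the first projection $K[G]\times_{\varepsilon_K,K}R\to K[G]$, $(f,r)\mapsto f$, is injective (because $r=\varepsilon_K(f)$ is recovered from $f$) and has image $\{f\in K[G]:\varepsilon_K(f)\in R\}=R\oplus\g a_K$. Thus the content of the first assertion is exactly the identity of subrings $R[N]=R\oplus\g a_K$ inside $K[G]$.

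The heart of the argument is then this identity. Since $R$ is a DVR we have $K=\cup_n\pi^{-n}R$, and as $\g a$ is torsion-free it follows that $\g a_K=\g a\ot_RK=\cup_n\pi^{-n}\g a$; by Proposition \ref{05.02.2015--1} this union is precisely the generating set of $R[N]$ over $R[G]$, so $R[N]=R[G][\g a_K]$. Now $\g a_K$ is an \emph{ideal} of $K[G]$, so the $R$-submodule $R\oplus\g a_K$ is stable under multiplication and is therefore already a subring of $K[G]$; containing both $R[G]=R\oplus\g a$ and $\g a_K$, it must contain $R[G][\g a_K]=R[N]$, while the reverse inclusion is clear. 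Hence $R[N]=R\oplus\g a_K$, which establishes the ring isomorphism.

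Finally, the geometric statement is the translation of this isomorphism into the language of schemes. By the theory of fibre products of rings recalled in \cite[Section 1]{ferrand}, the ring $K[G]\times_{\varepsilon_K,K}R$ is the coordinate ring of the pushout of $G_K=\mathrm{Spec}\,K[G]$ and $\mathrm{Spec}\,R$ along their two maps to $\mathrm{Spec}\,K$, namely the identity section $e$ on one side and the generic point $\mathrm{Spec}\,K\to\mathrm{Spec}\,R$ on the other; applying $\mathrm{Spec}$ to the isomorphism above yields $N\simeq G_K\amalg_{e,\mathrm{Spec}\,K}\mathrm{Spec}\,R$. I expect the only delicate point to be this last step, where one must confirm that Ferrand's gluing hypotheses are met (the identity section being a closed immersion and $\mathrm{Spec}\,K\to\mathrm{Spec}\,R$ the complementary open), but this is exactly the configuration treated there, so no genuine obstacle survives once the algebraic identity is in place.
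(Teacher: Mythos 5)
Your proof is correct and follows essentially the same route as the paper's: both identify the image of the first projection with the subring $\{f\in K[G]:\varepsilon_K(f)\in R\}$, use Proposition \ref{05.02.2015--1} (via the identity $\mathrm{Ker}\,\varepsilon_K=\cup_n\pi^{-n}\g a$) to show this subring equals $R[N]$, and then invoke Ferrand's theorem to convert the cartesian square of rings into a co-cartesian square of schemes. Your write-up merely makes explicit, via the decomposition $R\oplus\g a_K$ and the ideal property of $\g a_K$, the inclusion that the paper dismisses as ``equally clear.''
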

\begin{proof}It is clear that the first projection is an isomorphism onto the subalgebra $A:=\{f\in K[G]\,:\,\varepsilon_K(f)\in R\}$. Equally clear is the inclusion $R[N]\subset A$, so that we are left with the verification of $A\subset R[N]$. 
If $f\in A$, then $f=c+f'$ with $c\in R$ and $\varepsilon_K(f')=0$; this means that $f'=\pi^{-m}f''$ with $f''$ in the augmentation ideal of $R[G]$. By Proposition \ref{05.02.2015--1}, $f'\in R[N]$ and   $A\subset R[N]$ is verified.

The proof of the final statement follows directly from the first and from the fact that the functor $\mathrm{Spec}$ sends the cartesian diagram of rings in sight into a co-cartesian diagrams of schemes \cite[Theorem 5.1, p. 568]{ferrand}. 
\end{proof}

\begin{cor}\label{06.02.2015--1}Let $\mathcal G\to G$ be a morphism of flat group schemes over $R$ which induces an isomorphism on generic fibres.  Then there exists a unique  arrow $N\to G$ rendering commutative  the diagram 
\[\xymatrix{  N\ar[r]\ar[d]&\mathcal G\ar[dl] \\ G.& }\]
Said differently, $N\to G$ is an initial object in the category of flat group schemes $\mathcal G\to G$ over $G$ which induce an isomorphism on generic fibres. 
\end{cor}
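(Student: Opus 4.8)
The plan is to build the arrow $N\to\mathcal G$ directly on the level of coordinate rings, using the concrete description of $R[N]$ furnished by Corollary \ref{06.02.2015--2}. One cannot invoke the universal property of the limit defining $N$, since that only produces arrows \emph{into} $N$, whereas here the desired arrow points \emph{out} of $N$; so the explicit realisation of $R[N]$ inside $K[G]$ is what does the work. First I would set up the ambient ring. Since $\mathcal G\to G$ is an isomorphism on generic fibres, the induced map $K[G]\to K[\mathcal G]$ is an isomorphism of Hopf algebras, and I use it to identify $K[\mathcal G]$ with $K[G]$. As $R[G]$ and $R[\mathcal G]$ are $R$-flat, hence $\pi$-torsion free, they embed into their generic fibres; together with the above identification this exhibits a chain of Hopf subalgebras $R[G]\subset R[\mathcal G]\subset K[G]$, and likewise $R[N]\subset K[G]$.

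For existence, let $\varepsilon_K\colon K[G]\to K$ be the counit and $\varepsilon_{\mathcal G}\colon R[\mathcal G]\to R$ the counit of $\mathcal G$. Because the identification $K[\mathcal G]=K[G]$ is one of Hopf algebras and $\varepsilon_{\mathcal G}$ is the restriction of the counit of $K[\mathcal G]$, one has $\varepsilon_K(f)=\varepsilon_{\mathcal G}(f)\in R$ for every $f\in R[\mathcal G]$. By Corollary \ref{06.02.2015--2}, $R[N]=\{f\in K[G]\,:\,\varepsilon_K(f)\in R\}$, and therefore $R[\mathcal G]\subset R[N]$. The resulting inclusion $R[\mathcal G]\hookrightarrow R[N]$ is a homomorphism of Hopf algebras, since both sides are Hopf subalgebras of $K[G]$ and thus inherit comultiplication, counit and antipode by restriction from $K[G]$. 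This defines a morphism $N\to\mathcal G$ in $\mathbf{FGSch}/R$, and the composite $N\to\mathcal G\to G$ corresponds on rings to the chain $R[G]\subset R[\mathcal G]\subset R[N]$, which is exactly the structural inclusion $R[G]\subset R[N]$; hence the triangle commutes and $N\to\mathcal G$ is an arrow over $G$.

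For uniqueness, suppose $\phi,\psi\colon N\to\mathcal G$ are two arrows over $G$, with comorphisms $\phi^\#,\psi^\#\colon R[\mathcal G]\to R[N]$. Their generic fibres are arrows $N_K\to\mathcal G_K$ over $G_K$; but $N_K\to G_K$ and $\mathcal G_K\to G_K$ are both isomorphisms, so $\phi_K=\psi_K$ and consequently $\phi^\#\ot_RK=\psi^\#\ot_RK$. Since $R[N]$ is $R$-flat, the map $R[N]\hookrightarrow K[N]$ is injective, so $\phi^\#$ and $\psi^\#$ are determined by their base changes to $K$; thus $\phi^\#=\psi^\#$. The only verification requiring attention — the ``hard part,'' if any — is the counit compatibility $\varepsilon_K|_{R[\mathcal G]}=\varepsilon_{\mathcal G}$, but this is immediate once one records that the generic identification $K[\mathcal G]=K[G]$ respects the full Hopf structure; everything else is formal torsion-free bookkeeping inside $K[G]$.
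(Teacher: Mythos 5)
Your proof is correct, and there is no circularity in quoting Corollary \ref{06.02.2015--2}: it is established before the statement at hand, using only Proposition \ref{05.02.2015--1}. Your basic strategy is the same as the paper's --- identify $K[\mathcal G]$ with $K[G]$ through the generic isomorphism and verify the containment $R[\mathcal G]\subset R[N]$ of Hopf subalgebras of $K[G]$ --- but you establish the containment via a different key result. The paper argues directly from Proposition \ref{05.02.2015--1}: it splits $R[\mathcal G]=R\cdot 1\oplus\g a_{\mathcal G}$, writes each $f\in\g a_{\mathcal G}$ as $\pi^{-m}f'$ with $f'$ in the augmentation ideal $\g a$ of $R[G]$, and notes that $\cup_n\pi^{-n}\g a\subset R[N]$. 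You instead invoke the counit criterion $R[N]=\{f\in K[G]\,:\,\varepsilon_K(f)\in R\}$ of Corollary \ref{06.02.2015--2}, which turns the containment into the one-line observation that the counit of $R[\mathcal G]$ is $R$-valued and is the restriction of $\varepsilon_K$. Since that corollary is itself a consequence of Proposition \ref{05.02.2015--1}, the two routes are logically very close; yours trades the augmentation-ideal bookkeeping for the cleaner integrality-of-the-counit test, while the paper's works straight from the generators of $R[N]$. You also spell out two points the paper leaves tacit: that an inclusion of Hopf subalgebras of $K[G]$ is automatically a morphism of Hopf algebras (flatness embeds the relevant tensor products into $K[G]\otimes_K K[G]$, so all structure maps are restrictions of the generic ones), and that uniqueness follows because arrows over $G$ agree on generic fibres and $R[N]$ is $\pi$-torsion free. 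Both of these verifications are sound.
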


\begin{proof}We need to prove that $R[\mathcal G]\subset K[G]$ is contained in $R[N]$. For that, let $\g a$ and $\g a_{\mathcal G}$ stand respectively for the  augmentation ideals of $R[G]$ and $R[\mathcal G]$. Writing an $f\in \g a_{\mathcal G}$ as $\pi^{-m}f'$ with $f'\in \g a$, we conclude that $f\in \cup_n\pi^{-n}\g a$. But then $f\in R[N]$ (Proposition \ref{05.02.2015--1}) and we are done since $R\cdot1\oplus\g a_{\mathcal G}=R[\mathcal G]$. 
\end{proof}

\begin{cor}\label{25.02.2015--1}Let $N\to G$ be the automatic blowup of the identity. 
Then, for each $n\ge0$, the group scheme $N\ot R_n$ over $R_n$ is trivial, i.e. isomorphic to $\mathrm{Spec}\,R_n$. 

Conversely, let $\rho:\mathcal G\to G$ be an arrow of $\mathbf{FGSch}/R$ which is an isomorphism on generic fibres. If $\mathcal G\ot R_n$ is trivial for all $n$, then $\rho$ is the automatic blowup of the identity.
\end{cor}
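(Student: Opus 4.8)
The plan is to treat the two assertions separately, leaning on Proposition \ref{05.02.2015--1} (which describes $R[N]$ and its augmentation ideal) for the direct statement and on Corollary \ref{06.02.2015--1} (the universal property of $N\to G$) for the converse. Throughout I would exploit the counit splitting $R[N]=R\cdot1\oplus\g a_N$, writing $\g a_N$, $\g a_{\mathcal G}$, $\g a$ for the augmentation ideals of $R[N]$, $R[\mathcal G]$, $R[G]$ respectively, and regard all rings as subrings of $K[G]$.

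For the first assertion, note that $N\ot R_n$ is trivial precisely when $\g a_N\ot R_n=\g a_N/\pi^{n+1}\g a_N$ vanishes, so it suffices to prove $\g a_N=\pi\g a_N$ and iterate. By Proposition \ref{05.02.2015--1}, $\g a_N$ is generated as an ideal of $R[N]$ by $\cup_m\pi^{-m}\g a$; and each generator $\pi^{-m}f$ with $f\in\g a$ equals $\pi\cdot(\pi^{-m-1}f)$, where $\pi^{-m-1}f$ again lies in $\cup_m\pi^{-m}\g a\subseteq\g a_N$. Hence every generator lies in $\pi\g a_N$, which forces $\g a_N\subseteq\pi\g a_N$ and therefore $\g a_N=\pi\g a_N$. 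Iterating gives $\g a_N=\pi^{n+1}\g a_N$, so $R[N]\ot R_n=R_n$, as wanted.

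For the converse, Corollary \ref{06.02.2015--1} provides a unique morphism $N\to\mathcal G$ over $G$, which on rings is the inclusion $R[\mathcal G]\subseteq R[N]$ (both $\rho$ and the automatic blowup being isomorphisms on generic fibres, we identify $K[\mathcal G]=K[G]=K[N]$). It remains to prove the reverse inclusion $R[N]\subseteq R[\mathcal G]$, for then $N\to\mathcal G$ is an isomorphism and $\rho$ is identified with the automatic blowup. To this end I would unwind the triviality hypothesis: an isomorphism of group schemes $\mathcal G\ot R_n\simeq\mathrm{Spec}\,R_n$ respects counits, so the augmentation ideal of $R[\mathcal G]\ot R_n$, namely $\g a_{\mathcal G}\ot R_n$, vanishes; that is, $\g a_{\mathcal G}=\pi\g a_{\mathcal G}$. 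Since $R[\mathcal G]$ is $R$-flat, hence $\pi$-torsion-free, this identity reads inside $K[G]$ as $\pi^{-1}\g a_{\mathcal G}=\g a_{\mathcal G}$, and iterating yields $\pi^{-m}\g a_{\mathcal G}=\g a_{\mathcal G}$ for all $m$. As $R[G]\subseteq R[\mathcal G]$ gives $\g a\subseteq\g a_{\mathcal G}$, we conclude $\pi^{-m}\g a\subseteq\g a_{\mathcal G}\subseteq R[\mathcal G]$ for every $m$; thus the $R[G]$-subalgebra $R[N]$ generated by $\cup_m\pi^{-m}\g a$ is contained in $R[\mathcal G]$, finishing the proof.

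The genuinely delicate point is the step in the converse where $\g a_{\mathcal G}=\pi\g a_{\mathcal G}$ is promoted to $\pi^{-1}\g a_{\mathcal G}=\g a_{\mathcal G}$ as submodules of $K[G]$: this is exactly where flatness of $R[\mathcal G]$ over $R$ is indispensable, since it guarantees that division by $\pi$ is unambiguous and keeps us within $R[\mathcal G]$. Everything else is formal bookkeeping with augmentation ideals and the generation statement of Proposition \ref{05.02.2015--1}; I would also pause to justify carefully that the isomorphism $\mathcal G\ot R_n\simeq\mathrm{Spec}\,R_n$ of group schemes really does kill $\g a_{\mathcal G}\ot R_n$, which uses that the augmentation ideal base-changes correctly thanks to the split exact sequence $0\to\g a_{\mathcal G}\to R[\mathcal G]\to R\to0$.
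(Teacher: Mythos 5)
Your proof is correct and takes essentially the same route as the paper's: both parts rest on the description of $R[N]$ and $\g a_N$ from Proposition \ref{05.02.2015--1} (dividing the generators $\pi^{-m}f$ by $\pi$ to get the triviality of $N\ot R_n$), and the converse combines the divisibility of $\g a_{\mathcal G}$ by all powers of $\pi$ with the universal property of Corollary \ref{06.02.2015--1} to squeeze $R[\mathcal G]$ between $R[N]$ and itself. The only cosmetic difference is that you phrase the key divisibility as the equality $\g a_{\mathcal G}=\pi\g a_{\mathcal G}$ and rescale inside $K[G]$, where the paper writes the equivalent containment $\g a_{\mathcal G}\subset(\pi^{n+1})$.
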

\begin{proof}In this argument, we let $\g a$, $\g a_N$ and $\g a_{\mathcal G}$ stand respectively for the augmentation ideals of $R[G]$, $R[N]$ and $R[\mathcal G]$. 
We know that $R[N]=\cup_nR[G][\pi^{-n}\g a]$ and that $\g a_N=(\cup_n\pi^{-n}\g a)$. Hence, $\g a_N\subset(\pi^{n+1})$ for any $n$; the first claim follows.  On the other hand if $R_n[\mathcal G]$ is always trivial, then $\g a_{\mathcal G}$ is contained in $(\pi^{n+1})$. Hence, $\g a\cdot R[\mathcal G]\subset (\pi^{n+1})$, which shows that $R[\mathcal G]\supset\cup_nR[G][\pi^{-n}\g a]=R[N]$. In view of Corollary \ref{06.02.2015--1}, the inclusion $R[N]\supset R[\mathcal G]$ always holds and the proof is finished. 
\end{proof}

In fact, the second statement in the above corollary allows the following relevant amplification in case $G$ is a closed subgroup scheme of $\mathbf{GL}_{n,R}$. 

\begin{cor}\label{15.06.2015--1}Let $V\in \mathrm{Rep}_R(G)^o$ afford  a faithful representation of $G$. Let $\rho:\mathcal G\to G$ be an arrow of $\mathbf{FGSch}/R$ which is an isomorphism on generic fibres. If $V\ot   R_n$ is the trivial representation of $\mathcal G\ot R_n$  for all $n$, then $\rho$ is the automatic blowup of the identity.
\end{cor}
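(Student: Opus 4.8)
The plan is to show that $\rho$ is the automatic blowup by establishing the equality $R[\mathcal G]=R[N]$ of subrings of $K[G]$, where $N\to G$ denotes the automatic blowup of the identity. By Corollary \ref{06.02.2015--1} the inclusion $R[\mathcal G]\subseteq R[N]$ holds automatically, so the whole matter reduces to the reverse inclusion. Here I would invoke Proposition \ref{05.02.2015--1}, which identifies $R[N]$ with the $R[G]$-subalgebra $\bigcup_n R[G][\pi^{-n}\g a]$ of $K[G]$, where $\g a$ is the augmentation ideal of $R[G]$. Writing $\rho^\#:R[G]\to R[\mathcal G]$ for the comorphism, it therefore suffices to prove that $\rho^\#(\g a)\subseteq\pi^n R[\mathcal G]$ for every $n\ge0$; indeed this gives $\pi^{-n}\g a\subseteq R[\mathcal G]$ for all $n$, whence $R[N]=\bigcup_n R[G][\pi^{-n}\g a]\subseteq R[\mathcal G]$. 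This is precisely the family of inclusions that powers the converse half of Corollary \ref{25.02.2015--1}, so the argument will conclude exactly as there.

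The substance of the proof is thus the translation of the representation-theoretic hypothesis into the inclusion $\rho^\#(\g a)\subseteq(\pi^{n+1})R[\mathcal G]$, and this is where faithfulness enters. The representation of $\mathcal G\ot R_n$ afforded by $V\ot R_n$ is, by definition, the composite $\mathcal G\ot R_n\xrightarrow{\rho\ot R_n}G\ot R_n\to\mathbf{GL}(V)\ot R_n=\mathbf{GL}(V\ot R_n)$, and the hypothesis that this is the trivial representation says exactly that the composite factors through the identity section of $\mathbf{GL}(V\ot R_n)$. Since $V$ is a faithful representation of $G$, the morphism $G\to\mathbf{GL}(V)$ is a closed immersion; as closed immersions (hence monomorphisms) are stable under the base change $\ot R_n$, the morphism $G\ot R_n\to\mathbf{GL}(V\ot R_n)$ is a monomorphism. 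Because the identity section of $\mathbf{GL}(V\ot R_n)$ factors through the subgroup $G\ot R_n$, the monomorphism property forces $\rho\ot R_n$ itself to factor through the identity section of $G\ot R_n$.

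It then remains to read off the consequence on rings. Saying that $\rho\ot R_n$ factors through the identity section means that the reduced comorphism $R_n[G]\to R_n[\mathcal G]$ factors through the counit $\varepsilon:R_n[G]\to R_n$; equivalently $\rho^\#(\g a)$ is killed in $R[\mathcal G]/(\pi^{n+1})$, i.e. $\rho^\#(\g a)\subseteq(\pi^{n+1})R[\mathcal G]$. As $n$ ranges over all non-negative integers, this yields $\rho^\#(\g a)\subseteq\pi^m R[\mathcal G]$ for every $m$, which is the family of inclusions required in the first paragraph. Hence $R[N]\subseteq R[\mathcal G]$, the two rings coincide, and $\rho$ is the automatic blowup of the identity.

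I expect the main obstacle to be the second paragraph: the passage from ``the representation $V\ot R_n$ is trivial'' to ``the homomorphism $\rho\ot R_n$ is trivial.'' The point to handle with care is that $V$ is assumed faithful for $G$ and \emph{not} for $\mathcal G$, so triviality of the representation a priori only controls the composite into $\mathbf{GL}(V\ot R_n)$; it is solely the closed-immersion property of $G\to\mathbf{GL}(V)$, together with its stability under base change, that upgrades this to triviality of $\rho\ot R_n$. Everything else is bookkeeping with the explicit description of $R[N]$ furnished by Proposition \ref{05.02.2015--1} and Corollary \ref{06.02.2015--1}.
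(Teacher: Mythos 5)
Your proof is correct, and its skeleton is the one the paper uses: both arguments reduce everything to the single inclusion $\rho^\#(\g a)\subseteq(\pi^{n+1})R[\mathcal G]$ for all $n$, and then conclude with Proposition \ref{05.02.2015--1} (the description $R[N]=\cup_nR[G][\pi^{-n}\g a]$) together with Corollary \ref{06.02.2015--1} (the automatic inclusion $R[\mathcal G]\subseteq R[N]$). The only genuine divergence is how that inclusion is extracted from the hypothesis. The paper chooses a basis of $V$ and computes with matrix coefficients: writing $[a_{ij}]$ for the matrix of the coaction and $b_{ij}=a_{ij}-\delta_{ij}$, triviality of $V\ot R_n$ gives $\pi^{n+1}\mid b_{ij}$ in $R[\mathcal G]$, and faithfulness enters through the observation that the $a_{ij}$ and $1/\det\,[a_{ij}]$ generate $R[G]$, so that the $b_{ij}$ generate $\g a$. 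You instead argue categorically: faithfulness makes $G\to\mathbf{GL}(V)$ a closed immersion, hence a monomorphism stable under the base change $\ot R_n$, and triviality of the composite $\mathcal G\ot R_n\to G\ot R_n\to\mathbf{GL}(V\ot R_n)$ then forces $\rho\ot R_n$ itself to be the trivial homomorphism, which on rings is exactly $\rho^\#(\g a)\subseteq(\pi^{n+1})R[\mathcal G]$. Your route proves a slightly sharper intermediate statement (triviality of the morphism $\rho\ot R_n$, not merely a divisibility of matrix entries), avoids any choice of basis, and isolates precisely where faithfulness is needed; the paper's computation is more elementary and deliberately echoes the explicit style of Proposition \ref{05.05.2014--2} and of Corollary \ref{25.02.2015--1}, of which this corollary is an amplification. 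Both arguments are complete.
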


\begin{proof}Let $[a_{ij}]\in \mathrm{GL}_r(R[G])$ be the matrix associated to $V$ on some unspecified basis. Write $b_{ij}=a_{ij}-\delta_{ij}$. Since $V\ot R_n$ is the trivial representation of $\mathcal G\ot R_n$, we conclude that $\pi^{n+1}|b_{ij}$ in $R[\mathcal G]$ for all $n$. Now, the augmentation ideal  $\g a$ of $R[G]$ is generated by the functions $b_{ij}$, so that $\g a\cdot R[\mathcal G]\subset (\pi^{n+1})$. Hence, $R[\mathcal G]\supset\cup_n R[G][\pi^{-n}\g a]=R[N]$. Using Corollary \ref{06.02.2015--1}, we have $R[N]=R[\mathcal G]$.  
\end{proof}


\section{Faithful representations of Neron blowups}\label{07.10.2014--3} 
Let $G$ be a flat group scheme over $R$. We assume furthermore that $G$ is of finite type over $R$, so that the Neron blowup (see Section \ref{03.02.2015--3}) of any closed subgroup of $G_k$ is again of finite type. As such a group scheme, it admits a closed embedding  into some $\mathbf{GL}_{r,R}$ (adapt the \emph{proofs} in \cite[3.3]{waterhouse}), 
or, according to the terminology at the end of section \ref{05.10.2016--2}, it possesses a faithful representation. (The reader is again warned that this terminology \emph{differs from \cite{SGA3}}, where in Expos\'e I, Definition 2.3.6.2, a faithful action is one having a trivial kernel.) In this section we describe a means to find faithful representations of Neron blowups. Of course, put this way, our task might seem pointless since  it is sufficient to run a general argument. But our point of view is to produce faithful representations by performing linear algebraic constructions in $\mathrm{Rep}_R$. 
This is most desirable when the category $\mathrm{Rep}_R$ is in fact one side of a Tannakian correspondence, see Example \ref{15.10.2014--1}.

The principle is quite simple and we begin with a particular case: 
\[
G'\longrightarrow G
\] 
is the Neron  blowup of the  identity in the closed fibre.  Let $V\in \mathrm{Rep}_R(G)^o$ be a faithful representation of rank $r$; since $G'_k\to G_k$ is the \emph{trivial} morphism, we know that $\mathbf 1^r\ot k\simeq V\ot k$  as  representations of $G'$. We then obtain a diagram 
\[
\xymatrix{ &V\ar[d]\\ \mathbf 1^r\ar[r]_-{\varphi}& V\ot k  }
\]
in the category $\mathrm{Rep}_R(G')$. (Here $\varphi$ is the obvious morphism.)  
Let $V'$ stand for its pull-back. In concrete terms, 
\begin{equation}\label{09.10.2014--1}
V'=\left\{ (\mathbf v,\mathbf e)\in V\oplus \mathbf 1^r\,:\,  \mathbf v\otimes1=\varphi(\mathbf e) \right\}.
\end{equation}

\begin{prop}\label{05.05.2014--2}In the above setting, the representation  $V'$ of $G'$ is faithful.  
\end{prop}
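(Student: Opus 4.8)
The plan is to verify that the structural homomorphism $G'\to\mathbf{GL}(V')$ is a closed immersion by showing that the corresponding map of Hopf algebras $R[\mathbf{GL}(V')]\to R[G']$ is surjective; equivalently, that $R[G']$ is generated as an $R$-algebra by the matrix coefficients of $V'$ together with the inverse of its determinant. The whole point will be that the extra generators of $R[G']$ over $R[G]$ resurface, up to harmless relabelling, as matrix coefficients of $V'$.

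First I fix a basis $v_1,\dots,v_r$ of $V$, let $[a_{ij}]\in\mathrm{GL}_r(R[G])$ be the associated coaction matrix, and set $b_{ij}=a_{ij}-\delta_{ij}$. Because $V$ is faithful, $R[G]$ is generated over $R$ by the $a_{ij}$ and $1/\det[a_{ij}]$, and, exactly as in the proof of Corollary \ref{15.06.2015--1}, the augmentation ideal $\g a$ is generated by the $b_{ij}$. Since the ideal of $\{e\}\subset G_k$ is $(\pi,\g a)$, the construction of the Neron blowup gives $R[G']=R[G][\pi^{-1}\g a]$, and the elementary identity $A[\pi^{-1}\g S]=A[\pi^{-1}S]$ recalled in the proof of Proposition \ref{05.02.2015--1} upgrades this to $R[G']=R[G][\pi^{-1}b_{ij}]$. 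This isolates $\{a_{ij},\ \pi^{-1}b_{ij},\ 1/\det[a_{ij}]\}$ as a generating set of $R[G']$ that I must now locate among the matrix coefficients of $V'$.

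Next I produce an explicit $R$-basis of $V'$. Writing $\mathbf e_1,\dots,\mathbf e_r$ for the standard basis of $\mathbf 1^r$, the pairs $f_i=(v_i,\mathbf e_i)$ and $g_i=(\pi v_i,0)$ both satisfy the defining condition in \eqref{09.10.2014--1}, and a one-line congruence-mod-$\pi$ computation shows they form a basis, so $V'$ is free of rank $2r$. The heart of the argument is the coaction of $G'$ on this basis. Starting from $v_j\mapsto\sum_i v_i\ot a_{ij}$ on $V$ and the trivial coaction on $\mathbf 1^r$, and rewriting the result in the basis $\{f_i,g_i\}$ inside $V'\ot R[G']$, I expect to obtain $g_j\mapsto\sum_i g_i\ot a_{ij}$ and, crucially, $f_j\mapsto f_j\ot1+\sum_i g_i\ot\pi^{-1}b_{ij}$, the factor $\pi^{-1}$ being precisely what makes the off-diagonal entries lie in $R[G']$. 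Thus the coaction matrix of $V'$ is block lower-triangular, equal to $\left(\begin{smallmatrix} I & 0\\ \pi^{-1}B & A\end{smallmatrix}\right)$ with $A=[a_{ij}]$ and $B=[b_{ij}]$.

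It then only remains to read off the conclusion: the matrix coefficients of $V'$ comprise all the $a_{ij}$ and all the $\pi^{-1}b_{ij}$, while $\det V'=\det A$ is already a unit, so the image of $R[\mathbf{GL}(V')]\to R[G']$ contains the generating set found above; hence the map is surjective and $V'$ is faithful. I expect the only genuinely delicate step to be the coaction computation of the preceding paragraph — specifically, confirming that the $f_j$-coaction contributes exactly $g_i\ot\pi^{-1}b_{ij}$ and not a term carrying a spurious power of $\pi$ — since the bookkeeping takes place in $V'\ot R[G']$, where $\pi^{-1}b_{ij}$ is integral but $\pi^{-1}$ by itself is not.
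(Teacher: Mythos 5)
Your proposal is correct and takes essentially the same route as the paper: the same basis $\{(\pi\mathbf v_i,\mathbf 0),(\mathbf v_i,\mathbf e_i)\}$ of $V'$, the same block-triangular coaction matrix (yours is lower-triangular only because you order the basis the other way), and the same conclusion that the matrix coefficients $a_{ij}$, $\pi^{-1}b_{ij}$ and $1/\det[a_{ij}]$ generate $R[G']$, so that $R[\mathbf{GL}(V')]\to R[G']$ is surjective. The coaction computation you flag as delicate does come out exactly as you predict, with no spurious powers of $\pi$.
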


The proof hinges on the following lemma, whose verification is omitted. 

\begin{lem}\label{05.05.2014--1}Let $M$ and $E$ be free $R$-modules. Let $\{\mathbf m_1,\ldots,\mathbf m_r\}$ be a basis for $M$ and     
$\{\mathbf e_1,\ldots,\mathbf e_s\}$ be one for $E$. Let $q\le \min\{r,s\}$ and define $\varphi:E \to M\ot k$ by 
\[
\varphi(\mathbf e_j )=\left\{\begin{array}{ll}\mathbf m_j\ot1,&1\le j\le q\\ \mathbf 0,&\text{otherwise}.
\end{array}\right.
\]
Then the $R$-module 
\[
\left\{(\mathbf m,\mathbf e)\in M\oplus E\,:\, \mathbf m \otimes1=\varphi(\mathbf e) \right\}
\]
is free on the basis 
\[
\begin{array}{lll}
(\pi\mathbf m_1,\mathbf0),&\ldots,&(\pi\mathbf m_r,\mathbf 0)\\  (\mathbf m_1,\mathbf e_1),&\ldots,& (\mathbf m_q,\mathbf e_q)\\ (\mathbf 0,\mathbf e_{q+1}),&\ldots,&(\mathbf0,\mathbf e_s).
\end{array}
\]
\qed
\end{lem}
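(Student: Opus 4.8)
The plan is to argue directly in coordinates, the whole point being to translate the defining relation into a system of congruences on the coordinates. Write a general element of $M\oplus E$ as $(\mathbf m,\mathbf e)$ with $\mathbf m=\sum_{i=1}^r a_i\mathbf m_i$ and $\mathbf e=\sum_{j=1}^s b_j\mathbf e_j$, where $a_i,b_j\in R$. Since $\varphi$ is $R$-linear, $\varphi(\mathbf e)=\sum_{j=1}^q \bar b_j\,(\mathbf m_j\ot1)$, where $\bar b_j\in k$ denotes the reduction of $b_j$ modulo $\pi$. Because $\{\mathbf m_i\ot1\}_{i=1}^r$ is a $k$-basis of $M\ot k$, comparing coordinates shows that the relation $\mathbf m\ot1=\varphi(\mathbf e)$ is \emph{equivalent} to the congruences $a_i\equiv b_i$ modulo $\pi$ for $1\le i\le q$ and $a_i\equiv0$ modulo $\pi$ for $q<i\le r$, with no condition imposed on the $b_j$. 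This reformulation is the crux of the argument; everything after it is bookkeeping. Denote by $P$ the module $\{(\mathbf m,\mathbf e):\mathbf m\ot1=\varphi(\mathbf e)\}$.

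First I would check that each of the $r+s$ proposed generators lies in $P$. The elements $(\pi\mathbf m_i,\mathbf 0)$ and $(\mathbf 0,\mathbf e_j)$ for $j>q$ satisfy $\mathbf m\ot1=0=\varphi(\mathbf e)$, while $(\mathbf m_i,\mathbf e_i)$ with $i\le q$ satisfies $\mathbf m_i\ot1=\varphi(\mathbf e_i)$ by the very definition of $\varphi$. So all proposed generators belong to $P$.

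Next, for spanning, take an arbitrary $(\mathbf m,\mathbf e)\in P$. Subtracting $\sum_{i=1}^q b_i(\mathbf m_i,\mathbf e_i)+\sum_{j=q+1}^s b_j(\mathbf 0,\mathbf e_j)$ from $(\mathbf m,\mathbf e)$ annihilates the second coordinate and leaves an element $(\mathbf m',\mathbf 0)$ with $\mathbf m'=\sum_{i=1}^q(a_i-b_i)\mathbf m_i+\sum_{i=q+1}^r a_i\mathbf m_i$. By the congruences established above, every coefficient of $\mathbf m'$ is divisible by $\pi$, so $(\mathbf m',\mathbf 0)$ is an $R$-linear combination of the $(\pi\mathbf m_i,\mathbf 0)$. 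Hence the proposed family spans $P$. For independence, suppose $\sum_i c_i(\pi\mathbf m_i,\mathbf 0)+\sum_{i\le q}d_i(\mathbf m_i,\mathbf e_i)+\sum_{j>q}f_j(\mathbf 0,\mathbf e_j)=0$. Reading the second coordinate and using that $\{\mathbf e_j\}$ is a basis of $E$ forces all $d_i=0$ and all $f_j=0$; the first coordinate then collapses to $\pi\sum_i c_i\mathbf m_i=0$, and since $M$ is free—hence has no $\pi$-torsion—we get $\sum_i c_i\mathbf m_i=0$ and thus all $c_i=0$.

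There is no genuine obstacle here, which is doubtless why the authors omitted the verification. The only non-formal inputs are that $\{\mathbf m_i\ot1\}$ is a $k$-basis of $M\ot k$, so that the defining relation decouples into coordinatewise congruences, and that $M$ is torsion-free, so that $\pi$ is a nonzerodivisor on $M$ in the final independence step. One could alternatively observe that $(\mathbf m,\mathbf e)\mapsto\mathbf e$ exhibits $P$ as an extension $0\to\pi M\to P\to E\to0$ which splits because $E$ is free, thereby proving freeness and computing the rank as $r+s$; but since the statement asserts freeness \emph{on a specified basis}, the direct spanning-and-independence computation above is what actually needs to be carried out.
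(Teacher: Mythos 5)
Your proof is correct, and it supplies exactly the verification the paper omits: the authors state Lemma \ref{05.05.2014--1} with no argument at all (``whose verification is omitted''), so there is nothing to compare against. Your reduction of the defining relation to coordinatewise congruences modulo $\pi$, followed by the spanning and independence checks, is the straightforward computation the authors evidently had in mind, and the only inputs you use --- that $\{\mathbf m_i\ot1\}$ is a $k$-basis of $M\ot k$ and that $\pi$ is a nonzerodivisor on $M$ --- are precisely the ones needed.
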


\begin{proof}[Proof of Proposition \ref{05.05.2014--2}] This is plain linear algebra. Let $\{\mathbf v_1,\ldots,\mathbf v_r\}$ be a basis of $V$ and let $a_{ij}\in R[G]$ stand for the matrix coefficients inducing the representation of $G$. Let $\varepsilon:R[G]\to R$ stand for the counit; by definition of a representation  we have $\varepsilon(a_{ij})=\delta_{ij}$, so that 
\begin{equation}\label{05.05.2014--3}
a_{ij}=\delta_{ij}\cdot 1+b_{ij},
\end{equation}
with $b_{ij}\in\mathrm{Ker}\,\varepsilon$. By  definition of $G'$, there exist $b_{ij}'\in R[G']$ such that 
\begin{equation}\label{05.05.2014--4}
\pi b_{ij}'=b_{ij}.
\end{equation}
Let $\{\mathbf e_1,\ldots,\mathbf e_r\}$ be an ordered basis of $\mathbf 1^r$ which is sent by $\varphi$ to $\{\mathbf v_1\ot1 ,\ldots,\mathbf v_r\ot1\}$.   According to Lemma \ref{05.05.2014--1}, $V'$ is free on  
\[
\{\pi\mathbf v_1,\ldots,\pi\mathbf v_r,\mathbf v_1+\mathbf e_1,\ldots,\mathbf v_r+\mathbf e_r\}.
\]
Employing this basis and the equations \eqref{05.05.2014--3} and \eqref{05.05.2014--4},
we conclude that the matrix defining the representation of $G'$ on $V'$ is  
\[\left[
\begin{array}{c|c}  a_{ij} & b'_{ij} \\\hline O& \delta_{ij} \end{array}
\right].
\]
Since the functions $a_{ij}$ together with $1/\det\,[a_{ij}]$ generate the $R$-algebra $R[G]$, the functions $b_{ij}$ generate the ideal $\mathrm{Ker}\,\varepsilon$; by definition the functions $a_{ij}$, $1/\det\,[a_{ij}]$ and $b_{ij}'=\pi^{-1}b_{ij}$ generate $R[G']$
  and the proof is finished. 
\end{proof}

Before moving to  the search for a faithful representation of a general Neron blowup $G'\to G$, we record some valuable properties of $V'$ and give an example.  

\begin{cor}\label{09.10.2014--2}We maintain the above notations. 
\begin{enumerate}
\item The representation $V'$ is an extension of  $\mathbf1^r$ by $V$. In particular, $V$ is a sub-representation of $V'$. 
\item Let $\xi$ be the evident extension class in $\mathrm{Ext}_{G'}(V\ot k,V)$. Then the class of $V'$ in $\mathrm{Ext}_{G'}(\mathbf 1^r,V)$ is simply the image of $\xi$ under the morphism induced by $\mathbf1^r\to V\ot k$.  
\item The cokernel of the injection $V'\to V\oplus\mathbf1^r$ is annihilated by $\pi$.   
\item The natural arrow $V'\ot K\to (V\oplus\mathbf1^r)\ot K$ is an isomorphism of representations of $G'\ot K$.  
\item Let $\{\mathbf v_1,\ldots,\mathbf v_r\}$  and  $\{\mathbf e_1,\ldots,\mathbf e_r\}$  be respectively ordered bases of $V$ and  $\mathbf 1^r$ as constructed above. Write $\rho_K:G_K\to \mathbf{GL}_{2r,K}$ for the homomorphism associated to the representation $(V \oplus \mathbf 1^r)\otimes K$ by means of these basis. Then,  if  
\[\beta=\left[\begin{array}{ccc|ccc}\pi &&&&&\\ &\ddots& &&\mathrm{Id}_r& \\
&&\pi&&&\\\hline &&&&&\\ &&&&\mathrm{Id}_r&\\ &&&&&\end{array}\right],
\]
it follows that   $G'$ is the closure of the image of $\beta^{-1}\cdot \rho_K \cdot\beta$ in $\mathbf{GL}_{2r,R}$.

\item As representations of $G'$, $V'$ is a sub-representation of $V\oplus\mathbf1^r$, and $V$ is a sub-representation of $V'$. 

\end{enumerate}  
\end{cor}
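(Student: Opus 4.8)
The plan is to extract all six assertions from the fibre-product description \eqref{09.10.2014--1} of $V'$ and from the explicit basis supplied by Lemma \ref{05.05.2014--1}, reusing the block-triangular matrix $\left[\begin{smallmatrix} a_{ij}&b'_{ij}\\ 0&\delta_{ij}\end{smallmatrix}\right]$ already computed in the proof of Proposition \ref{05.05.2014--2}. For (1), I would exhibit the short exact sequence $0\to V\to V'\to\mathbf 1^r\to0$ in $\mathrm{Rep}_R(G')$ by hand: the second projection $V'\to\mathbf 1^r$ is surjective because $V\to V\ot k$ is, and its kernel $\{(\mathbf v,\mathbf 0):\mathbf v\ot1=0\}=\pi V\times\{0\}$ is the image of the equivariant monomorphism $V\to V'$, $\mathbf v\mapsto(\pi\mathbf v,\mathbf 0)$. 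In Lemma \ref{05.05.2014--1}'s basis this is exactly the block decomposition recorded in Proposition \ref{05.05.2014--2}, the top-left block $a_{ij}$ being $V$ and the bottom-right block $\delta_{ij}$ being $\mathbf 1^r$.

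For (2), the key observation is that $V'$ is literally the pullback of the canonical extension $\xi:\ 0\to V\xrightarrow{\pi}V\xrightarrow{\mathrm{can}}V\ot k\to0$ along $\varphi:\mathbf 1^r\to V\ot k$, because $V\times_{V\ot k}\mathbf 1^r=\{(\mathbf v,\mathbf e):\mathrm{can}(\mathbf v)=\varphi(\mathbf e)\}$ is precisely the defining equation \eqref{09.10.2014--1}. Since pulling back a short exact sequence along a map computes the image under the functorial arrow $\mathrm{Ext}_{G'}(V\ot k,V)\to\mathrm{Ext}_{G'}(\mathbf 1^r,V)$, the class of $V'$ is $\varphi^*\xi$, as claimed. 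Here one only needs to observe that $\xi$ and $\varphi$ live in $\mathrm{Rep}_R(G')$, which is immediate since $\pi$ and $\mathrm{can}$ are equivariant and $\varphi$ is given as a morphism of representations.

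For (3) and (4), I would verify the inclusion $\pi\cdot(V\oplus\mathbf 1^r)\subseteq V'$: for any $(\mathbf v,\mathbf e)$ one has $(\pi\mathbf v)\ot1=0=\varphi(\pi\mathbf e)$ because $V\ot k$ is killed by $\pi$, hence $(\pi\mathbf v,\pi\mathbf e)\in V'$. Thus the cokernel of the injection $V'\to V\oplus\mathbf 1^r$ is annihilated by $\pi$, giving (3); and tensoring the sequence $0\to V'\to V\oplus\mathbf 1^r\to C\to0$ with the flat $R$-module $K$, where $C\ot K=0$, yields the $G'\ot K$-equivariant isomorphism of (4).

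For (5), I would first record the change-of-basis identity $\beta^{-1}\rho_K\beta=\left[\begin{smallmatrix}a_{ij}&b'_{ij}\\0&\delta_{ij}\end{smallmatrix}\right]$, a one-line matrix computation starting from the block-diagonal $\rho_K=\left[\begin{smallmatrix}a_{ij}&0\\0&\delta_{ij}\end{smallmatrix}\right]$ and using $\pi b'_{ij}=b_{ij}=a_{ij}-\delta_{ij}$; here $\beta$ is the change-of-basis matrix from $\{\mathbf v_i,\mathbf e_i\}$ to $\{\pi\mathbf v_i,\ \mathbf v_i+\mathbf e_i\}$, and part (4) guarantees $V'\ot K=(V\oplus\mathbf 1^r)\ot K$. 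Since $V'$ is faithful (Proposition \ref{05.05.2014--2}), the map $G'\hookrightarrow\mathbf{GL}(V')=\mathbf{GL}_{2r,R}$ is a closed immersion onto a flat closed subgroup scheme whose generic fibre is the image of $\beta^{-1}\rho_K\beta$; flatness forces $R[G']\hookrightarrow K[G_K]$, whence $R[G']$ is cut out by $\ker(R[\mathbf{GL}_{2r,R}]\to K[G_K])$, which is exactly the coordinate ring of the schematic closure of that image. Finally (6) merely collects the inclusion $V'\hookrightarrow V\oplus\mathbf 1^r$ of (3)--(4) and the monomorphism $V\to V'$ of (1). The one place where genuine input beyond formal manipulation is required is the last step of (5): identifying the faithful-image closed subgroup with the schematic closure of its generic fibre, where flatness (torsion-freeness over the DVR) is essential; everything else follows routinely from the fibre-product description and Lemma \ref{05.05.2014--1}.
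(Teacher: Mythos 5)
Your proof is correct and takes essentially the same route as the paper's: the paper likewise treats (1), (3), (4) and (6) as routine consequences of the fibre-product description, handles (2) by viewing $V'$ as the pullback of the extension $0\to V\xrightarrow{\ \pi\ } V\to V\otimes k\to 0$ along $\varphi$ (citing Hilton--Stammbach for the pullback construction of the induced map on $\mathrm{Ext}$), and proves (5) by the same conjugation identity $\beta^{-1}\rho_K\beta$ together with the principle that two $R$-flat closed subschemes of $\mathbf{GL}_{2r,R}$ with the same generic fibre coincide --- which is exactly the saturation/torsion-freeness argument you spell out.
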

\begin{proof}The only statements which need  comment are (2) and (5). Concerning (2),  we only need to inform the reader that we follow \cite[p.87]{hilton-stammbach} in constructing $\mathrm{Ext}(V\ot k,V)\to\mathrm{Ext}(\mathbf1^r,V)$.  Concerning (5), we 
 produce the following justifications. Let $[a_{ij}]\in\mathrm{GL}_r(R[G])$ stand for the matrix associated to the representation $V$ and the basis $\{\mathbf v_1,\ldots,\mathbf v_r\}$. We  observe that   
\[\beta^{-1}\cdot \rho_K\cdot\beta=\left[
\begin{array}{c|c}  a_{ij} &(a_{ij}-\delta_{ij})/\pi \\\hline O& \delta_{ij} \end{array}
\right].
\]
Now the above matrix is also the matrix of a faithful representation $G'\to \mathbf{GL}_{2r,R}$, see the proof of Proposition  \ref{05.05.2014--2}. 
Since $G'\ot K\stackrel{\sim}{\to}G\ot K$, the closure of the image of $\beta^{-1}\cdot\rho_K\cdot\beta$ is the same as the closure of the image of $G'\ot K$ by $\beta^{-1}\cdot\rho_K\cdot\beta$. This is $G'$, since two flat and closed subschemes having the same generic fibre must be equal. 
\end{proof}

\begin{ex}Let $G'$ be the Neron blowup of $\mathbf{G}_{m,R}=\mathrm{Spec}\,R[u,1/u]$ at the origin in $\mathbf G_{m,k}$. If $V$ is the obvious representation of $\mathbf{G}_{m,R}$ (corresponding to $\mathrm{id}$), then $V'$ is  the representation of $G'$ corresponding to the matrix 
\[
\begin{bmatrix}\pi&1\\0&1\end{bmatrix}^{-1}\cdot\begin{bmatrix} u&\\&1  \end{bmatrix}\cdot\begin{bmatrix}\pi&1\\0&1\end{bmatrix}=\begin{bmatrix}u&(u-1)/\pi\\0&1\end{bmatrix}.
\]
It follows that $G'$ is 
\[
\left\{ \begin{bmatrix}u&v\\ 0&1\end{bmatrix} \,:\, \pi v+1=u \right\}.
\]
\end{ex}

Let us now assume that 
\[G'\longrightarrow G\]
is the Neron blowup of a closed subgroup scheme $H_0\subset G\ot k$. The idea to construct a faithful representation of $G'$ by means of a faithful representation of $G$ is to express $H_0$ as the stabilizer of some line. This means that the problem is divided into two: 
\begin{enumerate}\item[\textbf{Step 1};] express $H_0$ as a stabilizer,  and \item[\textbf{Step 2};] construct a faithful representation of the Neron blow of a stabilizer in the special fibre. \end{enumerate}
We start by discussing \textbf{Step 2} and presenting our findings as Proposition \ref{09.10.2014--3}. \textbf{Step 1} follows from standard material in the theory of affine group schemes and is explained in Lemma \ref{07.04.2015--1} below.  

Let $V\in\mathrm{Rep}_R(G)^o$ be faithful and let $\mathbf v\ot 1\in V\ot k$ be a nonzero vector. Let $H_0\subset G_k$ stand for the stabilizer of the \emph{line} \[\ell:=\mathbf v\ot k.\] 
Clearly, it is possible to find an ordered basis $\{\mathbf v_1,\ldots,\mathbf v_r\}$
of $V$ such that $\mathbf v_1=\mathbf v$. Let $[a_{ij}]\in\mathrm{GL}_r( R[G])$ be the matrix of coefficients associated to $\{\mathbf v_1,\ldots,\mathbf v_r\}$. 
It then follows that the ideal cutting out $H_0$ in $R[G]$ is simply 
\[(\pi,a_{21},\ldots,a_{r1}).\]
For future use, we write 
\begin{equation}\label{02.06.2015--2}
a_{21}=\pi a_{21}',\ldots,a_{r1}=\pi a_{r1}'
\end{equation}
for functions  $a_{21}',\ldots,a_{r1}'\in R[G']$.

The line $\ell$ is now fixed by $G'_k$ and we obtain a character $G_k'\to\mathbf{G}_{m,k}$ (defined by the group-like element $a_{11}+(\pi)$ in $k[G']$). In order to follow the method developed to treat the particular case, we should find a representation $L\in\mathrm{Rep}_R(G')^o$ lifting $\ell$. 
So here we need to modify the argument since there is no reason for $L$ to exist. 
Let $E\in\mathrm{Rep}_R(G')^o$ be the source of a surjection 
\[
\varphi:E\longrightarrow \ell
\]
in $\mathrm{Rep}_R(G')$. (That $E$ exists is proved in \cite[Proposition 3, p.41]{serre}).  We fix an element $\mathbf e_1\in E$ above $\mathbf v\ot1\in\ell$. A bit of common sense shows that there exists elements $\mathbf e_2,\ldots,\mathbf e_s\in \mathrm{Ker}\,\varphi$ which together with $\mathbf e_1$ form a basis of $E$. 
Let $[b_{ij}]\in\mathrm{GL}_s(R[G'])$ be the matrix associated to the representation $E$ and the ordered basis $\{\mathbf e_1,\ldots,\mathbf e_s\}$. 
Note that in this case we have 
\begin{equation}\label{02.06.2015--1}
\begin{split}b_{11}&=a_{11}+\pi c_{11}\\ b_{12}&=\pi c_{12}\\ \cdots&=\cdots\\ 
b_{1s}&=\pi c_{1s}
\end{split}
\end{equation}
since $\varphi(\mathbf e_1)=\mathbf v_1\otimes1$ and the subspace of $E_k$ generated by $\mathbf e_2\ot1,\ldots,\mathbf e_s\ot1$ is stable under $G_k'$. 
We then consider the pull-back diagram  
\[
\xymatrix{V'\ar[r]\ar[d]\ar@{}[dr]|{\square}&V\ar[d]\\ E\ar[r]_-{\varphi}&V_k,} 
\]
so that $V'$ is naturally a sub-$G'$-module of $V\oplus E$.
From Lemma \ref{05.05.2014--1} we know that 
\[
\begin{array}{lll} (\pi\mathbf v_1,\mathbf0),&\ldots,&(\pi\mathbf v_r,\mathbf 0)\\ (\mathbf v_1,\mathbf e_1),&&\\ (\mathbf 0,\mathbf e_2),&\ldots,&(\mathbf0,\mathbf e_s)
\end{array}
\]
is a basis of $V'$. 
A simple computation using equations \eqref{02.06.2015--2} and \eqref{02.06.2015--1} shows that 
the matrix of the representation of $G'$ on $V'$ associated to this basis is 
\[\left[\begin{array}{ccc|c|ccc}
a_{11}&\cdots&a_{1r}&-c_{11}&-c_{12}&\cdots&-c_{1s}\\
\vdots&\ddots&\vdots&a_{21}'&0&\cdots&0\\ 
\vdots&&\vdots&\vdots&\vdots&\vdots&\vdots\\
a_{r1}&\cdots&a_{rr}&a_{r1}'&0&\cdots&0\\
\hline0&\cdots&0&b_{11}&b_{12}&\cdots&b_{1s}\\
\vdots&\ddots&\vdots&\vdots&\vdots&\ddots&\vdots\\
0&\cdots&0&b_{s1}&b_{s2}&\cdots&b_{ss}
\end{array}\right].
\]
Since the $a_{ij}$ together with $1/\det\,[a_{ij}]$ generate $R[G]$, and $a_{21}',\ldots,a_{r1}'$ generate $R[G']$ over $R[G]$, we conclude that $V'$ is a faithful representation of $G'$. We have then proved the following. 
\begin{prop}\label{09.10.2014--3}Let $V\in\mathrm{Rep}_R(G)$ be faithful and let $\ell\subset V_k$ be a line   whose stabilizer is denoted by $H_0\subset G_k$. Let $G'$ be the Neron blowup of $G$ at $H_0$ and let $\varphi:E\to \ell$ be a surjective morphism in $\mathrm{Rep}_R(G')$ such that the $R$-module $E$ is free. Then $V':=V\times_{V_k}E$ is a faithful representation of $G'$.  \qed
\end{prop}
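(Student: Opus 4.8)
The plan is to choose bases of $V$ and $E$ adapted to the line $\ell$, use Lemma \ref{05.05.2014--1} to exhibit an explicit $R$-basis of the pull-back $V'=V\times_{V_k}E$, read off the matrix of the $G'$-action in that basis, and then conclude faithfulness by checking that the matrix entries generate $R[G']$. Since $V'$ is obtained as a fibre product of free modules along maps reducing to the situation of Lemma \ref{05.05.2014--1}, its freeness is automatic, so the homomorphism $G'\to\mathbf{GL}(V')$ is the object to analyse.

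First I would fix a basis $\{\mathbf v_1,\ldots,\mathbf v_r\}$ of $V$ with $\mathbf v_1\ot k$ spanning $\ell$, and write $[a_{ij}]\in\mathrm{GL}_r(R[G])$ for the associated matrix of coefficients. Because $H_0$ is the stabilizer of $\ell$, fixing the line forces the first-column off-diagonal entries to vanish modulo $\pi$; hence the ideal cutting out $H_0$ in $R[G]$ is $(\pi,a_{21},\ldots,a_{r1})$. By the very definition of the Neron blowup we may then write $a_{i1}=\pi a_{i1}'$ with $a_{i1}'\in R[G']$ for $i\ge 2$, and these elements together with $R[G]$ generate $R[G']$.

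Next I would pick a basis of $E$: an element $\mathbf e_1$ lying over $\mathbf v_1\ot1$ under $\varphi$, completed by $\mathbf e_2,\ldots,\mathbf e_s\in\ker\varphi$, and I would call $[b_{ij}]\in\mathrm{GL}_s(R[G'])$ the associated matrix. The facts that $\varphi(\mathbf e_1)=\mathbf v_1\ot1$ and that $\mathbf e_2\ot1,\ldots,\mathbf e_s\ot1$ span the $G_k'$-stable subspace $\ker(\varphi\ot k)$ translate into the congruences $b_{11}\equiv a_{11}$ and $b_{1j}\equiv0$ modulo $\pi$, i.e. eq. \eqref{02.06.2015--1}. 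Since, as a map into $V_k$, $\varphi$ sends $\mathbf e_1\mapsto\mathbf v_1\ot1$ and kills $\mathbf e_2,\ldots,\mathbf e_s$, Lemma \ref{05.05.2014--1} applies with $q=1$ and yields the explicit basis $(\pi\mathbf v_i,\mathbf0)$, $(\mathbf v_1,\mathbf e_1)$, $(\mathbf 0,\mathbf e_j)$ of $V'$.

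The heart of the argument, and the step I expect to be most delicate, is the direct computation of the coaction of $G'$ on this mixed basis. Expanding the coactions of $V$ and $E$ and substituting the relations $a_{i1}=\pi a_{i1}'$ together with the congruences \eqref{02.06.2015--1}, one checks that the representing matrix is block lower-triangular with the blocks $[a_{ij}]$ and $[b_{ij}]$ on the diagonal and the off-diagonal entries $a_{i1}'$ (from the $(\mathbf v_1,\mathbf e_1)$ vector) and $-c_{1j}$ appearing in the first row of the upper block. The care required is precisely in tracking the signs and the placement of $\pi$ when coacting on $(\mathbf v_1,\mathbf e_1)$, which is what produces the $a_{i1}'$ column and the $-c_{1j}$ terms. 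Once this matrix is in hand, faithfulness is formal: its entries contain the $a_{ij}$, hence all of $R[G]$ since $a_{ij}$ and $1/\det[a_{ij}]$ generate it, as well as the $a_{i1}'$, which generate $R[G']$ over $R[G]$; thus the matrix coefficients of $V'$ generate $R[G']$ and $G'\to\mathbf{GL}(V')$ is a closed immersion.
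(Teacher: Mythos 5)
Your proposal is correct and follows essentially the same route as the paper's own argument: the same adapted bases of $V$ and $E$, the same appeal to Lemma \ref{05.05.2014--1} with $q=1$, the same block-matrix computation producing the entries $a_{i1}'$ and $-c_{1j}$, and the same generation argument for $R[G']$. The only cosmetic discrepancy is calling the resulting matrix block lower-triangular where the paper writes it in block upper-triangular form; this is just a transposition convention and does not affect the argument.
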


\begin{cor}Let $V\in\mathrm{Rep}_R(G)^o$ be faithful and let $\mathbf v\ot1\in V_k\setminus0$ be a vector whose stabilizer is denoted by $H_0\subset G_k$. (Note that $H_0$ is smaller than the stabilizer of the line!) Let $G'$ be the Neron blowup of $G$ at $H_0$ and note that there is an obvious $G'$-equivariant morphism  $\mathbf1\to V_k$ whose image is the line  $\mathbf v\ot k$. Then $V':=V\times_{V_k}\mathbf1$ is a faithful representation of $G'$.  Moreover, if $[a_{ij}]\in \mathrm{GL}_r(R[G])$ is the matrix associated to some basis $\{\mathbf v_1,\ldots,\mathbf v_r\}$ with $\mathbf v_1=\mathbf v$, then the matrix of the representation $V'$ of $G'$ is 
\[\left[\begin{array}{ccc|cccc}
a_{11}&\cdots&a_{1r}&\alpha_{11}\\
\vdots&\ddots&\vdots&a_{21}'\\ 
\vdots&&\vdots&\vdots\\
a_{r1}&\cdots&a_{rr}&a_{r1}'\\
\hline0&\cdots&0&1
\end{array}\right],
\]
where $\pi\alpha_{11}=a_{11}-1$ and $\pi a_{i1}'= a_{i1}$ for each $i\ge2$.\qed
\end{cor}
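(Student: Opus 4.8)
The plan is to reproduce, with one essential change, the computation behind Proposition \ref{09.10.2014--3}, taking as auxiliary module $E$ the trivial representation $\mathbf 1$ together with the $G'$-equivariant map $\varphi\colon\mathbf 1\to V_k$, $\mathbf e\mapsto \mathbf v\ot 1$ (equivariance being exactly the assertion recorded in the statement, and it is what makes $V'=V\times_{V_k}\mathbf 1$ a $G'$-submodule of $V\oplus\mathbf 1$). First I would record the ideal-theoretic data of the blowup. With the basis chosen so that $\mathbf v_1=\mathbf v$ and $[a_{ij}]$ the associated matrix, the coaction $\mathbf v_1\mapsto\sum_i\mathbf v_i\ot a_{i1}$ shows that the stabilizer $H_0$ of the vector $\mathbf v_1\ot1$ is cut out in $R[G]$ by $(\pi,\,a_{11}-1,\,a_{21},\dots,a_{r1})$, so that $R[G']$ is generated over $R[G]$ by $\alpha_{11}:=\pi^{-1}(a_{11}-1)$ and $a'_{i1}:=\pi^{-1}a_{i1}$ for $i\ge2$. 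This is precisely where the vector stabilizer departs from the line stabilizer of Proposition \ref{09.10.2014--3}: here it is $a_{11}-1$, and not $a_{11}$ itself, that becomes divisible by $\pi$, because $G_k'$ fixes $\mathbf v_1\ot1$ on the nose rather than up to a scalar.

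Second, I would invoke Lemma \ref{05.05.2014--1} with $M=V$, $E=\mathbf 1$ and $q=s=1$ to obtain the explicit free $R$-basis of $V'$, namely $\mathbf w_i:=(\pi\mathbf v_i,\mathbf 0)$ for $1\le i\le r$ together with $\mathbf w_{r+1}:=(\mathbf v_1,\mathbf e)$. Computing the coaction on the first $r$ of these is immediate: $\mathbf w_i\mapsto\sum_j\mathbf w_j\ot a_{ji}$, which gives the top-left block $[a_{ij}]$ and a vanishing bottom-left block.

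Third — and this step carries the real content — I would express the coaction on $\mathbf w_{r+1}=(\mathbf v_1,\mathbf e)$ in this basis. Written componentwise it is $\sum_j(\mathbf v_j,\mathbf 0)\ot a_{j1}+(\mathbf 0,\mathbf e)\ot1$, and the obstacle is that neither $(\mathbf v_j,\mathbf 0)$ nor $(\mathbf 0,\mathbf e)$ belongs to $V'$. The manoeuvre is to substitute $(\mathbf 0,\mathbf e)=\mathbf w_{r+1}-(\mathbf v_1,\mathbf 0)$ and then to absorb the factors of $\pi$: using $a_{11}-1=\pi\alpha_{11}$ one rewrites $(\mathbf v_1,\mathbf 0)\ot(a_{11}-1)=\mathbf w_1\ot\alpha_{11}$, and using $a_{i1}=\pi a'_{i1}$ one rewrites $(\mathbf v_i,\mathbf 0)\ot a_{i1}=\mathbf w_i\ot a'_{i1}$ for $i\ge2$. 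The image of $\mathbf w_{r+1}$ thereby becomes $\mathbf w_1\ot\alpha_{11}+\sum_{i\ge2}\mathbf w_i\ot a'_{i1}+\mathbf w_{r+1}\ot1$, producing exactly the last column of the asserted matrix and the entry $1$ in its lower-right corner. The only care needed anywhere in the proof is this bookkeeping: keeping straight which coordinate vectors lie in $V'$ and exploiting the two divisibility relations to re-land inside it.

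Finally, faithfulness falls out of the matrix itself. Expanding along the last row $(0,\dots,0,1)$ shows its determinant equals $\det[a_{ij}]$, so $1/\det$ is available in $R[G]$, and the $R$-subalgebra generated by the entries of $V'$ together with this inverse contains both $R[G]$ (generated by the $a_{ij}$ and $1/\det[a_{ij}]$, since $V$ is faithful over $G$) and the new generators $\alpha_{11},a'_{21},\dots,a'_{r1}$; hence it is all of $R[G']$, and $G'\to\mathbf{GL}(V')$ is a closed immersion. As in the preceding propositions, the argument is ultimately plain linear algebra, the lone subtlety being the $\pi$-absorption in the third step.
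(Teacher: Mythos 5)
Your proposal is correct and takes essentially the approach the paper intends: the corollary carries a \qed{} precisely because it is the specialization of Proposition \ref{09.10.2014--3} and its explicit matrix computation to the case $E=\mathbf 1$, which is exactly what you carry out. Your three key points---that the ideal of the \emph{vector} stabilizer is $(\pi,\,a_{11}-1,\,a_{21},\dots,a_{r1})$ so that $R[G']$ is generated over $R[G]$ by $\alpha_{11}$ and the $a'_{i1}$, the $\pi$-absorption via $(\mathbf 0,\mathbf e)=\mathbf w_{r+1}-(\mathbf v_1,\mathbf 0)$ producing the last column, and the determinant/generation argument for the closed immersion---are precisely the content of the paper's computation adapted to this case.
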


To address \textbf{Step 1} we only need the following. 

\begin{lem}\label{07.04.2015--1}Let $H_0\subset G_k$ be a closed subgroup scheme of $G$. Then there exists  a faithful representation $V$ of $G$ and a line $\ell\subset V_k$ whose stabilizer is exactly $H_0$. 
\end{lem}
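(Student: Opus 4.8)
The plan is to adapt Chevalley's classical realization of a closed subgroup as the stabilizer of a line, performing the essential construction on the special fibre and then importing it into a representation defined over $R$. Throughout I use that, by the standing hypothesis of this section, $G$ is of finite type over $R$, so $G_k$ is of finite type over $k$ and the Hopf ideal $J_0\subset k[G]$ cutting out $H_0$ is finitely generated.

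\emph{Step 1 (a subspace over $k$ with stabilizer $H_0$).} First I would invoke the field version of the theorem for the pair $H_0\subset G_k$ (see, e.g., \cite[Ch.~16]{waterhouse}): realizing $k[G]$ as the right regular representation of $G_k$, one chooses a finite-dimensional subcomodule $W\subset k[G]$ containing a finite generating set of $J_0$ and sets $M:=W\cap J_0$. Since right translation acts through $k$-algebra automorphisms and $M$ generates $J_0$, one obtains $\mathrm{Stab}_{G_k}(M)=H_0$ as closed subgroup schemes of $G_k$ (the subspace $M$ is automatically a direct summand of $W$, so its stabilizer is representable and closed). I would only recall this classical fact, not reprove it.

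\emph{Step 2 (lifting the ambient space to $R$).} The one genuinely new point over a DVR is to move $W$ into the reduction of an $R$-representation. Here I would use, exactly as in the proof of Theorem~\ref{19.11.2014--1b}, that the right regular representation $R[G]$ is the \emph{directed} union of its subrepresentations that are finite — hence free — over $R$. Writing this union as $R[G]=\bigcup_\alpha V_\alpha$, we get $k[G]=R[G]\ot k=\bigcup_\alpha(V_\alpha\ot k)$, and since $W$ is finite-dimensional, directedness yields one index with $W\subset V_\alpha\ot k=:V_k$, where $V:=V_\alpha$ is finite free. As $W$ is a subcomodule of $k[G]$ lying inside the subcomodule $V_k$, it is a subcomodule of $V_k$; consequently $M\subset V_k$ is a $k$-subspace with $\mathrm{Stab}_{G_k}(M)=H_0$, the stabilizer being unchanged by the passage to the larger ambient $V_k$.

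\emph{Step 3 (passage to a line and faithfulness).} Put $d=\dim_k M$ and form the free $R$-representation $\wedge^d V$, whose reduction is $\wedge^d_k V_k$ because exterior powers of free modules commute with $\ot k$. The line $\ell:=\wedge^d M\subset(\wedge^d V)\ot k$ then satisfies $\mathrm{Stab}_{G_k}(\ell)=\mathrm{Stab}_{G_k}(M)=H_0$, using the standard identity $M=\{v:\,v\wedge\omega=0\}$ recovering a subspace from the line it spans in the top exterior power; this identity is stable under base change and so the equality of stabilizers is functorial, i.e. an equality of closed subgroup schemes. Finally, to secure faithfulness let $V_f\in\mathrm{Rep}_R(G)^o$ be a faithful representation (which exists as recalled at the beginning of this section). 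Then $V':=(\wedge^d V)\oplus V_f$ is again faithful, since adding a summand containing $V_f$ keeps $R[\mathbf{GL}(V')]\to R[G]$ surjective, and $\ell\subset(\wedge^d V)\ot k\subset V'\ot k$ retains stabilizer $H_0$. Thus $V'$ together with $\ell$ are as required. The main obstacle is really Step~2 — the only step proper to the DVR setting — together with the verification that the stabilizer equalities of Steps~1 and~3 hold after \emph{every} base change and not merely on $k$-points; the remaining ingredients are entirely classical.
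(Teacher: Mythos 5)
Your overall strategy---Chevalley over $k$, lift the ambient space into the reduction of an object of $\mathrm{Rep}_R(G)^o$, then add a faithful direct summand---is the same as the paper's, which quotes \cite[16.1, Corollary]{waterhouse} for a representation $G_k\to\mathbf{GL}_{r,k}$ together with a line of stabilizer $H_0$, and then \cite[Proposition 3, p.41]{serre} for a $G_k$-equivariant \emph{injection} $k^r\to W_k$ with $W\in\mathrm{Rep}_R(G)^o$. But your Step 2, which is your hand-made substitute for Serre's result, has a genuine gap: from $R[G]=\bigcup_\alpha V_\alpha$ you cannot conclude that $k[G]=\bigcup_\alpha (V_\alpha\ot k)$ \emph{as subspaces}, because the natural map $V_\alpha\ot k\to k[G]$ is injective only when $V_\alpha$ is saturated in $R[G]$, and finite subcomodules need not admit finite saturated enlargements. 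A concrete obstruction, entirely within the standing hypotheses of this section: let $G$ be the open subgroup scheme of the constant group scheme $(\mathbf{Z}/2)_R$ obtained by deleting the nonidentity point of the special fibre, so that $R[G]=R\times K$ (a flat, finitely generated $R$-algebra) and $k[G]=k$. Writing $e_0=(1,0)$ and $e_1=(0,1)$, the modules $V_n=Re_0+R\pi^{-n}e_1$ form a directed family of finite free subcomodules with union $R[G]$, yet each map $V_n\ot k\to k[G]$ kills the class of $\pi^{-n}e_1$, and the saturation of $V_n$ contains $Ke_1$, hence is not finite over $R$. So the phrase ``$W\subset V_\alpha\ot k$'' is not meaningful as written, and your $M$ need not sit inside $V_k$; this injectivity is precisely the nontrivial content of Serre's proposition that your argument glosses over.

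The gap is repairable without changing your plan. What is true is that $k[G]$ is the directed union of the \emph{images} $q_\alpha(V_\alpha\ot k)$, where $q_\alpha:V_\alpha\ot k\to k[G]$ is the comodule map induced by $V_\alpha\subset R[G]$; by directedness there is an $\alpha$ with $M\subset W\subset q_\alpha(V_\alpha\ot k)$. Now replace $M$ by $\widetilde M:=q_\alpha^{-1}(M)\subset V_k:=V_\alpha\ot k$. Since $\mathrm{Ker}\,q_\alpha$ is a subcomodule of $V_k$, every $A$-point of $G_k$ preserves $(\mathrm{Ker}\,q_\alpha)\ot_kA$, and a direct check (using exactness of $\ot_kA$ over the field $k$) shows that $\mathrm{Stab}_{G_k}(\widetilde M)=\mathrm{Stab}_{G_k}(M)=H_0$ as subgroup functors, hence as closed subgroup schemes. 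Your Step 3 then goes through verbatim with $\widetilde M$ (and its dimension) in place of $M$: the exterior power over $R$ and the addition of a faithful summand are both correct and match the paper's final step. Alternatively, you can avoid re-proving the lifting statement altogether and simply cite \cite[Proposition 3, p.41]{serre}, as the paper does.
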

\begin{proof}By \cite[16.1, Corollary]{waterhouse} there exists $\rho:G_k\to\mathbf{GL}_{r,k}$ and a line $\ell\subset k^r$ whose stabilizer is $H_0$. Using \cite[Proposition 3, p.41]{serre} we can find $W\in\mathrm{Rep}_R(G)^o$ together with a $G_k$-equivariant injection $k^r\to W_k$. Obviously, $H_0$ is still the stabilizer of the line $\ell\subset W_k$. 
Replacing $W$ by $V=W\oplus F$, with $F$ faithful,    we are done.   
\end{proof}

Let us now criticize Proposition \ref{09.10.2014--3}. Finding explicitly the ``covering'' representation $\varphi:E\to \ell$ is by no means a simple task. (But in many important cases, the character of $G_k'\to \mathbf G_{m,k}$ associated to $\ell$ will be a reduction of a character $G'\to \mathbf G_{m,R}$.) Also, if we abandon the need to construct the faithful representation $V'$ by means of linear algebra in the abelian category $\mathrm{Rep}_R(G')$, a more efficient path is:

\begin{prop}\label{24.06.2015--6}Let $V$ be a free $R$-module on the basis $\mathbf v_1,\ldots,\mathbf v_r$ affording a faithful representation of $G$.
Let $H_0\subset G_k$ be the stabilizer of the line spanned by $\mathbf v_1\ot1$ in $V_k$ and denote by $G'\to G$ the Neron blowup of $H_0$.
Write $[a_{ij}]\in \mathrm{GL}_r(R[G])$  for the matrix associated to the representation of $G$ on $V$, and let    $a_{21}',\ldots,a_{r1}'$ be functions of $R[G']$   which, when multiplied by $\pi$, become respectively  $a_{21},\ldots,a_{r1}$. Then the following claims are true. 

\begin{enumerate}
\item The $R$-submodule of $V_K$ freely generated by $\mathbf v_1\ot\pi^{-1},\mathbf v_2\ot1,\ldots,\mathbf v_r\ot1$ has the structure of an $R[G']$-comodule. Its associated matrix is 
\[\left[
\begin{array}{c|ccc}
a_{11} & \pi a_{12} & \cdots & \pi a_{1r} \\ \hline  a_{21}' & a_{22} & \cdots & a_{2r} \\ \vdots & \vdots &\ddots & \vdots \\
a_{r1}'& a_{r2}&\cdots & a_{rr}  
\end{array}
\right].
\] 
\item If $V'$ stands for the representation of $G'$ considered in the previous item, then $V\oplus V'$ is a faithful representation of $G'$. 
\end{enumerate}
\end{prop}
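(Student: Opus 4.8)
My plan is to handle the two claims separately, both reducing to explicit matrix computations resting on the description of $R[G']$ recorded just before the statement: the ideal of $H_0$ in $R[G]$ is $(\pi,a_{21},\ldots,a_{r1})$, whence $R[G']=R[G][a_{21}',\ldots,a_{r1}']$ with $\pi a_{i1}'=a_{i1}$.

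For (1), I would realise the asserted comodule as the restriction to the lattice $V'$ of the coaction already present on $V_K=V\otimes_R K$, viewed as a $K[G]=K[G']$-comodule. Writing $\mathbf v_1\otimes\pi^{-1}$ and $\mathbf v_j\otimes1$ ($j\ge2$) for the given basis of $V'$, and substituting $\mathbf v_1=\pi(\mathbf v_1\otimes\pi^{-1})$, $\mathbf v_i=\mathbf v_i\otimes1$ into $\mathbf v_j\mapsto\sum_i\mathbf v_i\otimes a_{ij}$, a short computation yields $\mathbf v_1\otimes\pi^{-1}\mapsto(\mathbf v_1\otimes\pi^{-1})\otimes a_{11}+\sum_{i\ge2}(\mathbf v_i\otimes1)\otimes a_{i1}'$ and, for $j\ge2$, $\mathbf v_j\otimes1\mapsto(\mathbf v_1\otimes\pi^{-1})\otimes\pi a_{1j}+\sum_{i\ge2}(\mathbf v_i\otimes1)\otimes a_{ij}$; this is precisely the displayed matrix, equivalently $D[a_{ij}]D^{-1}$ for $D=\mathrm{diag}(\pi,1,\ldots,1)$. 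The only genuine verification is that this coaction takes values in $V'\otimes_R R[G']$, which holds because each entry $a_{11},\pi a_{1j},a_{i1}',a_{ij}$ lies in $R[G']$; coassociativity and the counit identity are inherited for free, since $V'$ is an $R$-lattice inside the $K[G']$-comodule $V_K$.

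For (2), I would prove faithfulness by showing the algebra map $R[\mathbf{GL}_{2r,R}]\to R[G']$ attached to $V\oplus V'$ is surjective. The block-diagonal matrix of $V\oplus V'$ has among its coefficients all the $a_{ij}$ (from the summand $V$) and all the $a_{i1}'$ (from $V'$); its determinant is $\det[a_{ij}]\cdot\det(D[a_{ij}]D^{-1})=(\det[a_{ij}])^2$. Hence the subalgebra generated by these coefficients together with the inverse determinant contains $\det[a_{ij}]$ (a polynomial in the $a_{ij}$) and $(\det[a_{ij}])^{-2}$, hence $(\det[a_{ij}])^{-1}$, hence $R[G]=R[a_{ij}][(\det[a_{ij}])^{-1}]$ since $V$ is faithful for $G$, and finally $R[G']=R[G][a_{21}',\ldots,a_{r1}']$. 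Surjectivity, and thus the closed immersion $G'\hookrightarrow\mathbf{GL}_{2r,R}$, follows.

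I expect the one delicate point to be the determinant bookkeeping in (2). One must notice that $V'$ alone is \emph{not} faithful --- its coefficients furnish only $\pi a_{1j}$ and not $a_{1j}$, so they fail to generate $R[G]$ --- and that adjoining $V$ both supplies the missing $a_{1j}$ and keeps the inverse determinant of the direct sum equal to $(\det[a_{ij}])^{-2}$, from which $(\det[a_{ij}])^{-1}$, not merely $(\det[a_{ij}])^{-2}$, can still be recovered. Tracking exactly which coefficients each summand contributes is where the argument must be run with care; everything else is formal.
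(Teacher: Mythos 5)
Your proposal is correct and coincides with the paper's intended argument: the paper's entire proof of this proposition reads ``Once one knows what to look for, the proof is a triviality,'' and your computations --- the conjugated coaction matrix $D[a_{ij}]D^{-1}$, $D=\mathrm{diag}(\pi,1,\ldots,1)$, whose entries visibly lie in $R[G']$, plus the generation argument for faithfulness --- are exactly that triviality made explicit. Your handling of the one delicate point is also sound: the $V$-block supplies all the $a_{ij}$, hence $\det[a_{ij}]$ as a polynomial, so from the inverse determinant $(\det[a_{ij}])^{-2}$ of the block-diagonal matrix one recovers $(\det[a_{ij}])^{-1}$, hence $R[G]$, and then $R[G']=R[G][a_{21}',\ldots,a_{r1}']$.
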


\begin{proof}Once one knows what to look for, the proof is a triviality. 
\end{proof}

Provided the centre of the Neron blowup has certain particular features, the point of view of \cite{SS} provides another means to construct faithful representations.

\begin{prop}\label{24.06.2015--5}  
Let $H\subset G$ be an $R$-flat, normal subgroup scheme and write $A$ for the quotient group scheme. 
Denote by $G'$ and $A'$ the Neron blowups of $G$ at $H_k$ and of $A$ at $\{e\}\subset A_k$, respectively. Let $G'\to A'$ be the morphism obtained by the ``universal property''   (Lemma \ref{09.04.2015--1}).  If   $\rho:G\to \mathbf{GL}_r$ and  $\sigma:A'\to\mathbf{GL}_s$ are faithful representations,  then $\rho\oplus\sigma$ is a faithful representation of $G'$.  
\end{prop}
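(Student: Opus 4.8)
The plan is to verify faithfulness at the level of coordinate rings: since in our terminology a faithful representation is one giving a closed immersion, it suffices to show that the homomorphism $R[\mathbf{GL}_{r+s}]\to R[G']$ attached to $\rho\oplus\sigma$ is surjective. First I would recall where the arrow $G'\to A'$ comes from, as it controls one of the two pieces: by construction $G'_k\to G_k$ factors through $H_k$, and $H_k\to G_k\to A_k$ is trivial because $H=\ker(G\to A)$; hence $G'_k\to A_k$ factors through $\{e\}$ and Lemma \ref{09.04.2015--1} (the universal property of $A'\to A$) produces the desired arrow. With this in place, the image of $R[\mathbf{GL}_{r+s}]\to R[G']$ is the $R$-subalgebra generated by the image of $R[\mathbf{GL}_r]$ (which travels through $R[G]\subset R[G']$) together with the image of $R[\mathbf{GL}_s]$ (which travels through $R[A']\to R[G']$). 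Because $\rho$ and $\sigma$ are faithful, these two images are exactly $R[G]$ and the image $\overline{R[A']}$ of $R[A']$ in $R[G']$. So I am reduced to proving that $R[G]$ and $\overline{R[A']}$ together generate $R[G']$ as an $R$-algebra.

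Next I would unwind the two blowups in terms of ideals. Let $I\subset R[G]$ be the ideal of the flat closed subgroup $H$ and let $\g a\subset R[A]$ denote the augmentation ideal. The ideal of $H_k$ in $R[G]$ is $I+\pi R[G]$, so by definition of the Neron blowup (and using $\pi^{-1}\cdot\pi R[G]=R[G]$) one has $R[G']=R[G][\pi^{-1}I]$. The key identity is
\[
I=\big\langle\,\text{image of }\g a\text{ in }R[G]\,\big\rangle,
\]
the ideal generated by the image of $\g a$ under $R[A]\to R[G]$. This holds because $H$ is the scheme-theoretic kernel of $G\to A$, i.e. the fibre product $G\ti_A\mathrm{Spec}\,R$ along the identity section, whence $R[H]=R[G]\ot_{R[A]}R=R[G]/\langle\g a\rangle$.

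To finish, recall that $A'$ is the blowup of $A$ at $\{e\}$, so $R[A']=R[A][\pi^{-1}\g a]$; passing to $R[G']$, the subalgebra $\overline{R[A']}$ contains the image of $R[A]$ and all the elements $\pi^{-1}\bar f$, where $\bar f$ denotes the image in $R[G]$ of an $f\in\g a$. As these $\bar f$ generate the ideal $I$, the elementary remark that $B[\pi^{-1}\g S]=B[\pi^{-1}S]$ whenever $S$ generates the ideal $\g S$ (used already in the proof of Proposition \ref{05.02.2015--1}) shows that $R[G]$ together with $\overline{R[A']}$ generate $R[G][\pi^{-1}I]=R[G']$. This is precisely the surjectivity we wanted.

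The only genuinely substantive point is the key identity $I=\langle\g a\rangle$, namely that the ideal of the kernel in $R[G]$ is generated by the pulled-back augmentation ideal of the quotient. I expect this to be the main obstacle to write carefully: it rests on $H$ being the scheme-theoretic kernel and on the compatibility $R[H]=R[G]\ot_{R[A]}R$ for the faithfully flat quotient $G\to A$, with flatness of $H$ ensuring there is no $\pi$-torsion discrepancy between $I$ and the ideal of $H_k$. Everything else is bookkeeping with the explicit generators of the two blowup algebras.
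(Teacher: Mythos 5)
Your proof is correct and follows essentially the same route as the paper's: both reduce faithfulness to surjectivity of $R[\mathbf{GL}_r]\otimes R[\mathbf{GL}_s]\to R[G']$, both rest on the key fact that the ideal of $H$ in $R[G]$ is generated by the pulled-back augmentation ideal $\g a_A$ of $R[A]$ (so that $R[G']=R[G][\pi^{-1}a_1,\ldots,\pi^{-1}a_m]$), and both then observe that $\rho$ supplies $R[G]$ while $\sigma$ supplies the elements $\pi^{-1}a_i$. The only difference is that you spell out details the paper takes for granted (the construction of $G'\to A'$ via Lemma \ref{09.04.2015--1}, and the kernel-as-fibre-product justification of $I=\langle\g a_A\rangle$), which is fine.
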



\begin{proof} Here are the properties of the quotient which we shall need: $R[A]$ is a subring of $R[G]$ and if $\g a_A$ stands for the augmentation ideal of $R[A]$, then $\g a_A\cdot R[G]$ cuts out $H$. Hence, if   $a_1,\ldots,a_m$ are generators of $\g a_A$,  it follows that \[R[G']=R[G][\pi^{-1}a_1,\ldots,\pi^{-1}a_m].
\]
We  now verify that  
\[
\rho^*\otimes\sigma^*:R[\mathbf{GL}_r]\otimes R[\mathbf{GL}_s] \aro R[G']
\]
is a surjection. By definition of $\rho$, $R[G]\subset \mathrm{Im}(\rho^*\otimes\sigma^*)$. By definition of $\sigma$, the elements $\pi^{-1}a_i$ belong to $\mathrm{Im}(\sigma^*)$. This finishes the proof. 

\end{proof}

\section{Images of morphisms between flat group schemes}\label{06.03.2015--2}
Let $\rho:\Pi\to G$ be a morphism in $\mathbf{FGSch}/R$.  There are two natural ways of defining ``images'' of $\rho$. 
\begin{defn}[The diptych]\label{21.08.2014--1} Define $\Psi_\rho$ as the group scheme whose Hopf algebra is the image of $R[G]$ in $R[\Pi]$. Define $R[\Psi_{\rho}']$ as the saturation of the latter inside $R[\Pi]$. The obvious commutative  diagram 
\[
\xymatrix{   \Psi'_\rho \ar[r]& \Psi_\rho\ar [d] \\  \Pi\ar[r]_\rho \ar [u] & G   }
\]
is called the diptych of $\rho$. 
\end{defn}

Implicit in the above definition is the fact that $R[\Psi_\rho']$ is a Hopf algebra. This can be extracted from the proof of Lemma 3.1.2 in \cite{DH} (see Remarks \ref{21.04.2015--2}).  Another relevant fact, whose proof the reader can find in \cite[Theorem 4.1.1]{DH}, is the following. 

\begin{thm}The morphism $\Pi\to \Psi_\rho'$ is faithfully flat.\qed 
\end{thm}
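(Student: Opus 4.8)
The plan is to prove that the inclusion of Hopf algebras $A:=R[\Psi_\rho']\hookrightarrow B:=R[\Pi]$ is faithfully flat by descending the statement to the two fibres, where the classical theory over a field is available; at the end I recall that a flat ring map which is surjective on spectra is faithfully flat, so flatness plus surjectivity of $\mathrm{Spec}\,B\to\mathrm{Spec}\,A$ suffices. First I would record the behaviour of the inclusion under the two localisations of $R$. By construction $A$ is the \emph{saturation} inside $B$ of the image of $R[G]$, so $A$, $B$ and the quotient $B/A$ are all $R$-flat. Inverting $\pi$ makes saturation invisible, hence $A\ot K=\mathrm{Im}(K[G]\to K[\Pi])$ is a Hopf subalgebra of $B\ot K=K[\Pi]$. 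Reducing modulo $\pi$, flatness of $B/A$ kills $\mathrm{Tor}_1^R(B/A,k)$, so $A\ot k\hookrightarrow B\ot k=k[\Pi]$ stays injective and the coproduct descends: $A\ot k$ is a Hopf subalgebra of $k[\Pi]$. It is exactly here that the passage from $\Psi_\rho$ to $\Psi_\rho'$ is indispensable, since without saturation the reduction $A\ot k\to B\ot k$ need not be injective.

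Next I would invoke the field case. Over any field $F$ an affine group scheme is faithfully flat over the image of any homomorphism into it \cite[Ch.~14]{waterhouse}; equivalently, a commutative Hopf algebra over $F$ is faithfully flat over any Hopf subalgebra. Applying this to $F=K$, where $A\ot K$ is literally an image, and to $F=k$, where the injectivity of the comorphism $A\ot k\hookrightarrow k[\Pi]$ exhibits $\Psi_\rho'\ot k$ as the image of $\Pi_k\to\Psi_\rho'\ot k$, gives that $B\ot K$ is faithfully flat over $A\ot K$ and $B\ot k$ is faithfully flat over $A\ot k$.

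It remains to glue the two fibres, and this is the only genuine difficulty: since $\Pi$, and hence $A$ and $B$, may be of \emph{infinite type}, the fibre-by-fibre flatness criterion of [EGA $\mathrm{IV}_3$, 11.3.10] is not directly available. I would instead establish and use the following criterion, which needs no finiteness hypothesis: if $A$ is $R$-flat and $M$ is an $R$-flat $A$-module such that $M\ot K$ is $(A\ot K)$-flat and $M\ot k$ is $(A\ot k)$-flat, then $M$ is $A$-flat. To prove it one checks $\mathrm{Tor}_1^A(M,N)=0$ for every $A$-module $N$. Since $\pi$ is regular on $A$ and on $M$ one has $\mathrm{Tor}_{>0}^A(M,A\ot k)=0$, so the change-of-rings spectral sequence degenerates to $\mathrm{Tor}_\bullet^A(M,P)=\mathrm{Tor}_\bullet^{A\ot k}(M\ot k,P)$ for every $A\ot k$-module $P$. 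Writing $N_{\mathrm{tors}}$ for the $\pi$-power torsion and $N_{\mathrm{tf}}=N/N_{\mathrm{tors}}$, the filtration of $N_{\mathrm{tors}}$ by $\ker(\pi^m)$ has $A\ot k$-module subquotients, so flatness of $M\ot k$ and the identity above give $\mathrm{Tor}_1^A(M,N_{\mathrm{tors}})=0$; while the sequence $0\to N_{\mathrm{tf}}\xrightarrow{\pi}N_{\mathrm{tf}}\to N_{\mathrm{tf}}\ot k\to 0$ together with the vanishing of $\mathrm{Tor}_1^A(M,N_{\mathrm{tf}}\ot k)$ and $\mathrm{Tor}_2^A(M,N_{\mathrm{tf}}\ot k)$ shows that $\pi$ acts bijectively on $\mathrm{Tor}_1^A(M,N_{\mathrm{tf}})$; hence this group equals its localisation $\mathrm{Tor}_1^{A\ot K}(M\ot K,N_{\mathrm{tf}}\ot K)=0$ by generic flatness. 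Taking $M=B$ yields that $B$ is $A$-flat.

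Finally, faithfulness follows by checking surjectivity of $\mathrm{Spec}\,B\to\mathrm{Spec}\,A$: every prime of $A$ lies over $(0)$ or $(\pi)$ in $R$, hence arises from a prime of $A\ot K$ or of $A\ot k$, and is therefore in the image because the two fibres are already faithfully flat by the second paragraph. A flat ring map surjective on spectra is faithfully flat, so $\Pi\to\Psi_\rho'$ is faithfully flat. The principal obstacle is thus the non-noetherian, infinite-type nature of the rings in play, and it is precisely this that the torsion-free, spectral-sequence argument of the third paragraph is designed to bypass.
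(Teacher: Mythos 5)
Your proof is correct, but there is nothing in the paper to compare it against line by line: the paper gives no internal argument for this statement and simply quotes it from \cite[Theorem 4.1.1]{DH}. Your route is the natural one and very likely parallels that source: saturation forces $R[\Pi]/R[\Psi'_\rho]$ to be $\pi$-torsion-free, so both reductions $A\ot K\to K[\Pi]$ and $A\ot k\to k[\Pi]$ are inclusions of Hopf algebras over fields, Takeuchi's theorem (\cite[Ch.~14]{waterhouse}) gives faithful flatness on each fibre, and the problem becomes one of gluing. The genuine content you supply---and the step that really matters here---is the fibrewise flatness criterion over a DVR with \emph{no} finiteness hypotheses, since $\Pi$, hence $R[\Pi]$, may be of infinite type and the critère de platitude par fibres of [EGA $\mathrm{IV}_3$, 11.3.10] does not apply; your identification of this as the only real obstacle is accurate. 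The Tor argument for the criterion is sound: $\pi$-regularity on $A$ and $M$ gives $\mathrm{Tor}^A_{>0}(M,A\ot k)=0$, hence the change-of-rings identification $\mathrm{Tor}^A_\bullet(M,P)\simeq\mathrm{Tor}^{A\ot k}_\bullet(M\ot k,P)$ for $P$ killed by $\pi$; the torsion part of $N$ is handled by the $\ker(\pi^m)$ filtration together with the fact that Tor commutes with filtered colimits; and bijectivity of $\pi$ on $\mathrm{Tor}_1^A(M,N_{\mathrm{tf}})$ identifies that group with its localization, which vanishes by flatness of $M\ot K$ over $A\ot K$ (calling this ``generic flatness'' is a misnomer---you are invoking the generic-fibre hypothesis, not the generic flatness theorem). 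Two harmless omissions: the concluding splice of $0\to N_{\mathrm{tors}}\to N\to N_{\mathrm{tf}}\to 0$ to get $\mathrm{Tor}_1^A(M,N)=0$, and an explicit statement that the colimit argument starts from the base case $N[\pi]$. The final step---flat plus surjective on spectra equals faithfully flat, with surjectivity checked prime-by-prime over $(0)$ and $(\pi)$---is also correct.
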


A basic property of  $\Psi_\rho$ is as follows. 

\begin{lem} The  group scheme $\Psi_\rho$ is  the closure of the image of $\Pi_K  \to G_K$
in $G$.\qed 
\end{lem}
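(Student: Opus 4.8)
The plan is to translate both sides of the asserted equality into ideals of $R[G]$, since $\Psi_\rho$ and the purported closure are both closed subschemes of the \emph{affine} scheme $G$ and so are determined by ideals of $R[G]$; the whole proof then collapses to a comparison of two kernels.

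First I would identify the two ideals. By Definition \ref{21.08.2014--1} the Hopf algebra $R[\Psi_\rho]$ is the image of $\rho^\#\colon R[G]\to R[\Pi]$, so $\Psi_\rho$ is cut out in $G$ by $I_1:=\ker\rho^\#$. For the other side, I would first note that, $R$ being a DVR, the generic fibre $G_K$ is nothing but the open subscheme $D(\pi)\subset G$ (the complement of the special fibre), because $K[G]=R[G]\otimes_RK=R[G][\pi^{-1}]$. Hence ``the closure of the image of $\Pi_K\to G_K$ in $G$'' is the scheme-theoretic image of the composite $\Pi_K\to G_K\hookrightarrow G$; since $\Pi_K$ and $G$ are affine this image is cut out by $I_2:=\ker\bigl(R[G]\to K[\Pi]\bigr)$, the map in question being $R[G]\xrightarrow{\rho^\#}R[\Pi]\to R[\Pi]\otimes_RK=K[\Pi]$. (Whether one first forms the schematic image inside $G_K$ and then closes up, or takes the scheme-theoretic image of $\Pi_K\to G$ directly, one lands on the same ideal $I_2$, both reductions being legitimate thanks to the quasi-compactness of the affine morphisms at play.)

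The heart of the matter is then the equality $I_1=I_2$, and this is where the flatness hypothesis enters. Because $\Pi\in\mathbf{FGSch}/R$, the algebra $R[\Pi]$ is $R$-flat, hence $\pi$-torsion-free, so the localisation $R[\Pi]\to K[\Pi]$ is \emph{injective}. Composing with an injection does not enlarge a kernel, so $I_2=\ker\rho^\#=I_1$. This yields $R[\Psi_\rho]=R[G]/I_1=R[G]/I_2=\mathrm{Im}(\rho^\#)$, identifying $\Psi_\rho$ with the closure as closed subschemes of $G$; the group structures match automatically since both arise from the same Hopf-algebra quotient $R[G]\twoheadrightarrow R[G]/I_1$. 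I expect the only genuinely delicate points to be the bookkeeping in the previous paragraph — pinning down $G_K=D(\pi)$ so that ``closure'' really is scheme-theoretic closure across an open immersion, and checking the two routes to $I_2$ agree — while the decisive, if slight, input is the torsion-freeness of $R[\Pi]$ that forces $I_1=I_2$.
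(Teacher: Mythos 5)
Your proof is correct. The paper states this lemma without proof (it carries a \qed), and your argument supplies exactly the intended routine verification: both $\Psi_\rho$ and the schematic closure of the image of $\Pi_K\to G_K$ are cut out in the affine scheme $G$ by kernels of maps out of $R[G]$, and these kernels coincide because $R$-flatness of $R[\Pi]$ makes $R[\Pi]\to K[\Pi]$ injective, so that $\ker\bigl(R[G]\to R[\Pi]\bigr)=\ker\bigl(R[G]\to K[\Pi]\bigr)$.
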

Other useful facts having simple proofs are collected in the next lemma.

\begin{lem}Let $\rho:\Pi\to G$ be as before and consider a factorization of $\rho$ in the category $\mathbf{FGSch}/R$:
\[\Pi\aro G'\aro G.\]
 
\begin{enumerate}
\item If $\Pi\to G'$ faithfully flat, then there exists a unique dotted arrow   
rendering the diagram 
\[\xymatrix{& \Psi_\rho'\ar[dr]& \\ \Pi\ar[ru]\ar[r]&G'\ar[r]\ar@{-->}[u]&G}\] 
commutative. 
\item If $G'\to G$ is a closed embedding, then there exists a unique dotted arrow  
\[\xymatrix{&\ar@{-->}[d]\Psi_\rho\ar[dr]&\\ \Pi\ar[ru]\ar[r]&G'\ar[r]&G}\] 
rendering this diagram commutative. 
\item If $G'\to G$ is a closed embedding,   and the arrow $\Psi_\rho\to G'$ produced in (2) induces an isomorphism on generic fibres, then $\Psi_\rho\to G'$ is an isomorphism.
\item If $\Pi\to G'$ is faithfully flat and the arrow $G'\to \Psi_\rho'$ produced in (1) induces an isomorphism on generic fibres, then $G'\to \Psi_\rho'$ is an isomorphism. 
\qed 
\end{enumerate}
\end{lem}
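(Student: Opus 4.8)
The plan is to transport everything to Hopf algebras, where by construction $R[\Psi_\rho]$ is the image of $\rho^*\colon R[G]\to R[\Pi]$ and $R[\Psi_\rho']$ is its saturation inside $R[\Pi]$, while the given factorization $\Pi\to G'\to G$ corresponds to $R[G]\xrightarrow{f}R[G']\xrightarrow{g}R[\Pi]$ with $g\circ f=\rho^*$. The whole argument rests on one input that I would isolate first, and which I expect to be the only genuine obstacle: \emph{if $\Pi\to G'$ is faithfully flat, then $g$ is injective and its image is saturated in $R[\Pi]$}. Injectivity is immediate, since a faithfully flat ring map is injective. For saturation I would base change along $R\to k$: faithful flatness is stable under base change, so $k[G']\to k[\Pi]$ is again faithfully flat, hence injective; factoring the composite $R[G']\to R[\Pi]\to R[\Pi]\otimes k$ through $R[G']\otimes k$ then shows $R[G']\cap\pi R[\Pi]=\pi R[G']$, and as $R[\Pi]$ is $\pi$-torsion free this says precisely that $R[\Pi]/R[G']$ has no $\pi$-torsion, i.e. that $R[G']$ is saturated. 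Once this is in hand the four assertions become formal.

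For (1), identify $R[G']$ with its image in $R[\Pi]$. Then $R[\Psi_\rho]=g(f(R[G]))\subseteq g(R[G'])=R[G']$, and since $R[G']$ is a saturated $R$-submodule of $R[\Pi]$ containing $R[\Psi_\rho]$, minimality of the saturation forces $R[\Psi_\rho']\subseteq R[G']$. This inclusion of saturated Hopf subalgebras of $R[\Pi]$ is exactly the sought Hopf algebra homomorphism $R[\Psi_\rho']\to R[G']$, i.e. the dotted arrow $G'\to\Psi_\rho'$; both triangles commute because every map in sight is a restriction of the identity of $R[\Pi]$, and uniqueness follows from the injectivity of $R[G']\hookrightarrow R[\Pi]$. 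For (2), the hypothesis that $G'\to G$ is a closed embedding means $f$ is surjective, so $R[\Psi_\rho]=g(f(R[G]))=g(R[G'])$ is just the image of $R[G']$ in $R[\Pi]$; the corestriction $R[G']\twoheadrightarrow R[\Psi_\rho]$ is the desired $\Psi_\rho\to G'$, unique because $R[\Psi_\rho]\hookrightarrow R[\Pi]$ is injective.

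Parts (3) and (4) are the two dual forms of the principle ``generic isomorphism plus $R$-flatness forces isomorphism''. For (3), the map $\Psi_\rho\to G'$ corresponds to the surjection $q\colon R[G']\twoheadrightarrow R[\Psi_\rho]$; its kernel is a submodule of the $R$-flat module $R[G']$, hence $\pi$-torsion free, while $(\ker q)\otimes K=\ker(q\otimes K)=0$ by the generic-isomorphism hypothesis, so $\ker q=0$ and $q$ is an isomorphism. For (4), both $R[\Psi_\rho']$ (by definition) and $R[G']$ (by the saturation claim above) are saturated $R$-submodules of $R[\Pi]$, and the generic-isomorphism hypothesis says $R[\Psi_\rho']\otimes K=R[G']\otimes K$ as subspaces of $K[\Pi]$; given $x\in R[G']$ one then has $\pi^m x\in R[\Psi_\rho']$ for some $m\ge 0$, and saturation of $R[\Psi_\rho']$ in $R[\Pi]$ yields $x\in R[\Psi_\rho']$, whence $R[\Psi_\rho']=R[G']$ and the inclusion from (1) is an isomorphism.
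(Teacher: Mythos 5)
Your proof is correct. The paper states this lemma with no proof at all---it is introduced as a collection of facts ``having simple proofs''---and your argument is precisely the routine verification intended: everything reduces to your key observation that faithful flatness of $\Pi\to G'$ forces $g(R[G'])$ to be a \emph{saturated} Hopf subalgebra of $R[\Pi]$ (the same mod-$\pi$ injectivity device the paper uses in Lemma \ref{09.03.2015--7}), after which all four parts are formal manipulations of submodules of $R[\Pi]$ together with the standard fact that a torsion-free $R$-module vanishing after $\ot K$ is zero.
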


\begin{lem}\label{09.03.2015--7}If $\Psi'_{\rho,k}\to \Psi_{\rho,k}$ is faithfully flat, then $\Psi_{\rho}'\to \Psi_\rho$ is an isomorphism. 
\end{lem}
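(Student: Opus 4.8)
The statement is really one about rings, so the plan is to unwind the definitions and reduce to a commutative algebra triviality. Write $A=R[\Psi_\rho]$ and $B=R[\Psi'_\rho]$. By Definition \ref{21.08.2014--1} we have inclusions $A\subseteq B\subseteq R[\Pi]$, where $B$ is the saturation of $A$ inside $R[\Pi]$ in the sense of Definition \ref{21.04.2015--1}. Both $A$ and $B$ are torsion-free over the DVR $R$, being subrings of the $R$-flat ring $R[\Pi]$, hence both are $R$-flat. The morphism $\Psi'_\rho\to\Psi_\rho$ corresponds precisely to the inclusion $A\hookrightarrow B$, so the lemma amounts to the equality $A=B$. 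Finally, the hypothesis that $\Psi'_{\rho,k}\to\Psi_{\rho,k}$ is faithfully flat says that the reduction $A/\pi A\to B/\pi B$ is a faithfully flat homomorphism of $k$-algebras; the only consequence I shall use is that it is \emph{injective}.

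First I would record two structural facts. From the definition of saturation, the module $B/A$ is $\pi$-power torsion: if $x\in B$ then $\pi^mx\in A$ for some $m$, so $\pi^m$ kills the class of $x$. And injectivity of $A/\pi A\to B/\pi B$ is exactly the equality $A\cap\pi B=\pi A$ inside $B$ (the kernel of that reduction being $(A\cap\pi B)/\pi A$). Granting these, I would argue by contradiction. Suppose $x\in B\setminus A$ and take $m\geq1$ minimal with $\pi^mx\in A$. Then $a:=\pi^mx$ lies in $A\cap\pi B$, hence in $\pi A$, say $a=\pi a'$ with $a'\in A$. Since $B$ is $R$-flat, it has no $\pi$-torsion, so $\pi(\pi^{m-1}x-a')=0$ forces $\pi^{m-1}x=a'\in A$, contradicting the minimality of $m$. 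Therefore $B=A$, as required. Equivalently, one may tensor $0\to A\to B\to B/A\to0$ with $k$ and use the flatness of $B$ to identify $\ker(A\otimes k\to B\otimes k)$ with $\mathrm{Tor}_1^R(B/A,k)=(B/A)[\pi]$; injectivity then annihilates the $\pi$-torsion of the $\pi$-power torsion module $B/A$, whence $B/A=0$.

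There is no real obstacle here: the entire content lies in the two translations made above, namely that faithful flatness of the special fibre gives injectivity of $A/\pi A\to B/\pi B$, and that the defining property of saturation makes $B/A$ into a $\pi$-power torsion module; a $\pi$-power torsion module with no $\pi$-torsion vanishes. It is worth remarking that the argument exploits the hypothesis solely through injectivity of the reduced map, so the faithfulness, and indeed the flatness, of $\Psi'_{\rho,k}\to\Psi_{\rho,k}$ is more than is strictly needed.
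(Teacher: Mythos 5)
Your proof is correct and is essentially the paper's own argument: faithful flatness of $\Psi'_{\rho,k}\to\Psi_{\rho,k}$ gives injectivity of $k[\Psi_\rho]\to k[\Psi'_\rho]$, i.e. $R[\Psi_\rho]$ is saturated in $R[\Psi'_\rho]$, and since the quotient is $\pi$-power torsion (equivalently, $K[\Psi_\rho]=K[\Psi'_\rho]$), it must vanish. The only difference is that you spell out explicitly the final commutative-algebra step (the minimal-$m$ contradiction, or the $\mathrm{Tor}_1$ computation) that the paper leaves to the reader.
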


\begin{proof}By construction,  $R[\Psi_\rho]\to R[\Psi_\rho']$ is injective.  If $k[\Psi_\rho] \to k[\Psi_\rho']$ is also injective, which follows from the assumption, then $R[\Psi_\rho] \subset R[\Psi_\rho']$ is saturated. The equality $K[\Psi_\rho]=K[\Psi_\rho']$ then finishes the proof.   
\end{proof}

Over the residue field $k$, there is yet another interesting group scheme in sight: the image of $\rho_k$. We then have the \emph{triptych} of $\rho$, which is the  commutative diagram 
\begin{equation}\label{09.05.2014--1}
\xymatrix{  \ar@{->>}[rd]\Psi_{\rho,k}'\ar[rr]&& \Psi_{\rho,k}\ar@{^{(}->}[dd] \\ &\textrm{Im}(\rho_k)\ar@{^{(}->}[ru]\ar@{^{(}->}[rd] & \\ \ar@{->>}[uu]\Pi_k\ar[rr]_{\rho_k}\ar@{->>}[ru]  && G_k .} \qquad\qquad \begin{array}{ll}   \xymatrix{\ar@{^{(}->}[r]&}  =& \text{closed immersion}\\ \xymatrix{\ar@{->>}[r]&}= &\text{ faithfully flat.} \end{array}
\end{equation}
Together with Lemma \ref{09.03.2015--7}, diagram \eqref{09.05.2014--1} proves the following:
\begin{cor}\label{09.05.2014--2}The following claims are true.
\begin{enumerate}\item[i)] If $\mathrm{Im}(\rho_k)\to \Psi_{\rho,k}$ is an isomorphism, then $\Psi'_{\rho}\to \Psi_\rho$ is an isomorphism. 

\item[ii)] If $\Psi_{\rho}'\to \Psi_{\rho}$ is an isomorphism, then $\mathrm{Im}(\rho_k)\to\Psi_{\rho,k}$ is an isomorphism. 

\item[iii)] The image of $\Psi_{\rho,k}'$ in $\Psi_{\rho,k}$ is none other than  $\mathrm{Im}(\rho_k)$.
\end{enumerate}
\qed 
\end{cor}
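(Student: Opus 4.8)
The plan is to deduce all three assertions formally from the triptych \eqref{09.05.2014--1}, invoking Lemma \ref{09.03.2015--7} only once; no further geometric input should be needed. The single fact I would read off the diagram is that the reduction
\[
d:=(\Psi_\rho'\to\Psi_\rho)\ot k:\ \Psi'_{\rho,k}\longrightarrow\Psi_{\rho,k}
\]
factors through the left triangle of \eqref{09.05.2014--1} as
\[
\Psi'_{\rho,k}\xrightarrow{\ b\ }\mathrm{Im}(\rho_k)\xrightarrow{\ c\ }\Psi_{\rho,k},
\]
with $b$ faithfully flat and $c$ a closed immersion. Passing to Hopf algebras over $k$ this reads $d^{\#}=b^{\#}\circ c^{\#}$, where $c^{\#}:k[\Psi_{\rho,k}]\to k[\mathrm{Im}(\rho_k)]$ is surjective (as $c$ is a closed immersion) and $b^{\#}:k[\mathrm{Im}(\rho_k)]\to k[\Psi'_{\rho,k}]$ is injective (faithful flatness being, in particular, purity).

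For (iii) I would observe that the schematic image of $d$ equals that of $c\circ b$. Since $b$ is faithfully flat it is schematically dominant, so the schematic image of $c\circ b$ is the same as that of $c$; and the image of the closed immersion $c$ is its source $\mathrm{Im}(\rho_k)$. Hence the image of $\Psi'_{\rho,k}$ in $\Psi_{\rho,k}$ is exactly $\mathrm{Im}(\rho_k)$.

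For (i), assuming $c$ is an isomorphism, the map $d=c\circ b$ becomes faithfully flat, being the composite of a faithfully flat morphism with an isomorphism; that is, $\Psi'_{\rho,k}\to\Psi_{\rho,k}$ is faithfully flat, and Lemma \ref{09.03.2015--7} then yields that $\Psi'_\rho\to\Psi_\rho$ is an isomorphism. For (ii), assuming conversely that $\Psi'_\rho\to\Psi_\rho$ is an isomorphism, reduction modulo $\pi$ (base change of an isomorphism) makes $d$, hence $d^{\#}$, bijective. From $d^{\#}=b^{\#}\circ c^{\#}$ with $c^{\#}$ surjective and $b^{\#}$ injective one gets that $c^{\#}$ is also injective: if $c^{\#}(x)=0$ then $d^{\#}(x)=b^{\#}(c^{\#}(x))=0$, whence $x=0$. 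Thus $c^{\#}$ is bijective and $c=\big(\mathrm{Im}(\rho_k)\to\Psi_{\rho,k}\big)$ is an isomorphism.

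I do not anticipate a genuine obstacle inside the corollary: once the triptych is granted, the argument is pure bookkeeping with the dictionary ``faithfully flat $=$ injective/pure on rings, closed immersion $=$ surjective on rings,'' plus the lone appeal to Lemma \ref{09.03.2015--7}. The only points demanding care are keeping the arrows straight under the contravariant passage to Hopf algebras (so that $d^{\#}=b^{\#}\circ c^{\#}$ and not the reverse), and confirming that the factorization $d=c\circ b$ is precisely the one recorded in \eqref{09.05.2014--1}. The substantive work --- verifying that, after reduction modulo $\pi$, the relevant special-fibre morphisms really are closed immersions and faithfully flat --- is already absorbed into the construction of that diagram.
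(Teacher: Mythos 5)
Your proof is correct and is exactly the paper's argument: the paper offers no written proof beyond the remark that diagram \eqref{09.05.2014--1} together with Lemma \ref{09.03.2015--7} proves the corollary, and your factorization $d=c\circ b$ (faithfully flat followed by closed immersion), the Hopf-algebra dictionary, and the single appeal to Lemma \ref{09.03.2015--7} for part (i) are precisely the bookkeeping being left to the reader. No gaps; the care you flag about contravariance ($d^{\#}=b^{\#}\circ c^{\#}$) is the only point where one could slip, and you handle it correctly.
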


\begin{prop}\label{24.06.2015--7}The kernel of $\Psi_{\rho,k}'\to \mathrm{Im}(\rho_k)$ is unipotent. 
\end{prop}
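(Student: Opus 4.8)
The plan is to exploit the fact that $\Psi_\rho'\to\Psi_\rho$ is an isomorphism on generic fibres (passing to the saturation does not alter the $K$-fibre) and to decompose it via the theory of standard sequences. By Corollary \ref{09.05.2014--2}(iii) the group scheme $\mathrm{Im}(\rho_k)$ is exactly the scheme-theoretic image of $\Psi_{\rho,k}'\to\Psi_{\rho,k}$. Applying Theorem \ref{26.09.2014--6} to $\Psi_\rho'\to\Psi_\rho$, I obtain the standard blowup sequence $\Psi_\rho'=\varprojlim_nG_n$, with $G_0=\Psi_\rho$ and $G_{n+1}\to G_n$ the Neron blowup of $B_n:=\mathrm{Im}(\Psi_\rho'\ot k\to G_n\ot k)$; here $B_0=\mathrm{Im}(\rho_k)$. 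Since $k[\Psi_\rho']=\varinjlim_nk[G_n]$, the union of the images $k[B_n]$ is all of $k[\Psi_\rho']$, so $\Psi_\rho'\ot k=\varprojlim_nB_n$. Writing $F$ for the kernel of $\Psi_{\rho,k}'\to B_0$, one then gets $k[F]=\varinjlim_n\big(k[B_n]\ot_{k[B_0]}k\big)$, i.e. $F=\varprojlim_nK^{(n)}$ with $K^{(n)}:=\ker(B_n\to B_0)$.

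The heart of the matter is the following local statement, which I expect to be the main obstacle, since the centres $B_n$ are neither assumed of finite type nor regularly immersed and Theorem \ref{11.02.2015--1} therefore does not apply directly: \emph{for any Neron blowup $G'\to G$ of a closed subgroup $H_0\subset G_k$, the kernel $F_0$ of $G_k'\to H_0$ is unipotent.} To prove it, let $J\subset R[G]$ be the ideal of $H_0$ and choose generators so that $J=(\pi,a_1,a_2,\ldots)$ with each $a_i$ in the augmentation ideal $\g a$ of $R[G]$ (possible as in Lemma \ref{30.11.2016--1}, since $\varepsilon(J)\subset\pi R$). Put $\xi_i=\pi^{-1}a_i\in R[G']$, so that $R[G']=R[G][\xi_1,\xi_2,\ldots]$, and note that $k[F_0]=R[G']/(\pi R[G']+\g aR[G'])$ is generated over $k$ by the classes $\bar\xi_i$. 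I claim each $\bar\xi_i$ is primitive. Indeed, since $J$ is a Hopf ideal one may write $\Delta(a_i)$ as a sum of terms $p\ot q$ and $s\ot t$ with $p,t\in J$; substituting $J=(\pi)+\sum(a_l)$ and $a_l=\pi\xi_l$ shows that every such $p$ and $t$ lies in $\pi R[G']$, whence the coproduct extending that of $R[G]$ satisfies
\[
\Delta(\xi_i)=\pi^{-1}\Delta(a_i)\in R[G']\ot R[G'],
\]
an element of the shape $\sum(\cdots)\ot q+\sum s\ot(\cdots)$ with $q,s\in R[G]$. Reducing modulo $\pi R[G']+\g aR[G']$ collapses every factor lying in $R[G]$ to the scalar $\varepsilon(\cdot)$, so the image of $\Delta(\xi_i)$ in $k[F_0]\ot k[F_0]$ has the form $P_i\ot1+1\ot Q_i$; the counit identities together with $\varepsilon(\bar\xi_i)=0$ then force $\Delta(\bar\xi_i)=\bar\xi_i\ot1+1\ot\bar\xi_i$. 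Consequently the surjection $k[\zeta_i:i]\to k[F_0]$, $\zeta_i\mapsto\bar\xi_i$, from the Hopf algebra of a (possibly infinite) vector group is a homomorphism of Hopf algebras, exhibiting $F_0$ as a closed subgroup scheme of $\prod_i\GG_{a,k}$, hence unipotent.

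With the lemma in hand the proof assembles quickly. The kernel of $B_{n+1}\to B_n$ is a closed subgroup of the kernel of $G_{n+1}\ot k\to B_n$ (the former being the restriction of the latter along the closed immersion $B_{n+1}\hookrightarrow G_{n+1}\ot k$), hence unipotent. Because all the transition maps $B_{n+1}\to B_n$ are faithfully flat (condition (2) of Definition \ref{03.02.2015--2}), the group $K^{(n)}=\ker(B_n\to B_0)$ is an iterated extension of the unipotent groups $\ker(B_{i+1}\to B_i)$, $0\le i<n$, and unipotence is stable under extensions; thus each $K^{(n)}$ is unipotent. Finally $F=\varprojlim_nK^{(n)}$ is an inverse limit of unipotent group schemes, and unipotence is stable under such limits, so $F$ is unipotent, which is the assertion.
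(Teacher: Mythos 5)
Your proof is correct, and its skeleton coincides with the paper's: identify the kernel with $\mathrm{Ker}(\Psi'_{\rho,k}\to\Psi_{\rho,k})$, decompose $\Psi'_\rho\to\Psi_\rho$ into its standard blowup sequence (Theorem \ref{26.09.2014--6}), establish unipotence at each finite level, and pass to the inverse limit, using that unipotence survives closed subgroups, extensions and filtered inverse limits. The genuine difference is in the key lemma: where the paper disposes of the unipotence of the kernel of a single Neron blowup by citing \cite[Theorem 1.5]{WW}, you prove it from scratch, checking that the generators $\bar\xi_i$ of $k[F_0]$ are primitive and thereby embedding $F_0$ as a closed subgroup of a (possibly infinite) product of copies of $\GG_{a,k}$. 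This computation is sound --- the choice of generators $a_i$ in $\g a\cap J$, the containment $\Delta(J)\subset J\ot R[G]+R[G]\ot J$, and the counit argument all hold up --- and it buys two things: the argument becomes self-contained, and it visibly requires no finite-type hypothesis, which is relevant here because $\Pi$ and $G$, hence $\Psi_\rho$ and the terms $G_n$ of the standard sequence, need not be algebraic. Two small points should be patched. First, you invoke condition (2) of Definition \ref{03.02.2015--2} for the faithful flatness of $B_{n+1}\to B_n$, but your sequence arises as a standard blowup sequence in the sense of Definition \ref{19.11.2014--1}, so this property needs a one-line check: $k[B_n]\to k[B_{n+1}]$ is injective since both are identified with subalgebras of $k[\Psi'_\rho]$, and injective maps of Hopf algebras over a field correspond to faithfully flat morphisms \cite[Ch. 14]{waterhouse}. (Alternatively, faithful flatness can be bypassed entirely, as the paper does, since the image of $K^{(n)}\to K^{(n-1)}$ is in any case a closed subgroup of a unipotent group.) Second, the stability of unipotence under closed subgroups, extensions and filtered inverse limits, which you use both for $\prod_i\GG_{a,k}$ and in the final limit, should be pinned to \cite[Proposition 2.3, p. 485]{DG}, the same reference the paper relies on.
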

\begin{proof}Obviously, the kernel in question is also $\mathrm{Ker}\,\Psi_{\rho,k}'\to \Psi_{\rho,k}$ as we learn from diagram \eqref{09.05.2014--1}. Using Theorem \ref{26.09.2014--6}, we are able to write $\Psi'_\rho$ as $\varprojlim_i\Psi_i'$, where each $\Psi_{i+1}'\to\Psi_{i}'$ is a Neron blowup and $\Psi_0'=\Psi_\rho$. The reader can immediately verify that 
\[
\mathrm{Ker}\left(\Psi'_{\rho,k}\to \Psi_{\rho,k}\right)\quad=\quad\varprojlim_i\mathrm{Ker}\left(\Psi'_{i,k}\to \Psi_{\rho,k}\right).
\]
Consequently, using \cite[Proposition 2.3 on p. 485]{DG}, we only need to show that each kernel in the above   limit is unipotent. This, in turn follows directly from  \cite[Theorem 1.5]{WW} and two fundamental properties of unipotent group schemes: a closed subgroup scheme of a  unipotent group scheme is unipotent, and the extension of a  unipotent group scheme by a unipotent group scheme is still unipotent \cite[Proposition 2.3 on p. 485]{DG}. 
\end{proof}

We show in the following example that it is possible that neither  $\Psi_{\rho,k}'\to \mathrm{Im}(\rho_k)$ nor  $\mathrm{Im}(\rho_k)\to \Psi_{\rho,k}$ be an isomorphism. 

\begin{ex}\label{15.10.2014--2} Let $\rho:G'\to G$ be the Neron blowup of the origin in the closed fibre of $G=\mathbf G_{m,R}$. The diptych of $\rho$ is 
\[
\xymatrix{ G'\ar[r]^{\rho}  & G \\ \ar[u]^{\mathrm{id}}G'\ar[r]_\rho& G\ar[u]_{\mathrm{id}}.  }
\] 
Then $\Psi_{\rho,k}'=\mathbf G_{a,k}$, $\Psi_{\rho,k}=\mathbf G_{m,k}$ and $\mathrm{Im}(\rho_k)=\{e\}$.
\end{ex}

Further ahead, in Example \ref{15.10.2014--1}, we will show how this example fits in a more empiric situation.

Let $V$ be a free $R$-module of finite rank and assume that our $G$ above equals $\mathbf{GL}(V)$. 
We now interpret $\Psi_\rho$ and $\Psi_\rho'$ in terms of their representation categories. This amounts simply to finding proper references in the literature.
\begin{defn}\label{07.04.2015--2} Let $\Pi\in\mathbf{FGSch}/R$, as before. 
\begin{enumerate}
\item Let $V$ be an object of $\mathrm{Rep}_R(\Pi)^o$. Write $\mathbf T^{a,b}V$ for the representation $V^{\ot a}\ot V^{\vee\ot b}$ and denote by $\langle V\rangle_\ot$ the full subcategory of $\mathrm{Rep}_R(\Pi)$ having as objects sub-quotients of direct sums $\mathbf{T}^{a_1,b_1}V\oplus\cdots\oplus\mathbf{T}^{a_m,b_m}V$ for varying $a_i,b_i$. 
\item  Let $\alpha:V'\to V$ be a monomorphism in $\mathrm{Rep}_R(\Pi)$ with both $V$ and $V'$ free as $R$-modules. If $\mathrm{Coker}(\alpha)$ is also free as an $R$-module, we say, following \cite[Definitions 10 and 23]{dS09}, that $\alpha$ is a special monomorphism.  
Call an object $V''\in \mathrm{Rep}_R(\Pi)^o$ a special sub-quotient of $V$ if there exists a special monomorphism $V'\to V$ and an epimorphism $V'\to V''$. The category of all special sub-quotients of various $\mathbf{T}^{a_1,b_1}V\oplus\cdots\oplus\mathbf{T}^{a_m,b_m}V$ is denoted by $\langle V\rangle_\ot^s$.   
\end{enumerate}
\end{defn}

\begin{prop}\label{09.03.2015--2}
Let $V$ be an object of $\mathrm{Rep}_R(\Pi)^o$ and $\rho$ be the natural homomorphism $\Pi\to G:=\mathbf{GL}(V)$.
\begin{enumerate}
\item The obvious functor $\mathrm{Rep}_R(\Psi_\rho)\to\mathrm{Rep}_R(\Pi)$ defines an equivalence of categories between $\mathrm{Rep}_R(\Psi_\rho)^o$  and $\langle V\rangle_\ot^s$.
\item  The obvious functor $\mathrm{Rep}_R(\Psi_\rho')\to\mathrm{Rep}_R(\Pi)$ defines an equivalence between $\mathrm{Rep}_R(\Psi_\rho')$ and $\langle V\rangle_\ot$. 
\end{enumerate}
\end{prop}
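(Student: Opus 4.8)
The plan is to recognise each statement as an instance of a Tannakian dictionary already present in the literature, and then to match the relevant group scheme with the corresponding subcategory. Throughout I regard $V$ as living in $\mathrm{Rep}_R(\Pi)$ and write $q:\Pi\to\Psi_\rho'$ and $p:\Pi\to\Psi_\rho$ for the two structural morphisms, recalling that $q$ is faithfully flat (by the theorem stated just after Definition \ref{21.08.2014--1}), while $p$ factors the closed immersion $\Psi_\rho\hookrightarrow\mathbf{GL}(V)$, since $R[\mathbf{GL}(V)]\twoheadrightarrow R[\Psi_\rho]$ by the very definition of $\Psi_\rho$.

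For part (2) I would invoke the Tannakian reconstruction over $R$ of \cite{DH}: because $q$ is faithfully flat, the functor $\mathrm{Rep}_R(\Psi_\rho')\to\mathrm{Rep}_R(\Pi)$ is fully faithful and its essential image $\mathcal C$ is a full abelian tensor subcategory closed under subobjects and quotients taken inside $\mathrm{Rep}_R(\Pi)$. The morphism $\Psi_\rho'\to\mathbf{GL}(V)$ (which exists because $R[\Psi_\rho]\subset R[\Psi_\rho']$) shows that $V\in\mathcal C$, whence $\langle V\rangle_\ot\subseteq\mathcal C$. The reverse inclusion — that every finite-type representation of $\Psi_\rho'$ is a subquotient of a sum of the $\mathbf T^{a,b}V$ — is the assertion that $V$ tensor-generates $\mathrm{Rep}_R(\Psi_\rho')$; equivalently, that the Tannaka group of the subcategory $\langle V\rangle_\ot$ is exactly the saturated image $\Psi_\rho'$. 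This is the point at which I would appeal most heavily to \cite{DH}, as it is precisely the recognition, over the base $R$, of the faithfully flat image of $\Pi\to\mathbf{GL}(V)$ through the subquotient-closed tensor category it generates.

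For part (1) the morphism $p$ is \emph{not} faithfully flat, so the previous dictionary does not apply and one must instead use the framework of special subobjects of \cite{dS09}. Since $\Psi_\rho\hookrightarrow\mathbf{GL}(V)$ is a closed immersion, \cite[Definitions 10 and 23]{dS09} identify $\mathrm{Rep}_R(\Psi_\rho)^o$ with the category of those free representations of $\Pi$ obtained as quotients of \emph{special} submodules (i.e. with free cokernel) of sums of the $\mathbf T^{a,b}V$, which is by definition $\langle V\rangle_\ot^s$. The role of the freeness and specialness hypotheses here is exactly to compensate for the failure of flatness of $p$: the non-saturated image $\Psi_\rho$ sees only those subquotients whose formation commutes with reduction modulo $\pi$, which are the special ones.

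The main obstacle I anticipate lies not in the easy inclusions but in the matching of Tannaka groups: one must be certain that $\langle V\rangle_\ot$ (respectively $\langle V\rangle_\ot^s$) reconstructs precisely $\Psi_\rho'$ (respectively $\Psi_\rho$) and not some intermediate group scheme. In the saturated case this is delicate because $\Psi_\rho'\to\Psi_\rho$ is merely a generic isomorphism, so the additional representations of $\Psi_\rho'$ must be exhibited as genuine ($\pi$-torsion-bearing) subquotients of tensor powers of $V$ — the very mechanism underlying the construction of faithful representations in Proposition \ref{05.05.2014--2}. Checking that this mechanism exhausts $\mathrm{Rep}_R(\Psi_\rho')$, and dually that the special-subquotient constraint cuts $\mathrm{Rep}_R(\Psi_\rho)^o$ down correctly, is the crux, and it is here that the Tannakian results of \cite{DH} and \cite{dS09} do the decisive work.
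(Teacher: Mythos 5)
Your proposal is correct and follows essentially the same route as the paper, whose entire proof consists of citing \cite[Proposition~12]{dS09} for part (1) and the definition of $\Psi_\rho'$ together with \cite[Theorem~4.2]{DH} for part (2). The only cosmetic difference is that for part (1) the decisive statement is Proposition~12 of \cite{dS09} (not just Definitions 10 and 23, which merely set up the notion of special subquotient), but you are clearly appealing to exactly that framework.
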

\begin{proof}The first claim is a result of \cite[Proposition~12]{dS09}. The second claim follows from the definition of $\Psi_\rho'$ and \cite[Theorem~4.2]{DH}.
\end{proof}

Note that, in general,  $\mathrm{Rep}_R(\Psi_\rho)$ is not a full subcategory of $\mathrm{Rep}_R(\Pi)$.
This means that we have the following interpretation of the diptych (Definition \ref{21.08.2014--1}) in terms of representation categories: 
\[
\xymatrix{\langle V\rangle_\ot\ar[d]& \ar[l]\ar[d]\langle V\rangle^s_\ot\\ \mathrm{Rep}_R(\Pi)&\mathrm{Rep}_R(\mathbf{GL}(V))^o.\ar[l]}
\]

We now deal with the representation theoretic interpretation of the triptych (diagram \eqref{09.05.2014--1}) of $\rho$. For that, given an $R$-linear category $\mathcal C$, write $\mathcal C_{(k)}$ to denote the full subcategory whose objects $W$ are annihilated by $\pi$, i.e. $\pi\cdot\mathrm{id}_W=0$ in $\mathrm{Hom}_{\mathcal C}(W,W)$. We then have a commutative diagram of \emph{solid} arrows between $k$-linear abelian categories: 
\begin{equation}\label{09.03.2015--8}
\xymatrix{\mathrm{Rep}_R(\Psi_\rho')_{(k)}\ar[dd]     && \mathrm{Rep}_R(\Psi_\rho)_{(k)}\ar[ll]\ar@{-->}[dl]  \\ &\ar[lu]\langle V\ot k\rangle_\ot \ar[dl]&\\ \mathrm{Rep}_R(\Pi)_{(k)}.  }
\end{equation}
From \cite[Part I, 10.1, 162]{jantzen} the categories $\mathrm{Rep}_R(-)_{(k)}$ are simply the corresponding representations categories of the group schemes obtained by base change $R\to k$. Since $V$ is a faithful representation of $\Psi_\rho$ (recall that $\Psi_\rho\to\mathbf{GL}(V)$ is a closed immersion by construction), $V\ot k$ is a faithful representation of $\Psi_\rho\ot k$, so that each object of $\mathrm{Rep}_R(\Psi_\rho)_{(k)}$ is a sub-quotient of some $\bigoplus \mathbf T^{a_i,b_i}(V\ot k)$.
 This means that the upper horizontal arrow in diagram \eqref{09.03.2015--8} factors through $\langle V\ot k\rangle_\ot$, i.e. the dotted arrow exists and still produces a commutative diagram. We conclude that diagram \eqref{09.03.2015--8} captures the essence of diagram \eqref{09.05.2014--1} as the former  can easily be completed by introducing the representation category of the general linear group on the lower right corner.


\section{Neron blowups of formal subgroup schemes}\label{05.02.2015--2}

Let $G$ be group scheme over $R$ which is flat and of finite type. In what follows, we fix a non-negative integer $n$. 
Let  $H_n\subset G\ot{R_n}$ be a closed subgroup scheme (over $R_n$) cut out by the ideal $I_n\subset R[G]$. We note that, in this case, $\pi^{n+1}\in I_n$. 

\begin{defn} The subring of $K[G]$ obtained by adjoining to $R[G]$ all elements of the form $\pi^{-n-1}a$ with $a\in I_n$ will be denoted by $E^n$. 
\end{defn}

\begin{lem}\label{23.09.2014--1}
Let $\Delta$, $\varepsilon$, and $S$ denote respectively the co-multiplication, the co-identity and the antipode of $K[G]$. Then $\Delta$, $\varepsilon$ and $S$ send $E^n$ into $E^n\ot E^n$, $R$ and $E^n$ respectively. 
\end{lem}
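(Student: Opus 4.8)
The plan is to reduce everything to a computation on generators and then translate the condition that $H_n$ is a subgroup scheme into three concrete properties of the ideal $I_n$. As an $R$-algebra, $E^n$ is generated by $R[G]$ together with the elements $\pi^{-n-1}a$ with $a\in I_n$ (and it suffices to let $a$ range over a finite generating set of $I_n$). Since $\Delta$ and $\varepsilon$ are $K$-algebra homomorphisms and $S$ is a $K$-algebra anti-homomorphism, and since $E^n\ot E^n$, $R$ and $E^n$ are rings, it is enough to verify the three assertions on these generators. On $R[G]$ there is nothing to prove: $R[G]$ is already a Hopf algebra over $R$, so $\Delta(R[G])\subset R[G]\ot R[G]\subset E^n\ot E^n$, $\varepsilon(R[G])\subset R$, and $S(R[G])\subset R[G]\subset E^n$. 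I also note that $E^n$, being a submodule of the $K$-vector space $K[G]$, is torsion-free and hence flat over $R$, so that $E^n\ot E^n\to K[G]\ot_K K[G]$ behaves well and the containment $\Delta(E^n)\subset E^n\ot E^n$ is meaningful.

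The core step is to extract, from the hypothesis that $H_n$ is a closed subgroup scheme of $G\ot R_n$, the following three properties of $I_n$ inside $R[G]$:
\[
\Delta(I_n)\subset I_n\ot R[G]+R[G]\ot I_n,\qquad \varepsilon(I_n)\subset\pi^{n+1}R,\qquad S(I_n)\subset I_n.
\]
These are the liftings to $R[G]$ of the Hopf-ideal conditions satisfied by $\bar I_n:=I_n/(\pi^{n+1})$ in $R_n[G]$. The decisive simplification, which makes the bookkeeping clean, is that $\pi^{n+1}\in I_n$: any error term arising from reduction modulo $\pi^{n+1}$ is divisible by $\pi^{n+1}$ and therefore already belongs to $I_n$ (respectively to $\pi^{n+1}R$, or to $I_n\ot R[G]$). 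Thus the coideal condition lifts without a residual $\pi^{n+1}$-term, the counit condition reads $\varepsilon(a)\in\pi^{n+1}R$, and the antipode condition reads $S(a)\in I_n$.

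With these in hand the conclusion is immediate on each generator $\pi^{-n-1}a$. Writing $\Delta(a)=\sum_i b_i\ot c_i+\sum_j d_j\ot e_j$ with $b_i,e_j\in I_n$ and $c_i,d_j\in R[G]$, one gets
\[
\Delta(\pi^{-n-1}a)=\sum_i(\pi^{-n-1}b_i)\ot c_i+\sum_j d_j\ot(\pi^{-n-1}e_j),
\]
and each tensor factor lies in $E^n$ (the $\pi^{-n-1}b_i,\pi^{-n-1}e_j$ by construction of $E^n$, the $c_i,d_j$ because $R[G]\subset E^n$); hence $\Delta(\pi^{-n-1}a)\in E^n\ot E^n$. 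Likewise $\varepsilon(\pi^{-n-1}a)=\pi^{-n-1}\varepsilon(a)\in R$ because $\varepsilon(a)\in\pi^{n+1}R$, and $S(\pi^{-n-1}a)=\pi^{-n-1}S(a)\in\pi^{-n-1}I_n\subset E^n$ because $S(a)\in I_n$. The main obstacle is not the final algebra but the second step: one must carry out the lifting of the three Hopf-ideal conditions from $R_n[G]$ to $R[G]$ with care, tracking the $\pi^{n+1}$-error terms and using $\pi^{n+1}\in I_n$ to absorb them; once this is done correctly, the rest follows formally from the (anti-)homomorphism property of $\Delta$, $\varepsilon$, $S$.
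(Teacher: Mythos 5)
Your proof is correct and follows essentially the same route as the paper's: both reduce to the generators $\pi^{-n-1}a$, lift the Hopf-ideal conditions on $\bar I_n\subset R_n[G]$ to statements about $I_n$ in $R[G]$, and divide by $\pi^{n+1}$. The only (cosmetic) difference is that the paper keeps the $\pi^{n+1}$-error terms explicit (e.g.\ $\Delta(a)=\sum a_i\ot x_i+x_i'\ot a_i'+\pi^{n+1}\tau$ with $\tau\in R[G]\ot R[G]\subset E^n\ot E^n$), whereas you absorb them into the ideal terms using $\pi^{n+1}\in I_n$; both computations are identical in substance.
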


\begin{proof}
Let $a\in I_{n}$. Since $(\mathrm{Ker}\,\varepsilon,\pi^{n+1})\supset(I_n,\pi^{n+1})$, there exists some $c\in \mathrm{Ker}\,\varepsilon$ and some $f\in R[G]$ such that $a=c+\pi^{n+1}f$. It then follows that $\varepsilon(a)=\pi^{n+1}\varepsilon(f)$, so that $\varepsilon(a\pi^{-n-1})\in R$. Also, there are $a_i,a_i'\in 
I_n$ together with $\tau\in R[G]\ot R[G]$ such that 
\[
\Delta(a)=\sum a_i\ot x_i+x_i'\ot a_i'+\pi^{n+1}\tau. 
\]
Then, 
\[
\Delta\left(\pi^{-n-1}a\right)= \sum \pi^{-n-1}a_i\ot x_i+x_i'\ot\pi^{-n-1} a_i'+\tau.
\]
The verification of the statement concerning the antipode is equally trivial and is omitted. 
\end{proof} 

\begin{defn}\label{20.11.2014--1}The group scheme  
\[\mathrm{Spec}\,E^n\]
is called the Neron blowup of $H_n$.  
\end{defn}

Obviously,  there is a morphism of group schemes \[\mathrm{Spec}\,E^n\aro G\] which, when tensored with $K$, becomes an isomorphism. The next result, whose simple proof is left to  the reader, says a bit more about the group scheme $\mathrm{Spec}\,E^n$.
\begin{lem}\label{01.12.2016--1}The following claims are true. 
\begin{enumerate}
\item The morphism $\mathrm{Spec}\,E^n\to G$ when tensored with $R_n$ factors thorough $H_n\subset G\otimes R_n$. 
\item If $f:G'\to G$ is an arrow of $\mathbf{FGSch}/R$ such that $f\ot R_n$ factors through $H_n\subset G\ot R_n$, then $f$ factors uniquely through $\mathrm{Spec}\,E^n\to G$. \qed
\end{enumerate}
\end{lem}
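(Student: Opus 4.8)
The plan is to translate both claims into statements about the chain of inclusions $R[G]\subset E^n\subset K[G]$, and to exploit repeatedly the elementary observation that, by the very construction of $E^n$, every $a\in I_n$ satisfies $\pi^{-n-1}a\in E^n$, whence $I_n\subset\pi^{n+1}E^n$.

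For claim (1) I would argue entirely on the level of rings. The morphism $\mathrm{Spec}\,E^n\to G$ corresponds to the inclusion $R[G]\hookrightarrow E^n$, and tensoring with $R_n$ gives $R[G]/\pi^{n+1}\to E^n/\pi^{n+1}E^n$. Since $H_n$ is cut out by $I_n$ and $\pi^{n+1}\in I_n$, its coordinate ring is $R_n[H_n]=R[G]/I_n$, so factoring $(\mathrm{Spec}\,E^n)\ot R_n\to G\ot R_n$ through $H_n$ amounts to showing that the image of $I_n$ in $E^n/\pi^{n+1}E^n$ vanishes. This is immediate from $I_n\subset\pi^{n+1}E^n$: for $a\in I_n$ one has $a=\pi^{n+1}\cdot(\pi^{-n-1}a)$ with $\pi^{-n-1}a\in E^n$. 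The resulting map $R[G]/I_n\to E^n/\pi^{n+1}E^n$ is the desired factorization.

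For claim (2) let $f^\#:R[G]\to R[G']$ be the homomorphism corresponding to $f$. Because $G'$ is $R$-flat, $R[G']$ is $\pi$-torsion-free and embeds into $K[G']$; I would work inside $K[G']$, where $f^\#$ extends to $f^\#_K:K[G]\to K[G']$. The hypothesis that $f\ot R_n$ factors through $H_n$ says precisely that the composite $R[G]\to R[G']/\pi^{n+1}$ kills $I_n$, i.e. $f^\#(I_n)\subset\pi^{n+1}R[G']$. Using torsion-freeness, each $f^\#(a)$ with $a\in I_n$ is divisible by $\pi^{n+1}$ uniquely inside $R[G']$, so $f^\#_K(\pi^{-n-1}a)=\pi^{-n-1}f^\#(a)\in R[G']$. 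As $E^n$ is generated over $R[G]$ by the elements $\pi^{-n-1}a$, this yields $f^\#_K(E^n)\subset R[G']$, hence a ring homomorphism $E^n\to R[G']$ extending $f^\#$. It is automatically a homomorphism of Hopf algebras, since the co-multiplication, co-identity and antipode on $E^n$ are the restrictions of those on $K[G]$ by Lemma \ref{23.09.2014--1} and $f^\#_K$ is a $K$-Hopf map; dually this is the sought factorization $G'\to\mathrm{Spec}\,E^n$. Uniqueness is forced by flatness: any lift must agree with the restriction of $f^\#_K$, and since $R[G']\subset K[G']$, a homomorphism $E^n\to R[G']$ over $R[G]$ is determined by its extension to $K$-algebras.

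All the computations are routine; the only point that demands care is the precise dictionary between the geometric factorization conditions and the ideal-theoretic statements $I_n\subset\pi^{n+1}E^n$ and $f^\#(I_n)\subset\pi^{n+1}R[G']$, together with the systematic use of the $R$-flatness of $G'$ to divide by $\pi^{n+1}$ without ambiguity. I would not expect any genuine obstacle here.
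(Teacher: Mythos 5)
Your proof is correct, and since the paper explicitly leaves this lemma's ``simple proof'' to the reader, your argument is precisely the intended routine verification: claim (1) reduces to $I_n\subset\pi^{n+1}E^n$, and claim (2) uses $\pi$-torsion-freeness of $R[G']$ to divide $f^\#(I_n)\subset\pi^{n+1}R[G']$ and land $f^\#_K(E^n)$ inside $R[G']$, with the Hopf compatibility and uniqueness both following by passing to the generic fibre. This is the same mechanism underlying the universal property of the ordinary Neron blowup (Lemma \ref{09.04.2015--1}), so nothing further is needed.
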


We now wish to concentrate on the case ``$n= \infty$.'' We assume that $R$ is \emph{complete}.  Let $\widehat G$ be the   completion of $G$ along its closed fibre (it is automatically flat over $R$) and let  $\mathfrak H\subset\widehat G$ be a closed formal subgroup scheme which is moreover flat over $R$.  
We follow traditional notation in the theory of adic algebras and write $R\langle  G\rangle$, respectively $R\langle \mathfrak H\rangle$, to denote the algebra associated to the formal schemes $\widehat G$, respectively $\mathfrak H$. This being so, $R\langle \mathfrak H\rangle$ is a quotient of $R\langle  G\rangle$ by some ideal $I$. 
In order to carry on proofs, we shall find useful to let $I_n\subset R[G]$ be the ideal of the closed subgroup $H_n$ of $G\ot{R_n}$ induced by $\mathfrak H$. (The integer $n$ should no more be thought as fixed.) Note that $\pi^{n+1}\in I_n$ and $I_n R\langle  G\rangle=(\pi^{n+1},I)$. 
Using Lemma \ref{01.12.2016--1}, we see that $E^n\subset E^{n+1}$.

\begin{defn}\label{25.09.2014--2}We denote by $E^\infty$ the algebra $\cup_n E^n$. 
\end{defn}

Of course, replacing ``$n$'' by ``$\infty$'' in the statement of Lemma \ref{23.09.2014--1} still produces a true claim so that we have the following. 

\begin{defn}\label{25.09.2014--1}The group scheme associated to the $R$-algebra  $E^\infty$ of Definition \ref{25.09.2014--2} will be denoted by $\cn^\infty_{\g H}$ or $\cn^\infty_{\g H}(G)$ if the need presents. The group scheme $\cn^\infty_{\g H}$ will be called the blowup of $G$ along   $\g H$. 

The group scheme $\mathrm{Spec}\,E^n$ associated to $H_n=\g H\ot R_n$ (see Definition \ref{20.11.2014--1}) will be denoted by $\cn^n_{\g H}$ or $\cn^n_{\g H}(G)$ if the need presents. The group scheme $\cn^n_{\g H}$ will be called the partial blowup of $G$ of level $n$ along   $\g H$. 

If $\g H$ is the completion of a closed subgroup $H$ of $G$, we put $\cn_H^*=\cn_{\g H}^*$. 
\end{defn}

\begin{ex}The automatic blowup of the identity introduced in Definition \ref{22.08.2014--2} is, due to Proposition \ref{05.02.2015--1},  $\cn_{\{e\}}^\infty(G)$.
\end{ex}

Note that the obvious arrow of groups $\cn_{\g H}^\infty\to G$ becomes an isomorphism when tensored with $K$. 
The following is a trivial observation which will prove useful further ahead.  

\begin{lem}\label{26.09.2014--1}Let $\varphi:G'\to G$ be a morphism in $\mathbf{FGSch}/R$. Denote by $\hat\varphi:\widehat{G}'\to\widehat{G}$ the morphism induced between $\pi$-adic completions. If $\g H'\subset \widehat{G}'$ is a closed and $R$-flat formal subgroup which is taken by $\hat\varphi$ into $\g H$, then there exists a unique arrow of affine group schemes $\cn^\infty_{\g H'}(G')\to \cn^\infty_{\g H}(G)$ rendering the diagram 
\[
\xymatrix{ \ar[d] \cn^\infty_{\g H'}(G')\ar[r]&G'\ar[d]^\varphi& \\  \cn^\infty_{\g H}(G)\ar[r]&G  }
\]   
commutative. 
\qed
\end{lem}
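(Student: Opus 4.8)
The plan is to build the arrow on the level of Hopf algebras inside the generic fibres. The morphism $\varphi$ induces a homomorphism of Hopf algebras $\varphi^\#:R[G]\to R[G']$ which extends uniquely to a homomorphism of Hopf algebras over $K$, namely $\varphi^\#_K:K[G]\to K[G']$. I would write $E^\infty\subset K[G]$ and $(E')^\infty\subset K[G']$ for the $R$-algebras of $\cn^\infty_{\g H}(G)$ and $\cn^\infty_{\g H'}(G')$ (Definition \ref{25.09.2014--1}), and let $I_n\subset R[G]$, $I'_n\subset R[G']$ be the ideals of $H_n=\g H\ot R_n$ and $H'_n=\g H'\ot R_n$. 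The claim to establish is that $\varphi^\#_K$ maps $E^\infty$ into $(E')^\infty$, and that its restriction is the morphism we seek.

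The crux is the containment $\varphi^\#(I_n)\subset I'_n$ for every $n$. To obtain it, I would reduce the hypothesis modulo $\pi^{n+1}$: since $\g H\ot R_n$ and $\g H'\ot R_n$ depend only on the reductions mod $\pi^{n+1}$ and $\hat\varphi$ carries $\g H'$ into $\g H$, the morphism $\varphi\ot R_n:G'\ot R_n\to G\ot R_n$ carries $H'_n$ into $H_n$. On rings this says that $\varphi^\#\ot R_n$ sends the ideal of $H_n$ in $R_n[G]$ into that of $H'_n$ in $R_n[G']$; lifting back to $R[G']$ yields $\varphi^\#(I_n)\subset I'_n+\pi^{n+1}R[G']$, and since $\pi^{n+1}\in I'_n$ the term $\pi^{n+1}R[G']$ is absorbed, giving $\varphi^\#(I_n)\subset I'_n$.

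Existence would then follow formally. As $E^\infty$ is generated over $R$ by $R[G]$ together with the elements $\pi^{-n-1}a$ with $a\in I_n$, and since $\varphi^\#_K(R[G])\subset R[G']\subset (E')^\infty$ while $\varphi^\#_K(\pi^{-n-1}a)=\pi^{-n-1}\varphi^\#(a)\in\pi^{-n-1}I'_n\subset (E')^\infty$, one concludes $\varphi^\#_K(E^\infty)\subset (E')^\infty$. The restriction $E^\infty\to (E')^\infty$ is a homomorphism of Hopf algebras: both $E^\infty$ and $(E')^\infty$ are Hopf subalgebras of the respective generic fibres (the $n=\infty$ form of Lemma \ref{23.09.2014--1}), and $\varphi^\#_K$ is itself a Hopf algebra homomorphism. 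The square commutes by construction, since the map agrees with $\varphi^\#$ on $R[G]$.

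Finally, uniqueness would be argued as follows. Both $E^\infty$ and $(E')^\infty$ are $\pi$-torsion free, being subrings of $K[G]$ and $K[G']$, hence inject into their generic fibres. Any $\psi:E^\infty\to (E')^\infty$ fitting into the square must restrict on $R[G]$ to $\varphi^\#$, so $\psi\ot K$ and $\varphi^\#_K$ are two $K$-algebra maps $K[G]\to K[G']$ agreeing on the generating subring $R[G]$; they therefore coincide, forcing $\psi$ to be the restriction of $\varphi^\#_K$. The one genuinely non-formal step is the ideal computation $\varphi^\#(I_n)\subset I'_n$, i.e. the transport of the formal hypothesis $\hat\varphi(\g H')\subset\g H$ down to the finite levels $R_n$; once that is secured, everything else is bookkeeping.
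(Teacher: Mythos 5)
Your proposal is correct, and it follows the only natural route: the paper offers no written proof (the lemma is labelled a ``trivial observation''), and your argument—transporting the hypothesis $\hat\varphi(\g H')\subset\g H$ to the finite levels to get $\varphi^\#(I_n)\subset I'_n+\pi^{n+1}R[G']\subset I'_n$, then checking that $\varphi^\#_K$ carries $E^\infty$ into $(E')^\infty$ with uniqueness forced by $\pi$-torsion freeness—is precisely the unwinding of Definitions \ref{20.11.2014--1} and \ref{25.09.2014--1} that the authors leave to the reader.
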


In the following lines we wish to present some elementary features of the blowup along a formal subgroup. We maintain the above notations and introduce 
\[K\langle G\rangle=K\otimes R\langle G\rangle\quad\text{and}\quad K\langle\g H\rangle=K\ot R\langle \g H\rangle.\] 
For what comes, the following commutative  diagram can be helpful.
\[
\xymatrix{ \ar@{^{(}->}[d] R[G]\ar[r]&\ar@{^{(}->}[d] R\langle  G\rangle \ar@{->>}[r] & R\langle\mathfrak  H\rangle \ar@{^{(}->}[d]\\ K[G]\ar[r]&\ar[r] K\langle  G\rangle\ar@{->>}[r]  & K\langle \mathfrak H\rangle.  }
\]

\begin{lem}An element $b\in K[G]$ belongs to $E^\infty$ if and only if its image in $K\langle \g H\rangle$ belongs to the image of $R\langle\g H\rangle$. 
\end{lem}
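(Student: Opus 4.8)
The statement breaks into an easy inclusion and a substantive one, and the plan is to treat them separately after recording that $R\langle\g H\rangle$ injects into $K\langle\g H\rangle$ (it is $R$-flat, hence $\pi$-torsion free, as displayed in the diagram preceding the statement), so that ``belongs to the image of $R\langle\g H\rangle$'' unambiguously designates a \emph{subring} of $K\langle\g H\rangle$.

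For the ``only if'' direction I would use that $E^\infty=\cup_nE^n$ is generated as a ring by $R[G]$ together with the elements $\pi^{-n-1}a$ with $a\in I_n$, and check that each such generator maps into $R\langle\g H\rangle$. Indeed $R[G]$ maps into $R\langle\g H\rangle$ through $R\langle G\rangle$; and since $I_nR\langle G\rangle=(\pi^{n+1},I)$, the image of $I_n$ in $R\langle\g H\rangle=R\langle G\rangle/I$ lies in $\pi^{n+1}R\langle\g H\rangle$, whence $\pi^{-n-1}a$ maps into $R\langle\g H\rangle$. As $R\langle\g H\rangle$ is a subring of $K\langle\g H\rangle$, the image of the whole ring $E^\infty$ lands in it.

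The converse is the heart of the matter. Given $b\in K[G]$ whose image $\bar b$ lies in $R\langle\g H\rangle$, I would first invoke $K[G]=R[G][\pi^{-1}]$ to write $\pi^Nb=c\in R[G]$ for some $N\ge0$; the case $N=0$ is trivial ($b\in R[G]\subset E^0$), so assume $N\ge1$. Then the image $\bar c$ of $c$ in $R\langle\g H\rangle$ equals $\pi^N\bar b\in\pi^NR\langle\g H\rangle$, i.e. $c$ dies in $R\langle\g H\rangle/\pi^NR\langle\g H\rangle$. The key structural identification is $R\langle\g H\rangle/\pi^NR\langle\g H\rangle\cong R_{N-1}[H_{N-1}]=R_{N-1}[G]/\bar I_{N-1}$, using that $\g H$ is $R$-flat and $\pi$-adically complete (so reduction modulo $\pi^N$ recovers $H_{N-1}=\g H\ot R_{N-1}$) and that $I_{N-1}$ is by definition the ideal of $H_{N-1}$ lifted to $R[G]$; under this identification the composite $R[G]\to R\langle\g H\rangle\to R\langle\g H\rangle/\pi^N$ is just reduction modulo $\pi^N$ followed by the quotient by $\bar I_{N-1}$, whose kernel is the preimage of $\bar I_{N-1}$, namely $I_{N-1}$ itself (here one uses $\pi^N=\pi^{(N-1)+1}\in I_{N-1}$, so that $\pi^NR[G]\subseteq I_{N-1}$). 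Hence $c\in I_{N-1}$, and therefore $b=\pi^{-N}c=\pi^{-((N-1)+1)}c$ is one of the defining generators of $E^{N-1}$, giving $b\in E^{N-1}\subset E^\infty$.

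The only genuinely delicate point is the identification of $R\langle\g H\rangle/\pi^NR\langle\g H\rangle$ with $R_{N-1}[G]/\bar I_{N-1}$ together with the compatibility of the two maps out of $R[G]$; this rests solely on the flatness of $\g H$ and on the relation $I_nR\langle G\rangle=(\pi^{n+1},I)$ recorded just before the lemma. Once these are in place, no ring-theoretic manipulation with products in $E^\infty$ is required, since $b$ is exhibited directly as a single generator of the ring $E^{N-1}$.
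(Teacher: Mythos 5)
Your proof is correct and follows essentially the same route as the paper's: the easy direction checks the generators $\pi^{-n-1}a$ of $E^n$ land in $R\langle\g H\rangle$ via the relation $I_nR\langle G\rangle=(\pi^{n+1},I)$, and the substantive direction clears denominators and uses that same relation (in your phrasing, the identification $R\langle\g H\rangle/\pi^NR\langle\g H\rangle\cong R[G]/I_{N-1}$; in the paper's, the congruence $\pi^m a^*\equiv b \bmod I$ reduced modulo $I_{m-1}$) to conclude $c\in I_{N-1}$ and hence $b\in E^{N-1}$. The two arguments are the same computation organized slightly differently, with flatness of $\g H$ entering in both only through the injectivity of $R\langle\g H\rangle\to K\langle\g H\rangle$.
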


\begin{proof}Let $b\in R[G]$ and  $m\ge1$. We assume that the image of $\pi^{-m}b$ in $K\langle\g H\rangle$ coincides with the image of an element  $a^*\in R\langle   G\rangle$. Then $\pi^m a^*$ and $b$ have the same image in $K\langle\g H\rangle$. This  implies that $\pi^m a^*\equiv b\mod I$.   As $I R_n[G]=I_n R_n[G]$, we have $0\equiv b\mod I_{m-1} R_{m-1}[G]$. Hence $\pi^{-m}b\in E^{m-1}(G)$.

Let $a\in I_n$. Since $(I,\pi^{n+1})=I_nR\langle G\rangle$, we can write $a=\pi^{n+1}a^*+a^{**}$, where $a^*\in R\langle G\rangle$ and $a^{**}\in I$. Then, $\pi^{-n-1}a=a^*+\pi^{-n-1}a^{**}$ in $K\langle G\rangle$ with $a^*\in R\langle G\rangle$ and $\pi^{-n-1}a^{**}\in I\cdot K\langle  G\rangle$. The image of $\pi^{-n-1}a$ in $K\langle\g H\rangle$ is then the image of $a^*$. 
\end{proof}

The next statement employs the notion of fibre product of rings, cf. \cite[Section 1]{ferrand}.

\begin{cor}\label{06.06.2015--1}The first projection \[K[G]\times_{K\langle\mathfrak H\rangle}R\langle\mathfrak H\rangle\longrightarrow K[G]\]
induces an isomorphism between $K[G]\times_{K\langle\mathfrak H\rangle}R\langle\mathfrak H\rangle$ and $E^\infty$.\qed
\end{cor}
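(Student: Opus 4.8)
The plan is to exhibit the first projection $p\colon K[G]\times_{K\langle\mathfrak H\rangle}R\langle\mathfrak H\rangle\to K[G]$, $(b,a)\mapsto b$, as a bijection onto $E^\infty$, with essentially all the substance already supplied by the preceding lemma. By the very definition of the fibre product of rings, $p$ is a homomorphism of $R$-algebras, and a pair $(b,a)$ lies in the source precisely when $b$ and $a$ have the same image in $K\langle\mathfrak H\rangle$ (where $b$ is sent through $K[G]\to K\langle G\rangle\twoheadrightarrow K\langle\mathfrak H\rangle$ and $a$ through $R\langle\mathfrak H\rangle\hookrightarrow K\langle\mathfrak H\rangle$).

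First I would record that the vertical arrow $R\langle\mathfrak H\rangle\to K\langle\mathfrak H\rangle$ of the commutative diagram above is injective. This is exactly the flatness of $\mathfrak H$ over $R$: since $R$ is a DVR, $R\langle\mathfrak H\rangle$ is then $\pi$-torsion free, hence embeds into $K\langle\mathfrak H\rangle=K\otimes_RR\langle\mathfrak H\rangle$ (it is indeed drawn as a monomorphism in the diagram). The consequence I want is that for a given $b\in K[G]$ there is \emph{at most one} $a\in R\langle\mathfrak H\rangle$ having the same image as $b$ in $K\langle\mathfrak H\rangle$. Therefore a pair in the fibre product is determined by its first coordinate, and $p$ is injective.

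It then remains to identify the image of $p$. By construction $b\in K[G]$ lies in the image of $p$ if and only if there exists $a\in R\langle\mathfrak H\rangle$ whose image in $K\langle\mathfrak H\rangle$ coincides with that of $b$, that is, if and only if the image of $b$ in $K\langle\mathfrak H\rangle$ belongs to the image of $R\langle\mathfrak H\rangle$. But this is precisely the membership criterion for $E^\infty$ furnished by the preceding lemma. Hence the image of $p$ equals $E^\infty$, and $p$ factors as an isomorphism onto $E^\infty$ followed by the inclusion $E^\infty\hookrightarrow K[G]$, which is the assertion.

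I do not anticipate any genuine obstacle: the content of the statement resides in the preceding lemma, and this corollary is only the translation of that set-theoretic description into the language of fibre products, by the same mechanism already exploited in Corollary \ref{06.02.2015--2} for the automatic blowup of the identity. The single point deserving a moment's care is the injectivity of $R\langle\mathfrak H\rangle\to K\langle\mathfrak H\rangle$, which is where the flatness of $\mathfrak H$ over $R$ is used and without which $p$ would fail to be injective.
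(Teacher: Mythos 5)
Your proof is correct and is precisely the argument the paper intends: the corollary is stated with no proof because it is the direct translation of the preceding lemma (the image of the first projection is exactly the set of $b\in K[G]$ whose image in $K\langle\mathfrak H\rangle$ comes from $R\langle\mathfrak H\rangle$, i.e.\ $E^\infty$), with injectivity coming from the inclusion $R\langle\mathfrak H\rangle\hookrightarrow K\langle\mathfrak H\rangle$ already recorded in the paper's commutative diagram. Your identification of $R$-flatness of $\mathfrak H$ as the reason that inclusion holds, and hence the one point where a hypothesis is genuinely used, is exactly right.
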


The next result marks a distinctive feature of the Neron blowup of a formal  flat subgroup as opposed to Neron blowups centered at the special fibre (cf. the second remark after Theorem 1.4 in \cite{WW}). 

\begin{cor}\label{24.09.2014--2}For each $n\in\mathbb N$, the obvious morphism of $R_n$-groups  $\cn^\infty\otimes R_n\stackrel{}{\to}G\otimes R_n$ induces an isomorphism of $\cn^\infty\otimes R_n$ with $H_n=\mathfrak H\otimes R_n$. 
\end{cor}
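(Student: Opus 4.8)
The plan is to work entirely on the level of rings and to exhibit an explicit isomorphism $E^\infty/\pi^{n+1}E^\infty\xrightarrow{\sim}R[G]/I_n$, the source being the algebra of $\cn^\infty\ot R_n$ and the target (recall $\pi^{n+1}\in I_n$, so it is already an $R_n$-algebra) the algebra of $H_n$. The mechanism is the characterisation of $E^\infty$ given in the Lemma preceding Corollary \ref{06.06.2015--1}: an element $b\in K[G]$ lies in $E^\infty$ precisely when its image in $K\langle\g H\rangle$ lies in the image of $R\langle\g H\rangle$. Since $\g H$ is $R$-flat, $R\langle\g H\rangle$ is $\pi$-torsion free, whence $R\langle\g H\rangle\hookrightarrow K\langle\g H\rangle$; therefore the composite $K[G]\to K\langle G\rangle\to K\langle\g H\rangle$ restricts to a well-defined homomorphism of $R$-algebras (indeed of Hopf algebras) $\phi\colon E^\infty\to R\langle\g H\rangle$.

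Next I would identify the reduction of the target. From the relation $I_nR\langle G\rangle=(\pi^{n+1},I)$ recorded before Definition \ref{25.09.2014--2}, together with $R\langle G\rangle/\pi^{n+1}=R_n[G]$, one gets
\[
R\langle\g H\rangle/\pi^{n+1}R\langle\g H\rangle\;=\;R\langle G\rangle/(I,\pi^{n+1})\;=\;R[G]/I_n,
\]
the last identification using $\pi^{n+1}\in I_n$. Composing $\phi$ with this reduction produces $\bar\phi\colon E^\infty\to R[G]/I_n$. Surjectivity is then immediate: the restriction of $\bar\phi$ to the subring $R[G]\subset E^\infty$ is exactly the canonical quotient $R[G]\to R[G]/I_n$.

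The heart of the argument is the computation $\ker\bar\phi=\pi^{n+1}E^\infty$, which shows that $\bar\phi$ descends to the desired isomorphism. The inclusion $\pi^{n+1}E^\infty\subset\ker\bar\phi$ is clear. For the reverse, I would take $x\in E^\infty$ with $\phi(x)\in\pi^{n+1}R\langle\g H\rangle$ and claim $\pi^{-n-1}x\in E^\infty$. Indeed, the image of $\pi^{-n-1}x$ in $K\langle\g H\rangle$ is $\pi^{-n-1}\phi(x)$, and this lies in the image of $R\langle\g H\rangle$ because $\phi(x)=\pi^{n+1}y$ for some $y\in R\langle\g H\rangle$ and $R\langle\g H\rangle$ is $\pi$-torsion free, so $\pi^{-n-1}\phi(x)=y$. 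By the characterising Lemma, $\pi^{-n-1}x\in E^\infty$, that is, $x\in\pi^{n+1}E^\infty$. Hence $\bar\phi$ induces an isomorphism $E^\infty/\pi^{n+1}E^\infty\xrightarrow{\sim}R[G]/I_n$; since every arrow in sight respects the Hopf structure, this is an isomorphism of $R_n$-group schemes, compatible with the maps to $G\ot R_n$, i.e. $\cn^\infty\ot R_n\simeq H_n$.

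The main obstacle is not a single hard computation but rather keeping the two flatness facts in their correct roles. The flatness of $\g H$ is what yields $R\langle\g H\rangle\hookrightarrow K\langle\g H\rangle$, and it is used twice: once to define $\phi$ and once, in the kernel step, to perform the cancellation $\pi^{-n-1}\phi(x)=y$. Separately, the torsion-freeness of $E^\infty\subset K[G]$ is what permits the characterising Lemma to be applied to $\pi^{-n-1}x$. One should also take care that the identification $R\langle\g H\rangle/\pi^{n+1}=R[G]/I_n$ is exactly the one turning $\bar\phi|_{R[G]}$ into the canonical quotient map, which is precisely where the relation $I_nR\langle G\rangle=(\pi^{n+1},I)$ is indispensable.
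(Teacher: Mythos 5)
Your proof is correct and takes essentially the same route as the paper: your map $\phi$ is precisely the second projection of the fibre product $E^\infty\simeq K[G]\times_{K\langle\mathfrak H\rangle}R\langle\mathfrak H\rangle$ used there (Corollary \ref{06.06.2015--1}), and your kernel computation is the paper's injectivity step verbatim---an element whose image in $R\langle\mathfrak H\rangle$ is divisible by $\pi^{n+1}$ is written as $\pi^{n+1}$ times an element of $K[G]$ which the characterising Lemma certifies lies in $E^\infty$. The only differences are presentational: you invoke that Lemma directly rather than its fibre-product reformulation, and you make explicit the identification $R\langle\mathfrak H\rangle/\pi^{n+1}R\langle\mathfrak H\rangle\simeq R[G]/I_n$ and the surjectivity that the paper dismisses as ``easily checked.''
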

\begin{proof} In view of Corollary \ref{06.06.2015--1}, it is enough to show that the obvious morphism 
\[\theta:R[G]\aro K[G]\times_{K\langle\mathfrak H\rangle}R\langle\mathfrak H\rangle,
\]
when tensored with $R_n$, gives the ensuing factorization: 
\[\xymatrix{
R_n[G] \ar[d]\ar[r]& R_n\ot\left( K[G]\times_{K\langle\mathfrak H\rangle}R\langle\mathfrak H\rangle \right) \\ R_n\ot R\langle\g H\rangle\ar[ru]_\sim& 
}\]

We consider the  commutative diagram of $R$-algebras
\[\xymatrix{K[G]\times_{K\langle\mathfrak H\rangle}R\langle\mathfrak H\rangle\ar[r]^-{\mathrm{pr_2}}& R\langle\g H\rangle \\ R[G]\ar[u]^\theta\ar[r]& \ar@{->>}[u]R\langle G\rangle,}\]
and claim that $\mathrm{pr}_2\ot R_n$ is an isomorphism. Surjectivity is easily checked  and  injectivity is justified as follows: If $(f,\varphi)\in K[G]\times_{K\langle\mathfrak H\rangle}R\langle\mathfrak H\rangle$ is such that $\varphi=\pi^{n+1}\varphi'$, then $(f,\varphi)=\pi^{n+1}(\pi^{-n-1}f,\varphi')$, so that $(f,\varphi)$ belongs to $(\pi^{n+1})$. 
 The proof then follows by tensoring the above commutative square with $R_n$. 
\end{proof}

We now analyze the standard sequence (see Definition \ref{19.11.2014--1}) associated to $\cn_{\g H}^\infty\to G$. 
We will see that this sequence can be produced by a spontaneous process. For that we need the following concept. 

\begin{defn}[Strict transform]\label{19.11.2014--2}Let $\mathfrak H\subset\widehat G$ be as before.  
Write  $I\subset R\langle G\rangle$ for the ideal of $\g H$ and let $\rho:G'\to G$ be the Neron blowup of some closed subgroup of $G\ot k$.  Then the  strict transform of $\mathfrak H$, denoted $\rho^\#(\mathfrak H)$, is the closed   formal subgroup of $\widehat G'$ cut out by the saturation of the ideal $I\cdot R\langle G'\rangle$, namely  $\cup_n\left(I\cdot R\langle G'\rangle:\pi^n\right)$. If $H\subset G$ is a closed $R$-flat subgroup, we can equally define the strict transform $\rho^\#(H)$, which now is a closed, subgroup of $G'$.   
\end{defn}

\begin{rmks}\begin{enumerate}\label{17.01.2017--1}
\item By construction, all strict transforms are flat over $R$.
\item The strict transform of closed subgroups $H\subset G$ is used in \cite[Section 1]{SS}. 
\item If $\rho:G'\to G$  and $H\subset G$ are as in Definition \ref{19.11.2014--2}, then the strict transform $\rho^\#(H)$ is just the closure in $G'$ of the closed subgroup $H\ot K$ in $G'\ot K$. The same reasoning can be applied in the case of a formal subgroup as $\mathfrak H\subset \widehat G$ if we use the concept of schematic closure in the setting of rigid geometry, see \cite[0.2.4(vi), p.17]{berthelot91}. (Note that   $K\langle G\rangle\to K\langle G'\rangle$ is not usually an isomorphism.) 
\item The completion $\widehat A$ of a noetherian $R$-algebra $A$ is flat an an $A$-module \cite[Theorem 8.8, p.60]{matsumura}. From this, we conclude that for any noetherian $R$-algebra $A$ and for any ideal $J$ of $A$, the equality  $J^\mathrm{sat}\widehat A=(J\widehat A)^\mathrm{sat}$ holds. This proves that, in the notation of the previous item,  the completion of $\rho^\#(H)$ is $\rho^\#(\widehat H)$.
\end{enumerate}
\end{rmks}

We are now ready to state:  
\begin{thm}\label{26.09.2014--7}Let $\mathfrak H\subset \widehat G$ be as above. 
\begin{enumerate}
\item Let $G_0:=G$,  $\g H_0:=\g H$ and denote by  $\rho_0:G_1\to G_0$  the Neron blowup of $\g H_0\ot k$. If $\g H_n$ is defined, write $\rho_n:G_{n+1}\to G_n$ for the Neron blowup of $\g H_n\ot k$ and put $\g H_{n+1}:=\rho_n^\#(\g H_n)$. Then 
\[
\cdots\stackrel{\rho_1}{\longrightarrow} G_1\stackrel{\rho_0}{\longrightarrow}G_0=G\]  
is the standard sequence (see Definition \ref{19.11.2014--1}) of $\cn^\infty_{\g H}(G)\to G$.
\item The image of the canonical morphism $\mathcal N_{\g H}^\infty(G)\ot k\to G_{n+1}\ot k$ is taken isomorphically by $\rho_n\ot k$ onto the image of $\mathcal N_{\g H}^\infty(G)\ot k\to G_{n}\ot k$.
\end{enumerate} 
\end{thm}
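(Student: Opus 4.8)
The plan is to deduce the claim from Corollary \ref{24.09.2014--2}, which already identifies the special fibre of a blowup along a formal flat subgroup, by applying it not to $(G,\g H)$ directly but to every pair $(G_m,\g H_m)$ occurring in the sequence of the first part. Throughout write $\cn=\cn^\infty_{\g H}(G)$ and, for $m\ge0$, $B_m=\mathrm{Im}(\cn\ot k\to G_m\ot k)$, where $\cn\to G_m$ is the canonical projection coming from $\cn=\varprojlim_iG_i$.

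The first and decisive step is to recompute $\cn$ at each stage. The pair $(G_m,\g H_m)$ satisfies the hypotheses of the theorem — $G_m$ is again flat of finite type and $\g H_m$ is $R$-flat by Remark \ref{17.01.2017--1}(1) — and the strict-transform recursion issuing from it reproduces verbatim the tail $\cdots\to G_{m+1}\to G_m$. Hence the first part of the theorem, applied to $(G_m,\g H_m)$, shows this tail to be the standard sequence of $\cn^\infty_{\g H_m}(G_m)\to G_m$. As the tail is cofinal in $\cdots\to G_1\to G_0$, Theorem \ref{26.09.2014--6} furnishes a canonical isomorphism $\cn^\infty_{\g H_m}(G_m)\xrightarrow{\sim}\varprojlim_iG_i=\cn$ compatible with the projections to $G_m$.

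Next I would feed these identifications into Corollary \ref{24.09.2014--2} at level $0$: applied to $(G_m,\g H_m)$, it asserts that $\cn\ot k\to G_m\ot k$ factors as an isomorphism $\phi_m\colon\cn\ot k\xrightarrow{\sim}\g H_m\ot k$ followed by the closed immersion $\g H_m\ot k\hookrightarrow G_m\ot k$. Consequently $B_m=\g H_m\ot k$ and the induced arrow $\cn\ot k\to B_m$ is an isomorphism of $k$-group schemes, which I again denote $\phi_m$.

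It then remains to compare the cases $m=n$ and $m=n+1$. Since the projections from $\cn$ are compatible, the composite $\cn\ot k\to G_{n+1}\ot k\xrightarrow{\rho_n\ot k}G_n\ot k$ equals $\cn\ot k\to G_n\ot k$; passing to schematic images shows that $\rho_n\ot k$ maps $B_{n+1}$ onto $B_n$ and that $(\rho_n\ot k)|_{B_{n+1}}\circ\phi_{n+1}=\phi_n$. As $\phi_n$ and $\phi_{n+1}$ are isomorphisms, so is $(\rho_n\ot k)|_{B_{n+1}}=\phi_n\circ\phi_{n+1}^{-1}$, which is exactly the asserted isomorphism of images. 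The only real obstacle is the recomputation in the second paragraph — one must know that dropping the initial terms of the sequence genuinely yields the standard sequence of the blowup over $G_m$, i.e. that the process is memoryless — and this is supplied by the first part of the theorem; thereafter Corollary \ref{24.09.2014--2} does all the work and the conclusion is purely formal.
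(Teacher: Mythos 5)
Your argument is circular at its decisive point. The entire second paragraph rests on ``the first part of the theorem, applied to $(G_m,\g H_m)$'' --- but part (1) is precisely the statement under proof, and the instance for $(G_m,\g H_m)$ is in no sense a smaller or previously settled case: no parameter decreases when passing from $(G,\g H)$ to $(G_m,\g H_m)$, so this cannot be organized into a legitimate induction. What you call ``memorylessness'' --- that restarting the recursion at $(G_m,\g H_m)$ yields the standard sequence of $\cn^\infty_{\g H_m}(G_m)\to G_m$, whence $\cn^\infty_{\g H_m}(G_m)\simeq\cn^\infty_{\g H}(G)$ compatibly with the projections --- is exactly the mathematical content of the theorem, not something that can be ``supplied by the first part of the theorem.'' Note also that even the identification $\varprojlim_iG_i=\cn^\infty_{\g H}(G)$, which you extract from Theorem \ref{26.09.2014--6}, already presupposes part (1) for $(G,\g H)$: without it you do not know that the given diagram is the standard blowup sequence of $\cn^\infty_{\g H}(G)\to G$, so Theorem \ref{26.09.2014--6} says nothing about its limit.

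What is missing is the actual mechanism for the inductive step, which in the paper runs as follows: (i) Proposition \ref{26.09.2014--2}, asserting that $\widehat G_1\to\widehat G_0$ carries the strict transform $\g H_1=\rho_0^\#(\g H_0)$ isomorphically onto $\g H_0$ (its proof requires the description of $R\langle G_1\rangle$ given in Lemma \ref{25.09.2014--3}); (ii) the functoriality Lemma \ref{26.09.2014--1}, producing a morphism $\cn^\infty_{\g H_1}(G_1)\to\cn^\infty_{\g H_0}(G_0)$ compatible with the maps to $G_0$; and (iii) Corollary \ref{24.09.2014--2} applied at both levels which, combined with (i), shows that this morphism is an isomorphism on special fibres; since it is also one on generic fibres, it is an isomorphism of group schemes. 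Only after this is established does the image of $\cn^\infty_{\g H}(G)\ot k$ in $G_1\ot k$ get identified with $\g H_1\ot k$, which is what makes the sequence of blowups of the $\g H_n\ot k$ coincide with the standard sequence and lets the induction on $n$ prove (1) and (2) simultaneously. Your final paragraph --- deducing the isomorphism of images in (2) from the identifications $B_m=\g H_m\ot k$ and the compatibility of projections --- is sound, but it is the easy, formal part; the identifications themselves are what require proof, and your proposal supplies no non-circular argument for them.
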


The proof will need the following preparation. 
From Corollary \ref{24.09.2014--2} we know that the image of $\rho_0\ot k$ is just $\g H_0\ot k$. Let $a_1^*,\ldots,a_r^*$ generate the ideal  $I$ of $\g H$ and let $a_{i}\in R[G_0]$ be such that $a_{i}^*\equiv a_{i}\mod\pi$. 
Obviously, the ideal of $\g H_0\ot k$ is $(\pi,a_{1},\ldots,a_{r})$. The Neron blowup $\rho_0:G_1\to G_0$ of $\g H_0\ot k$ is given by the inclusion  
\[
R[G_0]\subset E=R[G_0]\left[\frac{a_1}{\pi},\ldots,\frac{a_r}{\pi}\right]. 
\]

In the what follows, we write 
  $A\langle \xi_1,\ldots,\xi_r\rangle$ for the $\pi$-adic completion of $A[ \xi_1,\ldots,\xi_r]$. 

\begin{lem}\label{25.09.2014--3}Let $\widehat E$ stand for the $\pi$-adic completion of $E$. Then 
\[\widehat E\simeq \frac{R\langle G_0\rangle\langle\zeta_1,\ldots,\zeta_r\rangle}{(\pi\zeta_1-a_1^*,\ldots,\pi\zeta_r-a_r^*)^\mathrm{sat}}.\]
\end{lem}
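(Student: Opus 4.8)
The plan is to reduce everything to the finite presentation of $E$ supplied by Remark \ref{26.09.2014--4} and then to two stable facts under $\pi$-adic completion: exactness on finite modules over a noetherian ring, and the interchange with saturation, the latter being precisely Remark \ref{17.01.2017--1}(4). First I would set $P=R[G_0][\xi_1,\ldots,\xi_r]$ and $J=(\pi\xi_1-a_1,\ldots,\pi\xi_r-a_r)$; Remark \ref{26.09.2014--4} identifies $E$ with $P/J^{\mathrm{sat}}$, the saturation being taken inside $P$. Since $G$ is of finite type over the noetherian ring $R$, the algebra $R[G_0]$, and hence $P$, is noetherian, so I may regard $E=P/J^{\mathrm{sat}}$ as a cyclic (in particular finitely generated) $P$-module and phrase the whole problem as the computation of the $\pi$-adic completion of this module.

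Next, because $P$ is noetherian its $\pi$-adic completion $\widehat P$ is flat over $P$, whence
\[
\widehat E=\widehat P\otimes_P E=\widehat P/J^{\mathrm{sat}}\widehat P.
\]
Invoking Remark \ref{17.01.2017--1}(4) with $A=P$ gives $J^{\mathrm{sat}}\widehat P=(J\widehat P)^{\mathrm{sat}}$, so that $\widehat E\simeq \widehat P/(J\widehat P)^{\mathrm{sat}}$. It then remains only to describe $\widehat P$ and $J\widehat P$ explicitly.

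For $\widehat P$ I would exploit that $R[G_0]/\pi^{m}=R\langle G_0\rangle/\pi^{m}$ for all $m$, so that
\[
\widehat P=\varprojlim_m \left(R[G_0]/\pi^m\right)[\xi]=\varprojlim_m\left(R\langle G_0\rangle/\pi^m\right)[\xi]=R\langle G_0\rangle\langle\zeta_1,\ldots,\zeta_r\rangle,
\]
where $\zeta_i$ denotes the image of $\xi_i$; this is exactly the restricted power series ring in the statement. Under this identification $J\widehat P$ is generated by the elements $\pi\zeta_i-\bar a_i$, with $\bar a_i\in R\langle G_0\rangle$ the image of $a_i$. Finally, since $a_i^*\equiv a_i\bmod\pi$, I would write $a_i^*-\bar a_i=\pi b_i$ with $b_i\in R\langle G_0\rangle$ and note that the continuous $R\langle G_0\rangle$-algebra automorphism of $\widehat P$ sending $\zeta_i\mapsto\zeta_i+b_i$ carries each $\pi\zeta_i-a_i^*$ to $\pi\zeta_i-\bar a_i$. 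An automorphism preserves saturations, so it yields
\[
\widehat P/(\pi\zeta_1-a_1^*,\ldots,\pi\zeta_r-a_r^*)^{\mathrm{sat}}\simeq \widehat P/(J\widehat P)^{\mathrm{sat}}=\widehat E,
\]
which is the asserted formula.

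The only genuinely delicate point is the commutation of completion with the saturation operation: everything hinges on Remark \ref{17.01.2017--1}(4), so I would make sure its hypothesis (noetherianity of $P$, guaranteeing both the $P$-flatness of $\widehat P$ and the identity $J^{\mathrm{sat}}\widehat P=(J\widehat P)^{\mathrm{sat}}$) is explicitly in force. The change of variables replacing the honest lifts $a_i$ by the prescribed generators $a_i^*$ of $I$ is a cosmetic but necessary bookkeeping step, and is harmless because the translation is by constants of $R\langle G_0\rangle$.
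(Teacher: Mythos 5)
Your proof is correct and follows essentially the same route as the paper's: the presentation of $E$ from Remark \ref{26.09.2014--4}, Matsumura's completion theorems for finite modules over noetherian rings, the commutation $J^{\mathrm{sat}}\widehat A=(J\widehat A)^{\mathrm{sat}}$ of Remark \ref{17.01.2017--1}(4), and the change of variables $\zeta_i\mapsto\zeta_i+b_i$ (the paper performs the very same substitution, $\xi_i\mapsto\zeta_i-\pi^{-1}(a_i^*-a_i)$, before completing rather than after). The only nitpick is attribution: the equality $\widehat E=\widehat P\otimes_P E$ rests on $E$ being a finite module over the noetherian ring $P$ (Matsumura, Theorem 8.7), not on flatness of $\widehat P$ over $P$, which is instead what underlies the saturation identity.
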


\begin{proof}We use bold letters to stand for  $r$-tuples.
It is clear that, as $R\langle G_0\rangle$-algebras,  
\[\frac{R\langle  G_0\rangle\langle\boldsymbol \xi\rangle}{(\pi\boldsymbol \xi-\boldsymbol a)}\simeq \frac{R\langle  G_0\rangle\langle\boldsymbol \zeta\rangle}{(\pi\boldsymbol  \zeta-\boldsymbol a^*)}
\]
by means of $\xi_i\mapsto\zeta_i-\pi^{-1}(a_i^*-a_{i})$. 
We then recall that for a noetherian $R$-algebra $A$, its completion $\widehat A$ is $A$-flat \cite[Theorem 8.8, p.60]{matsumura}. From this, we conclude that for any noetherian $R$-algebra $A$ and for any ideal $J$ of $A$, the equality  $J^\mathrm{sat}\widehat A=(J\widehat A)^\mathrm{sat}$ holds. Now
\begin{align*}\widehat E&= \left(\frac{R[G_0][\boldsymbol\xi]}{(\pi\boldsymbol\xi-\boldsymbol a)^\mathrm{sat}}\right)^\wedge  & \text{(by Remark \ref{26.09.2014--4})}\\ 
& = \frac{R\langle G_0\rangle\langle\boldsymbol\xi\rangle}{(\pi\boldsymbol\xi-\boldsymbol a)^\mathrm{sat}R\langle G_0\rangle\langle\boldsymbol\xi\rangle} & \text{(by \cite[Theorem 8.7,p.60]{matsumura})}\\ &= \frac{R\langle G_0\rangle\langle\boldsymbol\xi\rangle}{\left(\pi\boldsymbol\xi-\boldsymbol a\right)^\mathrm{sat}}&\text{(by the argument above).}
\end{align*}
\end{proof}

The algebra $\widehat E$ in the statement of the lemma is quite close to a fundamental object in rigid analytic Geometry. Following the terminology of  \cite[4.1]{vdpf}, the analytic space $\mathrm{Sp}\,K\ot \widehat E$ is just the \emph{rational domain} $|a^*_i|\le|\pi|$ in the rigid analytic space $\mathrm{Sp}\,K\langle  G_0\rangle$.

\begin{prop}\label{26.09.2014--2} The morphism of affine formal schemes $\widehat G_1\to \widehat G_0$ induces an isomorphism between $\rho_0^\#(\g H_0)$ and $\g H_0$. The same statement also holds if, instead of a formal subgroup $\g H_0\subset \widehat G_0$, we choose a closed subgroupo $H_0\subset G_0$ which is flat over $R$. 
\end{prop}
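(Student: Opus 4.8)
The plan is to reduce the assertion to a computation with the adic algebras and then to show that the two saturations which intervene are in fact superfluous. Write $I\subset R\langle G_0\rangle$ for the ideal of $\g H_0$, generated by $a_1^*,\ldots,a_r^*$ as in the preparation above, and recall from Definition \ref{19.11.2014--2} that $\rho_0^\#(\g H_0)$ is cut out in $\widehat G_1$ by $(I\cdot R\langle G_1\rangle)^{\mathrm{sat}}$. Since $\rho_0^\#\colon R\langle G_0\rangle\to R\langle G_1\rangle$ carries each $a_i^*$ to $\pi\zeta_i$ (notation of Lemma \ref{25.09.2014--3}), it sends $I$ into $(I\cdot R\langle G_1\rangle)^{\mathrm{sat}}$ and therefore descends to a homomorphism
\[
R\langle\g H_0\rangle=\frac{R\langle G_0\rangle}{I}\longrightarrow \frac{R\langle G_1\rangle}{(I\cdot R\langle G_1\rangle)^{\mathrm{sat}}},
\]
which is exactly the arrow $\rho_0^\#(\g H_0)\to\g H_0$ read on rings. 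Thus everything comes down to proving that this map is an isomorphism.

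To this end I would lift the computation to $C:=R\langle G_0\rangle\langle \zeta_1,\ldots,\zeta_r\rangle$, using Lemma \ref{25.09.2014--3} to write $R\langle G_1\rangle=C/\g q^{\mathrm{sat}}$ with $\g q=(\pi\zeta_1-a_1^*,\ldots,\pi\zeta_r-a_r^*)$. Because $R\langle G_1\rangle$ is $\pi$-torsion-free (being $R$-flat), the identity $(\pi J)^{\mathrm{sat}}=J^{\mathrm{sat}}$ holds for every ideal $J$; since $a_i^*=\pi\zeta_i$ in $R\langle G_1\rangle$ one gets $I\cdot R\langle G_1\rangle=\pi(\zeta_1,\ldots,\zeta_r)$, hence $(I\cdot R\langle G_1\rangle)^{\mathrm{sat}}=(\zeta_1,\ldots,\zeta_r)^{\mathrm{sat}}$. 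Transporting this back to $C$ and using that saturation is monotone and idempotent, I would check the chain
\[
\bigl(\g q^{\mathrm{sat}}+(\zeta)\bigr)^{\mathrm{sat}}=\bigl(\g q+(\zeta)\bigr)^{\mathrm{sat}}=\bigl(I\,C+(\zeta)\bigr)^{\mathrm{sat}},
\]
the last equality holding because $\pi\zeta_i-a_i^*\equiv -a_i^*$ modulo $(\zeta)$. Consequently the quotient in sight equals $C/\bigl(I\,C+(\zeta)\bigr)^{\mathrm{sat}}$.

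The punchline is that the final saturation is vacuous: $C/\bigl(I\,C+(\zeta)\bigr)$ is plainly $R\langle G_0\rangle/I=R\langle\g H_0\rangle$, which is $R$-flat by the standing hypothesis on $\g H_0$, hence $\pi$-torsion-free, so the ideal $I\,C+(\zeta)$ is already saturated. Therefore $R\langle G_1\rangle/(I\cdot R\langle G_1\rangle)^{\mathrm{sat}}\cong R\langle\g H_0\rangle$, and unwinding the identifications shows that the displayed homomorphism is the identity of $R\langle\g H_0\rangle$; this settles the formal case. For a closed $R$-flat subgroup $H_0\subset G_0$ one repeats the argument verbatim, replacing $R\langle G_0\rangle$ by $R[G_0]$ and invoking Remark \ref{26.09.2014--4} to present $R[G_1]=R[G_0][\xi]/(\pi\xi-a)^{\mathrm{sat}}$, the flatness of $H_0$ again forcing $R[H_0]$ to be $\pi$-torsion-free so that the terminal ideal is saturated. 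The one delicate point — and the step I expect to demand the most care — is the bookkeeping of the two nested saturations, namely the one built into $R\langle G_1\rangle$ through Lemma \ref{25.09.2014--3} and the one defining the strict transform; lifting everything to the Tate algebra $C$ and exploiting the flatness of $\g H_0$ is precisely what makes both of them collapse.
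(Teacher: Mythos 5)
Your proof is correct, but it settles the crux of the matter by a genuinely different route than the paper. Both arguments start from the presentation $R\langle G_1\rangle=C/\g q^{\mathrm{sat}}$ of Lemma \ref{25.09.2014--3} (with $C=R\langle G_0\rangle\langle\boldsymbol\zeta\rangle$ and $\g q=(\pi\boldsymbol\zeta-\boldsymbol a^*)$), and both get surjectivity of $R\langle\g H_0\rangle\to R\langle G_1\rangle/I^\#$ essentially for free from it. The difference is in injectivity: the paper reduces to the generic fibre (implicitly using $R$-flatness of $R\langle\g H_0\rangle$ to pass from $\phi\ot K$ to $\phi$) and then invokes rigid-analytic geometry --- $\mathrm{Sp}\,K\ot\widehat E$ is the rational domain $|a_i^*|\le|\pi|$ in $\mathrm{Sp}\,K\langle G_0\rangle$, and the universal property of rational domains \cite[4.1.2, p.71]{vdpf} produces the factorization $K\langle G_0\rangle/I\cdot K\langle G_0\rangle$ through $K\langle G_1\rangle$ --- whereas you avoid rigid geometry entirely and compute the strict-transform ideal upstairs in $C$, showing its preimage is $\bigl(IC+(\boldsymbol\zeta)\bigr)^{\mathrm{sat}}=IC+(\boldsymbol\zeta)$, the saturation being vacuous precisely because $R\langle\g H_0\rangle$ is $\pi$-torsion-free; the isomorphism (injectivity and surjectivity at once) then falls out of $C/\bigl(IC+(\boldsymbol\zeta)\bigr)\simeq R\langle G_0\rangle/I$. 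Your approach buys three things: it is self-contained and elementary, it makes completely explicit where the flatness hypothesis on $\g H_0$ enters, and it treats the scheme-theoretic case ($H_0\subset G_0$ flat and closed) verbatim via Remark \ref{26.09.2014--4} --- a verification the paper explicitly omits. The paper's approach is shorter for a reader with rigid-geometric background and places the blowup in the conceptual picture of rational subdomains. One cosmetic remark: the identity $(\pi J)^{\mathrm{sat}}=J^{\mathrm{sat}}$ holds for \emph{any} ideal $J$ of \emph{any} $R$-algebra (if $\pi^n x\in\pi J$ then certainly $\pi^n x\in J$, and conversely $\pi^m x\in J$ gives $\pi^{m+1}x\in\pi J$), so the appeal to $\pi$-torsion-freeness of $R\langle G_1\rangle$ at that step is superfluous; the flatness of $\g H_0$ is the only place where a hypothesis is genuinely consumed.
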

\begin{rmk}In the usual theory of blowups, this is known as the statement that the strict transform \emph{is} the blowup of the intersection \cite[Proposition IV-21]{eisenbud_harris} and that blowing up a ``hypersurface'' operates no change. In the present setting, a separate consideration has to be made due to the fact that the generic fibre of $\widehat G_1$ is not the generic fibre of $\widehat G_0$. 
\end{rmk}
\begin{proof}We will omit the verification of the statement concerning a closed subgroup $H_0\subset G_0$ and concentrate on the formal case. 

Write $I^\#$ for the ideal of $\rho_0^\#(\g H_0)$.  We must show that the obvious arrow 
$\phi:R\langle G_0\rangle/I\to R\langle G_1\rangle/I^\#$
is an isomorphism. 
By the description of $\widehat E=R\langle G_1\rangle$ offered in Lemma \ref{25.09.2014--3}, the equality $I=(a_1^*,\ldots,a_r^*)$ and the definition of $I^\#$ as the saturation of $I\cdot R\langle G_1\rangle$, it is evident that $\phi$ is surjective. Injectivity will follow from injectivity of $\phi\ot K$, which for those accustomed to rigid analytic geometry is a triviality. Indeed, there exists a dotted arrow rendering the diagram 
\[\xymatrix{  K\langle G_1\rangle \ar@{-->}[dr] &\\ K\langle  G_0\rangle\ar[r]\ar[u]  &   K\langle G_0\rangle/I\cdot K\langle G_0\rangle   }\]
commutative, see \cite[4.1.2, p.71]{vdpf}. 
The injectivity of $\phi\ot K$ is then obvious as $I\cdot K\langle  G_1\rangle=I^\#\cdot K\langle  G_1\rangle$. 
\end{proof}

\begin{proof}[Proof of Theorem \ref{26.09.2014--7}.]
We have now obtained an affine and flat group scheme $G_1$ together with a closed formal subgroup scheme $\rho_0^\#(\g H_0)=\g H_1\subset \widehat G_1$. By functoriality (Lemma \ref{26.09.2014--1}), we arrive at a commutative diagram 
\[
\xymatrix{\ar[d] \cn^\infty_{\g H_1}(G_1)\ar[r] &\ar[d] G_1\\ \cn^\infty_{\g H_0}(G_0)\ar[r] & G_0.   }
\]
Furthermore, inserting the obvious morphism $\cn^\infty_{\g H_0}(G_0)\to G_1$ in the above diagram still produces a commutative one.   (Recall that $G_1$ is the Neron blowup of $\g H_0\ot k$.)
Using Corollary \ref{24.09.2014--2} and Corollary \ref{26.09.2014--2} we conclude that 
\[
\cn^\infty_{\g H_1}(G_1)\ot k\stackrel{\sim}{\longrightarrow}\cn^\infty_{\g H_0}(G_0)\ot k. \]
As 
\[
\cn^\infty_{\g H_1}(G_1)\ot K\stackrel{\sim}{\longrightarrow}\cn^\infty_{\g H_0}(G_0)\ot K,
\]
we deduce that $\cn_{\g H_1}^\infty(G_1)\to \cn_{\g H_0}^\infty(G_0)$ is an isomorphism  and the image of $\cn^\infty_{\g H_0}(G_0)\ot k\to G_1\ot k$ is the image of $\cn^\infty_{\g H_1}(G_1)\ot k\to G_1\ot k$.  Theorem \ref{26.09.2014--7} can now be proved by induction. 
\end{proof}

To end this section, we use Theorem \ref{26.09.2014--7} to study the standard sequence of the partial blowup of level $m$ of $\g H\subset\widehat G$ (cf. Definition \ref{20.11.2014--1}).   One probable candidate for this sequence is the truncation of the standard sequence of $\cn_{\g H}^\infty(G)$. The situation here seems to be a bit more subtle and we only propose an easy consequence of Theorem \ref{26.09.2014--7}.

\begin{cor}\label{28.10.2014--2}We maintain the notations of Theorem  \ref{26.09.2014--7}.  Let $n\ge0$ be given. Then there exists $\mu\ge0$ depending on $n$ and $\g H$ such that the standard sequence of $\cn_{\g H}^m\to G$, \emph{for all} $m\ge\mu$ starts as 
\[
G_{n+1}\stackrel{\rho_n}{\longrightarrow} \cdots \stackrel{\rho_0}{\longrightarrow} G_0=G.
\]
\end{cor}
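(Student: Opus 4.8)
The plan is to compare the standard sequence of the partial blowup $\cn_{\g H}^m\to G$ with the standard sequence of $\cn^\infty_{\g H}(G)\to G$, which is explicitly identified in Theorem \ref{26.09.2014--7}. Since a standard sequence is built recursively out of the schematic images of the given group scheme on the special fibres $G_j\ot k$ (Theorem \ref{26.09.2014--6}), it suffices to show that, for $m$ large, these images agree with the ones produced by $\cn^\infty_{\g H}(G)$ through the first $n+1$ stages. I would argue by induction on $j=0,\ldots,n$, the inductive hypothesis being that for $m\ge\mu_{j-1}$ the first $j$ blowups of the standard sequence of $\cn_{\g H}^m\to G$ coincide with $\rho_{j-1},\ldots,\rho_0$, so that in particular the scheme $G_j$ is common to both sequences. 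Writing $E^m=R[\cn_{\g H}^m]$ and $E^\infty=R[\cn^\infty_{\g H}(G)]=\cup_mE^m$ (Definition \ref{25.09.2014--2}), all regarded as subrings of $K[G]=K[G_j]$, the centre of the next blowup for the partial blowup is $\mathrm{Im}(\cn_{\g H}^m\ot k\to G_j\ot k)$, whereas in the $\infty$-sequence it is $\g H_j\ot k=\mathrm{Im}(\cn^\infty_{\g H}(G)\ot k\to G_j\ot k)$.

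Next I would translate the equality of these two images into a ring-theoretic statement. The schematic image $\mathrm{Im}(\cn_{\g H}^m\ot k\to G_j\ot k)$ is cut out in $k[G_j]$ by the kernel of $k[G_j]\to E^m/\pi E^m$, namely by $(R[G_j]\cap\pi E^m)/\pi R[G_j]$; similarly $\g H_j\ot k$ is cut out by $(R[G_j]\cap\pi E^\infty)/\pi R[G_j]$. Because $E^m\subset E^\infty$, the inclusion $R[G_j]\cap\pi E^m\subset R[G_j]\cap\pi E^\infty$ is automatic (this reflects the factorisation $\cn^\infty_{\g H}(G)\to\cn_{\g H}^m\to G_j$ and already gives $\g H_j\ot k\subset\mathrm{Im}(\cn_{\g H}^m\ot k\to G_j\ot k)$). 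Thus the two centres coincide precisely when
\[
R[G_j]\cap\pi E^m = R[G_j]\cap\pi E^\infty,
\]
and the whole matter reduces to securing the reverse inclusion for $m\gg0$.

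For this last point I would invoke noetherianity. Each $G_j$ occurring in the standard sequence of $\cn^\infty_{\g H}(G)$ is obtained from the finite-type group $G$ by finitely many Neron blowups along closed subgroups of special fibres, hence is itself of finite type over $R$ (see the opening of Section \ref{07.10.2014--3}); therefore $R[G_j]$ is noetherian and the ideal $R[G_j]\cap\pi E^\infty$ is finitely generated, say by $g_1,\ldots,g_s$. Since $\pi E^\infty=\cup_m\pi E^m$, each $g_i$ lies in some $\pi E^{m_i}$, and as $E^m$ is a ring containing $R[G_j]$ we get $(g_1,\ldots,g_s)\subset\pi E^m$ for every $m\ge\max_i m_i$; this forces $R[G_j]\cap\pi E^\infty\subset\pi E^m$ and hence the desired equality. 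Setting $\mu:=\max_{0\le j\le n}\mu_j$, where $\mu_j$ is the bound just obtained for the $j$-th stage, completes the induction and yields the statement. I expect the delicate points to be organisational rather than computational: one must run the induction so that the images being compared genuinely sit inside the \emph{same} $G_j$, and one must exploit the finite generation of $R[G_j]\cap\pi E^\infty$ together with $E^\infty=\cup_mE^m$ to absorb all the generators into a single $E^m$.
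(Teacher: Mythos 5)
Your proposal is correct and follows essentially the same route as the paper's proof: both identify, via Theorem \ref{26.09.2014--7}, the ideal $R[G_\nu]\cap\pi E^\infty$ with the ideal $J_\nu$ of $\g H_\nu\ot k$, and then use finite generation (noetherianity of the finite-type rings $R[G_\nu]$, together with $E^\infty=\cup_m E^m$) to absorb $R[G_{n+1}]$ and the generators of these ideals into a single $E^\mu$, so that $R[G_\nu]\cap\pi E^m=J_\nu$ for all $m\ge\mu$ and $\nu\le n$. Your explicit stage-by-stage induction with bounds $\mu_j$ is just a more detailed bookkeeping of the same argument the paper runs with a single choice of $\mu$.
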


\begin{proof}We write $J_\nu\subset R[G_\nu]$ for the ideal of $\g H_\nu\ot k\subset \widehat G_\nu\ot k=G_\nu\ot k$. 
From Theorem \ref{26.09.2014--7}, we know that \begin{enumerate}\item each $R[G_\nu]$ is contained in $E^\infty=\cup_m E^m$ and \item the ideal $\pi  E^\infty\cap R[G_\nu]$, which cuts out the image of $\cn_{\g H}^\infty(G)\ot k$ in $G_\nu\ot k$, is just $J_\nu$.\end{enumerate}
Since $G=G_0$ is assumed to be of finite type, there exists some $\mu_0\ge0$ such that $R[G_{n+1}]\subset E^m$ for all $m\ge\mu_0$. We now take $\mu\ge \mu_0$ to be such that the generators of each one of the ideals $J_0,\ldots,J_n$ belong   to $\pi E^\mu$. This $\mu$ is the one predicted in the statement as in this case, for $m\ge \mu$,  the image of $\cn_{\g H}^m(G)\ot k\to G_\nu\ot k$ is cut out by $J_\nu$ for each $\nu\le n$. 
\end{proof}

\begin{question}Under which conditions does the standard sequence of the ``partial blowup'' $\cn_{\g H}^n(G)\to G$  (see Definition \ref{20.11.2014--1}) begin as $G_{n+1}\to\cdots\to G_0$, where the $G_i$ are as in Theorem \ref{26.09.2014--7}?
\end{question}

\section{Study of a particular class of standard sequences}\label{20.08.2015--1}

In this section, we assume that $R$ is complete. Let $G$ be a group scheme over $R$ which is flat and of finite type.  
Theorem \ref{26.09.2014--7} guarantees that the centres appearing in the standard sequence of 
\[\cn_{\g H}^\infty(G)\aro G\] 
are all isomorphic.  
In this section we wish to understand a possible converse for this: Corollary \ref{09.04.2015--3}.

As the next paragraph argues, standard sequences with ``constant'' centres may easily appear;  the utility of this study is therefore not   restricted to proving a converse to Theorem \ref{26.09.2014--7}. 

Assume that $k$ is of characteristic zero and let \[\cdots\longrightarrow G_n\longrightarrow G_{n-1}\longrightarrow\cdots\longrightarrow G_0\] be a standard sequence (cf. Definition \ref{03.02.2015--2}).
As  $n\mapsto\dim B_n$ is  nondecreasing and  
\[\dim B_n\le\dim G_{n,k}=\dim G_{0,K}\] (the equality follows from [SGA3, $\mathrm{VI}_B$, Corollary 4.3]) there exists $n_0\in\mathbf N$ such that $\dim B_{n_0}=\dim B_n$ for all $n\ge n_0$.
Since the kernel of $B_{n+1}\to B_n$
is either trivial or positive dimensional (it is a subgroup scheme of some $\mathbf G_{a,k}^r$ due to Theorem \ref{11.02.2015--1}) we conclude that for $n\ge n_0$, the arrows $B_{n+1}\to B_n$ are all \emph{isomorphisms}.

It is quite possible that a result more general than Corollary \ref{09.04.2015--3} holds, but for the moment, our best effort needs higher control on  the relation between centres (i.e., the conclusion of Proposition \ref{17.11.2014--1} hold) so that more hypothesis were introduced.

\begin{prop}\label{17.11.2014--1}
Let 
\[
\cdots\stackrel{}{\longrightarrow}G_{n+1}\stackrel{\rho_n}{\longrightarrow}\cdots\stackrel{\rho_0}{\longrightarrow}G_0=G
\]
be a standard sequence (Definition \ref{03.02.2015--2}) where the centre of $\rho_n$ is $B_n\subset G_n\ot k$. Assume the following particularities. 
\begin{enumerate}
\item[i)] The group $G_0$ is smooth over $R$, and   $B_0$ is smooth over $k$. 
\item[ii)] For every $n\ge0$, the restriction  $\rho_{n}\ot k:B_{n+1}\to B_n$ is an isomorphism. 
\item[iii)] There exists an $R$-flat closed subgroup $L_0\subset G_0$ with $L_0\ot k=B_0$.  
\item[iv)] For each representation $V$ of $B_0$, the first cohomology group $H^1(B_0;V)$ vanishes (linear reductivity).  
\end{enumerate}

Then, for every $n\ge1$, there exist  an inner automorphism $a_n:G_n\to G_n$ together with a  closed and $R$-flat subgroup $L_n\subset G_n$ enjoying the following properties. 

\begin{enumerate}\item For any $n\ge1$, the closed subgroup $L_n$ is  $a_n(\rho_{n-1}^\#L_{n-1})$. (See Definition \ref{19.11.2014--2} for notation.) 

\item The special fibre of $L_n$ is $B_n$. 
\end{enumerate}
\end{prop}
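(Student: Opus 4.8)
The plan is to argue by induction on $n$, constructing $L_n$ and $a_n$ simultaneously, the base case $n=1$ being handled by the given $L_0$. Before the induction proper, I would first record that every $G_n$ is smooth over $R$. Indeed, $G_0$ is smooth by (i) and each $B_n\cong B_0$ is smooth by (ii), so the centre $B_{n-1}$ is regularly immersed in $G_{n-1}\ot k$ (a closed immersion of smooth $k$-schemes is regular); hence Theorem \ref{11.02.2015--1} applies at every stage and shows that $q\colon G_n\ot k\to B_{n-1}$ is smooth surjective with kernel $F\simeq\GG_{a,k}^{c}$, whence $G_n\ot k$ is smooth over $k$. As $G_n$ is $R$-flat with smooth generic fibre ($=G_0\ot K$) and smooth special fibre, and $\mathrm{Spec}\,R$ has only the two points, $G_n$ is smooth over $R$. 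This is exactly what will let me lift conjugating elements built over $k$ to elements of $G_n(R)$: since $R$ is complete, hence Henselian, smoothness forces $G_n(R)\to G_n(k)$ to be surjective.

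For the inductive step, assume $L_{n-1}\subset G_{n-1}$ is closed and $R$-flat with $L_{n-1}\ot k=B_{n-1}$. Since $B_{n-1}$ is precisely the centre of the blowup $\rho_{n-1}$, Proposition \ref{26.09.2014--2} applies to the flat closed subgroup $L_{n-1}$ and shows that the strict transform $L':=\rho_{n-1}^\#L_{n-1}$ is carried isomorphically onto $L_{n-1}$ by $G_n\to G_{n-1}$. On special fibres this says that $L'\ot k$ maps isomorphically onto $B_{n-1}$ under $q$, i.e. $L'\ot k$ is a section of $q$, a complement to $F$ in $G_n\ot k$. On the other hand, hypothesis (ii) says $\rho_{n-1}\ot k\colon B_n\to B_{n-1}$ is an isomorphism, so $B_n$ is a \emph{second} complement to $F$.

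Now I would compare the two complements cohomologically. The kernel $F\simeq\GG_{a,k}^{c}$ is a vector group, and since $F$ is abelian the conjugation action of the quotient $B_{n-1}$ on $F$ is independent of the chosen lift; by Proposition \ref{12.08.2015--9} this action is linear, being the conormal representation. Transporting along the isomorphism $B_{n-1}\cong B_0$ coming from (ii), hypothesis (iv) yields $H^1(B_{n-1};F)=0$. Because the complements of an abelian normal $F$ form a single $F$-conjugacy class exactly when this $H^1$ vanishes, there exists $f\in F(k)$ with $f\,(L'\ot k)\,f^{-1}=B_n$; concretely, the two sections $s_1,s_2\colon B_{n-1}\to G_n\ot k$ differ by the morphism $b\mapsto s_2(b)s_1(b)^{-1}\in F$, a Hochschild $1$-cocycle which the vanishing turns into a coboundary, producing $f$.

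Finally I would lift $f$ to some $g_n\in G_n(R)$ using smoothness and completeness of $R$, let $a_n=\mathrm{inn}(g_n)$ be conjugation by $g_n$, and set $L_n:=a_n(\rho_{n-1}^\#L_{n-1})$. Then $L_n$ is closed and $R$-flat (strict transforms are $R$-flat by Remarks \ref{17.01.2017--1}(1), and $a_n$ is an isomorphism), property (1) holds by construction, and $L_n\ot k=f\,(L'\ot k)\,f^{-1}=B_n$ gives property (2), completing the induction. The main obstacle is the cohomological conjugacy of the two complements: one must correctly identify the $B_{n-1}$-module $F$ with the conormal representation through Proposition \ref{12.08.2015--9}, check that the difference of the two sections is a genuine scheme-theoretic $1$-cocycle, and apply (iv) via the identification $B_{n-1}\cong B_0$. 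Lifting the resulting $k$-point conjugator to $R$ is then a routine consequence of the smoothness established at the outset.
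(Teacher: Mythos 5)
Your proposal is correct and follows essentially the same route as the paper's proof: one section of $G_n\ot k\to B_{n-1}$ comes from the strict transform of $L_{n-1}$ (via Proposition \ref{26.09.2014--2}), the other is $B_n$ itself by hypothesis (ii), the kernel is a vector group with linear ($=$ conormal) action by Theorem \ref{11.02.2015--1} and Section \ref{12.08.2015--10}, hypothesis (iv) makes the two sections conjugate by a point of the kernel (the paper cites the exercise in I.5.1 of \cite{galcoh}, which is exactly your Hochschild cocycle/coboundary argument), and smoothness of $G_n$ over $R$ plus completeness of $R$ lifts the conjugating element to $G_n(R)$. Your explicit up-front verification that every $G_n$ is smooth over $R$ is precisely what the paper does for $G_1$ and leaves implicit in its ``the inductive step proceeds in exactly the same fashion.''
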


\begin{proof}We construct $a_1$ and $L_1$.  From assumption (ii) and \cite[Theorem 14.1]{waterhouse}, we know that $G_1\ot k\to B_0$ is faithfully flat; we arrive at an exact sequence 
\[\tag{e}1\longrightarrow
U_1\longrightarrow G_{1}\ot k\longrightarrow B_0\longrightarrow1
\]
which, again by property (ii), ensures that $G_1\ot k$ is a semidirect product $U_1\rtimes B_1$. In $G_1\ot k$, besides $B_1$, we have another closed subgroup isomorphic to $B_0$, viz. the special fibre of $\Lambda_1:=\rho_0^\#(L_0)$ (see Definition \ref{19.11.2014--2} and Proposition \ref{26.09.2014--2}). As $U_1$ is isomorphic to $\GG_{a,k}^r$, due to (i) and  Theorem \ref{11.02.2015--1}, the action of $B_0$ on $U_1$ by conjugation defines a  linear action of $B_0$ on $U_0$. (See Section \ref{12.08.2015--10} for details.) Assumption (iv) together with the exercise proposed in I.5.1 of \cite{galcoh} shows that there exists an inner automorphism $a_1:G_1\ot k\to G_1\ot k$ taking $\Lambda_1\ot k$ to $B_1$. 

Now $G_1$ is smooth over $R$ due to the ``fibre-by-fibre'' smoothness criterion [EGA $\mathrm{IV}_4$, 17.8.2, p. 79], the isomorphism  $G_1\ot K\simeq G_0\ot K$ and smoothness of  $G_1\ot k\to B_0$ (Theorem \ref{11.02.2015--1}).  
The infinitesimal lifting criterion allows us to find an inner automorphism of $G_1$, parsimoniously called $a_1$, such that $a_1(\Lambda_1\ot k)=B_1$; define $L_1:=a_1(\Lambda_1)$.
This deals with the case $n=1$. 
The inductive step ``$n$ $\Rightarrow$ $n+1$'' proceeds in the exactly same fashion. 
\end{proof}

It is perhaps worth expressing the conclusion of Proposition \ref{17.11.2014--1} in a pictorial form by means of the following \emph{stair}:  
\begin{equation}\label{stair}
\xymatrix{&&\text{higher level}\ar@{}[d]|{\vdots}\\ &\ar[d]^{\rho_1}G_2\ar[r]_{a_2}& G_2\\ G_1\ar[d]^{\rho_0}\ar[r]^{a_1}& G_1& \\ G_0&&}
\end{equation}
where 
\begin{equation}\label{stair2}
\begin{array}{lll} \rho_0&=& \text{blowup of $L_0\ot k$},\\ a_1&=&\text{inner automorphism of $G_1$},  \\    L_1&=&a_1\left[\rho_0^\#(L_0)\right],\\
\rho_1&=&\text{blowup of $L_1\ot k$}, \\ a_2&=&\text{inner automorphism of $G_2$}\\ L_2&=&a_2\left[\rho_1^\#(L_1)\right],\\\cdots&=&\cdots. \end{array}
\end{equation}

For the sake of discussion, we make some definitions. 
\begin{defn}[Spontaneous sequences]\label{18.11.2014--3}
A standard sequence as depicted in the above stair is called an almost spontaneous  sequence associated to $L_0\subset G_0$. 
 An almost spontaneous standard sequence is spontaneous if all the inner automorphisms appearing in it  equal the identity. 
\end{defn}

Using Remark \ref{17.01.2017--1}-(4),   another way of stating Theorem \ref{26.09.2014--7}(1) is then: 
\begin{thm}\label{24.11.2014--5} Let $H\subset G$ be a closed immersion in $\mathbf{FGSch}/R$.  Then the standard sequence of $\cn^\infty_{H}(G)\to G$ is the spontaneous sequence  of $H\subset G$. \qed
\end{thm}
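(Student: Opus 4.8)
The plan is to read this statement as a pure translation of Theorem \ref{26.09.2014--7}(1), the bridge being the compatibility between the two notions of strict transform recorded in Remark \ref{17.01.2017--1}-(4). First I would unwind the definitions. By Definition \ref{25.09.2014--1} one has $\cn_H^\infty(G)=\cn_{\g H}^\infty(G)$ with $\g H=\widehat H$ the completion of $H$ along the special fibre, so Theorem \ref{26.09.2014--7}(1) already describes the standard sequence of $\cn_H^\infty(G)\to G$ as
\[
\cdots\stackrel{\rho_1}{\aro} G_1\stackrel{\rho_0}{\aro} G_0=G,
\]
where $G_0=G$, $\g H_0=\widehat H$, each $\rho_n$ is the Neron blowup of $\g H_n\ot k$, and $\g H_{n+1}=\rho_n^\#(\g H_n)$ in the sense of \emph{formal} subgroups (Definition \ref{19.11.2014--2}). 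On the other side, the spontaneous sequence of $H\subset G$ (Definition \ref{18.11.2014--3}) is the stair \eqref{stair}--\eqref{stair2} in which every inner automorphism $a_n$ equals the identity: $L_0=H$, each $\rho_n$ is the blowup of $L_n\ot k$, and $L_{n+1}=\rho_n^\#(L_n)$ in the sense of \emph{honest} closed $R$-flat subgroups. The entire task is then to match these two sequences term by term.

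The key step is a single induction proving simultaneously that the two recursions build the same groups $G_n$ and that their centres are related by $\g H_n=\widehat{L_n}$. The base case is the definition $\g H_0=\widehat H=\widehat{L_0}$. For the inductive step, I would first note that $\pi$-adic completion leaves the special fibre unchanged, so the hypothesis $\g H_n=\widehat{L_n}$ gives
\[
\g H_n\ot k=\widehat{L_n}\ot k=L_n\ot k;
\]
hence $\rho_n$ is, on both sides, the Neron blowup of one and the same closed subgroup of $G_n\ot k$, and the two constructions produce the same $G_{n+1}$. To propagate the identification of centres I would invoke Remark \ref{17.01.2017--1}-(4), which says precisely that the completion of the algebraic strict transform is the formal strict transform of the completion, whence
\[
\widehat{L_{n+1}}=\widehat{\rho_n^\#(L_n)}=\rho_n^\#(\widehat{L_n})=\rho_n^\#(\g H_n)=\g H_{n+1},
\]
closing the induction. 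Once $\g H_n=\widehat{L_n}$ is established for all $n$, the two sequences share identical groups, morphisms, and centres, and the theorem follows.

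Since essentially all the content is already packaged in Theorem \ref{26.09.2014--7} and in the flatness of completion underlying Remark \ref{17.01.2017--1}-(4), I do not expect a genuine obstacle. The one point that demands care, and which I regard as the crux, is exactly the bookkeeping carried out above: one must not conflate the formal strict transform $\rho_n^\#(\g H_n)$ appearing in Theorem \ref{26.09.2014--7} with the algebraic strict transform $\rho_n^\#(L_n)$ appearing in the stair, but rather verify through Remark \ref{17.01.2017--1}-(4) that the former is the completion of the latter, so that the two \emph{a priori} distinct recursions for the centres genuinely coincide.
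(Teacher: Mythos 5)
Your proposal is correct and is exactly the argument the paper intends: the paper states this theorem with no separate proof, introducing it as ``another way of stating Theorem \ref{26.09.2014--7}(1)'' via Remark \ref{17.01.2017--1}-(4), which is precisely the induction you carry out (identifying $\g H_n=\widehat{L_n}$ by compatibility of strict transform with completion, and noting that completion does not alter the special fibre, so the successive Neron blowups agree). Your write-up simply makes explicit the bookkeeping the paper leaves implicit.
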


We now wish to prove Theorem \ref{24.11.2014--3} below which sheds light on the nature of \emph{almost spontaneous sequences}. 
\label{24.11.2014--4} We fix, in addition to $G$, a closed immersion $L_0\subset G_0=G$ in $\mathbf{FGSch}/R$ and a stair as in \eqref{stair}-\eqref{stair2}, which is the almost spontaneous sequence associated to $L_0$ and the inner automorphisms  $a_i$. 

\begin{prop}\label{24.10.2014--1}We maintain the above notations. For each $n\ge1$, define inner automorphisms $a_{0,n},\ldots,a_{n,n}$ by decreeing that $a_{n,n}=a_n$ and that 
\[
\xymatrix{G_m\ar[rr]^-{a_{m,n}}\ar[d]_{\rho_{m-1}} && G_m\ar[d]^{\rho_{m-1}} \\ G_{m-1}\ar[rr]_-{a_{m-1,n}}&& G_{m-1}  } 
\]
commutes. 
In other words, we complete the stair \eqref{stair} as suggested by:
\[\tag{case $n=2$}
\xymatrix{   &&G_3 \ar[d]^{\rho_2}\\ 
&G_2\ar[r]^{a_2}\ar[d]_{\rho_1} & G_2\ar@{.>}[d]^{\rho_1}\\
G_1\ar[r]^{a_1}\ar[d]_{\rho_0} & G_1\ar@{.>}[r]^{a_{1,2}}\ar@{.>}[d]^{\rho_0}& G_1\ar@{.>}[d]^{\rho_0}\\ 
G_0\ar@{.>}[r]_{a_{0,1}}& G_0\ar@{.>}[r]_{a_{0,2}}& G_0     }\]
Then 
\[
G_{n+1}\stackrel{\rho_n}{\longrightarrow}\cdots \stackrel{\rho_0}{\longrightarrow}G_0=G
\] 
is the spontaneous sequence associated to  
\[
a_{0,n}\cdots a_{0,2} a_{0,1}(L_0)
\]
and truncated at level $n+1$. 
\end{prop}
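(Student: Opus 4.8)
The plan is to treat the two assertions of the proposition separately. The construction of the inner automorphisms $a_{0,n},\ldots,a_{n,n}$ together with the commutativity of the displayed squares is the formal part. An inner automorphism of $G_m$ is conjugation $\mathrm{int}(g)$ by a section $g\in G_m(R)$, and for any homomorphism $\phi$ one has $\phi\circ\mathrm{int}(g)=\mathrm{int}(\phi(g))\circ\phi$. Applying this to $\phi=\rho_{m-1}$, the choice $a_{m-1,n}:=\mathrm{int}(\rho_{m-1}(g))$, where $a_{m,n}=\mathrm{int}(g)$, makes the square commute; uniqueness of this descent follows because $\rho_{m-1}\ot K$ is an isomorphism and the groups are $R$-flat, so an automorphism of $G_{m-1}$ is determined by its composite with $\rho_{m-1}$. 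Thus $a_{m,n}$ is simply the descent of $a_n$ from level $n$ to level $m$, that is $a_{m,n}=\mathrm{int}(\rho_m\cdots\rho_{n-1}(g_n))$ when $a_n=\mathrm{int}(g_n)$; in particular $a_{0,j}$ in the statement is the descent of $a_j$ to $G_0$, which is unambiguous.

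For the main assertion I would isolate two facts. \emph{(Reduction lemma.)} For $m<j$ the reduction $a_{m,j}\ot k$ is the identity. Indeed, by the construction in Proposition \ref{17.11.2014--1} (via the exercise I.5.1 of \cite{galcoh} and the vanishing of $H^1$), the reduction $a_j\ot k$ is conjugation by an element $\bar g_j$ lying in the kernel $U_j$ of $G_j\ot k\to B_{j-1}$; since $\rho_{j-1}\ot k$ factors as $G_j\ot k\twoheadrightarrow B_{j-1}\hookrightarrow G_{j-1}\ot k$ (Theorem \ref{11.02.2015--1} and the basic property of Neron blowups), one has $U_j=\ker(\rho_{j-1}\ot k)$, so the image of $\bar g_j$ in $G_m\ot k$ is trivial and $a_{m,j}\ot k=\mathrm{id}$. \emph{(Strict-transform lemma.)} If $\tilde a$ is an automorphism of $G_{m+1}$ lifting an automorphism $a$ of $G_m$ along $\rho_m$, i.e. $\rho_m\circ\tilde a=a\circ\rho_m$, then $\tilde a(\rho_m^\#H)=\rho_m^\#(aH)$ for every $R$-flat closed subgroup $H\subset G_m$. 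This is immediate from the description of the strict transform as the closure in $G_{m+1}$ of the generic fibre of $H$ (Remark \ref{17.01.2017--1}(3)) together with the fact that $\rho_m\ot K$ is an isomorphism.

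I would then fix $n$ and argue by induction up the tower, on $m=0,\ldots,n$, that $\tilde G_m=G_m$, that $\tilde\rho_{m-1}=\rho_{m-1}$, and that
\[
\tilde L_m=\bigl(a_{m,n}a_{m,n-1}\cdots a_{m,m+1}\bigr)(L_m),
\]
where $\tilde G_\bullet,\tilde\rho_\bullet,\tilde L_\bullet$ denote the data of the spontaneous sequence of $\tilde L_0$ and the product is empty (so $\tilde L_n=L_n$) when $m=n$. The case $m=0$ is the definition of $\tilde L_0$. For the inductive step, the reduction lemma gives $\tilde L_m\ot k=L_m\ot k$ as closed subschemes of $G_m\ot k$, whence $\tilde\rho_m$ is the blowup of $L_m\ot k$, i.e. $\tilde\rho_m=\rho_m$ and $\tilde G_{m+1}=G_{m+1}$. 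Next, the product $a_{m+1,n}\cdots a_{m+1,m+1}$ lifts $a_{m,n}\cdots a_{m,m+1}$ along $\rho_m$ (each factor lifts by the definition of the descent, and lifting is multiplicative), so, writing $\Lambda_{m+1}=\rho_m^\#L_m$, the strict-transform lemma gives
\[
\tilde L_{m+1}=\rho_m^\#\bigl((a_{m,n}\cdots a_{m,m+1})L_m\bigr)=(a_{m+1,n}\cdots a_{m+1,m+1})(\Lambda_{m+1}).
\]
Since $a_{m+1,m+1}=a_{m+1}$ and $L_{m+1}=a_{m+1}(\Lambda_{m+1})$, the rightmost factor sends $\Lambda_{m+1}$ to $L_{m+1}$, leaving $\tilde L_{m+1}=(a_{m+1,n}\cdots a_{m+1,m+2})(L_{m+1})$, which is the invariant at level $m+1$. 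Carrying $m$ up to $n$ shows that the spontaneous sequence of $\tilde L_0$, truncated at level $n+1$, is exactly $G_{n+1}\xrightarrow{\rho_n}\cdots\xrightarrow{\rho_0}G_0$.

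The step requiring the most care is the reduction lemma: one must extract from the proof of Proposition \ref{17.11.2014--1} that $a_j\ot k$ is conjugation by an element of the unipotent kernel $U_j$, so that all its descents die on the special fibre and the successive centres $\tilde L_m\ot k$ coincide \emph{as subschemes} with $L_m\ot k$ (mere isomorphism would not suffice to identify the blowups). One should also confirm that $a_j$ may genuinely be taken to be $\mathrm{int}(g_j)$ for a section $g_j\in G_j(R)$ lifting $\bar g_j$, which uses the smoothness of $G_j$ (established in the proof of Proposition \ref{17.11.2014--1}) and the completeness of $R$. The bookkeeping of the lifts of the composite inner automorphisms is otherwise routine, but one must be attentive to the order of composition throughout.
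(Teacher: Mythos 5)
Your overall scheme is sound and is essentially a transpose of the paper's argument: the paper inducts on the truncation level $n$, applying the induction hypothesis to the shifted tower based at $G_1$ with $L_1=a_1\bigl[\rho_0^\#(L_0)\bigr]$, whereas you fix $n$ and induct up the tower with the invariant $\tilde L_m=(a_{m,n}\cdots a_{m,m+1})(L_m)$, $\tilde G_m=G_m$, $\tilde\rho_{m-1}=\rho_{m-1}$. Both proofs rest on the same two pillars: functoriality of the strict transform under lifted automorphisms (your ``strict-transform lemma,'' which is correct), and the fact that the centres of the blowups are unchanged on special fibres. Your treatment of the descents $a_{m,j}$ and the bookkeeping of composites is also fine.

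The genuine gap is the reduction lemma. Proposition \ref{24.10.2014--1} is stated for the data fixed just before it (see also Definition \ref{18.11.2014--3}): a closed immersion $L_0\subset G_0$ and \emph{arbitrary} inner automorphisms $a_j$ of $G_j$ over $R$ building the stair. The $a_j$ are \emph{not} assumed to be those manufactured in the proof of Proposition \ref{17.11.2014--1}; indeed, hypotheses (i)--(iv) of that proposition (smoothness, linear reductivity, etc.) are not in force here, so the semidirect-product structure, the vanishing of $H^1$, and the conclusion that the conjugating element reduces into the kernel $U_j$ are all unavailable. For an arbitrary $a_j=\mathrm{int}(g_j)$ with $g_j\in G_j(R)$, the reduction $\bar g_j$ need not lie in $U_j$; its image in $G_m(k)$, $m<j$, lies in $B_m(k)$ (because the special fibre of a Neron blowup maps into its centre) but need not be the identity, and conjugation by it need not be trivial --- so ``$a_{m,j}\ot k=\mathrm{id}$'' is false in this generality. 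The correct and sufficient statement, which is exactly what the paper's proof runs on, is the weaker one: $a_{m,j}\ot k$ is conjugation by a point of $B_m=L_m\ot k$, hence it carries $L_m\ot k$ to itself \emph{as a closed subscheme}. This still yields $\tilde L_m\ot k=L_m\ot k$ and therefore $\tilde\rho_m=\rho_m$, so your induction goes through verbatim after this substitution. As written, however, your proof establishes only the special case in which the $a_j$ arise from the construction of Proposition \ref{17.11.2014--1}, not the proposition in its stated generality.
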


\begin{proof}We proceed by induction and begin with $n=1$.  In this case, we have a commutative diagram 
\[
\xymatrix{&G_2\ar[d]^{\rho_1}\\ G_1\ar[r]^{a_1}\ar[d]_{\rho_0}&G_1\ar@{.>}[d]^{\rho_0}\\ G_0\ar@{.>}[r]_{a_{0,1}}&G_0.}
\]
We remark  that  $a_{0,1}:G_0\ot k\to G_0\ot k$ is conjugation by an element of $L_0\ot k$. Hence, $L_0\ot k=a_{0,1}(L_0)\ot k$ and commutativity of the above diagram guarantees that $\rho_0^\#\left[a_{0,1}(L_0)\right]=a_1\left[\rho_0^\#(L_0)\right]$. The desired result then follows.

Let $n>1$ and assume that the result is valid for all almost  spontaneous sequences associated to $L_1=a_1\left[\rho_0^\#(L_0)\right]\subset G_1$  and truncated at level $n$. 
Since 
\[
\xymatrix{G_1\ar[r]^{a_{1,1}} \ar[d]_{\rho_0} & G_1\ar[r]^{a_{1,2}}\ar[d]_{\rho_0}& \cdots \ar[r]^{a_{1,n}}& G_1\ar[d]^{\rho_0} \\ G_0\ar[r]_{a_{0,1}}  &  G_0\ar[r]_{a_{0,2}}&\cdots\ar[r]_{a_{0,n}}&G_0  }
\]
is commutative, functoriality of the strict transform and the definition of $L_1$ give us 
\begin{equation}\label{17.11.2014--2}\begin{split}\rho_0^\#\left[a_{0,n}\cdots a_{0,1}(L_0)\right]&=a_{1,n}\cdots a_{1,1}(\rho_0^\#(L_0))\\ & =a_{1,n}\cdots a_{1,2}(L_1).
\end{split}\end{equation}
(Recall that by construction $a_{1,1}=a_1$.) Since $a_{0,j}$ is conjugation by an element of $G_0(R)$ belonging to the image of $G_1(R)\to G_0(R)$, we can affirm that $a_{0,j}$ leaves $L_0\ot k$ invariant. Consequently, $L_0$ and $a_{0,n}\cdots a_{0,1}(L_0)$ have the same closed fibre, from which we conclude that $\rho_0$ is also the blowup of \[\left(a_{0,n}\cdots a_{0,1}(L_0)\right)\ot k.\]
Since \[G_{n+1}\stackrel{\rho_n}{\longrightarrow}\cdots\stackrel{\rho_1}{\longrightarrow}  G_1\] is, by induction hypotehsis, the spontaneous sequence of $a_{1,n}\cdots a_{1,2}(L_1)\subset G_1$ truncated at level $n$ and, as (see eqs. \eqref{17.11.2014--2}) 
\[a_{1,n}\cdots a_{1,2}(L_1)=\rho_0^\#\left[a_{0,n}\cdots a_{0,1}(L_0)\right],
\]
we obtain that 
\[
G_{n+1}\stackrel{\rho_n}{\longrightarrow}\cdots\stackrel{\rho_1}{\longrightarrow}  G_1\stackrel{\rho_0}{\longrightarrow}G_0
\]
is the spontaneous sequence associated to $a_{0,n}\cdots a_{0,1}(L_0)$ truncated at level $n+1$.  
\end{proof}

We maintain the above notations and write 
\[
C_n:=a_{0,n}\cdots a_{0,1}(L_0).
\]
\begin{prop}\label{18.11.2014--8}For each $n\ge m$, we have $C_n\ot R_m=C_m\ot R_m$ as closed subschemes of $G\ot R_m$. The limit $\g C:=\varinjlim_nC_n\ot R_n$ defines a closed formal subgroup of $\widehat G_0=\varinjlim_nG_0\ot R_n$ which is furthermore flat over $R$.  
\end{prop}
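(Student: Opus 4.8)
The plan is to first reduce the compatibility statement $C_n\ot R_m=C_m\ot R_m$ (for $n\ge m$) to an infinitesimal triviality of the correcting automorphisms, and only then to build $\g C$. Writing $a_{0,j}=\mathrm{conj}_{b_j}$ for a suitable $b_j\in G_0(R)$, one has $C_n=\mathrm{conj}_{\gamma_n}(L_0)$ with $\gamma_n=b_n\cdots b_1$, so that $C_n=\mathrm{conj}_{\gamma_n\gamma_m^{-1}}(C_m)$ with $\gamma_n\gamma_m^{-1}=b_n\cdots b_{m+1}$. Thus the first assertion follows once we know $b_j\equiv e\bmod \pi^{j}$ for every $j\ge1$: indeed then $b_n\cdots b_{m+1}\equiv e\bmod\pi^{m+1}$, and conjugation by such an element is the identity on $G_0\ot R_m$. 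Equivalently, I must show that each correction is trivial to the matching order, namely $a_{0,j}\ot R_{j-1}=\mathrm{id}$.

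To prove $b_j\equiv e\bmod\pi^{j}$ I would track the conjugating element down the tower of blowups. By the construction in Proposition \ref{17.11.2014--1}, the inner automorphism $a_j$ of $G_j$ is conjugation by a point $g_j\in G_j(R)$ whose reduction lies in the kernel $U_j=\ker(\rho_{j-1}\ot k\colon G_j\ot k\to G_{j-1}\ot k)$, which by Theorem \ref{11.02.2015--1} is isomorphic to $\GG_{a,k}^{c}$; moreover $b_j=(\rho_0\circ\cdots\circ\rho_{j-1})(g_j)$. Setting $g_j^{(i)}=(\rho_i\circ\cdots\circ\rho_{j-1})(g_j)\in G_i(R)$, I would prove by descending induction on $i$ that $g_j^{(i)}\equiv e\bmod\pi^{j-i}$, the base case $i=j-1$ being exactly the statement $\bar g_j\in U_j=\ker(\rho_{j-1}\ot k)$, so $b_j=g_j^{(0)}\equiv e\bmod\pi^{j}$. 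The engine of the induction is a one-step gain for a Neron blowup $\rho_i\colon G_{i+1}\to G_i$ of an $R$-flat subgroup $L_i$ with ideal $\mathcal I_i\subset R[G_i]$: since $L_i$ contains the identity, $\mathcal I_i$ lies in the augmentation ideal, so for $x\in\mathcal I_i$ and any $h\in G_{i+1}(R)$ with $h\equiv e\bmod\pi^{s}$ one computes $\rho_i(h)(x)=\pi\,h(\pi^{-1}x)\in\pi^{s+1}R$, i.e. a power of $\pi$ is gained along the blown-up directions. Since $a_{0,j}$ does not depend on the chosen representative $g_j$, I am free to pick $g_j$ supported in $U_j$, on which the functions pulled back from the lower groups are infinitesimally (that is, mod $\pi$) constant; this is what should control the directions transverse to $\mathcal I_i$.

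The delicate point — and the step I expect to be the main obstacle — is the coordination of these transverse directions across the whole tower: a single blowup gains a power only along its own ideal $\mathcal I_i$, and these ideals live at different levels, so reaching the full exponent $\pi^{j}$ requires showing that $g_j\in U_j$ forces the image in $R[G_j]$ of the augmentation ideal $\g a$ of $R[G_0]$ to decompose as $\pi\cdot(\text{$L$-directions})+\pi^{j}\g a_j$, with $g_j$ annihilating the first summand exactly. This is precisely the behaviour exhibited by the model computation with $G=\GG_{a,R}\rtimes\GG_{m,R}$ blown up along a non-constant complement of its torus, which I would use as a guide. Granting the compatibility, the system $(C_n\ot R_n)_n$ satisfies $C_{n+1}\ot R_n=C_n\ot R_n$, hence is a projective system of closed, $R$-flat subgroup schemes under reduction, and its colimit $\g C=\varinjlim_n C_n\ot R_n$ is a closed formal subgroup of $\widehat G_0$. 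Finally, because $\gamma_{n+1}\equiv\gamma_n\bmod\pi^{n+1}$, the inner automorphisms $\mathrm{conj}_{\gamma_n}$ induce compatible isomorphisms $R_n[C_n]\cong R_n[L_0]$, whence $R\langle\g C\rangle\cong\varprojlim_n R_n[L_0]$ is the $\pi$-adic completion of $R[L_0]$; as $L_0$ is $R$-flat, this completion is $\pi$-torsion free, so $\g C$ is flat over $R$, as required.
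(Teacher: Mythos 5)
Your reduction of the first assertion to a property of the conjugating elements $b_j$ is the right opening move, but the property you then try to prove --- $b_j\equiv e\bmod\pi^{j}$, i.e. $a_{0,j}\ot R_{j-1}=\mathrm{id}$ --- is not merely the ``main obstacle'' you flag: it is false, and no choice of representative $g_j$ repairs it. Two separate problems. First, in the setting of the proposition the $a_j$ are \emph{arbitrary} inner automorphisms of $G_j$ (the stair of Definition \ref{18.11.2014--3} does not require them to come from the construction of Proposition \ref{17.11.2014--1}), so already $b_1$ can be conjugation by a point of $L_0$ with nontrivial reduction mod $\pi$. Second, even granting $\bar g_j\in U_j(k)$, your one-step engine gains a power of $\pi$ only along the blown-up ideal $\mathcal I_i$, never along the directions of the subgroup itself, and those transverse components genuinely survive. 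Concretely: take $G_0=\GG_{a,R}\rtimes\GG_{m,R}$, $L_0=\GG_{m,R}$, $a_1=$ conjugation by $g_1=(c,1)$. Then $C_1=\{(\pi c(1-t),t)\}$, and a lift $g_2=(d,t_0)\in G_2(R)$ of a point of $U_2(k)$ only satisfies $t_0\equiv1\bmod\pi$; writing $1-t_0=\pi s$ one computes $b_2=\rho_0\rho_1(g_2)=\left(\pi^2(d+cs),\,t_0\right)$, whose $\GG_m$-component need not be $1$ mod $\pi^{2}$. So $b_2\not\equiv e\bmod\pi^2$ in general. What \emph{is} true is weaker: $b_2\bmod\pi^2=(0,\bar t_0)$, which is a point of $C_1(R_1)$ --- the conjugating element is congruent to a point \emph{of the subgroup}, not to the identity.

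That weaker statement is exactly what the paper proves, and it suffices. By Proposition \ref{24.10.2014--1} the truncated tower is the spontaneous sequence of $C_n\subset G_0$; by Lemma \ref{18.11.2014--7} each strict transform divides the ideal of the centre by $\pi$, so (Lemma \ref{18.11.2014--5}) \emph{any} point $g_{n+1}\in G_{n+1}(R)$ --- with no congruence hypothesis whatsoever --- pushes down to a point of $G_0(R_n)$ annihilating the ideal of $C_n$ mod $\pi^{n+1}$, i.e. a point of $C_n(R_n)$. Conjugation by a point of a subgroup preserves that subgroup, whence $C_{n+1}\ot R_n=a_{0,n+1}(C_n\ot R_n)=C_n\ot R_n$; triviality of the conjugation is never needed. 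Note also that your closing flatness argument inherits the same flaw: the isomorphisms $R_n[C_n]\simeq R_n[L_0]$ given by $\mathrm{conj}_{\gamma_n}$ need not be compatible as $n$ grows, since conjugation by $b_{n+1}$ preserves $C_n\ot R_n$ but can act nontrivially on it (this matters once $L_0$ is noncommutative). The paper avoids this by observing that the equality $C_{n+1}\ot R_n=C_n\ot R_n$ means the kernels $\g k_n$ of $R_n[G_0]\to R_n[C_n]$ form a surjective system, so $\varprojlim_nR_n[G_0]\to\varprojlim_nR_n[C_n]$ is surjective and $\g C$ is a closed formal subscheme; $R$-flatness is then quoted from the local flatness criterion rather than deduced from an identification with the completion of $R[L_0]$.
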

The proof relies on the following result.

\begin{lem}\label{18.11.2014--5}Let $\mathcal{L}\subset \mathcal {G}$ be a closed immersion in $\mathbf{FGSch}/R$. Let 
\[\cdots \stackrel{\sigma_0}{\longrightarrow}\mathcal{G}_0=\mathcal{G}\] 
stand for the spontaneous sequence associated to $\mathcal L$. 
Let $g_{n+1}\in \mathcal G_{n+1}(R)$. Then its image by the obvious arrow 
\[
\mathcal G_{n+1}(R)\stackrel{}{\longrightarrow} \mathcal G(R)\stackrel{}{\longrightarrow} \mathcal G(R_{n})
\] 
lies in $\mathcal L(R_{n})$. 
\end{lem}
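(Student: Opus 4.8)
The plan is to reduce the statement to a single ring-theoretic inclusion about the terms of the standard sequence and then read off the conclusion on $R_n$-points. Since we are in Section \ref{20.08.2015--1}, $R$ is complete and $\mathcal G$ is flat of finite type, so $\cn^\infty_{\mathcal L}(\mathcal G)$ is defined. First I would invoke Theorem \ref{24.11.2014--5}: because $\mathcal L\subset\mathcal G$ is a closed immersion in $\mathbf{FGSch}/R$, the spontaneous sequence $\cdots\to\mathcal G_{m+1}\stackrel{\sigma_m}{\to}\mathcal G_m\to\cdots\to\mathcal G_0=\mathcal G$ is exactly the standard sequence (Definition \ref{19.11.2014--1}) of $N:=\cn^\infty_{\mathcal L}(\mathcal G)\to\mathcal G$. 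Write $A=R[\mathcal G]$ and let $J\subset A$ be the ideal of $\mathcal L$, so that $R[\mathcal L]=A/J$ and $R[\mathcal L\ot R_n]=A/(J+\pi^{n+1}A)$.

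The heart of the argument is the inclusion
\[
\pi^{-(n+1)}J\subset R[\mathcal G_{n+1}].
\]
To obtain it I would combine two facts. First, the ideal of $H_m=\mathcal L\ot R_m$ in $A$ is $I_m=J+\pi^{m+1}A$, so the defining recipe for $E^m$ gives $E^m=A[\pi^{-(m+1)}J]$; hence $\pi^{-(n+1)}J\subset E^n\subset E^\infty=R[N]$. Second, the inductive inclusion established inside the proof of Theorem \ref{26.09.2014--6} — namely $\pi^{-m}R[\mathcal G_0]\cap R[N]\subset R[\mathcal G_m]$ — applied with $m=n+1$ yields $\pi^{-(n+1)}A\cap E^\infty\subset R[\mathcal G_{n+1}]$. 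Since every element $\pi^{-(n+1)}j$ with $j\in J$ lies simultaneously in $\pi^{-(n+1)}A$ and in $E^\infty$, the displayed inclusion follows.

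The rest is formal. The inclusion says $J\subset\pi^{n+1}R[\mathcal G_{n+1}]$, so the composite $A\to R[\mathcal G_{n+1}]\to R[\mathcal G_{n+1}]/\pi^{n+1}$ annihilates $J$ and therefore factors through $A/(J+\pi^{n+1}A)=R[\mathcal L\ot R_n]$. Geometrically, the morphism $\mathcal G_{n+1}\to\mathcal G$ tensored with $R_n$ factors as
\[
\mathcal G_{n+1}\ot R_n\longrightarrow \mathcal L\ot R_n\hookrightarrow \mathcal G\ot R_n.
\]
Evaluating on $R_n$-points, the image in $\mathcal G(R_n)$ of any $R_n$-point of $\mathcal G_{n+1}$ lies in $\mathcal L(R_n)$ (the latter being a subset because $\mathcal L\to\mathcal G$ is a closed immersion). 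Applying this to the reduction of $g_{n+1}\in\mathcal G_{n+1}(R)$, and noting that $\mathcal G_{n+1}(R)\to\mathcal G_{n+1}(R_n)\to\mathcal G(R_n)$ coincides with the composite $\mathcal G_{n+1}(R)\to\mathcal G(R)\to\mathcal G(R_n)$ of the statement, finishes the proof.

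The step I expect to be the main obstacle is the displayed inclusion $\pi^{-(n+1)}J\subset R[\mathcal G_{n+1}]$. The easy computation of $E^\infty$ only places $\pi^{-(n+1)}J$ inside the \emph{full} blowup $R[N]=E^\infty$, i.e.\ in the limit; the substance of the matter is the level-by-level control furnished by the proof of Theorem \ref{26.09.2014--6}, which guarantees that this element already appears at the \emph{finite} stage $R[\mathcal G_{n+1}]$. Getting the index bookkeeping right (the shift between $\pi^{-(n+1)}$ and the $(n+1)$-st term, matching the counit levels $R_n=R/\pi^{n+1}$) is where care is needed; once that inclusion is in hand, everything else is a routine passage from rings to schemes to points.
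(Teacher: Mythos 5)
Your proof is correct, and there is no circularity in invoking Theorem \ref{24.11.2014--5}, since that result is a restatement of Theorem \ref{26.09.2014--7}(1) and does not depend on the present lemma. But your route is genuinely different from, and considerably heavier than, the paper's. Both arguments hinge on the same key inclusion $\pi^{-(n+1)}J\subset R[\mathcal G_{n+1}]$, and both finish with the same formal endgame; the difference is how that inclusion is obtained. The paper gets it in one line by iterating Lemma \ref{18.11.2014--7}: the spontaneous sequence carries along the strict transforms $\mathcal L_m$ (Definition \ref{19.11.2014--2}), and the ideal $J_{m+1}$ of $\mathcal L_{m+1}$ contains $\pi^{-1}J_m$ simply because it is a saturation, whence $\pi^{-(n+1)}J_0\subset J_{n+1}\subset R[\mathcal G_{n+1}]$. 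You instead pass through the global object $\cn^\infty_{\mathcal L}(\mathcal G)$, identify the spontaneous sequence with the standard blowup sequence of $\cn^\infty_{\mathcal L}(\mathcal G)\to\mathcal G$ via Theorem \ref{24.11.2014--5}, and then extract the level-by-level control from an \emph{intermediate} inclusion inside the proof of Theorem \ref{26.09.2014--6} (not from its statement). This works, but it imports the entire Section \ref{05.02.2015--2} machinery --- which requires $R$ complete and $\mathcal G$ noetherian (finite type) for the completion and rigid-geometry arguments behind Theorem \ref{26.09.2014--7} --- whereas the lemma as stated assumes only a closed immersion in $\mathbf{FGSch}/R$, and the paper's proof indeed needs neither completeness nor finite type. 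What your approach buys is a conceptual reading: the lemma becomes transparent once one knows that the spontaneous sequence is the standard sequence of the blowup along $\widehat{\mathcal L}$, with the $R_n$-point statement then echoing Corollary \ref{24.09.2014--2}. What the paper's approach buys is economy and generality: a self-contained two-step argument (strict transforms, then evaluation of points) valid for arbitrary flat $\mathcal G$, which also keeps the logical weight of Section \ref{20.08.2015--1} resting on elementary facts rather than on the formal-subgroup theory it is meant to complement.
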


\begin{proof}
To help with the verification, we employ the following observation. 

\begin{lem}\label{18.11.2014--7}
Let $\mathcal L\subset \mathcal G$ be a closed immersion in $\mathbf{FGSch}/R$. If $J$ stands for the ideal of $\mathcal L$ and $\sigma:\mathcal G'\to \mathcal G$ for the blowup of $\mathcal L\ot k$, then the ideal of the strict transform $\sigma^\#(\mathcal L)$ of $\mathcal L$ contains $\pi^{-1}J$. 
\end{lem}
\begin{proof}Evident since the ideal of the strict transform is the saturation $\cup_\nu(JR[\mathcal G']:\pi^\nu)$. 
\end{proof}

We carry on the verification of the lemma. Write $\mathcal L_0=\mathcal L$ and define $\mathcal L_{n+1}$ as the strict transform of $\mathcal L_n$ in $\mathcal G_{n+1}$. 
Let $J_n$ stand for the ideal of $\mathcal L_n$ in $R[\mathcal G_n]$.  From Lemma \ref{18.11.2014--7}, we know that  for any $f_0\in J_0$, there exists an element $f_{n+1}\in R[\mathcal G_{n+1}]$ (belonging to $J_{n+1}$) such that $\pi^{n+1} f_{n+1}=f_0$. Consequently, if $g_{n+1}:R[\mathcal G_{n+1}]\to R$ is a morphism of $R$-algebras, we conclude that $g_{n+1}(f_0)=\pi^{n+1} g_{n+1}(f_0)$, so that the morphism $g_0:R[\mathcal G_0]\to R$ associated to it satisfies $g_0(J_0)\subset(\pi^{n+1})$. Hence, $g_0\ot R_{n}:R_{n}[\mathcal G_0]\to R_{n}$  annihilates $J_0$, i.e. the corresponding point belongs to $\mathcal L_0(R_{n})$. 
\end{proof}

\begin{proof}[Proof of Proposition \ref{18.11.2014--8}]We wish first to verify 
\begin{equation}\label{24.11.2014--1}
C_{n+1}\ot_RR_n=C_n\ot_RR_n.
\end{equation}
Proposition \ref{24.10.2014--1} ensures  us that for each fixed $n$, 
\[
G_{n+1}\stackrel{\rho_n}{\longrightarrow}\cdots \stackrel{\rho_0}{\longrightarrow}G_0=G
\]
is the spontaneous sequence of  $C_n\subset G_0$ truncated at level $n+1$. We know that the inner automorphism $a_{0,n+1}:G_0\to G_0$, which sends $C_{n}$ isomorphically to $C_{n+1}$, is conjugation by an element of the form
\[
\rho_n\circ\cdots\circ\rho_0(g_{n+1}),\qquad g_{n+1}\in G_{n+1}(R).
\]
Hence, due to Lemma \ref{18.11.2014--5}, it is true that 
\[
a_{0,n+1}(C_n\ot_R R_n)= C_n\ot_R R_n.
\]
Therefore, 
\[
\begin{split}C_{n+1}\ot_RR_{n}&=a_{0,n+1}(C_n)\ot_RR_n\\&=a_{0,n+1}(C_n\ot R_n)\\&=C_n\ot R_n.
\end{split}\]

Another way of expressing equality \eqref{24.11.2014--1} is by saying that, if  $\mathfrak k_n$ stands for the kernel of $R_n[G_0]\to R_n[C_n]$, then $R_{n+1}[G_0]\to R_n[G_0]$ takes $\g k_{n+1}$ onto $\g k_n$. Consequently, $\varprojlim_nR_n[G_0]\to\varprojlim_nR_n[C_n]$ is surjective and $\g C$ is a closed formal subscheme of $\widehat G_0$. We omit the verification that $\g C$ is a subgroup of $\widehat G_0$ and refer the reader to \cite[22.3, p.174]{matsumura} for the statement about flatness.   
\end{proof}

Everything is now in place for the proof of 

\begin{thm}\label{24.11.2014--3} Let 
\[
\cdots\stackrel{}{\longrightarrow}G_{n+1}\stackrel{\rho_n}{\longrightarrow}\cdots\stackrel{\rho_0}{\longrightarrow}G_0=G
\]
be the almost spontaneous sequence (see Definition \ref{18.11.2014--3})
associated to $L_0\subset G_0$ as defined on page \pageref{24.11.2014--4}.  
If $\g C=\varinjlim C_n\ot R_n$ stands for the formal closed subgroup of $\widehat G_0$ obtained by means of Proposition \ref{24.10.2014--1} and Proposition \ref{18.11.2014--8}, then 
the standard sequence above is the the standard sequence of $\cn_{\g C}^\infty(G)\to G$. 
 
\end{thm}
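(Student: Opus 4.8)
The plan is to show that the given almost spontaneous sequence coincides, term by term, with the standard sequence of $\cn_{\g C}^\infty(G)\to G$. By Theorem~\ref{26.09.2014--7} the latter is the \emph{spontaneous} sequence of the formal subgroup $\g C\subset\widehat G$: the sequence produced from $G_0=G$ by taking $\rho_0$ to be the blowup of $\g C\ot k$, setting $\g C_1:=\rho_0^\#(\g C)$, and iterating. Both this sequence and the sequence in the hypothesis are standard sequences in the sense of Definition~\ref{03.02.2015--2}; since a standard sequence is recovered from its limit (Lemma~\ref{03.03.2015--1}), it suffices to prove that the two sequences have the same centre at every level, for then the blowups and the groups agree by induction.

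First I would match the zeroth centre: by Proposition~\ref{18.11.2014--8} one has $\g C\ot R_n=C_n\ot R_n$ for all $n$, so in particular $\g C\ot k=C_0\ot k=L_0\ot k=B_0$, and hence the blowup $\rho_0$ of $\g C\ot k$ is exactly the first blowup of the given sequence. For the higher centres the idea is to compare the spontaneous sequence of $\g C$ with the spontaneous sequence of the \emph{closed} subgroup $C_n$. On the one hand, Proposition~\ref{24.10.2014--1} identifies the given sequence, truncated at level $n+1$, with the spontaneous sequence of $C_n\subset G_0$, so the centres of the latter at levels $0,\dots,n$ are precisely $B_0,\dots,B_n$. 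On the other hand, $\g C$ and the completion $\widehat{C_n}$ are flat formal subgroups of $\widehat G$ agreeing modulo $\pi^{n+1}$, and Remark~\ref{17.01.2017--1}(4) lets me pass freely between the closed and formal strict transforms. So I would run a joint induction showing that the spontaneous sequences of $\g C$ and of $\widehat{C_n}$ agree up to level $n$; combining the two identifications then yields $\g C_j\ot k=B_j$ for $j\le n$, and letting $n\to\infty$ gives the claim at every level.

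The engine of that joint induction — and the step I expect to be the main obstacle — is a locality statement for the strict transform: if $\sigma:G'\to G$ is a Neron blowup and $\g H,\g H'\subset\widehat G$ are flat formal subgroups with $\g H\ot R_{N}=\g H'\ot R_{N}$, then $\sigma^\#(\g H)\ot R_{N-1}=\sigma^\#(\g H')\ot R_{N-1}$; i.e. one level of blowing up costs at most one power of $\pi$ in the agreement of the reductions. Granting this, agreement of $\g C$ and $\widehat{C_n}$ modulo $R_n$ (level $0$) propagates to agreement of the $j$-th strict transforms modulo $R_{n-j}$, so the centres $\g C_j\ot k$ and $(C_n)_j\ot k$ coincide for all $j\le n$ and the two blowup sequences stay identical that far. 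Proving the locality statement itself is a commutative-algebra computation with the explicit blowup algebra $R\langle G\rangle\langle\zeta\rangle/(\pi\zeta-a)^{\mathrm{sat}}$ of Lemma~\ref{25.09.2014--3}, the delicate point being the interaction of the saturation (which can lower $\pi$-divisibilities) with reduction modulo a power of $\pi$.

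Finally I would record an alternative that partly sidesteps the strict-transform bookkeeping. Writing $\mathcal G:=\varprojlim_nG_n$ for the limit of the given sequence, the proof of Lemma~\ref{18.11.2014--5} in fact shows, with $R$ replaced by an arbitrary $R$-algebra, that the composite $\mathcal G\ot R_n\to G\ot R_n$ factors scheme-theoretically through $C_n\ot R_n=H_n$; the universal property of $\cn^\infty_{\g C}(G)$ (Lemma~\ref{01.12.2016--1} in its $n=\infty$ form) then produces a canonical morphism $\mathcal G\to\cn^\infty_{\g C}(G)$ over $G$. This map is an isomorphism on generic fibres, and one is reduced to checking it is an isomorphism modulo each $R_n$, where Corollary~\ref{24.09.2014--2} supplies $\cn^\infty_{\g C}(G)\ot R_n\simeq C_n\ot R_n$. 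The remaining point, namely the reverse inclusion $R[\mathcal G]=\cup_mR[G_m]\subseteq E^\infty$ inside $K[G]$, amounts to controlling the $\pi$-denominators introduced by the successive blowups of $B_0,\dots,B_{m-1}$ in terms of the ideal of $\g C$ — which is exactly where the same technical difficulty resurfaces.
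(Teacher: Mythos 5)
Your strategy is sound and genuinely different from the paper's, but as written it has a gap at exactly the point you flag: the ``locality'' lemma for strict transforms --- if $\g H\ot R_N=\g H'\ot R_N$ then $\sigma^\#(\g H)\ot R_{N-1}=\sigma^\#(\g H')\ot R_{N-1}$ --- is asserted, not proved, and in your scheme it carries the whole mathematical content of the theorem: once it is granted, the rest of your first route is a direct citation of Proposition \ref{18.11.2014--8}, Proposition \ref{24.10.2014--1}, Remark \ref{17.01.2017--1}-(4) and Theorem \ref{26.09.2014--7}, and your alternative route ends, as you say yourself, at the same unproven point. The good news is that the lemma is true for \emph{flat} formal subgroups and can be proved with tools already in the paper, so the gap is fillable. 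First, flatness lets you choose congruent generators: since $R\langle G\rangle/I'$ is $\pi$-torsion free, writing $a_i=c_i+\pi^{N+1}b_i$ with $c_i\in I'$ for a generating system $a_1,\ldots,a_r$ of $I$, one checks $I'=(c_1,\ldots,c_r)+\pi^{N+1}I'$, whence $I'=(c_1,\ldots,c_r)$ because ideals of the complete noetherian ring $R\langle G\rangle$ are $\pi$-adically closed. Second --- and this is precisely the ``delicate saturation point'' you worry about --- no saturation actually occurs: $IR\langle G'\rangle=(\pi\zeta_1,\ldots,\pi\zeta_r)$ in the presentation of Lemma \ref{25.09.2014--3}, and the surjections $R\langle G\rangle/I\to R\langle G'\rangle/(\boldsymbol\zeta)\to R\langle G'\rangle/\bigl(IR\langle G'\rangle\bigr)^{\mathrm{sat}}$ compose to the isomorphism of Proposition \ref{26.09.2014--2}, forcing $\bigl(IR\langle G'\rangle\bigr)^{\mathrm{sat}}=(\zeta_1,\ldots,\zeta_r)$; the same argument applied to $I'$ gives $\bigl(I'R\langle G'\rangle\bigr)^{\mathrm{sat}}=(\zeta_1-\pi^Nb_1,\ldots,\zeta_r-\pi^Nb_r)$, and these two ideals visibly agree modulo $\pi^N$. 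Until something of this sort is written down, your induction has no engine, so the proposal as submitted is incomplete.

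It is worth comparing with how the paper sidesteps this issue entirely. Rather than tracking how fast agreement of reductions degrades under blowing up, the paper fixes $n$ and invokes Corollary \ref{28.10.2014--2} to choose one large level $\mu\ge n$ at which the partial blowup $\cn^\mu_{\g C}$ already has the same standard sequence as $\cn^\infty_{\g C}$ up to level $n+1$; since a partial blowup of level $\mu$ only sees $\g C\ot R_\mu=C_\mu\ot R_\mu$ (Proposition \ref{18.11.2014--8}), one has $\cn^\mu_{\g C}=\cn^\mu_{C_\mu}$ on the nose, and then Theorem \ref{24.11.2014--5}, transported from $L_0$ to $C_\mu$ by an inner automorphism via Lemma \ref{17.11.2014--5}, together with Proposition \ref{24.10.2014--1}, identifies the truncated sequence with the given one. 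In short, the paper trades your sharp ``one power of $\pi$ per blowup'' estimate for a soft ``take $\mu$ large enough'' argument in which the saturation difficulties were packaged once and for all into Corollary \ref{28.10.2014--2}; your route, once the locality lemma is proved, is more quantitative, while the paper's is shorter because it never needs to know the rate of degradation.
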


In order to provide a clear proof of Theorem \ref{24.11.2014--3}, we need the following lemma. 

\begin{lem}\label{17.11.2014--5}Let $G'\to G$  and  $H\to G$ be respectively an isomorphism and a closed immersion of $\mathbf{FGSch}/R$. Denote by $H'\to G'$ the closed immersion corresponding to $H\to G$ and let $n$ be an integer. 

Assume that for some $\mu\in\mathbf N$, the standard sequence of $\cn^\mu_H(G)\to G$ coincides with the spontaneous sequence associated to $H$ when both are truncated at level $n+1$. Then the same property holds for  $\cn_{H'}^\mu(G')\to G'$. \qed
\end{lem}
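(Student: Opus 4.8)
The plan is to observe that every construction entering the two sequences is functorial for isomorphisms, so that the given isomorphism $\varphi\colon G'\xrightarrow{\sim}G$ transports the whole picture on the $G'$-side onto the one on the $G$-side; the hypothesised coincidence up to level $n+1$ then carries over automatically.

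First I would fix notation. Let $\varphi\colon G'\to G$ be the given isomorphism; by the phrase ``the closed immersion corresponding to $H\to G$'' we have $H'=\varphi^{-1}(H)$, so that $\varphi$ restricts to an isomorphism $H'\xrightarrow{\sim}H$ and $\varphi\ot k$ carries $H'\ot k$ onto $H\ot k$. The associated ring isomorphism $\varphi^\#\colon R[G]\to R[G']$ then takes the ideal of $H\ot R_m$ onto that of $H'\ot R_m$ for every $m$, and extends to an isomorphism $K[G]\to K[G']$.

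Next I would record the functorialities that are needed, each immediate from the relevant definition. The Neron blowup is functorial for isomorphisms: if $\psi\colon A'\to A$ is an isomorphism in $\mathbf{FGSch}/R$ taking a centre $B'\subset A'\ot k$ onto $B\subset A\ot k$, then, since $\psi^\#$ carries the ideal of $B$ onto that of $B'$ and commutes with multiplication by $\pi^{-1}$ inside $K[A]\to K[A']$, it induces an isomorphism between the two blowups, compatible with the projections to $A$ and $A'$. The strict transform (Definition \ref{19.11.2014--2}) is functorial for the same reason, being built from ideal extension and saturation, both of which commute with $\psi^\#$. The partial blowup $\cn^\mu$ is likewise functorial: because $\varphi^\#$ matches the ideal $I_\mu$ of $H\ot R_\mu$ with that of $H'\ot R_\mu$, it identifies $E^\mu(G)$ with $E^\mu(G')$, whence, by the universal property of Lemma \ref{01.12.2016--1}, $\varphi$ lifts to an isomorphism $\cn^\mu_{H'}(G')\xrightarrow{\sim}\cn^\mu_H(G)$ over $\varphi$. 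Finally, the image of a homomorphism of group schemes over $k$ is functorial for isomorphisms, so images of the shape $\mathrm{Im}(\rho_m\ot k)$ are preserved.

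With these in hand I would assemble the two transports by induction. Writing $\rho\colon\cn^\mu_H(G)\to G$ and $\rho'\colon\cn^\mu_{H'}(G')\to G'$, the third functoriality gives a commutative square identifying $\rho'$ with $\rho$ under $\varphi$; feeding this into the inductive definition of the standard sequence (Definition \ref{19.11.2014--1}) and using functoriality of the blowup and of the image produces isomorphisms $G'_m\xrightarrow{\sim}G_m$ for all $m$, compatible with the transition maps. The very same induction, starting from the match $H'\ot k\leftrightarrow H\ot k$ and using functoriality of the blowup and of the strict transform, shows that $\varphi$ carries the spontaneous sequence of $H'\subset G'$ (Definition \ref{18.11.2014--3}) onto that of $H\subset G$, term by term. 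Since both transports are isomorphisms of diagrams over $\varphi$, the truncated statement ``the two sequences for $H\subset G$ agree up to level $n+1$''---an equality of the diagrams $G_{n+1}\to\cdots\to G_0$ together with their blowup centres---is carried verbatim onto the corresponding statement for $H'\subset G'$, which is what we want. The only delicate point, and what I would regard as the sole and rather mild obstacle, is the bookkeeping that verifies these functoriality isomorphisms fit into commutative ladders with the transition maps; once that is checked the conclusion is immediate.
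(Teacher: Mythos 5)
Your proof is correct and matches the paper's intent exactly: the paper states this lemma with no proof at all (it ends in \qed), treating it as the self-evident transport-of-structure fact that you spell out. Your careful listing of the needed functorialities (of the Neron blowup, the strict transform, the partial blowup $\cn^\mu$, and the image over $k$) under the isomorphism $G'\to G$ is precisely the bookkeeping the authors left to the reader.
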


\begin{proof}[Proof of Theorem \ref{24.11.2014--3}]We fix some $n\ge0$ and show that the spontaneous sequence of $\g C\subset\widehat G$ truncated at level $n+1$ is
\[
G_{n+1}\stackrel{\rho_n}{\longrightarrow}\cdots\stackrel{\rho_0}{\longrightarrow}G_0=G.
\]
The goal is to apply Proposition \ref{24.10.2014--1}.
To avoid repetitions  we let ``$\sigma_{n+1}$'' abbreviate  ``standard sequence truncated at level $n+1$.'' We also omit reference to $G$ when possible.  

Let $\mu$ be a positive integer satisfying the following properties. 
\begin{enumerate}\item[\textbf{P1.}] The $\sigma_{n+1}$ of $\cn^\infty_{\g C}$  coincides with the $\sigma_{n+1}$ of $\cn^\mu_{\g C}$. 
\item[\textbf{P2.}] The spontaneous sequence of $L_0\subset G_0$ truncated at level $n+1$ coincides with the $\sigma_{n+1}$ of $\cn_{L_0}^\mu$.
\item[\textbf{P3.}] $\mu\ge n$.  
\end{enumerate}
That $\mu$ exists is a consequence of Corollary \ref{28.10.2014--2} and  Theorem \ref{24.11.2014--5}. 
From \textbf{P1}, the $\sigma_{n+1}$ of $\cn^\infty_{\g C}$ is the $\sigma_{n+1}$ of $\cn^\mu_{\g C}=\cn^\mu_{C_\mu}$.  From \textbf{P2} and Lemma \ref{17.11.2014--5}, the $\sigma_{n+1}$ of $\cn_{C_\mu}^\mu$ is the truncation of the spontaneous sequence of $C_\mu\subset G_0$ at level $n+1$. 
  Due to Proposition \ref{24.10.2014--1} and \textbf{P3}, the spontaneous sequence associated to $C_\mu\subset G_0$, truncated at level $n+1$, is just what we started with:
\[
G_{n+1}\stackrel{\rho_n}{\longrightarrow}\cdots\stackrel{\rho_0}{\longrightarrow}G_0=G.
\] 
\end{proof}

We then obtain our main converse of Theorem \ref{26.09.2014--7} as a consequence of Proposition \ref{17.11.2014--1}, Definition \ref{18.11.2014--3} and Theorem \ref{24.11.2014--3}.  

\begin{cor}\label{09.04.2015--3} Let 
\[
\cdots\aro G_{n+1}\stackrel{\rho_n}{\aro}\cdots\stackrel{\rho_0}{\aro}G_0=G
\]
be a standard sequence (Definition \ref{19.11.2014--1}). Denote the center of the Neron blowup $\rho_n$ by $B_n\subset G_n\ot k$. Assume that the four hypothesis in Proposition \ref{17.11.2014--1} hold: 
\begin{enumerate} 
\item[i)] The group $G_0$ is smooth over $R$, and   $B_0$ is smooth over $k$. 
\item[ii)] For every $n\ge0$, the restriction  $\rho_{n}\ot k:B_{n+1}\to B_n$ is an isomorphism. 
\item[iii)] There exists an $R$-flat closed subgroup $L_0\subset G_0$ with $L_0\ot k=B_0$.  
\item[iv)] For each representation $V$ of $B_0$, the first cohomology group $H^1(B_0;V)$ vanishes (linear reductivity).  
\end{enumerate}

Then the above standard sequence is the standard sequence  of $\mathcal N_{\g C}^\infty(G)\to G$, where $\g C$ is a formal, $R$-flat,  closed subgroup scheme of $\widehat G$. \qed
\end{cor}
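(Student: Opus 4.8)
The plan is to assemble the corollary from the three ingredients cited in its statement, the genuine work having already been discharged in Proposition \ref{17.11.2014--1} and Theorem \ref{24.11.2014--3}. First I would observe that hypotheses (i)--(iv) are verbatim those of Proposition \ref{17.11.2014--1}; applying it furnishes, for every $n\ge1$, an inner automorphism $a_n:G_n\to G_n$ together with a closed, $R$-flat subgroup $L_n\subset G_n$ satisfying $L_n=a_n(\rho_{n-1}^\#L_{n-1})$ and $L_n\ot k=B_n$. In particular hypothesis (iii) supplies the base datum $L_0\subset G_0$ needed to launch this construction.

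Next I would check that these data exhibit the given standard sequence as an almost spontaneous sequence associated to $L_0\subset G_0$ in the sense of Definition \ref{18.11.2014--3}. This is a matter of matching notation against the stair \eqref{stair}--\eqref{stair2}: since $\rho_n$ is by hypothesis the blowup of its centre $B_n$ and $L_n\ot k=B_n$, the morphism $\rho_n$ is the blowup of $L_n\ot k$; and the recursion $L_n=a_n[\rho_{n-1}^\#(L_{n-1})]$ is exactly the rule prescribed in \eqref{stair2}. Thus the sequence, together with the inner automorphisms $a_n$, realizes the stair.

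Finally I would invoke Theorem \ref{24.11.2014--3} directly. That theorem takes an almost spontaneous sequence associated to $L_0\subset G_0$, forms the subgroups $C_n=a_{0,n}\cdots a_{0,1}(L_0)$ of Proposition \ref{24.10.2014--1}, assembles the formal closed subgroup $\g C=\varinjlim_n C_n\ot R_n$ of $\widehat G$ by Proposition \ref{18.11.2014--8}, and asserts that the original standard sequence is the standard sequence of $\cn^\infty_{\g C}(G)\to G$. By Proposition \ref{18.11.2014--8}, $\g C$ is a closed, $R$-flat, formal subgroup scheme of $\widehat G$, which is precisely the required conclusion.

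The only point demanding care is the identification carried out in the second step, namely that the abstract output of Proposition \ref{17.11.2014--1} really does satisfy the defining clauses of an almost spontaneous sequence; but this is immediate from the equalities $L_n\ot k=B_n$ and $L_n=a_n(\rho_{n-1}^\#L_{n-1})$. Consequently there is no serious obstacle at the level of the corollary itself: all the analytic and cohomological difficulty---the use of linear reductivity (iv) and of the infinitesimal lifting criterion to produce the $a_n$, and the inductive comparison of truncated spontaneous sequences in Theorem \ref{24.11.2014--3}---has already been absorbed into the cited statements.
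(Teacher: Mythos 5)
Your proposal is correct and follows exactly the paper's intended argument: the paper itself proves this corollary merely by citing Proposition \ref{17.11.2014--1}, Definition \ref{18.11.2014--3} and Theorem \ref{24.11.2014--3}, which is precisely the three-step assembly you carry out (produce the $a_n$ and $L_n$, recognize the sequence as almost spontaneous, then invoke Theorem \ref{24.11.2014--3} together with Proposition \ref{18.11.2014--8} for the properties of $\g C$). Your write-up simply makes explicit the verification, left implicit in the paper, that the output of Proposition \ref{17.11.2014--1} matches the defining clauses of the stair \eqref{stair}--\eqref{stair2}.
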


\section{Group schemes over $R$ in   differential Galois theory}\label{09.03.2015--9}

We now wish to apply the theory so far developed to study differential Galois theory.  
Let $f:X\to S$ be a smooth morphism  between locally noetherian schemes. We let $\cd(X/S)$ be the ring of differential operators which in [EGA $\mathrm{IV}_4$, 16.8], respectively \cite[\S2]{BO}, is denoted by $\cd iff_{X/S}$, respectively  $\cd iff(\co_X,\co_X)$.
We let 
\[
\dmod{X/S}
\]
stand for the category of (sheaves of) $\cd(X/S)$-modules whose underlying $\co_X$-module is coherent. In fact, in the present work, a $\cd(X/S)$-module will always mean an object of $\dmod{X/S}$.  
The obvious action of $\cd(X/S)$ on $\mathcal O_X$ gives rise to an object of $\dmod{X/S}$ which is denoted by $\mathbf1$; arrows in $\mathrm{Hom}_{\cd(X/S)}(\ce,\ce')$ will  frequently be called horizontal morphisms between $\ce$ and $\ce'$, and, in the particular case where $\ce=\mathbf1$, these will be named horizontal sections of $\ce'$. 
It is obvious that $\dmod{X/S}$ is an abelian category and that the evident functor from $\dmod{X/S}$ to coherent modules is exact and faithful. Furthermore, the tensor product  of coherent modules induces a tensor product (denoted simply by $\ot_{\co_X}$ or $\ot$) in $\dmod{X/S}$. 

\begin{rmk}\label{relation_remark}In order to render referencing more effective, we inform the reader that $\dmod{X/S}$ is frequently denoted by $\mathbf{str}(X/S)$ (see \cite{DH}, \cite{dS09}, \cite[\S2]{BO}). Also, as Theorem 2.15 of \cite{BO} explains, if $S$ is a $\mathbf Q$-scheme, $\dmod{X/S}$ coincides with the category of $\co_X$-coherent modules endowed with an integrable connection in the sense of \cite[Section 1]{katz70}.  
\end{rmk}

Objects of $\dmod{X/S}$ which underlie  locally free $\mathcal O_X$-modules (vector bundles) are the objects of a full subcategory $\dmod{X/S}^o$. Note that the natural ``dualization'' in the category of vector bundles allows us to define a dualization in $\dmod{X/S}^o$; this will be denoted by a superscript $(-)^\vee$. 

The proposition below is a basic result in the theory. (For proofs, the reader is directed to \cite[Proposition 8.9]{katz70} of \cite[2.16]{BO} for the first statement and to \cite[p.40]{katz90}, \cite[p.82]{dS09} or \cite[5.1.1]{DH} for the second.)

\begin{prop}\label{05.03.2014--1}  If $S$ is the spectrum of a field, any $M\in\dmod{X/S}$ is locally free as an $\mathcal O_{X}$-module. 
If $S$ is the spectrum of $R$, any $\mathcal M\in\dmod{X/S}$ which is free of $\pi$-torsion belongs to $\dmod{X/S}^o$ (and conversely). 
\qed 
\end{prop}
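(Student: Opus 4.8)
The plan is to establish the two assertions in turn, deducing the $R$--statement from the field case.

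First I would treat $S=\mathrm{Spec}\,k$, where the claim is the classical local freeness of a coherent $\cd$--module over a smooth base \cite[Prop.~8.9]{katz70}, \cite[2.16]{BO}. Since the locus on which a coherent sheaf is locally free is open and $X$, being locally of finite type over $k$, is Jacobson, it is enough to show the stalk $M_x$ is $\co_{X,x}$--free at every closed point $x$. In an affine neighbourhood smoothness furnishes étale coordinates $t_1,\dots,t_d$ (normalised so that each $t_i$ vanishes at $x$, so that the $\g m_x$--adic order is measured by degree in the $t_i$) and an $\co_X$--basis $\{\partial^{[\alpha]}\}$ of $\cd(X/S)$ with $\partial^{[\alpha]}(t^\beta)=\binom{\beta}{\alpha}t^{\beta-\alpha}$ and the divided--power Leibniz rule $\partial^{[\alpha]}(fg)=\sum_{\gamma+\delta=\alpha}\partial^{[\gamma]}(f)\,\partial^{[\delta]}(g)$. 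Lifting a $\kappa(x)$--basis of $M_x/\g m_xM_x$ to $e_1,\dots,e_n\in M_x$, Nakayama shows these generate $M_x$, so I must prove that the kernel of $\co_{X,x}^n\to M_x$, $(a_i)\mapsto\sum_ia_ie_i$, is zero.

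The key is an order--reduction argument. If the kernel were nonzero, I would pick a relation $\sum_ia_ie_i=0$ minimising $\nu:=\min_i\mathrm{ord}_{\g m_x}(a_i)$; minimality of the generating set forces $\nu\ge1$. Choosing $a_{i_0}$ of order exactly $\nu$ and a monomial $t^\beta$ with $|\beta|=\nu$ occurring in its leading form, I apply $\partial^{[\beta]}$ to the relation and expand $\partial^{[\delta]}(e_i)=\sum_kc_{i\delta k}e_k$; the Leibniz rule then yields a new relation whose $i_0$--th coefficient equals $\partial^{[\beta]}(a_{i_0})$ plus terms of strictly positive order, hence is a unit, contradicting the independence of the $e_i$ modulo $\g m_x$. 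I expect this step to be the main obstacle: in positive characteristic an ordinary derivation cannot lower the $\g m_x$--adic order (for instance $\partial(t^p)=0$), and it is exactly here that the \emph{full} ring $\cd(X/S)$ with its divided--power operators is indispensable. This is why the hypothesis is membership in $\dmod{X/S}$ and not merely the existence of a connection (cf.\ Remark \ref{relation_remark}); the minor residue--field subtleties at closed points are dealt with as in the references.

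For $S=\mathrm{Spec}\,R$ the converse is immediate, since $X$ is $R$--flat and so $\pi$ is a nonzerodivisor on any locally free $\co_X$--module. For the direct implication I would use that over the DVR $R$ being free of $\pi$--torsion is equivalent to $R$--flatness. Restricting the $\cd$--structure to the fibres, $\cm_k=\cm/\pi\cm$ and $\cm_K$ are $\cd$--modules on the smooth schemes $X_k/k$ and $X_K/K$, hence locally free there by the first part. It then suffices to check that each stalk $\cm_x$ is $\co_{X,x}$--free: for $x\in X_K$ this is inherited from $\cm_K$, while for $x\in X_k$ (so $\pi\in\g m_x$) I lift an $\co_{X_k,x}$--basis of the free module $(\cm_k)_x$ to generators of $\cm_x$ and consider $0\to N\to\co_{X,x}^r\to\cm_x\to0$. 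Because $\cm_x$ is $R$--flat, $\mathrm{Tor}_1^R(\cm_x,k)=0$, so reducing modulo $\pi$ keeps the sequence exact; the reduced surjection $\co_{X_k,x}^r\to(\cm_k)_x$ carries the chosen generators to a basis and is therefore an isomorphism, forcing $N/\pi N=0$. As $\pi$ lies in the maximal ideal of the Noetherian local ring $\co_{X,x}$ and $N$ is finitely generated, Nakayama gives $N=0$, so $\cm_x$ is free and $\cm\in\dmod{X/S}^o$.
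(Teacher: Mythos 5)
The paper itself offers no proof of this proposition: the statement carries a \qed, and the sentence preceding it merely points to \cite[Proposition 8.9]{katz70}, \cite[2.16]{BO} for the field case and to \cite[p.40]{katz90}, \cite[p.82]{dS09}, \cite[5.1.1]{DH} for the case over $R$. So your argument must stand on its own, and in substance it is the standard argument contained in those references. The half over $R$ is correct as written; note that your Tor--Nakayama computation is a special case of the fibrewise flatness criterion \cite[$\mathrm{IV}_3$, 11.3.10]{EGA} (a coherent $\cm$ that is $R$-flat and whose special fibre is $\co_{X_k}$-flat is $\co_X$-flat), and invoking that criterion would reduce your last paragraph to a few lines. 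The order-lowering computation in the field case is also correct where it applies, including your key observation that it is the full ring $\cd(X/S)$ with its divided-power operators, and not merely a connection, that makes the argument work in characteristic $p$.

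There is, however, one step that fails as written: the normalisation of the \'etale coordinates so that each $t_i$ vanishes at the closed point $x$. \'Etale coordinates vanishing at $x$ exist on some neighbourhood if and only if $\kappa(x)/k$ is separable, so over an imperfect field (a case the proposition must cover, since it is applied to the special fibre $X_k$ for an arbitrary DVR $R$) this step can be genuinely impossible: for $k=\mathbf{F}_p(s)$, $X$ the affine line over $k$ and $x$ the closed point cut out by $t^p-s$, every $u\in\g m_x$ has differential vanishing at $x$, because $u=(t^p-s)v$ and $d(t^p-s)=0$; hence no \'etale coordinate vanishes at $x$. This is not a pointwise subtlety to be ``dealt with as in the references'': the remedy is a preliminary global reduction to algebraically closed ground field. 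Since $X/k$ is smooth, the sheaves of principal parts, and hence $\cd(X/k)$, commute with the extension $k\to\bar k$, so $M\ot_k\bar k$ is a coherent $\cd(X_{\bar k}/\bar k)$-module; and local freeness of $M$ follows from that of $M\ot_k\bar k$ by descent of flatness along the faithfully flat morphism $X_{\bar k}\to X$. After this reduction every closed point is rational and your computation applies verbatim; in fact it never needs the leading coefficients lifted into the ground field, since $\partial^{[\beta]}$ induces a $\kappa(x)$-linear map $\g m_x^{\nu}/\g m_x^{\nu+1}\to\kappa(x)$ sending the leading form of $a_{i_0}$ to its $t^\beta$-coefficient.
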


\emph{We now fix some assumptions and notations which will be in force for the rest of this section}. We write 
\[S=\mathrm{Spec}\,R,\] and let  
\[f:X\aro S\]
be a smooth and geometrically connected (hence integral)  morphism of finite type admitting a section $\xi:S\to X$. (The discrete valuation ring $R$ is not assumed to be complete.)
The base-change functor induced by $\mathrm{Spec}\,k\to S$ will be denoted by a subscript ``$k$'', and whenever convenient we write ``$R$'', or ``$k$'' in place of ``$S$'' or ``$\mathrm{Spec}\,k$''.

With these conventions, another cornerstone ensues. (For proofs, the reader should consult \cite[Proposition 5.1.1]{DH} and \cite[2.16]{BO}.)

\begin{prop}\label{05.03.2015--2}The pull-back functors \[\xi^*:\dmod{X/R}\aro\mathbf{mod}(R)\]
and
\[\xi_k^*:\dmod{X_k/k}\aro\mathbf{mod}(k)\] 
are exact and faithful.  
\qed
\end{prop}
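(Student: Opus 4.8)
The plan is to factor each pull-back through the forgetful functor to coherent $\mathcal O_X$-modules, which was already observed to be exact and faithful when $\dmod{X/S}$ was introduced; since $\xi$ and $\xi_k$ are sections of smooth morphisms, the only points that require work are the \emph{left}-exactness and the faithfulness of the set-theoretic pull-back. The single geometric input I would isolate is the horizontal triviality of a stratified module in a formal neighbourhood of the section: writing $Z=\xi(S)\subset X$ and $\widehat X$ for the completion of $X$ along $Z$, the stratification carried by an object $\cm\in\dmod{X/R}$ (recall from Remark \ref{relation_remark} that such objects are exactly stratified modules) furnishes a canonical \emph{horizontal} isomorphism $\cm|_{\widehat X}\simeq f^*\xi^*\cm|_{\widehat X}$, under which every horizontal morphism $\cm\to\cn$ becomes $f^*$ of the $R$-linear map $\xi^*\cm\to\xi^*\cn$. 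The same discussion applies verbatim over $k$ to $\xi_k$.

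For exactness, I would take a short exact sequence $0\to\cm'\to\cm\to\cm''\to0$ in $\dmod{X/R}$; by exactness of the forgetful functor it is exact as a sequence of coherent $\mathcal O_X$-modules, hence remains exact after the flat restriction to $\widehat X$. Applying the trivialization of the previous paragraph identifies it with $f^*$ of the complex $\xi^*\cm'\to\xi^*\cm\to\xi^*\cm''$ of $R$-modules. Now $f\colon\widehat X\to S$ is faithfully flat — it is flat because $X\to S$ is smooth and completion is flat, and surjective because $Z\cong S$ lies inside $\widehat X$ and maps isomorphically onto $S$ — so $f^*$ reflects exactness, whence the downstairs complex is exact; this is precisely the exactness of $\xi^*$. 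The argument for $\xi_k^*$ is identical, and in fact simpler: by Proposition \ref{05.03.2014--1} every object of $\dmod{X_k/k}$ is a vector bundle, so a short exact sequence is locally split and any pull-back is automatically exact.

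For faithfulness I would first reduce, using exactness, to the conservativity of $\xi^*$ on objects: if $\phi\colon\cm\to\cn$ satisfies $\xi^*\phi=0$, then exactness gives $\xi^*(\operatorname{Im}\phi)=\operatorname{Im}(\xi^*\phi)=0$, so it suffices to show $\xi^*\cm=0$ forces $\cm=0$. Given $\xi^*\cm=0$, the horizontal trivialization yields $\cm|_{\widehat X}\simeq f^*\xi^*\cm=0$, and since the completion map is faithfully flat this means $\cm$ vanishes on a Zariski neighbourhood $U$ of $Z$. As $X$ is integral and $U\neq\varnothing$, a torsion-free $\cm$ — which is a vector bundle by Proposition \ref{05.03.2014--1} — must have rank $0$, i.e. $\cm=0$. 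In general I would split off the $\pi$-power torsion sub-$\cd$-module: its torsion-free quotient is a bundle vanishing on $U$, hence $0$, so $\cm$ is $\pi$-power torsion; then $\cm[\pi]$ is an object of $\dmod{X_k/k}$, locally free by Proposition \ref{05.03.2014--1}, and it too vanishes on $U\cap X_k\neq\varnothing$, forcing $\cm[\pi]=0$ and therefore $\cm=0$.

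The main obstacle is the horizontal formal trivialization $\cm|_{\widehat X}\simeq f^*\xi^*\cm$ on which everything rests. Over a field of characteristic zero this is the classical Taylor/exponential solution of the connection, but over the mixed-characteristic base $R$ one cannot divide by factorials; the correct mechanism is the compatible system of isomorphisms on the infinitesimal neighbourhoods of the diagonal provided by the \emph{stratification} (a divided-power, crystalline Taylor expansion), pulled back along the section. This is exactly the content underlying \cite[Proposition 5.1.1]{DH} and \cite[2.16]{BO}, and once it is in hand the remaining steps are the flatness and integral-scheme bookkeeping described above.
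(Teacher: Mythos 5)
Your proposal is correct in substance, and it is less an alternative to the paper's proof than an unpacking of it: the paper proves nothing inline, deferring entirely to \cite[Proposition 5.1.1]{DH} and \cite[2.16]{BO}, and the mechanism behind those citations is precisely your formal divided-power Taylor trivialization $\cm|_{\widehat X}\simeq f^*\xi^*\cm|_{\widehat X}$, obtained by pulling the stratification back along $(\xi\circ f,\mathrm{id})$, followed by descent along $\widehat X\to S$ and the integrality of $X$ and $X_k$. Your individual reductions are sound: the naturality of the trivialization in horizontal morphisms; the faithful flatness of $\widehat X\to S$ (flat because smoothness and completion are flat, surjective because $\widehat A/I_Z\widehat A\simeq R$ gives a section); the reduction of faithfulness to conservativity on objects, using exactness and the fact that $\mathrm{Im}\,\phi$ is again an object of $\dmod{X/R}$; and the torsion/torsion-free d\'evissage through two applications of Proposition \ref{05.03.2014--1}.

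The one step whose justification must be repaired is the assertion that ``the completion map is faithfully flat.'' The morphism $\widehat X\to X$ is flat but \emph{not} faithfully flat; indeed, if it were, $\cm|_{\widehat X}=0$ would force $\cm=0$ outright, making your subsequent integrality argument superfluous --- a sign that the justification is off. What is true, and what your argument actually needs, is the weaker statement that completion detects vanishing \emph{along} $Z$: since $\cm$ is coherent, the reduction of $\cm|_{\widehat X}$ modulo $I_Z$ coincides with $\cm\ot_{\co_X}\co_Z$, so $\cm|_{\widehat X}=0$ gives $\cm_z=I_{Z,z}\cm_z$ and hence $\cm_z=0$ by Nakayama for every $z\in Z$; as $\mathrm{Supp}\,\cm$ is closed, $\cm$ does vanish on a Zariski neighbourhood of $Z$, which is exactly the conclusion you then feed into the integrality argument. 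With this substitution (plus the harmless remark that $Z\simeq S$ is local and therefore contained in a single affine open of $X$, so the commutative algebra can be run on one chart), your proof is complete.
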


In  possession of these facts, we can put forward the main definitions of this section. 
\begin{defn}\label{19.06.2015--2} 
\begin{enumerate}
\item Let $\mathcal M$ be an object of $\dmod{X/S}^o$. Write $\mathbf T^{a,b}\mathcal M$ for the $\cd(X/S)$-module $\mathcal M^{\ot a}\ot \mathcal M^{\vee\ot b}$ and denote by $\langle \mathcal M\rangle_\ot$ the full subcategory of $\dmod{X/R}$ having as objects subquotients of direct sums $\mathbf{T}^{a_1,b_1}\mathcal M\oplus\cdots\oplus\mathbf{T}^{a_m,b_m}\mathcal M$ for varying $a_i,b_i$. 
  
\item  Let $\alpha:\mathcal M'\to \mathcal M$ be a monomorphism in $\dmod{X/R}$ with both $\mathcal M$ and $\mathcal M'$ locally free as $\mathcal O_X$-modules. If $\mathrm{Coker}(\alpha)$ is also locally free, we say, following \cite[Definitions 10 and 23]{dS09}, that $\alpha$ is a special monomorphism.  
Call an object $\mathcal M''\in \dmod{X/R}^o$ a special sub-quotient of $\mathcal M$ if there exists a special monomorphism $\mathcal M'\to \mathcal M$ and an epimorphism $\mathcal M'\to \mathcal M''$. The category of all special sub-quotients of various $\mathbf{T}^{a_1,b_1}\mathcal M\oplus\cdots\oplus\mathbf{T}^{a_m,b_m}\mathcal M$ is denoted by $\langle \mathcal M\rangle_\ot^s$.   
\end{enumerate}
\end{defn}

The structure result which will enable us to see the theory of $\cd$-modules through group theoretical lenses is the following. (The proof, which is in Saavedra's seminal work \cite{saavedra}, is explained concisely in \cite{DH}.)

\begin{thm}\label{06.03.2015--1}Let $\mathcal M\in\dmod{X/R}^o$. The $R$-point $\xi$ induces an equivalence of abelian tensor  categories  
\[
\xi^*:\langle\mathcal M\rangle_\ot\aro\mathrm{Rep}_R(\mathrm{Gal}'(\mathcal M)),
\]
where $\mathrm{Gal}'(\mathcal M)$ is a flat group scheme over $R$. 
\qed
\end{thm}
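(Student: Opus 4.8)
The plan is to recognise the statement as an instance of Tannakian reconstruction over the base $R$, for which the machinery is already available: the duality theorem for tensor categories over a discrete valuation ring developed in \cite{DH} (an adaptation of Saavedra's formalism \cite{saavedra}). Concretely, I would \emph{define} $\mathrm{Gal}'(\mathcal M):=\underline{\mathrm{Aut}}^\ot(\xi^*)$, the affine $R$-group functor whose $A$-points are the tensor automorphisms of the fibre functor $\xi^*\ot A$, and then deduce that $\xi^*$ induces the asserted equivalence by checking that the pair $(\langle\mathcal M\rangle_\ot,\xi^*)$ satisfies the hypotheses under which that reconstruction theorem applies. The point of the proof is thus not to reinvent the duality but to verify its input data.

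First I would check that $(\langle\mathcal M\rangle_\ot,\ot,\mathbf 1)$ is an $R$-linear abelian tensor category generated by a dualizable object. Abelianness amounts to the observation that, inside $\dmod{X/R}$, kernels, cokernels and images of morphisms between subquotients of $\bigoplus\mathbf T^{a_i,b_i}\mathcal M$ are again such subquotients; closure under $\ot$ follows from $\mathbf T^{a,b}\mathcal M\ot\mathbf T^{c,d}\mathcal M\simeq\mathbf T^{a+c,b+d}\mathcal M$ together with right-exactness of $\ot$. The generator $\mathcal M$ is dualizable, its dual being $\mathcal M^\vee=\mathbf T^{0,1}\mathcal M$, because $\mathcal M$ underlies a vector bundle with connection, so evaluation and coevaluation are available on the generating objects. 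That $\mathrm{End}(\mathbf 1)=R$ uses the geometric connectedness of $f:X\to S$ together with the section $\xi$: the horizontal sections of $\mathcal O_X$ are the constants, and $\xi^*$ identifies them with $R$.

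Next I would invoke Proposition \ref{05.03.2015--2} to see that $\xi^*$ is exact, faithful and $R$-linear; it is monoidal since pull-back commutes with $\ot$, and it takes values in finitely generated $R$-modules because every object of $\langle\mathcal M\rangle_\ot$ has coherent underlying $\mathcal O_X$-module (Proposition \ref{05.03.2014--1}). With this in hand, the reconstruction theorem of \cite{DH} yields that $\mathrm{Gal}'(\mathcal M)$ is representable by an affine group scheme and that $\xi^*$ factors as an equivalence $\langle\mathcal M\rangle_\ot\xrightarrow{\sim}\mathrm{Rep}_R(\mathrm{Gal}'(\mathcal M))$ of abelian tensor categories. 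Flatness of $\mathrm{Gal}'(\mathcal M)$ over $R$ I would then extract from the construction of its Hopf algebra: it is a filtered union of the coefficient coalgebras attached to the free objects $\bigoplus\mathbf T^{a_i,b_i}\mathcal M$, each of which is $R$-free, so $R[\mathrm{Gal}'(\mathcal M)]$ is $R$-torsion-free and hence flat over the valuation ring.

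The step I expect to be the genuine obstacle is the first one, namely confirming that $\langle\mathcal M\rangle_\ot$ — which in general contains $\pi$-torsion objects, since cokernels of non-special monomorphisms need not be free — meets the precise axioms demanded by the DVR-version of Tannakian duality in \cite{DH}. In the classical field case the category is rigid and semisimplicity is not an issue, whereas over $R$ one must be attentive to the interplay between torsion and duality (a torsion object is not itself dualizable, so one relies on the generator being dualizable rather than on rigidity of the whole category). This is precisely why the companion notion of \emph{special} subquotient and the category $\langle\mathcal M\rangle_\ot^s$ were isolated in Definition \ref{19.06.2015--2}, and the verification that $\xi^*$ remains faithfully exact on torsion objects is what legitimises the reconstruction.
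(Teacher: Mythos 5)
Your proposal is correct and takes essentially the same route as the paper: for Theorem \ref{06.03.2015--1} the paper offers no independent argument, but defers entirely to Saavedra \cite{saavedra} and the duality over a discrete valuation ring worked out in \cite{DH}, which is exactly the reconstruction theorem you invoke (with $\mathrm{Gal}'(\mathcal M)=\underline{\mathrm{Aut}}^\ot(\xi^*)$) after checking its input data. Your verification of those inputs --- abelianness and tensor-closure of $\langle\mathcal M\rangle_\ot$ (where the closure under $\ot$ really rests on Proposition \ref{05.03.2014--1}, i.e.\ on subobjects of bundles being bundles, rather than on right-exactness alone), exactness and faithfulness of $\xi^*$ from Proposition \ref{05.03.2015--2}, and flatness of the resulting group scheme (which is in any case part of the conclusion of the cited theorem of \cite{DH}) --- is precisely what the paper leaves implicit.
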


\begin{defn}\label{21.08.2015--1}The group scheme $\mathrm{Gal}'(\mathcal M)$ is called the full differential Galois group of $\mathcal M$.
\end{defn}

\begin{defn}\label{21.08.2015--2}Let $\mathcal{M}\in\dmod{X/R}^o$ and write 
\[
\rho:   \mathrm{Gal}'(\mathcal{M})\aro \mathbf{GL}(\xi^*\mathcal{M})    
\]
for the associated representation of the full differential Galois group. The restricted differential Galois group of $\mathcal{M}$, denoted $\mathrm{Gal}(\mathcal M)$, is the group scheme $\Psi_\rho$ of Definition \ref{21.08.2014--1}. 
\end{defn}

Consider the diptych (Definition \ref{21.08.2014--1}) of $\rho$:
\[
\xymatrix{\Psi_\rho' \ar[r]& \mathrm{Gal}(\mathcal M)\ar@{^{(}->}[d]  \\\mathrm{Gal}'(\mathcal M)\ar[u]\ar[r]_\rho & \mathbf{GL}(\xi^*\mathcal{M}). }
\]
From Proposition \ref{09.03.2015--2}(2),  Definition \ref{19.06.2015--2}(1) and \cite[Theorem 4.1.2]{DH} we know that the leftmost arrow above is an \emph{isomorphism}. Of course, due to this, one may think that we have chosen an improper way to present things; the reader   who wishes to complete the  diagram by including the fundamental group scheme $\Pi(X/S,\xi)$ (see \cite[Definition 5.1.4]{DH}) in the lower left corner is invited to do so at the expense of having to understand  that $\mathrm{Rep}_R(\Pi)$ is not what one might naively think it is. Since we focus on differential Galois groups and not fundamental group schemes, we leave $\Pi(X/S,\xi)$ as inspirational.

Then, as in Section \ref{06.03.2015--2} we have:

\begin{prop}\label{19.06.2015--1}The following claims are true. 
\begin{enumerate}\item The arrow $\mathrm{Gal}'(\mathcal M)\to \mathrm{Gal}(\mathcal M)$ induces an isomorphism on generic fibres and is  a possibly infinite iteration of Neron blowups. 
\item The functor $\xi^*$ induces an equivalence  \[\langle\mathcal{M}\rangle_\ot^s\stackrel{\sim}{\aro}\mathrm{Rep}_R(\mathrm{Gal}(\mathcal{M}))^o.\] \end{enumerate}\qed
\end{prop}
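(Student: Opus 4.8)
The plan is to deduce both assertions from facts already established, namely the identification of the two Galois groups with the members of the diptych of $\rho$ (Definition \ref{21.08.2014--1}), the Tannakian equivalence of Theorem \ref{06.03.2015--1}, and the representation-theoretic reading of the diptych furnished by Proposition \ref{09.03.2015--2}. Recall that just before the statement it was shown that the leftmost arrow of the diptych of $\rho$ is an isomorphism, i.e. $\mathrm{Gal}'(\mathcal M)\stackrel{\sim}{\aro}\Psi_\rho'$; since $\mathrm{Gal}(\mathcal M)=\Psi_\rho$ by Definition \ref{21.08.2015--2}, the arrow $\mathrm{Gal}'(\mathcal M)\to\mathrm{Gal}(\mathcal M)$ is, after this identification, nothing but the canonical morphism $\Psi_\rho'\to\Psi_\rho$ induced by the inclusion $R[\Psi_\rho]\subset R[\Psi_\rho']$ of Definition \ref{21.08.2014--1}.

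To prove (1) I would first check that $\Psi_\rho'\to\Psi_\rho$ is an isomorphism on generic fibres. By Definition \ref{21.08.2014--1}, $R[\Psi_\rho']$ is the saturation of $R[\Psi_\rho]$ inside $R[\mathrm{Gal}'(\mathcal M)]$, so every element of $R[\Psi_\rho']$ has the form $\pi^{-m}f$ with $f\in R[\Psi_\rho]$; hence the quotient $R[\Psi_\rho']/R[\Psi_\rho]$ is $\pi$-torsion and, after inverting $\pi$, $K[\Psi_\rho]=K[\Psi_\rho']$ inside $K[\mathrm{Gal}'(\mathcal M)]$. This is exactly the desired generic isomorphism. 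With this in place I would apply Theorem \ref{26.09.2014--6} to the generically-iso morphism $\Psi_\rho'\to\Psi_\rho$: it presents $\Psi_\rho'$ as $\varprojlim_n G_n$ with $G_0=\Psi_\rho$ and each $G_{n+1}\to G_n$ a Neron blowup. Transporting this through the isomorphism $\mathrm{Gal}'(\mathcal M)\cong\Psi_\rho'$ says precisely that $\mathrm{Gal}'(\mathcal M)\to\mathrm{Gal}(\mathcal M)$ is a possibly infinite iteration of Neron blowups, which is (1).

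For (2) the idea is to restrict the equivalence of Theorem \ref{06.03.2015--1} to the ``special'' subcategories and then to plug in Proposition \ref{09.03.2015--2}(1). Put $V=\xi^*\mathcal M$; since $\mathcal M$ lies in $\dmod{X/R}^o$ and $\xi$ is a section into $X$, the $R$-module $V$ is a vector bundle over the local ring $R$, hence free, so $V\in\mathrm{Rep}_R(\mathrm{Gal}'(\mathcal M))^o$. Theorem \ref{06.03.2015--1} gives an equivalence of abelian tensor categories $\xi^*:\langle\mathcal M\rangle_\ot\aro\mathrm{Rep}_R(\mathrm{Gal}'(\mathcal M))$; being tensor and duality-preserving it carries $\mathbf T^{a,b}\mathcal M$ to $\mathbf T^{a,b}V$. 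I would then verify that $\xi^*$ maps $\langle\mathcal M\rangle_\ot^s$ (Definition \ref{19.06.2015--2}) onto $\langle V\rangle_\ot^s$ (Definition \ref{07.04.2015--2}), and finally invoke Proposition \ref{09.03.2015--2}(1), which identifies $\langle V\rangle_\ot^s$ with $\mathrm{Rep}_R(\Psi_\rho)^o=\mathrm{Rep}_R(\mathrm{Gal}(\mathcal M))^o$. Composing the two equivalences yields the asserted equivalence $\langle\mathcal M\rangle_\ot^s\stackrel{\sim}{\aro}\mathrm{Rep}_R(\mathrm{Gal}(\mathcal M))^o$.

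The one point demanding genuine care, and hence the main obstacle, is the claim that $\xi^*$ respects the notion of \emph{special} sub-quotient. Concretely, one must show that $\xi^*$ sends special monomorphisms to special monomorphisms and conversely. In the forward direction I would use that $\xi^*$ is exact (Proposition \ref{05.03.2015--2}) and that it sends a locally free $\mathcal O_X$-module to a free $R$-module (again because $R$ is local): a short exact sequence $0\to\mathcal M'\to\bigoplus_i\mathbf T^{a_i,b_i}\mathcal M\to\mathcal Q\to0$ with $\mathcal Q$ locally free is taken to a short exact sequence whose cokernel $\xi^*\mathcal Q$ is free, so a special monomorphism goes to a special monomorphism, and any epimorphism out of $\mathcal M'$ is preserved by exactness. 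For the converse I would use that $\xi^*$ is an equivalence on all of $\langle\mathcal M\rangle_\ot$, so that every special sub-quotient of $\bigoplus_i\mathbf T^{a_i,b_i}V$ is $\xi^*$ of a sub-quotient in $\langle\mathcal M\rangle_\ot$ whose structural monomorphism and epimorphism are reflected back to a special monomorphism and an epimorphism by the same exactness and freeness considerations. Once this compatibility is secured, the remainder is the purely formal composition of equivalences described above.
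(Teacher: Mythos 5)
Your proposal is correct and follows essentially the same route the paper intends: the paper states this proposition with a \qed precisely because it is the translation, via the identifications $\mathrm{Gal}'(\mathcal M)\simeq\Psi_\rho'$ and $\mathrm{Gal}(\mathcal M)=\Psi_\rho$ established just beforehand, of Theorem \ref{26.09.2014--6} (for claim (1)) and Proposition \ref{09.03.2015--2}(1) (for claim (2)). Your additional verification that $\xi^*$ preserves and reflects special monomorphisms — using exactness, faithfulness, and the fact that over the local ring $R$ the pullback of a vector bundle is free — is exactly the point the paper leaves implicit, and your treatment of it is sound.
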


We are then ready to present the ``empiric'' version of Example \ref{15.10.2014--2}. 

\begin{ex}\label{15.10.2014--1}Assume that  $k$ is of characteristic zero and that $R$ contains a copy of it.  Let $A=R[x,1/x]$ and define $X$ to be $\mathrm{Spec}\,A$. In this case, the structure of $\cd(X/S)$ is quite simple,  and to give the free $\co_X$-module $\mathcal L=\mathcal O_X \mathbf e$ the structure of a $\cd(X/S)$-module we only need to specify the action of $d/dx=:\partial$ (see 2.6 and 2.15 of  \cite{BO} for details). We put  
\[
\partial\mathbf e=-\frac{\pi}{x}\mathbf e 
\]
and set out to compute $\mathrm{Gal}(\mathcal L)$ and $\mathrm{Gal}'(\mathcal L)$ (at some unspecified $R$-point).

In $\langle\mathcal{L}\rangle_\otimes$ we have the $\cd(X/R)$-module $\mathcal V$ given by 
\[\begin{split}
\partial \mathbf v_1&=\frac{1}{x}(\mathbf v_2-\pi\mathbf v_1)\\
\partial\mathbf v_2&=0;
\end{split}
\]
it is the sub-object of $\mathbf1\oplus\mathcal{L}$ generated by $\mathbf v_1=(1,\mathbf e)$ and $\mathbf v_2=(\pi,0)$. (Needless to say, the construction of $\mathcal V$ parallels that in Proposition \ref{05.05.2014--2}.) Since $\pi(\mathbf1\oplus\mathcal{L})\subset\mathcal V$, we conclude that $\langle  \mathcal V \rangle_\otimes=\langle  \mathcal{L} \rangle_\otimes$. We denote by $\mathrm{Gal}'$ the full differential Galois group of $\mathcal L$, which is the same as that of $\mathcal V$, at some $R$-point, so that we have, associated to $\mathcal{L}$, the representation $\rho:\mathrm{Gal}'\to\mathbf{G}_{m,R}$. Let  $\GG_{m,R}'$ stand for the Neron blowup of $\mathbf G_{m,R}$ at the origin of the closed fibre. Since $\rho_k$ is the trivial representation (it corresponds to $\mathcal{L}_k$), we derive a factorization 
\[
\xymatrix{\mathrm{Gal}'\ar[r]^{\rho'}\ar[dr]_\rho & \mathbf G_{m,R}'  \ar[d]^\gamma \\ & \mathbf G_{m,R}   }
\] of $\rho$. Under this factorization, the  faithful representation of $\mathbf G_{m,R}'$ constructed in Proposition \ref{05.05.2014--2}  is taken to $\mathcal V$. In particular, we have a faithful representation of $\GG_{m,R}'\ot k$ which is taken to $\mathcal V_k \in\langle \mathcal L\rangle_\ot$.  
Now we note that $\mathcal{V}_k$ is the module of the logarithm, so that $\langle\mathcal V_k\rangle_\ot\stackrel{\sim}{\to}\mathrm{Rep}_k(\mathbf{G}_{a,k})$. We arrive at a commutative diagram 
\[
\xymatrix{ \mathrm{Gal}'_k\ar[r]^-{\rho'_k} \ar@{->>}[d]& \mathbf{G}_{m,R}'\otimes k\\ \mathbf{G}_{a,k}.\ar[ur]   }
\]
Since the arrow \[\mathrm{Rep}_k(\mathbf{G}_{m,R}'\otimes k)\longrightarrow\mathrm{Rep}_k(\mathbf{G}_{a,k})\] takes a faithful representation of $\mathbf{G}_{m,R}'\otimes k$ to a faithful representation of $\mathbf{G}_{a,k}$, we conclude that $\mathbf{G}_{a,k}\to \mathbf{G}_{m,R}'\otimes k$ is a closed embedding. As $\mathbf{G}_{m,R}'\otimes k=\mathbf{G}_{a,k}$ and $k$ is of characteristic zero,  $\mathbf{G}_{a,k}\to \mathbf{G}_{m,R}'\otimes k$ is an isomorphism. 
This guarantees that $\rho'$ is is an isomorphism \cite[Lemma 1.3, p.551]{WW} so that  the diptych of $\rho$ (see Section \ref{06.03.2015--2}) is 
\[
\xymatrix{  \mathbf{G}_{m,R}'\ar[r]^\gamma& \mathbf{G}_{m,R}\ar[d]^{\mathrm{id}}   \\ \ar[u]^{\rho'} \mathrm{Gal}'\ar[r] & \mathbf{G}_{m,R}  . }
\] 
To summarise, $\mathrm{Gal}'=\mathbf{G}_{m,R}'$ and $\mathrm{Gal}=\mathbf{G}_{m,R}$. 
When reduced modulo $\pi$, this gives 
\[
\xymatrix{  \mathbf{G}_{a,k}\ar[r]& \mathbf{G}_{m,k}\ar[d]^{\mathrm{id}}   \\ \ar[u]^{\sim} \mathrm{Gal}'_k\ar[r]_{\rho_k} & \mathbf{G}_{m,k}, }
\]
in which the image of $\rho_k$ is the trivial group \cite[8.3, Corollary, p.65]{waterhouse}. 
\end{ex}

\begin{rmk}Note that this example shows that \cite[Theorem 3.3.1.1, p. 729]{Andre} cannot be true. In fact, what in \cite{Andre} is defined as $\mathrm{Gal}(\mathcal L)$, respectively $\mathrm{Gal}(\mathcal L_k)$,  is what we here call $\mathrm{Gal}'$, respectively $\mathrm{Im}(\rho_k)$,  and we have showed that   $\mathrm{Gal}'_k\not\simeq\mathrm{Im}(\rho_k)$.
  It seems that the inconsistency in the proof lies in the definition of the arrow $u$ in \cite[3.2.3.4, p.727]{Andre}. 
\end{rmk}

\begin{ex}\label{10.11.2016--1}Assume $k$ to be of characteristic zero and that $R$ contains a copy of it. Let $X=\mathrm{Spec}\,R[x]$ and define $\mathcal L$ to be the free $\co_X$-module on the element $\mathbf e$. 
We then introduce on $\mathcal L$ the structure of $\cd(X/R)$-module by fixing  
\[\frac{\partial}{\partial x}\mathbf e=-\pi\mathbf e.
\]
(The reader should consult 2.15 of \cite{BO} to understand this construction.)
Since this is just the differential equation of the ``function'' $\exp(-\pi x)$, we are led to consider the following identity: 
\[
\frac\partial{\partial x} \left( \sum_{\nu=0}^n\frac{(\pi x)^\nu}{\nu!}\mathbf e \right)=-\frac{x^n}{n!}\pi^{n+1}\mathbf e.
\]
Consequently, we see that 
$\mathcal L\ot R_n\simeq \mathbf1\ot R_n$. 
Using Corollary \ref{15.06.2015--1}, we conclude that $\mathrm{Gal}'(\mathcal L)$ is the automatic blowup of $\mathrm{Gal}(\mathcal L)=\GG_{m,R}$ at $\{e\}$. 
\end{ex}

\section{Coincidence of Galois groups in the case of inflated $\cd$-modules}\label{20.08.2015--2}
We maintain the notations of Section \ref{09.03.2015--9} introduced after Proposition 
\ref{05.03.2014--1}. In particular,    $f:X\to S=\mathrm{Spec}\,R$ is a smooth and geometrically connected (hence integral) morphism admitting a section $\xi:S\to X$. \emph{We also assume that $k$ is algebraically closed and that $R$ contains a copy of it.}

We now wish to study the differential Galois groups of certain objects in $\dmod{X/S}$ of a very special kind: \emph{inflations}.   
As $X$ is certainly not of finite type over $k$---an assumption which is usually present in the literature, see for example  \cite[Section 1]{katz70} or \cite[\S2]{BO}---we shall need a few adjustments to our theory.

\subsection{Approximating $R$ by smooth $k$-algebras of finite type}\label{28.10.2016--1}
When $R$ is the Zariski or etale local ring of a closed point on a smooth curve over $k$, it is possible to find  a \emph{filtered} family of $k$-subalgebras $\{R_\lambda\}$ of $R$ enjoying the ensuing properties. 
\begin{enumerate}\item[i)] The $k$-scheme $S_\lambda:=\mathrm{Spec}\,R_\lambda$ is smooth and  
\item[ii)] the natural arrow $S\to\varprojlim_\lambda S_\lambda$ is an isomorphism. 
\end{enumerate}
On the other hand,  such a family of $k$-subalgebras invariably exists as can be seen from the arguments given by  Artin  on pages 40 and 41 of \cite{artin_approximation}. 
(Unfortunately the \emph{statements} of the results in  \cite[Section 4]{artin_approximation} do not immediately apply in our setting.  
Since it seems pointless to present a quick survey of the concerned theme, we simply leave the reader with the previous reference and the task of examining the mentioned pages.)  

As explained in  \cite{EGA}, see $\mathrm{IV}_3$ 8.8.2(ii) p.28 and $\mathrm{IV}_4$, 17.7.8(ii) p.75, there exists some index $\alpha$ and a smooth morphism of finite type 
\[
f_\alpha:X_\alpha\aro S_\alpha
\]
giving back $f:X\to S$. In addition, \cite{EGA} $\mathrm{IV}_3$, 12.2.4, p.183 lets us assume that $f_\alpha$ is geometrically integral. In fact, many other properties of $f$ are reflected starting from some model on, see \cite[$\mathrm{IV}_3$, 8.10.5]{EGA} for example.

In order to ease communication in what comes, for $\lambda\ge\alpha$, we call $X_\lambda$, $f_\lambda$, etc,  a \emph{model of $X$}, $f$, etc, and we write models as well as ``objects associated to these'' using bold letters instead of putting subscripts. 

Finally, two more conventions will prove useful:  
A certain $X_\mu$ is said to be \emph{thiner} than $X_\lambda$ if $\mu\ge\lambda$ 
if $\mu\ge\lambda$, and for each model $\bm f:\bm X\to \bm S$ of $f:X\to S$, we write $\bm X_k$ for the fibre of $\bm f$ above the point 
\[
\xymatrix{\mathrm{Spec}\,k\ar[rr]^-{\text{closed point}}&& S\ar[rr]^{\text{canonic}}&& \bm S. } 
\]

\subsection{Inflating $\cd$-modules}
Let $\bm f:\bm X\to \bm S$ be a model of $f:X\to S$ and let  $\bm\cm\in\dmod{\bm X/k}$. Since on $\bm X$ we have a morphism of left $\co_{\bm X}$-algebras 
\[
\cd(\bm X/\bm S)\aro\cd(\bm X/k),
\]
we can give $\bm{\cm}$ the structure of a $\cd(\bm X/\bm S)$-module. This association, which is clearly functorial, is called the inflation functor and denoted by $\mathrm{Inf}(\bm\cm)$. 
Analogously, we define the inflation of $\bm\cm$ to $X$, call it $\mathrm{Inf}_X(\bm\cm)$, as being the $\cd(X/S)$-module on $X$ induced by $\mathrm{Inf}(\bm\cm)$ via the canonical morphism $X\to \bm X$. The reader who is unfamiliar with the process of pulling-back $\cd$-modules will find it made clear in \cite[2.1]{BeDMA2}. 

The following result will prove useful.

\begin{lem}\label{26.10.2016--1} Let $\bm\cm\in\dmod{\bm X/k}$ and $\bm\cn\in\langle \bm\cm \rangle_\ot$.
Then,  $\mathrm{Inf}_X(\boldsymbol{\mathcal N})\in\langle\mathrm{Inf}_X(\bm\cm )\rangle_\ot^s$. 
\end{lem}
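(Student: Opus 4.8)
The plan is to exploit the fact that over the \emph{field} $k$ there is no distinction between ordinary and special subquotients, and then to transport a suitable short exact sequence through the inflation functor, using that inflation is an $\co$-linear tensor functor which preserves local freeness. This sidesteps any flatness considerations for the canonical morphism $X\to\bm X$.

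First I would record that $\langle\bm\cm\rangle_\ot=\langle\bm\cm\rangle_\ot^s$ inside $\dmod{\bm X/k}$. Indeed, by Proposition \ref{05.03.2014--1} every object of $\dmod{\bm X/k}$ is locally free, so if $\bm\cn\in\langle\bm\cm\rangle_\ot$ we may realise it as a subquotient of $\mathcal P:=\bigoplus_i\mathbf{T}^{a_i,b_i}\bm\cm$ by means of a sub-$\cd$-module $\bm\cn'\subseteq\mathcal P$ together with an epimorphism $\bm\cn'\twoheadrightarrow\bm\cn$ in which $\bm\cn'$, $\mathcal P$ and the quotient $\mathcal Q:=\mathcal P/\bm\cn'$ are all vector bundles (again Proposition \ref{05.03.2014--1}). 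In particular the monomorphism $\bm\cn'\hookrightarrow\mathcal P$ is special in the sense of Definition \ref{19.06.2015--2}, and the short exact sequence
\[
0\longrightarrow\bm\cn'\longrightarrow\mathcal P\longrightarrow\mathcal Q\longrightarrow0
\]
is a sequence of vector bundles, hence locally split as a sequence of $\co_{\bm X}$-modules.

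Next I would unwind the inflation functor. By construction $\mathrm{Inf}_X$ is the composition of the restriction-of-scalars functor $\dmod{\bm X/k}\to\dmod{\bm X/\bm S}$ along $\cd(\bm X/\bm S)\to\cd(\bm X/k)$, which leaves the underlying $\co_{\bm X}$-module untouched, with the pullback of $\cd$-modules along the canonical morphism $g:X\to\bm X$ (see \cite[2.1]{BeDMA2}). Both steps are $\co$-linear and compatible with $\ot$ and with duals, so $\mathrm{Inf}_X$ is a tensor functor; in particular $\mathrm{Inf}_X(\mathcal P)=\bigoplus_i\mathbf{T}^{a_i,b_i}\mathrm{Inf}_X(\bm\cm)$ and $\mathrm{Inf}_X$ carries vector bundles to vector bundles. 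Because the displayed sequence is locally split, applying the additive, right-exact functor $\mathrm{Inf}_X$ preserves its exactness, yielding a short exact sequence
\[
0\longrightarrow\mathrm{Inf}_X(\bm\cn')\longrightarrow\mathrm{Inf}_X(\mathcal P)\longrightarrow\mathrm{Inf}_X(\mathcal Q)\longrightarrow0
\]
of vector bundles on $X$; thus $\mathrm{Inf}_X(\bm\cn')\to\mathrm{Inf}_X(\mathcal P)$ is again a special monomorphism. Right-exactness also turns the epimorphism $\bm\cn'\twoheadrightarrow\bm\cn$ into an epimorphism $\mathrm{Inf}_X(\bm\cn')\twoheadrightarrow\mathrm{Inf}_X(\bm\cn)$, with $\mathrm{Inf}_X(\bm\cn)$ itself a vector bundle. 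Combining the special monomorphism with this epimorphism exhibits $\mathrm{Inf}_X(\bm\cn)$ as a special subquotient of $\bigoplus_i\mathbf{T}^{a_i,b_i}\mathrm{Inf}_X(\bm\cm)$, which is exactly the assertion $\mathrm{Inf}_X(\bm\cn)\in\langle\mathrm{Inf}_X(\bm\cm)\rangle_\ot^s$.

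The only genuinely delicate point, and the step I expect to be the main obstacle, is the behaviour of the pullback $g^*$ of $\cd$-modules: one must verify that it commutes with the formation of the $\mathbf{T}^{a,b}$ and of finite direct sums, sends vector bundles to vector bundles, and is exact on locally split sequences. All of these, however, are formal consequences of $g^*$ being an $\co$-linear additive tensor functor on the underlying modules, so no flatness hypothesis on $g$ is required — the local splitting available over the field $k$ does all the work.
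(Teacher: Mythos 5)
Your proof is correct and follows essentially the same route as the paper: the paper's entire argument is the observation that every object of $\dmod{\bm X/k}$ is locally free by Proposition \ref{05.03.2014--1}, so that the subquotient presentation of $\bm\cn$ is automatically special and is carried to a special presentation by inflation. Your write-up simply makes explicit the details the paper leaves implicit (the local splitting trick to avoid any flatness hypothesis on $X\to\bm X$, and the compatibility of $\mathrm{Inf}_X$ with $\mathbf T^{a,b}$), all of which are sound.
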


\begin{proof}
This follows immediately from the fact that all $\bm{\mathcal C}$ in $\dmod{\bm X/k}$ underlie locally free $\mathcal O_{\bm X}$-modules (see Proposition \ref{05.03.2014--1}). 
\end{proof}

\subsection{The differential Galois group of an inflated $\cd$-module}
\begin{thm}\label{absolute-conn}We assume in addition that $f:X\to S$ is projective. Let $\boldsymbol{\mathcal M}$ be an object of $\dmod{\bm X/k}$ for some model $\bm X$ of $X$, and let  $\mathcal M=\mathrm{Inf}_X(\bm\cm)$. Then the morphism of group schemes over $R$
\[
\mathrm{Gal}'(\mathcal M)\aro \mathrm{Gal}(\mathcal M)
\] 
is an isomorphism. 
\end{thm}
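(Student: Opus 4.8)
The plan is to translate the assertion into a statement about the special fibre, and then to feed that statement with an isomonodromy (``constancy'') input that comes from properness together with the inflated nature of $\mathcal M$. Recall from Definition \ref{21.08.2015--2} and the diptych displayed just after it that $\mathrm{Gal}(\mathcal M)=\Psi_\rho$ and that $\mathrm{Gal}'(\mathcal M)\to\Psi_\rho'$ is an isomorphism, so the morphism under study is precisely $\Psi_\rho'\to\Psi_\rho$. By Corollary \ref{09.05.2014--2}(i)--(ii) this morphism is an isomorphism if and only if $\mathrm{Im}(\rho_k)\to\Psi_{\rho,k}=\mathrm{Gal}(\mathcal M)\otimes k$ is an isomorphism. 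First I would record the identification $\mathrm{Im}(\rho_k)\simeq\mathrm{Gal}(\mathcal M\otimes k)$: the schematic image $\mathrm{Im}(\rho_k)$ is, over the field $k$, the Tannakian group of the tensor category generated by the representation $\xi_k^*(\mathcal M\otimes k)$, and restriction to $X_k$ together with the field analogue of Proposition \ref{09.03.2015--2} identifies that category with $\langle\mathcal M\otimes k\rangle_\ot=\mathrm{Rep}_k(\mathrm{Gal}(\mathcal M\otimes k))$. Thus the whole theorem reduces to showing that the closed immersion
\[
\mathrm{Gal}(\mathcal M\otimes k)\hookrightarrow \mathrm{Gal}(\mathcal M)\otimes k
\]
is an isomorphism.

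Next I would turn this into a numerical statement. Since $\mathrm{Gal}(\mathcal M)=\Psi_\rho$ is a closed subgroup scheme of $\mathbf{GL}(\xi^*\mathcal M)$, it is flat and of finite type over $R$, and its generic fibre is the differential Galois group $\mathrm{Gal}(\mathcal M\otimes K)$ of the restriction to $X_K$ (over a field the two images of Definition \ref{21.08.2014--1} coincide, because $\xi^*\mathcal M\otimes K$ is a tensor-generating, hence faithful, representation of $\mathrm{Gal}'(\mathcal M)\otimes K$). By flatness and [SGA3, $\mathrm{VI}_B$, Corollary 4.3], as already invoked in Section \ref{20.08.2015--1},
\[
\dim\bigl(\mathrm{Gal}(\mathcal M)\otimes k\bigr)=\dim\bigl(\mathrm{Gal}(\mathcal M)\otimes K\bigr)=\dim\mathrm{Gal}(\mathcal M\otimes K).
\]
As $k$ is algebraically closed, to upgrade the closed immersion above to an isomorphism it suffices to match the dimensions of the two sides and their (finite) groups of connected components; hence it is enough to prove $\dim\mathrm{Gal}(\mathcal M\otimes k)=\dim\mathrm{Gal}(\mathcal M\otimes K)$, the matching of component groups following from the same constancy input described next.

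The heart of the argument is therefore the isomonodromy statement: for an inflated $\mathcal M=\mathrm{Inf}_X(\bm{\mathcal M})$ over a proper $f$, the differential Galois group of the fibre does not degenerate from the generic to the special point. Here I would exploit that $\bm{\mathcal M}$ carries an \emph{absolute} connection on the model $\bm X$, which is smooth and proper over the smooth finite-type $k$-algebra $\bm S$, so that its geometric fibres over $\bm S$ all carry ``the same'' object. Using the specialization and constancy results extracted from \cite{EH1} (our Theorem \ref{lem.eh}), the differential Galois groups of $\mathcal M\otimes\overline K$ on $X_{\overline K}$ and of $\mathcal M\otimes k$ on $X_{\bar k}=X_k$ are both identified with the Galois group of the constant object $\bm{\mathcal M}$ over a single geometric fibre of $\bm X/\bm S$. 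Properness is what makes this work, since it guarantees the good behaviour of relative de Rham cohomology needed to run the exactness of the homotopy sequence (our Theorem \ref{lem.santos}, after \cite{dS12} and \cite{Zha14}); this in turn lets me pass between the geometric fibre groups and the arithmetic groups $\mathrm{Gal}(\mathcal M\otimes K)$ and $\mathrm{Gal}(\mathcal M\otimes k)$ by means of the section $\xi$, the base $S$ contributing nothing to the relative theory. Combining these gives $\dim\mathrm{Gal}(\mathcal M\otimes K)=\dim\mathrm{Gal}(\mathcal M\otimes k)$, which closes the chain of equalities, yields $\mathrm{Im}(\rho_k)=\Psi_{\rho,k}$, and lets Corollary \ref{09.05.2014--2} finish the proof.

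The step I expect to be the main obstacle is exactly this constancy of the fibrewise differential Galois group and its compatibility with the integral structures. Two difficulties are intertwined: first, descending the comparison from the geometric fibres (over $\overline K$ and $\bar k$) to the groups that genuinely appear in the diptych, so that the coincidence of dimensions becomes an honest equality of group schemes rather than a mere bijection on geometric points; and second, verifying that properness delivers precisely the cohomological input required by Theorem \ref{lem.santos} and Theorem \ref{lem.eh} in the mixed, infinite-type setting forced by $X\to\mathrm{Spec}\,R$ --- this is why Section \ref{28.10.2016--1} approximates $R$ by smooth $k$-subalgebras and reduces everything to a model $\bm X$. The contrast with Example \ref{15.10.2014--1}, where $\mathcal M$ fails to be inflated (its connection is not absolutely integrable) and where $\mathrm{Gal}'_k\neq\mathrm{Im}(\rho_k)$ on a non-proper $X$, confirms that both hypotheses are genuinely used.
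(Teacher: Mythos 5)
Your opening reduction is correct and is exactly the paper's: via Corollary \ref{09.05.2014--2} and diagram \eqref{09.03.2015--8}, together with the identification $\mathrm{Im}(\rho_k)\simeq\mathrm{Gal}(\cm\ot k)$, the theorem amounts to the closed immersion $\mathrm{Gal}(\cm\ot k)\hookrightarrow \mathrm{Gal}(\cm)\ot k$ being an isomorphism, equivalently to $\mathrm{Rep}_k(G_k)\to\langle\cm|_{X_k}\rangle_\ot$ being an equivalence. Beyond that point, however, there is a genuine gap: the ``isomonodromy'' statement that you yourself single out as the main obstacle is asserted, not proved, and it is misattributed to Theorem \ref{lem.santos} and Theorem \ref{lem.eh}. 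Neither of those results says anything about constancy of fibrewise differential Galois groups or their dimensions. Theorem \ref{lem.santos} says that the maximal trivial subobject of $\bm\cm|_{\bm X_k}$ descends to the base and that objects of $\langle\bm\cm|_{\bm X_k}\rangle_\ot$ embed into restrictions of objects of $\langle\bm\cm\rangle_\ot$; Theorem \ref{lem.eh} says that special $\cd(X/S)$-submodules of inflations are quotients of inflations. Converting these two inputs into the equivalence of categories is precisely the content of the paper's proof, carried out in four Claims: lifting horizontal morphisms over $X_k$ to horizontal morphisms over $X$ (this is where properness enters, through $\bm f_*\co_{\bm X}=\co_{\bm S}$, making $\bm f^*$ full and faithful on bundles), presenting every object of $\langle\cm|_{X_k}\rangle_\ot$ as the cokernel of the reduction of an arrow between objects of $\langle\cm\rangle_\ot^s$, ``almost-lifting'' representations of $G_k$ to objects of $\mathrm{Rep}_R(G)^o$ whose images under $\eta$ are inflations (Serre's lifting plus Theorem \ref{lem.eh}), and deducing fullness of $\eta_k$. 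None of this machinery appears in your proposal, so the proof is deferred rather than given.

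Moreover, even granting the constancy of dimensions, the numerical framework you reduce to would not close the argument. A closed immersion of affine group schemes of finite type over an algebraically closed field with equal dimensions and equal finite component groups need not be an isomorphism unless both group schemes are smooth: in characteristic $p$ one has, e.g., $G_{\mathrm{red}}\subset G$ for $G$ non-reduced (such as $\{1\}\times\GG_{m,k}\subset \mu_p\times\GG_{m,k}$), with the same dimension and the same underlying topological space. Section \ref{20.08.2015--2} assumes only that $k$ is algebraically closed and contained in $R$; no characteristic-zero hypothesis is made, and neither $\mathrm{Gal}(\cm\ot k)$ nor $\mathrm{Gal}(\cm)\ot k$ has any reason to be reduced in positive characteristic. (The matching of component groups is likewise left to the same unproved ``constancy input.'') The paper's categorical route --- proving the equivalence $\mathrm{Rep}_k(G_k)\stackrel{\sim}{\to}\langle\cm|_{X_k}\rangle_\ot$ directly --- avoids all dimension and smoothness bookkeeping, and this is not an incidental convenience but a necessity in the stated generality.
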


\begin{ex}\label{09.11.2016--1}Here is an example showing that the Theorem \ref{absolute-conn} \emph{does not} hold if $X$ is simply affine. 
Let  $\bm X=\mathrm{Spec}\,\mathbf C[\pi,x]$ be the affine line over $\mathbf C[\pi]$, and $\boldsymbol{\mathcal L}=\mathcal O_{\bm X}\mathbf e$ the free $\mathcal O_{\bm X}$-module on $\mathbf e$.
We induce on $\bm{\mathcal L}$ the structure of a $\cd(\bm X/\mathbf C)$-module by 
\[
\begin{split}
\frac\partial{\partial x}\mathbf e&=-\pi \mathbf e\\
\frac\partial{\partial \pi}\mathbf e&=x\mathbf e.
\end{split}
\] (The reader should consult 2.15 of \cite{BO} to understand this construction.)
Its inflation $\mathcal L$ is then defined by 
\[
\frac\partial {\partial x}\mathbf e=-\pi\mathbf e.
\]
As we saw in Example \ref{10.11.2016--1}, $\mathrm{Gal}'(\mathcal L)$ is the automatic blowup of $\mathrm{Gal}(\mathcal L)=\GG_{m,R}$ at $\{e\}$. 
\end{ex}

Our proof of Theorem \ref{absolute-conn} depends on two known results (found in \cite{EH1}, \cite{dS12}, \cite{Zha14}). Since some work has to be done in order to adapt the substance of these theorems to our setting, we prefer to explain them in Section \ref{03.03.2015--2} and give a proof of Theorem \ref{absolute-conn} now.

\begin{proof}[Proof of Theorem \ref{absolute-conn}] To lighten notation, we write  $G$ and $G'$ in place of $\mathrm{Gal}(\mathcal M)$ and $\mathrm{Gal}'(\mathcal M)$, respectively. We also make use of the notations and conventions introduced on Section \ref{28.10.2016--1}.

It is enough to prove that the functor
$$\mathrm{Rep}_k(G_k)\longrightarrow \langle \mathcal M|_{X_k}\rangle_\otimes$$
is an equivalence (cf. Corollary \ref{09.05.2014--2}(i) and diagram \eqref{09.03.2015--8}).
The proof is a consequence of the  following claims.

\emph{Claim 1.} Let $\boldsymbol \cn$ and $\boldsymbol\cn '$ be objects in $\dmod{\boldsymbol X/k}$ and let 
\[
\xymatrix{
\mathrm{Inf}_X(\bm \cn)|_{X_k}\ar[rr]^-{\theta} && \mathrm{Inf}_X(\boldsymbol \cn')|_{X_k} \\ 
\bm \cn|_{\bm X_{k}}\ar@{=}[u] &&  \bm\cn'|_{\bm X_{k}}\ar@{=}[u] 
}
\]
be an arrow of $\cd(X_k/k)$-modules. 
Then there exists a morphism of $\cd(X/S)$-modules  
\[\widetilde \theta: \mathrm{Inf}_X(\boldsymbol\cn)\aro\mathrm{Inf}_X(\boldsymbol\cn')\] lifting $\theta$. 

\emph{Verification.} We have an arrow of $\cd(\boldsymbol X_k/k)$-modules 
\[
\theta: \boldsymbol\cn|_{\boldsymbol X_k} \aro \boldsymbol\cn'|_{\boldsymbol X_k}
\]
which gives us an arrow of $\cd(\boldsymbol X_k/k)$-modules 
\[
\sigma:\co_{\boldsymbol X_k} \aro \boldsymbol\cn|_{\boldsymbol X_k}^\vee \ot \boldsymbol\cn'|_{\boldsymbol X_k}=:\boldsymbol \ce|_{\boldsymbol X_k}.
\]
We will show that $\sigma$ is the restriction of an arrow $\co_{\bm X}\to \bm\ce$ of $\dmod{\bm X/\bm S}$.
Now, let \[\iota:T\aro \boldsymbol\ce|_{\boldsymbol X_k}\] be the maximal trivial subobject; the arrow  $\sigma$  can therefore be written as a composition in $\dmod{\bm X_k/k}$
\begin{equation}\label{19.10.2016--4}
\co_{\boldsymbol X_k}\stackrel{\tau}\aro T\stackrel{\iota}{\aro} \boldsymbol\ce|_{\boldsymbol X_k}
\end{equation}
According to Theorem \ref{lem.santos}-(1), it is possible to find $\boldsymbol\ct\in\dmod{\boldsymbol S/k}$ and a morphism of $\cd(\boldsymbol X/k)$-modules 
\[
\widetilde\iota:\boldsymbol f^*(\boldsymbol \ct) \aro  \boldsymbol\ce
\] 
such that $\iota$ is the restriction to $\boldsymbol X_k$ of  $\wt\iota$.
As $\boldsymbol f$ is proper, flat and geometrically integral, we have $\bm f_*\co_{\bm X}=\co_{\bm S}$ \cite[X, Proposition 1.2, p.202]{SGA1}; it then follows that the functor $\bm f^*$ from vector bundles on $\bm S$ to vector bundles on $\bm X$ is full and faithful. As $\bm S$ is affine, we conclude that the morphism of $\co_{\boldsymbol X_k}$-modules 
\[
 \bm f^*(\co_{\bm S})|_{\bm X_k}=\co_{\bm X_k}\stackrel{\tau}{\aro} T= \bm f^*(\bm\ct)|_{\bm X_k}
\] appearing in \eqref{19.10.2016--4} is the restriction of a morphism 
\[
\widetilde\tau=\bm f^*(\de) :\bm f^*(\co_{\mathbf S})\aro\bm f^*(\bm \ct).
\]
Of course, $\de$ need not be a morphism of $\cd(\bm S/k)$-modules, but $\bm f^*(\de)$ is surely a morphism of $\cd(\bm X/\bm S)$-modules, that is, an arrow between inflations 
\[
\mathrm{Inf}(\co_{\bm X})\aro \mathrm{Inf}(\bm f^*\bm \ct).
\]
In conclusion, we have proved that $\sigma$ is the restriction of $\widetilde\iota\circ\widetilde \tau$.

\noindent\emph{Claim 2.} For each $V\in\langle\cm|_{X_k}\rangle_\ot$, there exist $\ce$ and $\ce'$ in  $\langle\cm\rangle_\ot^s$ and an arrow of $\dmod{X/S}$
\[
\widetilde\theta:\ce\aro\ce'
\]
such that 
\[
V\simeq \mathrm{Coker}\left(\widetilde\theta|_{X_k}:\ce|_{X_k}\aro\ce'|_{X_k}\right)
\]

\noindent\emph{Verification.} By definition,  $\cm|_{X_k}=\bm\cm|_{\bm X_k}$. Hence, according to Theorem \ref{lem.santos}-(2) (applied to the dual of $V$), we can find 
$\bm \cn$ and $\bm \cn'$ in $\langle  \bm \cm  \rangle_\ot$
fitting into an exact sequence of $\cd(\bm X/k)$-modules: 
\[
\bm \cn|_{\bm X_k}\stackrel{\theta}{\aro}\bm \cn'|_{\bm X_k}\aro V\aro0.
\]
Using Claim 1, $\theta$ is the restriction to $X_k$ of an arrow in $\dmod{X/S}$
\[
\widetilde \theta: \mathrm{Inf}_X(\bm \cn)\aro  \mathrm{Inf}_X(\bm \cn'). 
\]
We then take $\mathcal E=\mathrm{Inf}_X(\bm\cn)$ and $\mathcal E'=\mathrm{Inf}_X(\bm\cn')$, and the proof is finished since these do belong to $\langle\mathcal M\rangle_\ot^s$ (see Lemma \ref{26.10.2016--1}).

\noindent\emph{Claim 3.} Denote by $\eta$ the  composition of functors  
\[\mathrm{Rep}_R(G)\aro \mathrm{Rep}_R(G')\stackrel{\sim}{\aro}  \langle \cm \rangle_\ot.
\]
For each $V\in\mathrm{Rep}_k(G_k)$,
there exists $N\in\mathrm{Rep}_R(G)^o$ 
such that 
\begin{enumerate}\item[(a)] $V$ is a quotient of $N_k$ and 
\item[(b)] after eventually passing to a thiner model of $X$, there exists some 
$\boldsymbol{\mathcal N}\in\langle\boldsymbol{\mathcal M}\rangle_\ot$ such that $\eta(N)=\mathrm{Inf}_X(\boldsymbol{\mathcal N})$.   
\end{enumerate}

\emph{Verification.} According to \cite[Proposition 3, p.41]{serre} we can ``almost lift'' $V$. Precisely, there exists $E\in\mathrm{Rep}_R(G)^o$ and a   surjection $E_k\to V$. 
By means of the equivalence 
\[
\eta :\mathrm{Rep}_R(G)^o\stackrel{\sim}{\aro} \langle \cm \rangle_\otimes^s
\] 
of Proposition \ref{19.06.2015--1}, we can find a diagram in $\dmod{X/S}$:
\[\xymatrix{\mathcal F\ar[r]\ar[d]& \mathcal T\\ \eta(E),}\]
where $\mathcal T$ is some tensor power (see Definition \ref{19.06.2015--2}-(1)) of $\cm=\mathrm{Inf}_X(\bm\cm)$, the vertical arrow is an epimorphism (in $\dmod{X/S}$), and the horizontal arrow is special (see Definition \ref{19.06.2015--2}). In particular, $\ct=\mathrm{Inf}_X(\bm \ct)$ for some tensor power $\bm \ct$ of $\bm\cm$.  
According to Theorem \ref{lem.eh}, eventually passing to a thiner model of $X$, there exists  \[\bm\cn\in\langle \bm{\mathcal M} \,:\, \dmod{\bm X/k}\rangle_\otimes\] and an epimorphism  
\[
\mathrm{Inf}_X(\bm{\mathcal N})\aro \mathcal F. 
\] Using Lemma \ref{26.10.2016--1}, we conclude that $\mathrm{Inf}_X(\bm\cn)$ in fact belongs to $\langle\cm\rangle^s_\ot$; the above equivalence then produces the desired $N$, viz. any object of $\in\mathrm{Rep}_R(G)^o$ which is taken by $\eta$ to $\mathrm{Inf}_X(\bm\cn)$. Indeed, (b) is verified by construction, and (a) follows from the fact that if $\eta(\theta):\eta(N)\to\eta(E)$ is an epimorphism of $\cd(X/S)$-modules, then $\theta$ is an epimorphism in $\mathrm{Rep}_R(G)$ (between objects of $\mathrm{Rep}_R(G)^o$).

\emph{Claim 4.} The functor \[\eta_k: \mathrm{Rep}_k(G_k)=\mathrm{Rep}_R(G)_{(k)}\aro \langle \cm \rangle_{\ot,(k)}
\]
is full.

\emph{Verification.} 
Let $\varphi: \eta_k (V)\to \eta_k(V')$ be a 
morphism in $\dmod{X_k/k}$. It then fits into a commutative diagram 
\[
\xymatrix{\eta(N)\ot k\ar[r]^\theta\ar@{->>}[d] & \eta(N')\ot k  \\ \eta_k(V)\ar[r]_\varphi&\eta_k(V'),\ar@{^{(}->}[u]}
\]
where $\eta(N)=\mathrm{Inf}_X(\bm\cn)$ and $\eta(N')=\mathrm{Inf}_X(\bm\cn')$ are constructed from Claim 3. 
Claim 1 gives us  a lift 
\[
\widetilde\theta:\eta(N)\aro \eta( N')
\]
of $\theta$.  Lemma \ref{26.10.2016--1} allied to the fact that $\bm\cn$ and $\bm \cn'$ belong to $\langle\bm \cm\rangle_\otimes$ shows that   both $\mathrm{Inf}_X(\bm\cn)$ and $\mathrm{Inf}_X(\bm\cn')$ lie in $\langle\cm\rangle_\ot^s$. Since $\eta$ is an equivalence between $\mathrm{Rep}_R(G)^o$ and $\langle\cm\rangle_\ot^s$, there exists $\sigma:N\to N'$ such that $\eta(\sigma)=\widetilde\theta$.
Since the vertical  arrows in the above diagram also belong to the image of $\eta_k$, the proof of the claim is finished. 

\end{proof}

\section{Adapting two known results to our setting}\label{03.03.2015--2}

Our goal in this section is to adapt two known  results (Theorem \ref{lem.santos} and Theorem \ref{lem.eh}) to the  setting of Theorem \ref{absolute-conn}. The conventions and notations we follow are those in the beginning of Section \ref{20.08.2015--2}. In particular, $f:X\to S$ is a smooth morphism with geometrically connected (hence integral) fibres and $R$ contains a copy of the residue field $k$. We also adopt the hypothesis and terminology developed on Section \ref{28.10.2016--1}, in particular $S$ is a limit of affine smooth $k$-schemes. To handle the projectivity hypothesis on a model of $f:X\to S$ appearing in the next statement, the reader is referred to \cite[$\mathrm{IV}_3$, 8.10.5-(xiii), p.37]{EGA}.

\begin{thm}\label{lem.santos} Assume in addition that $f:X\to S$ is projective and let $\bm f:\bm X\to\bm S$ be a projective model of $f$. Let $\bm{\mathcal M}$ be an object of  $\dmod{\bm X/k}$ and write $\bm X_k$ for the fibre of $\bm f:\bm X\to\bm S$ above the $k$-point of $\bm S$ induced by the  $k$-point of $S$. 

\begin{enumerate}
\item The maximal trivial subobject of $\bm{\mathcal M} |_{\bm X_k} $ (this belongs to $\dmod{\bm X_k/k}$) is the restriction of a subobject $\bm{\mathcal T}\subset \bm {\mathcal M}$. Moreover, $\bm{\mathcal T}$ is the pull-back to $\dmod{\bm X/k}$ of an object of $\dmod{\bm S/k}$.
\item If $N$ belongs to $\langle\bm{\mathcal M}|_{X_k}\rangle_\otimes$, then there exists $\bm{\mathcal N}$ in $\langle\bm{\mathcal M}\rangle_{\ot}$ and a monic  $N\to \bm{\mathcal N}|_{\bm X_k}$.\end{enumerate}
\end{thm}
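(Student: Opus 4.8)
The plan is to translate both assertions into statements about Tannakian fundamental groups and to feed in, as the single nontrivial ingredient, the exactness of the homotopy sequence established in \cite{dS12} and \cite{Zha14}. Because $k$ is algebraically closed and the section $\xi$ supplies $k$-rational points on $\bm X$, on $\bm S$, and on the fibre $\bm X_k=X_k$, Proposition \ref{05.03.2014--1} shows that $\dmod{\bm X/k}$, $\dmod{\bm S/k}$ and $\dmod{\bm X_k/k}$ are neutral Tannakian categories over $k$ whose objects are all locally free. Writing $\pi(\bm X)$, $\pi(\bm S)$, $\pi(\bm X_k)$ for the fundamental group schemes obtained by fibring at these points, functoriality of the fibre functor yields homomorphisms $j\colon\pi(\bm X_k)\to\pi(\bm X)$, induced by restriction of $\cd$-modules to $\bm X_k$, and $p\colon\pi(\bm X)\to\pi(\bm S)$, induced by $\bm f^*$. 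The key external input is that
\[
\pi(\bm X_k)\xrightarrow{\,j\,}\pi(\bm X)\xrightarrow{\,p\,}\pi(\bm S)\aro1
\]
is exact; concretely, $p$ is faithfully flat and $\cn:=\mathrm{Im}(j)$ is a \emph{normal} subgroup scheme of $\pi(\bm X)$ equal to $\ker(p)$. Faithful flatness of $p$ (equivalently, full faithfulness of $\bm f^*$) follows from $\bm f_*\co_{\bm X}=\co_{\bm S}$, exactly as invoked in the proof of Theorem \ref{absolute-conn}; the equality $\mathrm{Im}(j)=\ker(p)$ is the deep ``exactness in the middle'' borrowed from \cite{dS12}, \cite{Zha14}, and it is here that projectivity, smoothness and geometric integrality of $\bm f$, together with the existence of $\xi$, are used.

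For part (1), let $V$ be the $\pi(\bm X)$-representation corresponding to $\bm\cm$. Restriction to $\bm X_k$ corresponds to viewing $V$ as a $\pi(\bm X_k)$-module through $j$; since $j$ factors as a faithfully flat surjection onto $\cn$ followed by $\cn\hookrightarrow\pi(\bm X)$, the maximal trivial subobject of $\bm\cm|_{\bm X_k}$ is precisely the invariant subspace $V^{\cn}$. As $\cn$ is normal, $V^{\cn}$ is a $\pi(\bm X)$-subrepresentation, hence defines a subobject $\bm\ct\subset\bm\cm$ in $\dmod{\bm X/k}$ with $\bm\ct|_{\bm X_k}=V^{\cn}$, the sought maximal trivial subobject. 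Finally $\cn=\ker(p)$ acts trivially on $V^{\cn}$, so this representation factors through $p$; by Tannakian duality $\bm\ct$ is therefore in the essential image of $\bm f^*\colon\dmod{\bm S/k}\to\dmod{\bm X/k}$, giving $\bm\ct=\bm f^*(\bm\cs)$ for some $\bm\cs\in\dmod{\bm S/k}$.

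Part (2) does not need the homotopy sequence. Passing to the tensor-generated subcategories, I would write $\langle\bm\cm\rangle_\ot\simeq\mathrm{Rep}_k(H)$ and $\langle\bm\cm|_{\bm X_k}\rangle_\ot\simeq\mathrm{Rep}_k(H_0)$. Since $\mathbf T^{a,b}(\bm\cm|_{\bm X_k})=(\mathbf T^{a,b}\bm\cm)|_{\bm X_k}$, every object of $\langle\bm\cm|_{\bm X_k}\rangle_\ot$ is a subquotient of the restriction of an object of $\langle\bm\cm\rangle_\ot$, so the restriction functor corresponds to a closed immersion $H_0\hookrightarrow H$. Applying \cite[Proposition 3, p.41]{serre} to this closed subgroup, the $H_0$-representation $N$ embeds into $V'|_{H_0}$ for some $V'\in\mathrm{Rep}_k(H)$; transporting back through the equivalences produces $\bm\cn\in\langle\bm\cm\rangle_\ot$ together with a monomorphism $N\hookrightarrow\bm\cn|_{\bm X_k}$, as desired.

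The main obstacle is confined to the first paragraph: checking that the hypotheses of \cite{dS12}, \cite{Zha14} genuinely apply to the model $\bm f\colon\bm X\to\bm S$ and extracting the exactness in the middle, $\mathrm{Im}(j)=\ker(p)$ with $\mathrm{Im}(j)$ normal. Everything else is the routine Tannakian dictionary---normality of $\cn$ making $V^{\cn}$ a subobject, descent of $\cn$-trivial representations through $p$, and the standard facts about representations of closed subgroups---which I expect to go through without surprise.
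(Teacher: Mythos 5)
Your overall strategy---reduce everything to the exactness of $\Pi(\bm X_k)\to\Pi(\bm X)\to\Pi(\bm S)\to 1$ proved in \cite{dS12} and \cite{Zha14}---is exactly the paper's strategy, and your part (1) is a correct hand-unpacking of the Tannakian characterisation of exactness that the paper simply cites from \cite[Appendix]{EHS}: normality of $\cn=\mathrm{Im}(j)=\ker(p)$ makes $V^{\cn}$ a $\pi(\bm X)$-subrepresentation, and faithful flatness of $p$ lets any representation on which $\ker(p)$ acts trivially descend to $\pi(\bm S)$.

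Part (2), however, contains a genuine gap, located exactly where you assert that it ``does not need the homotopy sequence.'' A closed immersion $H_0\hookrightarrow H$ of affine group schemes over a field does \emph{not} imply that every $H_0$-representation embeds into the restriction of an $H$-representation; that property is \emph{observability} of $H_0$ in $H$, and it fails in general. Concretely, take $H=\mathrm{SL}_{2,k}$ in characteristic zero and $H_0=B$ its Borel subgroup: any $B$-stable line in an $\mathrm{SL}_2$-representation is spanned by a highest-weight vector, hence has dominant character, so the character $\mathrm{diag}(t,t^{-1})\mapsto t^{-1}$ of $B$ embeds into no restriction---yet $B$ is a closed subgroup and every $B$-representation \emph{is} a subquotient of restrictions, which is all your closed-immersion argument yields. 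Moreover, \cite[Proposition 3, p.41]{serre} is not the statement you need: as it is used throughout this paper, it produces, for a flat group scheme over $R$, a representation over $R$ surjecting onto a given representation of the \emph{special fibre}; it says nothing about extending representations from a closed subgroup over a field. The missing input is precisely the exact sequence: it shows that $H_0$ is the image of $\ker\bigl(\pi(\bm X)\to\pi(\bm S)\bigr)$ under the faithfully flat map $\pi(\bm X)\twoheadrightarrow H$, hence is \emph{normal} in $H$, and normal closed subgroups are observable. Equivalently, the embedding statement of part (2) is itself one of the conditions in the exactness criterion of \cite[Appendix]{EHS}, which is how the paper obtains it, together with the remark that the proof in loc.\ cit.\ allows $\bm\cn$ to be chosen inside $\langle\bm\cm\rangle_\ot$.
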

\begin{proof} If $\Pi(-)$ stands for the affine group scheme associated to the category $\dmod{\bm X/k}$, then the sequence of fundamental group schemes 
\[
\Pi(\bm X_k)\aro \Pi(\bm X)\aro\Pi(\bm S)\aro1 
\]
is exact (cf. \cite[Theorem 1.1]{Zha14} or \cite[Theorem 1]{dS12}). Using the characterisation of exactness presented in \cite[Appendix]{EHS}, we immediately arrive at the desired conclusion.  Moreover, the proof in loc.cit. shows that 
$\mathcal N$ can be chosen in $\langle  
\mathcal M\rangle_\otimes$.
\end{proof}

We now head to prove the following. 

\begin{thm}[see \cite{EH1}, Theorem 5.10]\label{lem.eh}Suppose that $f:X\to S$ is separated. Let $\bm f:\bm X\to \bm S$ be a model of $f:X\to S$ and $\bm\cm$ an object of $\dmod{\bm X/k}$. Write 
\[
\cm:=\mathrm{Inf}_X(\bm\cm)
\]
and let 
\[
\cv\aro  \cm
\]
be a special monic (see Definition \ref{19.06.2015--2}) in the category $\dmod{X/S}$. Then, eventually passing to a thiner model, there exists $\boldsymbol{\mathcal S}\in\dmod{\bm X/k}$ and an epimorphism 
\[
\mathrm{Inf}_X(\boldsymbol{\mathcal S})\aro \cv.
\]
Colloquially,  each special $\cd(X/S)$-submodule of an inflation is also a quotient of an inflation. Moreover, $\boldsymbol{\mathcal S}$ can be chosen in $\langle\boldsymbol{\mathcal M}\rangle_\otimes$.
\end{thm}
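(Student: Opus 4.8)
The plan is to run the argument of \cite[Theorem 5.10]{EH1}, whose natural habitat is schemes of \emph{finite type over a field}, and to feed it our situation by first descending all the data to a model of finite type over $k$. The essential structure to exploit is that $\cm=\mathrm{Inf}_X(\bm\cm)$ is pulled back from $\bm\cm\in\dmod{\bm X/k}$ along the $k$-morphism $X\to\bm X$, so that $\cm$ in fact carries an \emph{absolute} flat connection (a $\cd(X/k)$-module structure) of which the relative connection used to define $\cv\subset\cm$ is a restriction; this absolute flatness is precisely what \cite{EH1} uses to propagate a relative subbundle over the base. Since $\cv\to\cm$ is a special monic, Proposition \ref{05.03.2014--1} guarantees that $\cv$, $\cm$ and the cokernel of $\cv\to\cm$ are all vector bundles, so the whole diagram is a finitely presented piece of $\cd(X/S)$-linear data.

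First I would descend. Recall from Section \ref{28.10.2016--1} that $S=\varprojlim_\lambda S_\lambda$ with each $S_\lambda$ smooth and affine over $k$, that a model $\bm f:\bm X\to\bm S$ of $f$ exists over some index $\alpha$, and that $X=\varprojlim_{\lambda\ge\alpha}X_\lambda$. Because the special monic $\cv\to\cm=\mathrm{Inf}_X(\bm\cm)$ together with its cokernel are of finite presentation, the limit formalism of \cite{EGA} $\mathrm{IV}_3$, \S8 produces a thin enough index $\mu\ge\alpha$ over which they are all defined: a special monic $\cv_\mu\to\cm_\mu:=\mathrm{Inf}_{X_\mu}(\bm\cm)$ in $\dmod{X_\mu/S_\mu}$ whose pull-back along $X\to X_\mu$ recovers $\cv\to\cm$. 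Enlarging $\mu$ if necessary, \cite{EGA} $\mathrm{IV}_3$, 8.10.5 lets us keep $f_\mu:X_\mu\to S_\mu$ smooth, separated and geometrically integral; and now $X_\mu$ is of \emph{finite type over $k$}. Thus $\cv_\mu$ is a special relative $\cd$-submodule of the inflation $\cm_\mu$ in exactly the setting of \cite[Theorem 5.10]{EH1}. Applying that theorem yields $\bm\cs\in\dmod{X_\mu/k}$ and an epimorphism $\mathrm{Inf}_{X_\mu}(\bm\cs)\to\cv_\mu$; pulling back along the $k$-morphism $X\to X_\mu$ gives the sought epimorphism $\mathrm{Inf}_X(\bm\cs)\to\cv$, after renaming $X_\mu$ as the (thinner) model $\bm X$.

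It remains to secure the refinement $\bm\cs\in\langle\bm\cm\rangle_\ot$, which is the point where a bare citation of \cite[Theorem 5.10]{EH1} is insufficient and where one must instead read off from its proof that the covering object is manufactured by subquotient and tensor operations performed on $\cm_\mu=\mathrm{Inf}_{X_\mu}(\bm\cm)$. Since inflation is a tensor functor commuting with duals and with the formation of the $\mathbf T^{a,b}$, and since $\langle\bm\cm\rangle_\ot\subset\dmod{\bm X/k}$ is a Tannakian category over the field $k$, hence stable under the subquotients involved (Proposition \ref{05.03.2014--1}), these constructions never leave $\langle\bm\cm\rangle_\ot$; this yields $\bm\cs\in\langle\bm\cm\rangle_\ot$ and, via Lemma \ref{26.10.2016--1}, the companion fact that $\mathrm{Inf}_X(\bm\cs)\in\langle\cm\rangle_\ot^s$. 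I expect the main obstacle to be twofold: organising the descent so that the inflation structure and \emph{both} the relative and the ambient absolute connections are captured over one common finite-type-over-$k$ model (so that the hypotheses of \cite[Theorem 5.10]{EH1} are literally met), and tracking through that proof the tensor-categorical provenance of the covering object, which is exactly what upgrades the conclusion from ``some inflation'' to ``an inflation of an object of $\langle\bm\cm\rangle_\ot$.''
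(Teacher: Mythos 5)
Your proposal has a genuine gap: its entire mathematical content is delegated to a literal application of \cite[Theorem 5.10]{EH1} over a finite-type model, and that citation does not cover the situation at hand. First, the standing hypotheses here (see the beginning of Section \ref{20.08.2015--2}) only require $k$ to be algebraically closed with $R\supseteq k$; positive characteristic is allowed, and a $\cd$-module in this paper is a module over the \emph{full} ring of differential operators. The theory of \cite{EH1} is a characteristic-zero theory of integrable connections, so after descending to $X_\mu$ you would have nothing to cite in characteristic $p$. Second, even where it applies, the paper itself warns that Theorem \ref{lem.eh} is \emph{not} an instance of \cite[Theorem 5.10]{EH1} but a deeper consequence extracted from its methods (see the introduction: ``compare Theorem \ref{lem.eh} with its inspiration, Theorem 5.10 of \cite{EH1}''). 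You implicitly concede this by proposing to ``read off from its proof'' the construction of the covering object --- but that reading is exactly where the work lies, and your proposal does not supply it. The paper's actual argument is self-contained and characteristic-free: a socle filtration $\mathrm{Soc}_1(\cv)\subset\mathrm{Soc}_2(\cv)\subset\cdots$ (Lemma \ref{24.02.2015--3}) stabilizes at a special submodule $\mathrm{Soc}_r(\cv)$ satisfying $\mathrm{Hom}_{\cd(X/S)}\bigl(\mathrm{Soc}_r(\cv),\cm/\mathrm{Soc}_r(\cv)\bigr)=0$; the rigidity criterion of Theorem \ref{21.10.2016--3} and Corollary \ref{21.10.2016--4} then shows that such a ``rigid'' special submodule of an inflation is automatically stable under all absolute differential operators, hence equals $\mathrm{Inf}_X(\bm\cs)$ for a subobject $\bm\cs\subset\bm\cm$ of $\dmod{\bm X/k}$ (whence $\bm\cs\in\langle\bm\cm\rangle_\ot$ for free); this is carried out for $\cv$ of rank one, and the general case follows by twisting with determinants.

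A second, independent, weak point is your descent step. You assert that the special monic $\cv\to\cm$ ``together with its cokernel are of finite presentation'' and hence descend by \cite{EGA} $\mathrm{IV}_3$, \S 8 to a special monic in $\dmod{X_\mu/S_\mu}$. As coherent sheaves they do, but in positive characteristic a $\cd(X/S)$-module structure is a stratification --- infinitely many operators --- and is not finitely presented data, so the limit formalism does not hand you $\cv_\mu$ together with a $\cd(X_\mu/S_\mu)$-module structure. The paper circumvents precisely this difficulty: it descends $\cv$ only as a coherent subsheaf admitting local retractions (Lemma \ref{21.10.2016--1}) and then proves that the descended subsheaf is stable under the differential operators of the \emph{ambient} module, which are already defined at finite level because $\cm$ is an inflation (Lemma \ref{20.10.2016--1}); stability is checked using the injectivity of sections along the schematically dominant morphism $X\to\bm X$ (Lemma \ref{21.10.2016--2}). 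This maneuver is what makes the reduction to a model legitimate, and it is absent from your proposal.
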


The proof hinges on the concept of socle (see Lemma \ref{24.02.2015--3}) and on a criterion which allows us to detect when a $\cd$-submodule of an inflation actually comes from an inflation (Corollary \ref{21.10.2016--4}).

\begin{lem}\label{24.02.2015--3} Let $\mathcal L$ and $\mathcal E$ be  objects of $\dmod{X/S}$. Assume that $\mathcal E$, respectively $\mathcal L$,  is locally free, respectively locally free of rank one, as an $\mathcal O_X$-module. Define the $\mathcal L$-socle of $\mathcal E$,
\[
\mathrm{Soc}(\mathcal L)\subset \mathcal E,
\]
as the sum of all subobjects of $\mathcal E$ which are isomorphic to $\mathcal L$. The following properties are true.  
\begin{enumerate}\item  The sub-object $\mathrm{Soc}(\mathcal L)$ of  $\mathcal E$ is special. 
\item In $\dmod{X/S}$ we have $\mathrm{Soc}(\mathcal L)\simeq \mathcal L^{\oplus r}$ for some $r$.
\end{enumerate} 
\end{lem}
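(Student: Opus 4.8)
The plan is to reduce at once to the case $\mathcal L=\mathbf 1$ and then to realise the socle as the image of an evaluation map built from horizontal global sections, handling the isotypic bookkeeping over the generic fibre and saving the genuine difficulty for the behaviour modulo $\pi$.

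First I would twist by $\mathcal L^\vee$. Since $\mathcal L\in\dmod{X/S}^o$ has rank one, the functor $-\ot_{\mathcal O_X}\mathcal L^\vee$ is an auto-equivalence of $\dmod{X/S}$ that is exact, preserves $R$-flatness, local freeness, ranks and special monomorphisms, and carries subobjects isomorphic to $\mathcal L$ to subobjects isomorphic to $\mathbf 1$; in particular it carries $\mathrm{Soc}(\mathcal L)\subset\mathcal E$ to the $\mathbf 1$-socle of $\mathcal E\ot_{\mathcal O_X}\mathcal L^\vee$. Replacing $(\mathcal E,\mathcal L)$ by $(\mathcal E\ot\mathcal L^\vee,\mathbf 1)$, I may thus assume $\mathcal L=\mathbf 1$, so that subobjects isomorphic to $\mathcal L$ are precisely the images of the horizontal global sections and $H:=\mathrm{Hom}_{\cd(X/S)}(\mathbf 1,\mathcal E)$ is the $R$-module of such sections. (Throughout I use that the forgetful functor to $\mathcal O_X$-modules is exact and faithful, so images, kernels and monomorphisms in $\dmod{X/S}$ may be read off on the underlying sheaves.)

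Next I would show that $H$ is free of finite rank $r$ over $R$. It is $\pi$-torsion free because $\mathcal E$ is; and since $\xi^*$ is faithful (Proposition \ref{05.03.2015--2}), the natural map $H\aro\mathrm{Hom}_R(\xi^*\mathbf 1,\xi^*\mathcal E)=\xi^*\mathcal E$ is injective, embedding $H$ into a free $R$-module of rank $\mathrm{rank}\,\mathcal E$; as $R$ is a Noetherian DVR, $H$ is then free of finite rank $r$. Choosing a basis $h_1,\dots,h_r$ of $H$, whose members are horizontal, evaluation gives a morphism $\mathrm{ev}\colon \mathbf 1^{\oplus r}\cong\mathcal O_X\ot_R H\aro\mathcal E$ in $\dmod{X/S}$. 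I would check it is monic by passing to the generic fibre: $K$-linearly independent horizontal sections of $\mathcal E_K$ are $\mathcal O_{X_K}$-linearly independent by the standard flat-section argument (applying relative derivations and using that $X_K$ is connected and $\mathrm{char}\,k=0$), so $\mathrm{ev}_K$ is injective, whence $\mathrm{ev}$ is injective because its source is $R$-torsion free. Every subobject of $\mathcal E$ isomorphic to $\mathbf 1$ is the image of some element of $H$, and conversely $\mathrm{im}(\mathrm{ev})=\sum_i\mathrm{im}(h_i)$ is a sum of such images (each $h_i$ being monic); hence $\mathrm{Soc}(\mathbf 1)=\mathrm{im}(\mathrm{ev})$, and as $\mathrm{ev}$ is monic this is isomorphic to $\mathbf 1^{\oplus r}$. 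Untwisting proves part (2).

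Finally, for part (1) I must show $\mathcal E/\mathrm{Soc}(\mathbf 1)$ is locally free; by Proposition \ref{05.03.2014--1} it suffices that this coherent $\cd(X/S)$-module be free of $\pi$-torsion. Writing the long exact $\mathrm{Tor}$ sequence of $0\to\mathrm{Soc}(\mathbf 1)\to\mathcal E\to\mathcal E/\mathrm{Soc}(\mathbf 1)\to0$ and using that $\mathrm{Soc}(\mathbf 1)$ and $\mathcal E$ are $R$-flat, the $\pi$-torsion of the cokernel is identified with $\ker(\mathrm{ev}_k)$, so it is enough to prove that $\mathrm{ev}_k\colon\mathcal O_{X_k}\ot_k(H\ot_R k)\aro\mathcal E_k$ is injective. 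This is the crux, and the decisive point is that $H$ is \emph{saturated} in $\mathcal E$: if $h\in H$ lies in $\pi\mathcal E$ then $h/\pi$ is again horizontal (the connection is $R$-linear and $\mathcal E$ is $\pi$-torsion free), so $h\in\pi H$; consequently $H\ot_R k$ injects into the horizontal sections of $\mathcal E_k$ and the reductions $\bar h_1,\dots,\bar h_r$ remain $k$-linearly independent. Invoking the flat-section argument once more, now over the field $k$ (again $X_k$ is connected and $\mathrm{char}\,k=0$), these $\bar h_i$ are $\mathcal O_{X_k}$-linearly independent, so $\mathrm{ev}_k$ is injective. Hence $\mathcal E/\mathrm{Soc}(\mathbf 1)$ has no $\pi$-torsion and is locally free, so $\mathrm{Soc}(\mathbf 1)\hookrightarrow\mathcal E$ is special; untwisting gives part (1). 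I expect this saturation step to be the only real obstacle: the rest is the classical isotypic-component computation over a field, and the substance is in controlling the integral structure modulo $\pi$.
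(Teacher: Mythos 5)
Your reduction to $\mathcal L=\mathbf 1$ by twisting with $\mathcal L^\vee$ is exactly the paper's first step, and your handling of the integral structure (saturation of the module $H$ of horizontal sections, identification of the $\pi$-torsion of the cokernel with $\ker(\mathrm{ev}_k)$, and the appeal to Proposition \ref{05.03.2014--1}) is correct. The genuine gap lies in the two linear-independence steps, over $X_K$ and over $X_k$: both are justified by ``the standard flat-section argument (applying relative derivations and using that $\mathrm{char}\,k=0$).'' But the lemma carries no characteristic hypothesis. The standing assumptions of Sections \ref{20.08.2015--2} and \ref{03.03.2015--2} are only that $k$ is algebraically closed and $R$ contains a copy of it, and the results this lemma feeds (Theorem \ref{lem.eh}, hence Theorem \ref{absolute-conn}) rest on \cite{dS12} and \cite{Zha14}, which are positive-characteristic theorems; elsewhere, whenever the paper needs characteristic zero, it says so explicitly. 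In characteristic $p$ the derivation argument collapses: the kernel of all $k$-derivations on $k(X_k)$ contains $k(X_k)^p$, so first-order operators cannot show that horizontal sections independent over the constants are independent over functions. The statement remains true for $\cd$-modules (full stratifications), but proving it this way would require a Wronskian-type argument with the divided-power operators $\partial^{[q]}$ of \cite[$\mathrm{IV}_4$, 16.11.2]{EGA} together with an identification of the horizontal elements of the function field --- none of which is in your proposal.

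The paper sidesteps this entirely: after the same twist, it invokes the equivalence $\xi^*:\langle\mathcal E\rangle_\ot\stackrel{\sim}{\aro}\mathrm{Rep}_R(G)$ of Theorem \ref{06.03.2015--1}, with $G$ a flat affine group scheme over $R$, under which both assertions become an elementary exercise: for $V\in\mathrm{Rep}_R(G)^o$ the socle is the module of invariants $V^G$, which is saturated (if $\pi v$ is invariant then so is $v$, since $V\ot R[G]$ is $\pi$-torsion-free), carries the trivial coaction, and hence is isomorphic to $\mathbf 1^{\oplus r}$ with $R$-free quotient, i.e.\ the inclusion is special; the exact faithful equivalence transports the socle, and speciality, back to $\dmod{X/S}$. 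That argument is characteristic-free. Your route is a correct and more explicit alternative when the ambient characteristic is zero, but as written it does not prove the lemma in the generality the paper needs; the cleanest repair is to replace your two independence steps by the Tannakian reduction just described.
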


\begin{proof}Assume that $\mathcal L$ is the \emph{trivial} $\cd(X/S)$-module $\mathbf1$. Since  $\langle\mathcal E\rangle_\ot$ is equivalent to the category of representations of an affine and flat group scheme over $R$ (see Theorem \ref{06.03.2015--1}), the result becomes a simple exercise. 
The general case is treated by employing the $\cd(X/S)$-module $\mathcal L^\vee\ot\mathcal E$.
\end{proof}

We now explain a condition which guarantees that a $\cd(X/S)$-submodule of some inflation is still an inflation. The results follow mainly from: 

\begin{thm}\label{21.10.2016--3} Let $T=\mathrm{Spec}\,\Lambda$ be a $k$-scheme possessing a system of etale coordinates $t_1,\ldots,t_m$ relative to $k$, and   $g:Y\to T$  a smooth morphism. Let $\cm\in \dmod{Y/k}$ and let $\ce\to\mathrm{Inf}(\cm)$ be a monomorphism in $\dmod{Y/T}$.   If 
\[
\mathrm{Hom}_{\mathcal D_{Y/T}}(\ce,\mathrm{Inf}(\mathcal M)/\mathcal E)=0,
\]
then $\ce$ is in fact invariant under the action of $\cd(Y/k)$ on $\cm$, that is, it is a $\cd(Y/k)$-submodule. 
\end{thm}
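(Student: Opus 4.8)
The plan is to argue with the integrable-connection description of $\cd$-modules (Remark~\ref{relation_remark}), which is available in the characteristic-zero setting in force here; thus $\cm$ carries a flat connection $\nabla$ relative to $k$, its relative connection realises $\mathrm{Inf}(\cm)$, and $\cd(Y/k)$ is generated as a sheaf of $\co_Y$-algebras by the vector fields $\mathcal T_{Y/k}$. Since $g$ is smooth, the sequence $0\to\mathcal T_{Y/T}\to\mathcal T_{Y/k}\to g^*\mathcal T_{T/k}\to0$ is exact and locally split, and $g^*\mathcal T_{T/k}$ is locally free on the images of $\partial_{t_1},\dots,\partial_{t_m}$. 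Hence $\cd(Y/k)$ is generated by $\cd(Y/T)$ together with any local lifts $D_1,\dots,D_m\in\mathcal T_{Y/k}$ of $\partial_{t_1},\dots,\partial_{t_m}$. As $\ce$ is already a $\cd(Y/T)$-submodule, the assertion reduces to the local claim $\nabla_{D_i}(\ce)\subset\ce$ for every $i$, which may be checked over an affine cover of $Y$ on which such lifts exist.

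First I would introduce, over such an open set, the $\co_Y$-linear map
\[
\phi_i\colon \ce\hookrightarrow\cm\stackrel{\nabla_{D_i}}{\longrightarrow}\cm\longrightarrow\cm/\ce .
\]
It is $\co_Y$-linear because the Leibniz term $D_i(f)\cdot e$ lies in $\ce$ and so dies in the quotient. Moreover $\phi_i$ is \emph{independent of the chosen lift} $D_i$: two lifts differ by a section of $\mathcal T_{Y/T}$, whose connection preserves $\ce$ and hence vanishes modulo $\ce$. Consequently the locally defined maps agree on overlaps and glue to a global $\co_Y$-linear morphism $\phi_i\colon\ce\to\cm/\ce=\mathrm{Inf}(\cm)/\ce$, the coordinate derivations $\partial_{t_i}$ being globally defined on $T$.

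The core of the proof is to show that each $\phi_i$ is a morphism of $\cd(Y/T)$-modules, i.e.\ commutes with the relative connection. For $\partial\in\mathcal T_{Y/T}$ and $e\in\ce$ the obstruction is the class $\overline{[\nabla_{D_i},\nabla_\partial]\,e}$ in $\cm/\ce$. Here I would invoke flatness of the full connection, which yields the curvature identity $[\nabla_{D_i},\nabla_\partial]=\nabla_{[D_i,\partial]}$, together with the elementary fact that $[D_i,\partial]\in\mathcal T_{Y/T}$: for $a$ in the image of $g^\#\colon\Lambda\to\co_Y$ one has $\partial(a)=0$ and $D_i(a)\in g^\#\Lambda$, whence $[D_i,\partial](a)=0$. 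Since $\ce$ is a relative submodule, $\nabla_{[D_i,\partial]}(e)\in\ce$ and the obstruction vanishes; thus $\phi_i\in\mathrm{Hom}_{\cd(Y/T)}(\ce,\mathrm{Inf}(\cm)/\ce)$.

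Finally, the hypothesis forces $\phi_i=0$, that is $\nabla_{D_i}(\ce)\subset\ce$ for all $i$; together with the $\cd(Y/T)$-stability of $\ce$ and the generation statement of the first paragraph, this shows $\ce$ is a $\cd(Y/k)$-submodule. I expect the genuinely delicate points to be the passage between the ring-of-differential-operators and the connection formulations---this is where characteristic zero enters, licensing both the generation of $\cd(Y/k)$ by first-order operators and the curvature identity---and the bookkeeping needed to glue the locally defined $\phi_i$; the curvature computation that forms the conceptual heart is then very short.
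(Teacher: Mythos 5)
Your argument has a genuine gap: it is valid only in characteristic zero, and characteristic zero is \emph{not} among the hypotheses in force here. In Sections \ref{20.08.2015--2}--\ref{03.03.2015--2} the standing assumptions are only that $k$ is algebraically closed and that $R$ contains a copy of it; the equal-characteristic-$p$ case (e.g. $R=k[[t]]$ with $k=\bar{\mathbf F}_p$) is allowed, and this is precisely why the paper works with the full ring of differential operators $\cd(Y/k)$ rather than with integrable connections --- Remark \ref{relation_remark} identifies the two notions only when the base is a $\mathbf Q$-scheme. Wherever the paper does want characteristic zero (Example \ref{15.10.2014--1}, Example \ref{10.11.2016--1}, the opening discussion of Section \ref{20.08.2015--1}) it says so explicitly, which confirms it is not a blanket assumption.

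In positive characteristic your reduction fails at the very first step: $\cd(Y/k)$ is \emph{not} generated by $\co_Y$ and the tangent sheaf, so establishing $\nabla_{D_i}(\ce)\subset\ce$ for lifts $D_i$ of the $\partial_{t_i}$, together with $\cd(Y/T)$-stability, does not yield stability under the higher divided-power operators $\partial^{[q,0]}$ with $|q|\ge2$; already on $\mathbf A^1_k$ in characteristic $p$, stability under $\partial$ says nothing about stability under $\partial^{[p]}$. This is exactly why the paper's proof uses the operators $\partial^{[q,r]}$ of EGA $\mathrm{IV}_4$, 16.11.2 and runs an induction on the order $\ell$: for $|\mu|=\ell+1$ it defines $\phi_\mu(e)=\partial^{[\mu,0]}(e)\bmod\ce$, and the divided-power Leibniz rule combined with the inductive hypothesis (invariance under $\cd^{\ell}(Y/k)$) is what makes $\phi_\mu$ $\co_Y$-linear, independent of the choice of relative coordinates, and $\cd(Y/T)$-equivariant, so that it vanishes by the Hom-hypothesis. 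Your construction is essentially the base case $\ell=0$ of this induction translated into the language of connections and curvature; the missing content is the induction over the order of the operators. (If the theorem were only needed over $\mathbf Q$, your argument would be a correct and somewhat cleaner variant, since there the first-order operators do generate $\cd(Y/k)$.)
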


\begin{proof} Let  $U=\mathrm{Spec}\,A$ be an affine open subset of $Y$ on which there are  etale coordinates $x_1,\ldots,x_n$ relative to $T$. We then have etale coordinates $t_1,\ldots,t_m$ and $x_1,\ldots,x_n$ on $U$ relative to $k$. 
Let $\partial^{[q,r]}$ be the differential operators constructed from [EGA $\mathrm{IV}_4$, 16.11.2, p.54] by means of the system of etale coordinates $t_1,\ldots,t_m$, $x_1,\ldots,x_n$; here $q\in\mathbf N^m$ and $r\in\mathbf N^n$. Note that
$\partial^{[q,0]}(g^*(a))=g^*(\partial_{\bm t}^{[q]}(a))$, where $\partial^{[q]}_{\bm t}$ is the differential operator $\partial_{\bm t}^{[q]}$ constructed from the set of etale coordinates $t_1,\ldots,t_m$. This means that we have lifted $\partial_{\bm t}^{[q]}$, but the reader should note that if $\overline x_1,\ldots,\overline x_m$ is another system of etale coordinates on $U$ relative to $T$, then the differential operator obtained from $\overline x_1,\ldots,\overline x_n$, call it $\overline\partial^{[q,0]}$, is not necessarily the same as $\partial^{[q,0]}$. However, $\partial^{[q,0]}-\overline\partial^{[q,0]}$ does annihilate $\Lambda$ and induces therefore a section of $\cd(U/T)$.  

Let $\mu\in\mathbf N^m$ have zeroes all over except at one entry, where it has a 1. We define the additive map 
\[
\phi_{\mu}:\ce(U)\aro \cm(U)/\ce(U),\quad e\longmapsto \partial^{[\mu,0]}(e)\mod\ce(U). 
\]
It is easily verified that $\phi_\mu$ is $A$-linear and that 
\[
\phi_\mu(\partial^{[0,r]}e)=\partial^{[0,r]}\phi_\mu(e). 
\] 

Note that, if $\overline x_1,\ldots,\overline x_n$ is another system of etale coordinates for $U$ over $T$, employing the notation introduced above, the fact that $\partial^{[\mu,0]}-\overline\partial^{[\mu,0]}$ is a section of $\cd(U/T)$ means that $\phi_{\mu}$ is independent of the choice of the system of etale coordinates of $U$ relative to $T$.  This allows us to construct a morphism  
\[
\phi_{\mu}:\ce\aro\cm/\ce
\] 
of $\cd(Y/T)$-modules. By assumption, $\phi_{\mu}=0$; we conclude that $\ce$ is invariant under $\cd^1(Y/k)$ (we adopt the notation of \cite[$\mathrm{IV}_4$, 16.8.3, p.40]{EGA}).  

We now assume that $\ce$ is invariant under $\cd^{\ell}(Y/k)$. 
Pick any $\mu=(\mu_1,\ldots,\mu_m)\in\mathbf N^m$ such that $\mu_1+\ldots+\mu_m=\ell+1$. Define, as before, the map of additive groups:
\[
\phi_{\mu}:\ce(U)\aro\cm(U)/\ce(U),\quad e\longmapsto\partial^{[\mu,0]}(e)\mod\ce(U).
\]
If $e$ is a section of $\ce$ over $U$ and 
$\lambda=(\lambda_1,\ldots,\lambda_m)\in\mathbf N^m$ is such that $\lambda_1+\ldots+\lambda_m\le \ell$, then $\partial^{[\lambda,0]}(e)$ is, by hypothesis, also a section of $\ce$. Then, 
\[\begin{split}
\phi_{\mu}(ae)& = \partial^{[\mu,0]}(ae)  \mod\ce(U)\\
&=\sum_{\mu'+\mu''=\mu}\binom{\mu}{\mu'} \partial^{[\mu',0]}(a)\cdot\partial^{[\mu'', 0]}(e)\mod\ce(U)\\
&= a\cdot\partial^{[\mu,0]}(e)\mod\ce(U)\\
&=a\phi_{\mu}(e)\mod\ce(U).
\end{split}
\]
This shows that $\phi_\mu$ is $A$-linear. Note also that \cite[16.11.2, p.54]{EGA} gives
\[
\phi_\mu(\partial^{[0,r]}e)=\partial^{[0,r]}(\phi_\mu(e)).
\]
Finally, $\phi_\mu$ is independent of the choice of the system of coordinates $x_1,\ldots,x_n$ of $U$ relative to $T$. To verify this claim we adopt the same notations as before: for any other system $\overline x_1,\ldots,\overline x_n$ of etale coordinates relative to $T$, we associate the differential operators $\overline \partial^{[q,r]}$, and note that $\overline \partial^{[\mu,0]}-\partial^{[\mu,0]}$ preserves $\ce(U)$, so that $\phi_\mu$ is independent of $x_1,\ldots,x_n$. Hence, we obtain a morphism of $\cd(Y/T)$-modules 
\[
\phi_\mu:\ce\aro\cm/\ce,
\]
which must therefore vanish. Consequently, since $\mu$ is arbitrarily chosen, $\ce$ is invariant under $\cd^{\ell+1}(Y/k)$, and by induction we are done. 
\end{proof}

To deduce from Theorem \ref{21.10.2016--3} the result we need in order to prove Theorem \ref{lem.eh},  Corollary \ref{21.10.2016--4}, we make a few minor observations.

\begin{lem}\label{21.10.2016--2}Let $v:Y'\to Y$ be a schematically dominant \cite[$\mathrm{IV}_3$, 11.10.2,p.171]{EGA} affine morphism of schemes. Let $\mathcal F$ be a locally free $\co_Y$-module. Then $v^*:\Gamma(Y,{\mathcal F})\to\Gamma(Y',v^*{\mathcal F})$ is injective. \qed
\end{lem}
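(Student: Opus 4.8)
The plan is to reduce the injectivity of $v^*$ on global sections to the defining sheaf-level property of a schematically dominant morphism, checked one trivializing open at a time. First I would recall that, by \cite[$\mathrm{IV}_3$, 11.10.2]{EGA}, the hypothesis that $v$ is schematically dominant is equivalent to the canonical homomorphism $\co_Y\to v_*\co_{Y'}$ being a monomorphism of sheaves; since the kernel of a morphism of sheaves is computed sectionwise, this says exactly that for every open $U\subseteq Y$ the map $\Gamma(U,\co_Y)\to\Gamma(v^{-1}(U),\co_{Y'})$ is injective. I would also note that this property passes to the restriction $v^{-1}(U)\to U$, as restricting a monomorphism of sheaves to an open subset preserves it.

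Next I would take a section $s\in\Gamma(Y,\mathcal F)$ with $v^*s=0$ and verify $s=0$ locally on $Y$. Over an open $U$ on which $\mathcal F$ is free, fix a trivialization $\mathcal F|_U\simeq\co_U^{\,(I)}$ and write $s|_U$ as a family $(s_i)_{i\in I}$ with $s_i\in\Gamma(U,\co_Y)$. Since $v^*$ commutes with the chosen isomorphism and with restriction to $v^{-1}(U)$, the pulled-back section $v^*s$ restricted to $v^{-1}(U)$ is precisely the family of images of the $s_i$ under $\Gamma(U,\co_Y)\to\Gamma(v^{-1}(U),\co_{Y'})$. The hypothesis $v^*s=0$ therefore forces each such image to vanish, and the injectivity coming from schematic dominance gives $s_i=0$ for all $i$, whence $s|_U=0$. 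Covering $Y$ by trivializing opens yields $s=0$.

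The argument is essentially formal, so the only point demanding care is the compatibility invoked in the second step: that, under a trivialization of $\mathcal F$ over $U$, the map $v^*$ on sections is computed componentwise by the scalar restriction map. This is just the fact that $v^*$ is a left adjoint, hence commutes with the direct sum defining the trivialization and with restriction to the open $v^{-1}(U)$; I expect no genuine obstacle here. Finally I would remark that the affineness of $v$ is not needed for this particular statement (it would ensure exactness of $v_*$, which we never use), and that the proof can be repackaged without mentioning sections at all: the map $v^*$ on global sections is $\Gamma(Y,-)$ applied to the adjunction unit $\mathcal F\to v_*v^*\mathcal F$, and the left exactness of $\Gamma(Y,-)$ reduces everything to checking that this unit is a monomorphism, which is again a local question that reduces to schematic dominance.
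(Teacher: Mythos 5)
Your proof is correct. The paper in fact offers no argument for this lemma (it is stated with a \qed as evident), and what you write — reduce, via the characterization of schematic dominance as injectivity of $\mathcal O_Y\to v_*\mathcal O_{Y'}$, to a componentwise check over a trivializing cover, or equivalently apply left-exactness of $\Gamma(Y,-)$ to the monomorphic unit $\mathcal F\to v_*v^*\mathcal F$ — is precisely the routine verification the authors leave to the reader. One caution about your closing aside: discarding affineness is not entirely innocent, because for a morphism that is not quasi-compact the scheme-theoretic-image formulation of schematic dominance is in general strictly weaker than injectivity of $\mathcal O_Y\to v_*\mathcal O_{Y'}$ (the kernel may be a nonzero ideal containing no nonzero quasi-coherent subideal); it is the quasi-compactness supplied by affineness that makes the equivalence invoked in your first step unconditional, so the hypothesis is best kept.
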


\begin{lem}\label{21.10.2016--1}Assume in addition that $f:X\to S$ is separated. Let $\theta:\ce\to\cm$ be a morphism of coherent $\mathcal O_{X}$-modules and assume that on some finite open covering $\{U_i\}$ of $X$ we can find retractions $r_i:\cm|_{U_i}\to\ce|_{U_i}$. (In particular, $\theta$ is a monomorphism of $\co_X$-modules.) Then, there exists a model $\bm X$ of $X$, a morphism of coherent $\co_{\bm X}$-modules $\bm\theta:\bm\ce\to\bm\cm$, an open covering $\{\bm U_i\}$ of $\bm X$, and retractions $\bm r_i:\bm\cm|_{\bm U_i}\to\bm\ce|_{\bm U_i}$ which, when pulled back by the canonical arrow $X\to\bm X$, give back $\theta$, $\{U_i\}$, and the $\{r_i\}$.
\end{lem}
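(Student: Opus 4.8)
The plan is to treat this as a routine ``spreading-out'' argument within the limit formalism of \cite[$\mathrm{IV}_3$, \S8]{EGA}, applied to the cofiltered system $X=\varprojlim_\lambda X_\lambda$ set up in Section~\ref{28.10.2016--1}. For $\lambda\ge\alpha$ the models $X_\lambda=X_\alpha\ti_{S_\alpha}S_\lambda$ form an inverse system whose transition morphisms $X_\lambda\to X_\mu$ are affine (each being a base change of the affine $S_\lambda\to S_\mu$), with $X=\varprojlim_\lambda X_\lambda$; moreover every $X_\lambda$ is noetherian, being of finite type over the finite-type $k$-algebra $R_\lambda$. Hence coherent and finitely presented sheaves coincide on $X$ and on each model, and all the data in the statement lie within the scope of the EGA approximation theorems. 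Since the index set is filtered and only finitely many descent conditions will be imposed, it is enough to descend each piece of data to \emph{some} model and then pass to a common upper bound for $\lambda$.

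First I would descend the sheaves and the map: by \cite[$\mathrm{IV}_3$, 8.5.2]{EGA} the finitely presented modules $\ce$ and $\cm$ are pullbacks of coherent modules $\bm\ce,\bm\cm$ on a model $X_\lambda$, and, increasing $\lambda$, the morphism $\theta$ is the pullback of a morphism $\bm\theta:\bm\ce\aro\bm\cm$ (the cited result supplies both the surjectivity on objects and the description of $\mathrm{Hom}$ between pullbacks as a colimit of $\mathrm{Hom}$'s at finite level). Next I would descend the covering: each $U_i$ is a quasi-compact open of $X$, so by \cite[$\mathrm{IV}_3$, 8.2.11]{EGA} it is the preimage of a quasi-compact open $\bm U_i\subset X_\lambda$ for $\lambda$ large; the closed locus $X_\lambda\setminus\bigcup_i\bm U_i$ then has empty preimage in $X$, and by the emptiness criterion for limits of constructible sets \cite[$\mathrm{IV}_3$, 8.3.4]{EGA} it is empty after a further increase of $\lambda$, so that $\{\bm U_i\}$ is a finite open covering of $X_\lambda$ restricting to $\{U_i\}$.

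It then remains to descend the retractions together with their defining identities. Applying \cite[$\mathrm{IV}_3$, 8.5.2]{EGA} over each noetherian open $\bm U_i$, I would obtain $\bm r_i:\bm\cm|_{\bm U_i}\aro\bm\ce|_{\bm U_i}$ lifting $r_i$; and since the equality $\bm r_i\circ(\bm\theta|_{\bm U_i})=\mathrm{id}_{\bm\ce|_{\bm U_i}}$ holds after pullback to $U_i$, the injectivity built into the colimit description of $\mathrm{Hom}$ in the same reference forces it to hold on $\bm U_i$ once $\lambda$ is large enough. Choosing $\lambda$ to dominate the finitely many indices occurring above yields the desired model, with $\bm\theta$ automatically a monomorphism because it admits local retractions. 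The whole argument is bookkeeping inside the EGA limit package; the only steps needing genuine care are arranging that the map $\bm\theta$ and all the $\bm r_i$ descend over a \emph{common} $\lambda$ and there satisfy $\bm r_i\circ\bm\theta|_{\bm U_i}=\mathrm{id}$, and checking that the descended opens still cover, which I expect to be the main, albeit mild, obstacle.
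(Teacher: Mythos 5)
Your proposal is correct and follows essentially the same route as the paper's proof: the paper likewise invokes the EGA $\mathrm{IV}_3$, \S 8 limit formalism, citing 8.5.2 to descend $\theta$ and (a second time) the retractions $r_i$, and 8.2.11 (together with 8.2.5) to descend the covering $\{U_i\}$. Your additional details --- verifying the covering property via the constructible-set emptiness criterion and descending the identities $\bm r_i\circ\bm\theta|_{\bm U_i}=\mathrm{id}$ through the injectivity in the colimit description of $\mathrm{Hom}$ --- are exactly the bookkeeping the paper leaves implicit.
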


\begin{proof}All carefully explained in \cite[$\mathrm{IV}_3$]{EGA}, to which we refer in the following lines. From 8.10.5(v) (p.37), we can assume that all models are also separated. Using 8.5.2 (p.20) we obtain our $\bm\theta:\bm\ce\to\bm\cm$. Using 8.2.11 (p.11) we find the covering $\{\bm U_i\}$. Note that, employing 8.2.5 (p.9), the scheme $U_i$ is the limit of the inverse images of $\bm U_i$ in the thinner models. 
To end, another application of 8.5.2 (p.20) gives the retractions. 
\end{proof}

\begin{lem}\label{20.10.2016--1} Suppose  in addition that $f:X\to S$ is separated. Let $\bm\cm\in\dmod{\bm X/\bm S}$ and let $\cm$ stand for the $\cd(X/S)$-module induced by $\boldsymbol{\mathcal M}$. Assume that $\mathcal M$ is a vector bundle and let    
\[
\theta:\mathcal E\aro\mathcal M
\] 
be a special monic of $\dmod{X/S}$. Then there exists, possibly after passing to a thinner model of $X$, a morphism   
\[
\boldsymbol\theta:\bm\ce\aro\bm\cm
\] 
in $\dmod{\bm X/\bm S}$ giving back $\theta:\mathcal E\to\mathcal M$. In addition, the morphism of $\mathcal O_{\bm X}$-modules $\bm\theta$ can be chosen to be injective and to have a locally free cokernel.  
\end{lem}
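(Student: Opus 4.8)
The plan is to split the descent into its $\co_X$-linear part and its differential part, handling the latter by a noetherian saturation argument. First I would forget the $\cd(X/S)$-structure and descend $\theta$ as a morphism of coherent $\co_X$-modules. Since $\theta$ is a special monomorphism, its cokernel is locally free, so $\theta$ admits $\co_X$-linear retractions on a finite affine covering of the quasi-compact scheme $X$ (which is of finite type over the affine $S$). Lemma \ref{21.10.2016--1} then yields a model $\bm X$, a morphism $\bm\theta\colon\bm\ce\to\bm\cm$ of coherent $\co_{\bm X}$-modules together with local retractions, whose pullback along $X\to\bm X$ recovers $\theta$ and its retractions. Using the uniqueness up to a thinner model of a descent (\cite[$\mathrm{IV}_3$, \S8.5]{EGA}), I may identify the descended target with the given $\bm\cm$ on a common refinement. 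In particular $\bm\theta$ is injective and its image $\bm\ce':=\mathrm{Im}\,\bm\theta$ is an $\co_{\bm X}$-subbundle of $\bm\cm$ with locally free cokernel. Everything is then settled except that $\bm\ce'$ must be stable under $\cd(\bm X/\bm S)$.

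The heart of the matter is thus to arrange, after passing to a thinner model, that $\bm\ce'$ be a $\cd(\bm X/\bm S)$-submodule of $\bm\cm$. The genuine obstacle is that $\cd(\bm X/\bm S)$ is not coherent as a sheaf of rings: it is the increasing union of the coherent sheaves $\cd^{\le\ell}(\bm X/\bm S)$, so stability is a priori an infinite family of conditions, one per order, and a naive level-by-level descent would require infinitely many shrinkings of the model. To defuse this I would pass to the $\cd$-saturation. Let $\bm\ce''\subset\bm\cm$ be the smallest $\cd(\bm X/\bm S)$-submodule containing $\bm\ce'$, that is $\bm\ce''=\bigcup_\ell\cd^{\le\ell}(\bm X/\bm S)\cdot\bm\ce'$. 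As $\bm X$ is noetherian and $\bm\cm$ coherent, this ascending chain of coherent submodules stabilises: there is a finite $L$ with $\bm\ce''=\cd^{\le L}(\bm X/\bm S)\cdot\bm\ce'$, and $\bm\ce''$ is coherent. Because $\cd^{\le a}\cdot\cd^{\le b}\subset\cd^{\le a+b}$, a single equality $\cd^{\le L}(\bm X/\bm S)\cdot\bm\ce'=\bm\ce'$ already forces $\bm\ce'$ to be stable under all of $\cd(\bm X/\bm S)$.

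I would then compare $\bm\ce'$ with $\bm\ce''$ after pullback to $X$. The formation of $\cd^{\le L}$ and of the action commute with the base change $X\to\bm X$, so $\bm\ce''|_X$ is the image of $\cd^{\le L}(X/S)\cdot\mathcal E$; since $\mathcal E$ is, by hypothesis, a $\cd(X/S)$-submodule of $\cm$, this image is contained in $\mathcal E$, and together with $\bm\ce'|_X=\mathcal E$ we get $\bm\ce''|_X=\mathcal E=\bm\ce'|_X$. Hence the coherent $\co_{\bm X}$-module $\bm\ce''/\bm\ce'$ has vanishing pullback to $X=\varprojlim_\mu X_\mu$, and by the limit formalism of \cite[$\mathrm{IV}_3$, \S8.5]{EGA} it is already annihilated on some thinner model $X_\mu$. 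On that model $\bm\ce'|_{X_\mu}=\bm\ce''|_{X_\mu}$ is stable under $\cd^{\le L}(X_\mu/S_\mu)$, hence, by the remark on the order filtration, under all of $\cd(X_\mu/S_\mu)$.

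It remains to assemble the conclusion. Renaming $X_\mu$ as $\bm X$, the image $\bm\ce'$ is now a $\cd(\bm X/\bm S)$-submodule of $\bm\cm$; transporting this $\cd(\bm X/\bm S)$-structure through the $\co_{\bm X}$-isomorphism $\bm\theta\colon\bm\ce\xrightarrow{\sim}\bm\ce'$ turns $\bm\ce$ into an object of $\dmod{\bm X/\bm S}$ and $\bm\theta$ into a horizontal morphism. By construction $\bm\theta$ is injective with locally free cokernel (the cokernel is $\bm\cm/\bm\ce'$, locally free by the retractions of the first step) and its pullback gives back $\theta$. The one real difficulty, the unboundedness of the order of differential operators, is precisely what the noetherian stabilisation of the $\cd$-saturation removes, reducing the infinitely many stability conditions to the vanishing of a single coherent sheaf $\bm\ce''/\bm\ce'$, which descends to a finite level.
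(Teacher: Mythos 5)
The first half of your argument --- descending $\theta$ together with its local retractions via Lemma \ref{21.10.2016--1}, identifying the target with $\bm\cm$, and noting that local splitness gives injectivity of $\bm\theta$ and local freeness of its cokernel --- coincides with the paper's first step. The gap lies in your saturation step, at the sentence asserting that $\bm\ce''/\bm\ce'$ has vanishing pullback to $X$. The morphism $u:X\to\bm X$ is \emph{not flat}: it is the pullback of $S\to\bm S$ along the faithfully flat $\bm f$, and the inclusions $R_\lambda\subset R$ are in general not flat (as soon as $\dim R_\lambda\ge2$, which happens cofinally unless $R$ is essentially of finite type over a curve, going-down fails because $\mathrm{Spec}\,R$ has only two points; flatness of $u$ would descend to flatness of $R_\lambda\to R$). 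Consequently $u^*$ does not preserve injections. The map $u^*\bm\ce'\to\cm$ \emph{is} still injective, because $\bm\ce'\subset\bm\cm$ is locally split, but nothing of the sort is known for the inclusion $\bm\ce''\subset\bm\cm$, which is merely coherent. What your base-change argument actually yields is that the \emph{images} of $u^*\bm\ce'$ and $u^*\bm\ce''$ in $\cm$ agree (both equal $\mathcal E$). From this one cannot conclude that $u^*(\bm\ce''/\bm\ce')=\mathrm{coker}\left(u^*\bm\ce'\to u^*\bm\ce''\right)$ vanishes: writing $N=\ker(u^*\bm\ce''\to\cm)$, one only gets $u^*\bm\ce''=\mathrm{im}(u^*\bm\ce')+N$, so that cokernel is a quotient of $N$, which you do not control. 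Unwinding, the vanishing you need says exactly that $\bm\ce'$ is $\cd^{\le L}$-stable at every point of the image of $u$ --- that is, it is a local form of the statement to be proven, and the pullback step silently assumes it.

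There is a second, independent problem at the end: from stability of $\bm\ce'|_{X_\mu}$ under $\cd^{\le L}(X_\mu/S_\mu)$ you infer stability under all of $\cd(X_\mu/S_\mu)$ ``by the remark on the order filtration.'' The inclusion $\cd^{\le a}\cdot\cd^{\le b}\subset\cd^{\le a+b}$ goes the wrong way for this; what is needed is that $\cd$ be generated by operators of bounded order, which holds in characteristic zero but fails in positive characteristic (the divided-power operators $\partial^{[p^m]}$ are not composites of lower-order ones), and this section of the paper makes no characteristic assumption. Both difficulties are what the paper's actual argument is designed to avoid, and it needs no saturation and no further shrinking of the model: over an affine open $\bm U$ carrying a retraction $\bm r$ of $\bm\theta$, a free $\bm\cm(\bm U)$ and etale coordinates, the retraction gives the key identity
\[
\bm\ce(\bm U)\;=\;\bm\cm(\bm U)\cap\bigl(\co(U)\ot_{\co(\bm U)}\bm\ce(\bm U)\bigr)
\]
inside $\co(U)\ot_{\co(\bm U)}\bm\cm(\bm U)$, where only the injectivity of $\co(\bm U)\to\co(U)$ (schematic dominance, as in Lemma \ref{21.10.2016--2}) is used, not flatness; then, for \emph{every} multi-index $q$ at once, $1\ot\bm\partial^{[q]}(\bm e)=\partial^{[q]}(1\ot\bm e)$ lies in $\co(U)\ot_{\co(\bm U)}\bm\ce(\bm U)$ by the $\cd(X/S)$-stability of $\mathcal E$, whence $\bm\partial^{[q]}(\bm e)\in\bm\ce(\bm U)$. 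Any repair of your saturation route would have to reproduce precisely this local retraction argument, at which point the saturation detour becomes superfluous.
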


\begin{proof}Using Lemma \ref{21.10.2016--1}, we can find a morphism of $\mathcal O_{\bm X}$-coherent modules $\bm\theta:\bm\ce\to\bm\cm$ giving back $\theta$ and possessing, locally, a retraction. Moreover, employing \cite[$\mathrm{IV}_3$, 8.5.5,p.23]{EGA} it is licit to assume that $\bm \ce$ and $\bm \cm$ are locally free.

Passing to a thinner model, we can assume that $\theta$ comes from an arrow $\boldsymbol\theta:\boldsymbol{\mathcal E}\to\boldsymbol{\mathcal M}$ of locally free $\co_{\bm X}$-modules, see \cite[$\mathrm{IV}_3$]{EGA},  8.5.2(i), p.20 and 8.5.5, p.23. Moreover, Lemma \ref{21.10.2016--1} allows us to assume that locally $\boldsymbol\theta$ possesses a retraction. We now need to show that $\bm\ce$ can be given a structure of a $\cd(\bm X/\bm S)$-module, and for that we only need to prove that $\bm\ce$ is locally invariant under the action of $\cd(\bm X/\bm S)$.

So let
$\bm U\subset \bm X$ be an affine open of $\bm X$ having the following properties: (a) the arrow $\bm\theta(\bm U):\bm\ce(\bm U)\to\bm\cm(\bm U)$ possesses a retraction $\bm r:\bm\cm(\bm U)\to\bm\ce(\bm U)$, (b) the $\co(\bm U)$-module $\bm\cm(\bm U)$ is free, and (c) on $\bm U$ we have etale coordinates $\bm x_1,\ldots,\bm x_n$ with respect to $\bm S$. Moreover, we write  $U=\bm U\ti_{\bm S}S$. 

In this setting, we conclude that, if $1\ot\bm m\in\co(U)\ot_{\co(\bm U)}\bm\cm(\bm U)$ actually lies in $\co(U)\ot_{\co(\bm U)}\bm\ce(\bm U)$, then $1\ot\bm\theta\bm r(\bm m)=1\ot\bm m$, which shows that  $\bm m\in\bm\ce(\bm U)$ since $\bm\cm(\bm U)\to\co(U)\ot_{\co(\bm U)}\bm\cm(\bm U)$ is injective. Consequently, if $\bm\partial^{[q]}$ is the differential operator on $\bm U$ constructed via the above  system of coordinates \cite[$\mathrm{IV}_4$, 16.11.2, p.54]{EGA} we see that $\bm\partial^{[q]}(\bm e)\in\bm\ce(\bm U)$ for each $\bm e\in\bm\ce(\bm U)$ as $1\ot\bm\partial^{[q]}(\bm e)$ does belong to $\co(U)\ot_{\co(\bm U)}\bm\ce(\bm U)$.
\end{proof}

\begin{cor}\label{21.10.2016--4}We suppose that $f$ is separated. Let $\bm\cm\in\dmod{\bm X/k}$  and consider a special monic $\theta:\ce\to\mathrm{Inf}_X(\bm\cm)$ such that 
\[
\mathrm{Hom}_{\cd(X/S)}(\ce,\mathrm{Inf}_X(\bm\cm)/\ce)=0.
\]
Then, replacing eventually $\bm X$ by some thiner model of $X$, there exist $\bm\ce\in\dmod{\bm X/k}$ and an arrow $\bm\theta:\bm\ce\to\bm\cm$ which is taken to $\theta$ by the functor $\mathrm{Inf}_X(-)$.
\end{cor}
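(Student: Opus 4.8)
The plan is to descend the data to a finite-type model in two stages: first realize $\theta$ as the pull-back of a morphism of \emph{relative} $\cd$-modules on a model, and then upgrade it to an \emph{absolute} one by checking the hypotheses of Theorem \ref{21.10.2016--3}. First I would let $\mathrm{Inf}(\bm\cm)\in\dmod{\bm X/\bm S}$ denote the inflation of $\bm\cm$ to the relative category over $\bm S$; its pull-back to $X$ is exactly $\cm=\mathrm{Inf}_X(\bm\cm)$, and by Proposition \ref{05.03.2014--1} it is a vector bundle. As $\theta:\ce\to\mathrm{Inf}_X(\bm\cm)$ is a special monic, Lemma \ref{20.10.2016--1} provides, after passing to a thiner model, an injection $\bm\theta:\bm\ce\to\mathrm{Inf}(\bm\cm)$ in $\dmod{\bm X/\bm S}$ with locally free cokernel $\bm Q$ which pulls back to $\theta$ along the canonical map $X\to\bm X$. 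In particular the exact sequence $0\to\bm\ce\to\mathrm{Inf}(\bm\cm)\to\bm Q\to0$ restricts over $X$ to $0\to\ce\to\mathrm{Inf}_X(\bm\cm)\to\mathrm{Inf}_X(\bm\cm)/\ce\to0$, so that $\bm Q$ gives back $\mathrm{Inf}_X(\bm\cm)/\ce$.

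Next I would transport the vanishing hypothesis to the model, that is, show $\mathrm{Hom}_{\cd(\bm X/\bm S)}(\bm\ce,\bm Q)=0$. The morphism $S\to\bm S$ is affine and schematically dominant, since it corresponds to the injection $R_\lambda\hookrightarrow R$; these properties are preserved by the flat base change along $\bm X\to\bm S$, and hence $X\to\bm X$ is affine and schematically dominant. Because $\bm\ce$ is locally free, Lemma \ref{21.10.2016--2} shows that the pull-back map $\mathrm{Hom}_{\co_{\bm X}}(\bm\ce,\bm Q)\to\mathrm{Hom}_{\co_X}(\ce,\mathrm{Inf}_X(\bm\cm)/\ce)$ is injective; restricting to horizontal morphisms, the induced map on $\cd$-linear Homs is injective as well. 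As the target $\mathrm{Hom}_{\cd(X/S)}(\ce,\mathrm{Inf}_X(\bm\cm)/\ce)$ vanishes by assumption, so does the source.

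Finally, since $\bm S=\mathrm{Spec}\,R_\lambda$ is smooth and affine over $k$, after shrinking it---which again amounts to passing to a thiner model---I may assume it carries a system of étale coordinates relative to $k$. Theorem \ref{21.10.2016--3}, applied with $Y=\bm X$ and $T=\bm S$, then guarantees that $\bm\ce$ is invariant under $\cd(\bm X/k)$, i.e.\ $\bm\ce\in\dmod{\bm X/k}$ and $\bm\theta:\bm\ce\to\bm\cm$ is a morphism there; by construction $\mathrm{Inf}_X(\bm\theta)=\theta$, which is the assertion. The step I expect to be most delicate is the Hom-descent: the cleanest route is not a finiteness argument for horizontal homomorphisms over $R_\lambda$ (which would require a properness one does not have here), but rather the observation that, because $X\to\bm X$ is schematically dominant, pull-back of horizontal homomorphisms is injective by Lemma \ref{21.10.2016--2}, so that the relative vanishing over $X$ forces it already on the model.
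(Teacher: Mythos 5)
Your proof is correct and follows the paper's own argument essentially step for step: Lemma \ref{20.10.2016--1} descends $\theta$ to an injective morphism $\bm\theta:\bm\ce\to\mathrm{Inf}(\bm\cm)$ with locally free cokernel on a model, the vanishing of $\mathrm{Hom}_{\cd(\bm X/\bm S)}(\bm\ce,\bm Q)$ is then deduced from the hypothesis via schematic dominance of $X\to\bm X$ and Lemma \ref{21.10.2016--2}, and Theorem \ref{21.10.2016--3} upgrades $\bm\theta$ to an arrow of $\dmod{\bm X/k}$. The additional verifications you supply (affineness and schematic dominance of $X\to\bm X$ by flat base change from $S\to\bm S$, and the existence of \'etale coordinates on $\bm S$ after shrinking) are details the paper leaves implicit, and they are correct.
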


\begin{proof}Lemma \ref{20.10.2016--1} shows that $\theta:\ce
\to\mathrm{Inf}_X(\bm\cm)$ is the image of an arrow $\bm\ce\to\mathrm{Inf}(\bm\cm)$ between $\cd(\bm X/\bm S)$-modules.  Moreover,  
$\bm\theta$ is injective and its cokernel, call it $\bm{\mathcal C}$, is locally free. 
We now show that   
\[
\mathrm{Hom}_{\cd(\bm X/\bm S)}(\bm\ce\,,\,\bm{\mathcal C})=0.
\]
Let 
$\bm\varphi:\bm\ce\to\bm{\mathcal C}$ be a morphism of $\cd(\bm X/\bm S)$-modules. Then, if $u:X\to \bm X$ stands for the canonical morphism, we see that  $u^*(\bm\varphi)=:\varphi$ is a morphism of $\cd(X/S)$-modules from $\ce$ to the cokernel of $\theta$.
By assumption, $\varphi=0$ and because of Lemma \ref{21.10.2016--2},  $\bm\varphi=0$. 

We can now apply Theorem \ref{21.10.2016--3} to show that $\bm\theta$ is an arrow of   $\dmod{\bm X/k}$.
\end{proof}

We can now present the

\begin{proof}[Proof of Theorem \ref{lem.eh}]The idea is to apply Corollary \ref{21.10.2016--4} to the socle series obtained from Lemma \ref{24.02.2015--3}. We begin by assuming that $\mathcal V$ is of rank one as an $\mathcal O_X$-module. 

Employing the terminology of  Lemma \ref{24.02.2015--3},  define $\mathrm{Soc}_1(\mathcal V)$ as the $\mathcal V$-socle of $\mathcal M$. If $\mathrm{Soc}_i$ is defined, put 
\[
\mathrm{Soc}_{i+1}(\mathcal V)=\begin{array}{cc}\text{inverse image in $\mathcal M$ of} \\ \text{the $\mathcal V$-socle of $\mathcal M/\mathrm{Soc}_i(\mathcal V)$,}\end{array}
\]
so that \[\mathrm{Soc}_{i+1}(\mathcal V)/\mathrm{Soc}_i(\mathcal V)\simeq \text{$\mathcal V$-socle of $\cm/\mathrm{Soc}_i(\cv)$}.\]
Using that the socle is a special subobject (Lemma \ref{24.02.2015--3}) we see that $\mathrm{Soc}_i(\mathcal V)\subset\mathcal M$ is special too, so, for some $r\in\mathbf N$, we have 
\[  \mathrm{Soc}_{r}(\mathcal V)= \mathrm{Soc}_{r+1}(\mathcal V)=\cdots\]  Due to the definition of the socle, we conclude that  there are no submodules of 
\[
\mathcal M/\mathrm{Soc}_r(\mathcal V)
\]
isomorphic to $\mathcal V$. The assumption on the rank of $\mathcal V$ and Proposition \ref{05.03.2014--1} then force  all arrows  
\[
\mathcal V\aro  \mathcal M/\mathrm{Soc}_r(\mathcal V)
\]
in $\dmod{X/S}$ to be null. 
Using that 
\[
\mathrm{Soc}_1(\mathcal V),\quad  \frac{\mathrm{Soc}_{2}(\mathcal V)}{\mathrm{Soc}_{1}(\mathcal V)},\quad \ldots
\]
are all isomorphic to direct sums of $\mathcal V$, we see that any arrow 
\[
\mathrm{Soc}_r(\mathcal V)\aro \mathcal M/\mathrm{Soc}_r(\mathcal V)
\]
is null. By Corollary \ref{21.10.2016--4}, the $\cd(X/S)$-module $\mathrm{Soc}_r(\mathcal V)$ is an inflation, i.e. after eventually passing to a thiner model,  
there exists  $\bm \cs\in\dmod{\bm X/k}$ together with a monomorphism $\bm\cs\to\bm\cm$ of $\cd(\bm X/k)$-modules such that 
\[\mathrm{Inf}_X(\bm\cs)\aro \mathrm{Inf}_X(\boldsymbol{\mathcal M})=\mathcal M\]
is our special monic $\mathrm{Soc}_r(\mathcal V)\to\mathcal M$. Since  $\mathcal V$ is a quotient of  $\mathrm{Soc}_r(\mathcal V)=\mathrm{Inf}_X(\boldsymbol{\mathcal S})$ and since $\boldsymbol{\mathcal S}$ is a subobject of $\bm\cm$, we are done. 

In general, write  $m=\mathrm{rank}(\mathcal V)$ and let  $\mathrm{Inf}_X(\bm\cs)\to\det(\cv)$
be an epimorphism. Then
\[
\left\{\wedge^{m-1}\mathcal V\right\}^\vee\ot \det(\mathcal V)\simeq \mathcal V,
\]
which shows that $\mathcal V$ is a quotient of $\mathcal M^\vee\ot  \mathrm{Inf}_X(\bm\cs)$.
\end{proof}

\end{document}